\newcommand{\GU}{\mathrm{GU}}
\newcommand{\mF}{\mathrm{F}}
\newcommand{\G}{\mathrm{G}}
\newcommand{\T}{\mathrm{T}}
\newcommand{\SU}{\mathrm{SU}}
\newcommand{\U}{\mathrm{U}}
\newcommand{\Qp}{\mathbb{Q}_p}
\newcommand{\Z}{\mathbb{Z}}
\newcommand{\J}{\mathrm{J}}
\newcommand{\R}{\mathbb{R}}
\newcommand{\C}{\mathbb{C}}
\newcommand{\Res}{\mathrm{Res}}
\newcommand{\Irr}{\mathrm{Irr}}
\newcommand{\tr}{\mathrm{tr}}
\newcommand{\im}{\mathrm{im}}
\newcommand{\ind}{\mathrm{ind}}
\newcommand{\Stab}{\mathrm{Stab}}
\newcommand{\Sh}{\mathrm{Sh}}
\newcommand{\GL}{\mathrm{GL}}
\newcommand{\id}{\mathrm{id}}
\newcommand{\Id}{\mathrm{Id}}
\newcommand{\End}{\mathrm{End}}
\newcommand{\Ext}{\mathrm{Ext}}
\newcommand{\Vol}{\mathrm{Vol}}
\newcommand{\Groth}{\mathrm{Groth}}
\newcommand{\Gm}{{\mathbb{G}_m}}
\newcommand{\mc}{\mathcal}
\newcommand{\mf}{\mathfrak}
\newcommand{\Q}{\mathbb{Q}}
\newcommand{\X}{\mathbb{X}}
\newcommand{\F}{\mathbb{F}}
\newcommand{\mR}{\mathcal{R}}
\newcommand{\Hom}{\mathrm{Hom}}
\newcommand{\Int}{\mathrm{Int}}
\newcommand{\Ig}{\mathrm{Ig}}
\newcommand{\diag}{\mathrm{diag}}
\newcommand{\Lie}{\mathrm{Lie}}
\newcommand{\WD}{\mathrm{WD}}
\newcommand{\LL}{{}^L}
\newcommand{\Spf}{\mathrm{Spf}}
\newcommand{\re}{\mathrm{re}}
\newcommand{\Gr}{\mathrm{Groth}}
\newcommand{\Red}{\mathrm{Red}}
\newcommand{\Trans}{\mathrm{Trans}}
\newcommand{\der}{\mathrm{der}}
\newcommand{\mb}{\mathbf}
\newcommand{\D}{\mathbb{D}}
\newcommand{\ov}{\overline}
\newcommand{\Jac}{\mathrm{Jac}}
\newcommand{\Mant}{\mathrm{Mant}}
\newcommand{\A}{\mathbb{A}}
\newcommand{\Gal}{\mathrm{Gal}}
\newcommand{\Leta}{{}^L\eta}
\newcommand{\disc}{\mathrm{disc}}
\newcommand{\bas}{\mathrm{bas}}
\newcommand{\temp}{\mathrm{temp}}
\newcommand{\unit}{\mathrm{unit}}
\newcommand{\iso}{\mathrm{iso}}
\newcommand{\mH}{\mathrm{H}}
\newcommand{\SL}{\mathrm{SL}}
\newcommand{\LM}{{}^LM}
\newcommand{\rad}{\mathrm{rad}}
\newcommand{\LU}{{}^L\mathrm{U}}
\newcommand{\LGU}{{}^L\mathrm{GU}}
\newcommand{\ab}{\mathrm{ab}}
\newcommand{\N}{\mathbb{N}}
\newcommand{\inv}{\mathrm{inv}}
\newcommand{\el}{\mathrm{ell}}
\newcommand{\M}{\mathrm{M}}
\newcommand{\an}{\mathrm{an}}
\newcommand{\eff}{\mathrm{eff}}
\newcommand{\LJ}{{}^L\J}
\newtheorem{theorem}{Theorem}[section]
\newtheorem{lemma}[theorem]{Lemma}
\newtheorem{proposition}[theorem]{Proposition}
\newtheorem{corollary}[theorem]{Corollary}
\newtheorem{conjecture}[theorem]{Conjecture}
\theoremstyle{definition}
\newtheorem{definition}[theorem]{Definition}
\theoremstyle{remark}
\newtheorem{remark}[theorem]{Remark}
\numberwithin{equation}{section}
\title{The Kottwitz conjecture for unitary PEL type Rapoport--Zink spaces}
\author{Alexander Bertoloni Meli, Kieu Hieu Nguyen}
\begin{document}
\begin{abstract}
    In this paper we study the cohomology of PEL-type Rapoport--Zink spaces associated to unramified unitary similitude groups over $\Q_p$ in an odd number of variables. We extend the results of Kaletha--Minguez--Shin--White to construct a local Langlands correspondence for these groups and prove an averaging formula relating the cohomology of Rapoport--Zink spaces to this correspondence. We use this formula to prove the Kottwitz conjecture for the groups we consider.
\end{abstract}

\maketitle

\tableofcontents

\section{Introduction}

Shimura varieties play an important role in the global Langlands program, which predicts a link between automorphic representations of linear algebraic groups and Galois representations. Rapoport and Zink (\cite{RZ1}) introduced $p$-adic analogues of Shimura varieties defined as moduli spaces of $p$-divisible groups with additional structures. The $\ell$-adic $ (\ell \neq p) $ cohomology of these spaces should provide local incarnations of the Langlands correspondences and this is the subject of the Kottwitz conjecture (\cite[Conjecture 7.3]{RV1}). The goal of this paper is to prove the Kottwitz conjecture in the case of PEL type Rapoport--Zink spaces associated to unramified unitary similitude groups over $\Q_p$ in an odd number of variables. Prior to our work, the conjecture was proven for Lubin-Tate spaces by \cite{Boyer1}, \cite{HT1} and \cite{Boyer2}.  By duality \cite{Fal}, \cite{FGL}, \cite{SW17}, the conjecture is also known in the Drinfeld case. The  case of basic unramified EL type Rapoport--Zink spaces was proven by \cite{Shi1}, \cite{Far1} and the case of basic unramified PEL of unitary type of signature $(1, n-1)$ by \cite{KH1}. Kaletha and Weinstein (\cite{KW}) have proven, for all local Shimura varieties, a weakened form of the Kottwitz conjecture where, in particular, they do not consider the $W_{E_{\mu}}$-action.

We now describe our results in more detail. One considers triples $(\G, b, \mu)$ such that $\G$ is a connected reductive group over $\Q_p$ and $\mu$ is a minuscule cocharacter of $\G$ and $b$ is an element of the Kottwitz set $\mb{B}(\Q_p, \G, -\mu)$. Then Rapoport--Zink attach to triples $(\G, b, \mu)$ of \emph{PEL}-type a tower of rigid spaces $\mc{M}_{K_p}$ indexed by compact open subgroups $K_p \subset \G(\Q_p)$. 

Attached to the group $\G$ and the element $b$ is a connected reductive group $\J_b$ that is an inner form of a Levi subgroup of $\G$. The element $b$ is said to be \emph{basic} when $\J_b$ is in fact an inner form of $\G$. The tower $ (\mc{M}_{K_p})_{K_p \subset \G(\Q_p)} $ carries an action of $ \G(\Q_p) \times \J_b(\Q_p) \times W_{E_{\mu}} $ where $E_{\mu}$ is the field of definition of the conjugacy class of $\mu$. For each $i \geq 0$ one can take the compactly supported $\ell$-adic cohomology $H^i_c(\mc{M}_{K_p}, \ov{\Q}_{\ell})$ of $\mc{M}_{K_p}$ and hence consider the cohomology space 
\[
H^{i,j} (\G, b, \mu)[\rho] := \mathop{\mathrm{lim}}_{\overrightarrow{K_p}} \Ext^j_{J_b(\Q_p)}( H_c^{i}(\mathcal{M}_{K_p}, \overline{\Q}_{\ell}), \rho).
\]

Then the Kottwitz conjecture describes the homomorphism of Grothendieck groups $\Mant_{\G, b, \mu}: \Groth(\J_b(\Q_p)) \to \Groth(G(\Q_p) \times W_{E_{\mu}})$ given by
\[
\Mant_{\G, b, \mu} (\rho) := \sum_{i,j} (-1)^{i+j} H^{i,j} (\G, b, \mu)[\rho](- \dim \mc{M}^{\an}),
\]
in the case when $b$ is basic and $\rho$ is an irreducible admissible representation of $\J_b(\Q_p)$ with supercuspidal $L$-parameter. This means that under the local Langlands correspondence, the $L$-parameter $\psi_{\rho}: W_{\Q_p} \times \SL_2(\C) \to \LL\J_b$ is trivial when restricted to the $\SL_2(\C)$ factor and $\psi_{\rho}$ does not factor through a proper Levi subgroup of $\LL\J_b$.  

The Kottwitz conjecture states that
\begin{conjecture}{\label{Kottwitzconj}}
For irreducible admissible representations $\rho$ of $\J_b(\Q_p)$ with supercuspidal $L$-parameter, we have the following equality in $\Groth(\G(\Q_p) \times W_{E_{\mu}})$:
\begin{equation*}
  \mathrm{Mant}_{\G, b, \mu}( \rho )=
 \sum\limits_{\pi \in \Pi_{\psi_{\rho}}(\G)} [\pi][ \Hom_{\ov{\mathcal{S}}_{\psi_{\rho}}}(\iota_{\mf{w}}(\rho) \otimes \iota_{\mf{w}}(\pi)^{\vee}, r_{-\mu} \circ \psi_{\rho}) \otimes | \cdot |^{-\langle \rho_{\G}, \mu \rangle}],   
\end{equation*}
where $\Pi_{\psi_{\rho}}(\G)$ is the $L$-packet of irreducible admissible representations of $\G(\Q_p)$ attached to $\psi_{\rho}$. 
\end{conjecture}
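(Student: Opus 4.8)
\emph{Proof strategy.} I would prove Conjecture~\ref{Kottwitzconj} by a global argument: realize the local datum $(\G,b,\mu)$ with $b$ basic inside a global Shimura datum of PEL type whose basic locus is $p$-adically uniformized by the tower $(\mc{M}_{K_p})_{K_p}$, compute the $\ell$-adic cohomology of the Shimura variety in two independent ways, and compare. Two ingredients must be in place first. The first is a local Langlands correspondence for the unramified unitary similitude groups $\G$ in an odd number of variables, equipped with the internal parametrization by characters of the relevant component group $\ov{\mc{S}}_{\psi}$ and with the full list of (twisted) endoscopic character identities, extending the Kaletha--Minguez--Shin--White classification of $\U_{E/F}(n)$. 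Since $\G=\mathrm{GU}$ is built from $\U_{E/F}(n)$ and a torus of similitudes by a central, isogeny-type construction, I would descend the KMSW packets and their internal structure from $\U_{E/F}(n)$ to $\G$, define the packets $\Pi_{\psi}(\G)$ and their pairing with $\ov{\mc{S}}_{\psi}$, and verify stability of packet characters, the endoscopic character identities for the elliptic endoscopic groups of $\G$, compatibility with parabolic induction (so that the correspondence on $\G$ is pinned down by those on its Levi subgroups), and compatibility with Jacquet--Langlands transfer to the inner forms $\J_{b'}$; the odd-rank hypothesis is used throughout to keep the endoscopy of $\U_{E/F}(n)$ and the component groups $\ov{\mc{S}}_{\psi}$ as simple as possible.

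The second ingredient is a Mantovan-type averaging formula. Concretely, I would choose a unitary similitude group $\mb{G}$ over $\Q$ with $\mb{G}_{\Q_p}\cong\G$ admitting a Shimura datum $(\mb{G},\mb{X})$ of PEL type with cocharacter $\mu$ at $p$, good reduction at $p$, and basic locus Rapoport--Zink uniformized by $(\mc{M}_{K_p})_{K_p}$, and globalize $\rho$ to an automorphic representation $\pi$ of $\mb{G}(\A)$ that is cohomological at $\infty$ and whose component at $p$ corresponds to $\rho$ under Jacquet--Langlands. Supercuspidality of $\psi_\rho$ guarantees that $\pi$ is of cuspidal type with controlled global multiplicity and controlled endoscopic contributions. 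On one hand, the stabilized Langlands--Kottwitz formula, together with the first ingredient and its global counterpart (Galois representations attached to $\mathrm{GU}(n)$-type Shimura varieties), identifies the $\pi_f^p$-isotypic piece of $\sum_i(-1)^iH^i$ of the Shimura variety with
\[
\sum_{\pi_p\in\Pi_{\psi_\rho}(\G)}[\pi_p]\otimes[\Hom_{\ov{\mc{S}}_{\psi_\rho}}(\iota_{\mf{w}}(\rho)\otimes\iota_{\mf{w}}(\pi_p)^{\vee},\,r_{-\mu}\circ\psi_\rho)\otimes|\cdot|^{-\langle\rho_{\G},\mu\rangle}]
\]
as a $\G(\Q_p)\times W_{E_{\mu}}$-module, the Weil group acting precisely through $r_{-\mu}\circ\psi_\rho$. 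On the other hand, Rapoport--Zink uniformization of the basic locus together with Mantovan's description of the remaining Newton strata writes the same isotypic piece as $\sum_{b'}\Mant_{\G,b',\mu}(\Red_{b'}(\rho))$, where $\Red_{b'}$ is the appropriate composition of a normalized Jacquet module with Jacquet--Langlands transfers.

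The conclusion is then a collapse argument exploiting supercuspidality. For $b'$ non-basic, $\J_{b'}$ is an inner form of a proper Levi subgroup, so $\Red_{b'}$ factors through a proper normalized Jacquet module applied to a representation with supercuspidal $L$-parameter, and hence vanishes; only the basic index survives, so the geometric side collapses to $\Mant_{\G,b,\mu}(\rho)$. Comparing with the automorphic side yields Conjecture~\ref{Kottwitzconj} for this $\rho$ as an identity of $\G(\Q_p)\times W_{E_{\mu}}$-modules, including the $W_{E_{\mu}}$-equivariance because both cohomology computations carry matching Galois actions. Finally, since every $\rho$ with supercuspidal $L$-parameter arises as the $p$-component of such a global $\pi$, the statement follows for all $\rho$ of the required form.

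\emph{The main obstacle} is the averaging formula together with the exact matching of the two cohomology computations: one must arrange the globalization so that no stray endoscopic or non-tempered automorphic contributions appear, reconcile all the normalizations (transfer factors, the twist $|\cdot|^{-\langle\rho_{\G},\mu\rangle}$, the $\dim\mc{M}^{\an}$-shift built into $\Mant$, and the half-Tate-twists in Shimura cohomology versus in Rapoport--Zink cohomology), and use the full strength of the endoscopic character identities discussed above. Relative to the $\U_{E/F}(n)$ case already treated in the literature, the genuinely new work is the passage from $\U(n)$ to $\mathrm{GU}(n)$ at every stage --- local $L$-packets, the global endoscopic classification, and the cohomology of the associated Shimura varieties --- while keeping the similitude and centre bookkeeping compatible with the $W_{E_{\mu}}$-action throughout.
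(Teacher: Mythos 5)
Your strategy coincides with the paper's: the local Langlands correspondence for $\GU$ is obtained by descent from $\U\times Z(\GU)$ using the oddness of $n$, the local statement is globalized to a PEL Shimura variety and proved by comparing the stabilized Langlands--Kottwitz computation with the Mantovan/Igusa-variety computation, and the non-basic terms drop out because the elliptic endoscopic groups of $\GU$ are not endoscopic groups of any proper Levi subgroup. The one step you leave implicit --- passing from the packet-level geometric identity to $\Mant_{\G,b,\mu}(\rho)$ for a single $\rho$ --- is handled in the paper by establishing the averaging formula for \emph{every} elliptic endoscopic datum $(\mH,s,\Leta)$, so that the left-hand side involves $\Red^{\mc{H}^{\mf{e}}}_b(S\Theta_{\psi^{\mH}})=\sum_{\pi_{\J_b}}\langle\pi_{\J_b},\eta(s)\rangle\,\pi_{\J_b}$, and then inverting over $s\in\ov{\mathcal{S}}_{\psi}$ by character orthogonality.
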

We have not defined all the notation appearing in this conjecture, but this is described in detail in \S 5.

The main goal of this paper is to prove Conjecture \ref{Kottwitzconj} when $\G = \GU$ is an unramified unitary similitude group over $\Q_p$ in an odd number of variables and the datum $(\GU, b, \mu)$ is basic and of PEL-type. Of course, to make sense of the Kottwitz conjecture for $\GU$, one needs to establish the local Langlands correspondence for this group and show it satisfies an expected list of desiderata. In particular, one needs to check that the $L$-packet $\Pi_{\psi_{\rho}}$ has the expected structure determined by finite group $\ov{\mc{S}}_{\psi_{\rho}}$ related to the centralizer group of $\psi_{\rho}$ in $\widehat{\J_b}$ and satisfies the \emph{endoscopic character identities}. 

Prior to this work, such a local Langlands correspondence was known for unitary groups by the works \cite[Theorem 2.5.1, Theorem 3.2.1]{C.P.Mok} and \cite[Theorem 1.6.1]{KMSW}. These authors work with the \emph{arithmetic normalization} of the local Langlands correspondence whereby the Artin map is normalized so that uniformizers correspond to arithmetic Frobenius morphisms. However, it is more convenient for us to work with the opposite normalization. In Theorem \ref{itm: local} we use Kaletha's results in \cite{KalContra} on the compatibility of local Langlands correspondence and the contragredient to define a local Langlands correspondence for unitary groups under the \emph{geometric normalization} whereby the Artin map takes uniformizers to geometric Frobenius morphisms.

We next construct a local Langlands correspondence for our groups $\GU$ by lifting the result for unitary groups to the group $\U \times Z(\GU)$ and then descending it to $\GU$. We can carry out such an analysis because the map $\U \times Z(\GU) \to \GU$ is a surjection on $\Q_p$ points for odd unitary groups. This property fails in the even case and is in fact the main reason we consider odd unitary similitude groups. We get
\begin{theorem}[Theorem \ref{itm: local}, Theorem \ref{localECIpsi+}, \S \ref{ECIforGU}]
The local Langlands correspondence for odd unitary similitude groups is known and satisfies the properties of \cite[Theorem 1.6.1]{KMSW}, in particular, the endoscopic character identities.
\end{theorem}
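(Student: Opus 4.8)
The plan is to build the correspondence in three stages: first transport the known local Langlands correspondence for the odd unitary group $\U=\U_{E/F}(n)$ and its inner twists from the arithmetic normalization used in \cite{C.P.Mok}, \cite{KMSW} to the geometric one; next form the external product with local class field theory for the center to get a correspondence for $\U\times Z(\GU)$; and finally descend along the isogeny $\U\times Z(\GU)\to\GU$, which is surjective on $\Qp$-points precisely because $n$ is odd.

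\emph{Stage 1 (changing the normalization for $\U$).} For $\U$ and its pure inner twists the local Langlands correspondence, the internal parametrization of $L$-packets by characters of the component groups, and the endoscopic character identities are provided by \cite[Theorem 2.5.1, Theorem 3.2.1]{C.P.Mok} and \cite[Theorem 1.6.1]{KMSW}, but with uniformizers mapping to arithmetic Frobenii. To switch to the geometric normalization I would invoke Kaletha's comparison of the local Langlands correspondence with the contragredient in \cite{KalContra}: composing the arithmetically normalized correspondence with $\pi\mapsto\pi^\vee$ on the representation side and with the Chevalley-involution twist on the parameter side yields a geometrically normalized correspondence, and the results of \cite{KalContra} state precisely that this operation preserves the internal structure of the packets and the endoscopic character identities, the latter because those identities are symmetric under simultaneous contragredient on a group and on its endoscopic data. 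One then records that every item of the list \cite[Theorem 1.6.1]{KMSW} is stated in a way invariant under this operation, so it survives.

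\emph{Stage 2 (from $\U$ to $\U\times Z(\GU)$ to $\GU$).} The center $Z(\GU)$ is isomorphic to $\Res_{E/F}\Gm$, whose local Langlands correspondence is class field theory, so the external product with Stage 1 gives the correspondence, packets, component groups, and endoscopic character identities for $\U\times Z(\GU)$, all factoring through the two factors. For the descent the structural input, valid because $n$ is odd, is that the central isogeny $f\colon \U\times Z(\GU)\to\GU$, $(h,z)\mapsto hz$ --- whose kernel is the anti-diagonally embedded $\U_{E/F}(1)$ --- is surjective on $\Qp$-points: the similitude character of $\GU(\Qp)$ has image $N_{E/F}(E^\times)$ when $n$ is odd (from $N_{E/F}(\det g)=\lambda^n$ for $g$ of similitude $\lambda$ together with $[F^\times:N_{E/F}(E^\times)]=2$), and this image is already realized by the scalar matrices in $Z(\GU)(\Qp)$. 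Surjectivity identifies $\Irr(\GU(\Qp))$ with the irreducible representations of $\U(\Qp)\times Z(\GU)(\Qp)$ trivial on $\U_{E/F}(1)(\Qp)$, and dually identifies the supercuspidal (resp. tempered) $L$-parameters of $\GU$ with the parameters of $\U\times Z(\GU)$ whose two components have matching restriction to the subtorus dual to $\U_{E/F}(1)$; the same holds compatibly for all inner twists of $\GU$. The $L$-packet of $\GU$ attached to $\psi$ is then the set of irreducible constituents of the restrictions along $f$ of the members of the packet of $\U\times Z(\GU)$ over $\psi$, and $\ov{\mc{S}}_\psi(\GU)$ is computed from $\ov{\mc{S}}_\psi(\U\times Z(\GU))$ via the exact sequence of component groups dual to $f$; since $f$ is surjective on points this packet is the descent of a single packet of $\U\times Z(\GU)$, not a union of character twists.

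\emph{Stage 3 (the endoscopic character identities for $\GU$ --- the main obstacle).} The technical heart is to deduce the endoscopic character identities for $\GU$ from those for $\U\times Z(\GU)$. One must show that Langlands--Shelstad transfer factors and the matching of stable and $\kappa$-orbital integrals are compatible with pullback along $f$ --- transfer factors change only by an explicit character of the relevant maximal torus, stable orbital integrals at regular semisimple elements are unaffected by a central isogeny, and the Whittaker datum $\mf{w}$ used to rigidify the parametrization must be checked to be compatible on the two sides of $f$. The delicate bookkeeping is to match the way a packet of $\U\times Z(\GU)$ breaks up under restriction along $f$ --- governed by $\U_{E/F}(1)(\Qp)$ modulo its intersection with the relevant stabilizers --- with the characters of $\ov{\mc{S}}_\psi(\GU)$, so that the identities for $\GU$ come out with the right signs and multiplicities; this is once more where the oddness of $n$ is used, to keep the contributions of the relevant $2$-groups under control. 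Combining Stages 1--3 yields a local Langlands correspondence for odd unitary similitude groups with all the properties of \cite[Theorem 1.6.1]{KMSW}, in particular the endoscopic character identities.
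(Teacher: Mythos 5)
Your proposal is correct and follows essentially the same route as the paper: pass from the arithmetic to the geometric normalization for $\U$ via Kaletha's contragredient results, lift to $\U\times\Res_{E/\Q_p}\Gm$, and descend to $\GU$ using the surjectivity of $\U(\Q_p)\times E_p^{\times}\to\GU(\Q_p)$ for odd $n$, with the endoscopic character identities reduced to the unitary case through explicit transfer-factor comparisons along the product decomposition and the central isogeny. The paper's Lemmas \ref{itm : product of transfer factor}, \ref{itm : Compare U and GU} and \ref{itm : transfer factors for U and GU} carry out exactly the transfer-factor bookkeeping you flag as the main obstacle in Stage 3.
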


With the local Langlands correspondence in hand, we can describe our proof of Conjecture \ref{Kottwitzconj} for the groups we consider. Our method of proof is similar to that of \cite{Shi1} and crucially uses the \emph{endoscopic averaging formulas} of \cite{BM2}. We briefly describe these formulas. Suppose that $\mc{H}^{\mf{e}} = (\mH, s, \Leta)$ is an elliptic endoscopic datum for $\GU$. Then there exists a complicated map
\begin{equation*}
    \Red^{\mc{H}^{\mf{e}}}_b: \Groth^{st}(\mH(\Q_p)) \to \Groth(J_b(\Q_p)),
\end{equation*}
whose precise definition is given in \S5.2. We remark that $\Groth^{st}(\mH(\Q_p))$ denotes the subgroup of $\Groth(\mH(\Q_p))$ with stable virtual character. Associated to each tempered $L$-parameter $\psi^{\mH}$ of $\mH$, we have a stable character denoted by $S\Theta_{\psi^{\mH}}$. Suppose that $\psi$ is an $L$-parameter of $\GU$ with parameter $\psi^{\mH}$ of $\mH$ such that $\psi = \Leta \circ \psi^{\mH}$. Then the endoscopic averaging formula is the following identity in $\Groth(\GU(\Q_p) \times W_{E_{\mu}}$:
\begin{equation}{\label{EAF}}
\sum\limits_{b \in \mb{B}(\Q_p, \GU, - \mu)} \Mant_{\GU,b, \mu}(\Red^{\mc{H}^{\mf{e}}_b}(S\Theta_{\psi^{\mH}}))=
\end{equation}
\begin{equation*}
 \sum\limits_{\rho} \sum\limits_{\pi_p \in
 \Pi_{\psi}(\GU, \varrho)} \langle \pi_p, \eta(s) \rangle \frac{\tr( \eta(s) \mid V_\rho)}{\dim \rho} \pi_p \boxtimes [\rho \otimes | \cdot |^{-\langle \rho_{\GU}, \mu \rangle}], 
\end{equation*}
where the first sum on the right-hand side is over irreducible factors of the representation $r_{- \mu} \circ \psi$ and $V_{\rho}$ is the $\rho$-isotypic part of $r_{- \mu} \circ \psi$. The averaging formula is derived in \cite{BM2} under a substantial list of assumptions. In this paper, we verify these assumptions for supercuspidal parameters and hence prove:
\begin{theorem}
For supercuspidal parameters $\psi$ of $\GU$, the endoscopic averaging formulas hold. 
\end{theorem}

 For the sake of completeness, we briefly recall the strategy of the proof of this result as well as explain the important assumptions. The proof is via global methods. Thus we consider a global unitary similitude group $ \mb{GU} $ defined over $\Q$ and a Shimura variety $ \Sh $ attached to $ \mb{GU} $ which ``globalizes" our Rapoport--Zink space. In particular, we have $ \mb{GU}_{\Q_p} = \GU $. We deduce the averaging formula by combining the Mantovan formula (\cite[Theorem 22]{Man2}, \cite[Theorem 6.26]{LS2018})
\begin{equation} \phantomsection \label{itm : Mantovan's equality} 
 H^*_c (\Sh, \mathcal{L}_{\xi}) = \sum\limits_{b \in \mb{B}(\Q_p, \GU, - \mu)} \Mant_{\GU,b, \mu} (H_c^*(\Ig_b, \mathcal{L}_{\xi}))   
\end{equation}
and the trace formulas for Shimura and Igusa varieties (\cite[Theorem 7.2]{kot7}, \cite[Theorem 13.1]{Shi4}, \cite[Theorem 7.2]{Shi3}). We denote respectively by $ H^*_c (\Sh, \mathcal{L}_{\xi}) $ and $ H_c^*(\Ig_b, \mathcal{L}_{\xi}) $ the alternating sums of the compactly supported cohomology of Shimura and Igusa varieties evaluated at the $\ell$-adic sheaf $ \mathcal{L}_{\xi} $ associated to some irreducible algebraic representation $\xi$ of $\mb{GU}$.

To carry out this approach, we need to define global $A$-parameters of $\mb{GU}$ without referring to the conjectural global Langlands group. We do so by adapting Arthur's approach (also used in \cite{C.P.Mok} and \cite{KMSW}) where global parameters correspond to self-dual formal sums of cuspidal automorphic representations of $\GL_n$. For us, a parameter $\mb{\psi}$ of $\mb{GU}$ consists of a pair $(\dot{\psi}, \chi)$ such that $\dot{\psi}$ is a global parameter of $\mb{U}^*$ in the sense of \cite{C.P.Mok} and $\chi$ is an automorphic character of $Z(\mb{GU})(\A)$. We attach global $A$-packets to these parameters in the generic case and prove they satisfy the \emph{global multiplicity formula} (Proposition \ref{global multiplicity formula}).

One important step in the proof of the averaging formula is the process of stabilisation and destabilisation of the trace formula for the cohomology of Shimura and Igusa varieties following \cite{kot7} and \cite{Shi3}. The goal is to relate both sides of the equality (\ref{itm : Mantovan's equality}) to the \emph{global multiplicity formula}. In order to achieve this, we need to prove a technical hypothesis concerning stable orbital integrals. More precisely, let $\mH$ be an endoscopic group of $\GU$ and $f^{\mH}$ a test function satisfying some local ``cuspidality'' conditions. We want to show that $ST^{\mH}_{\el} (f^{\mH}) = ST^{\mH}_{\disc} (f^{\mH}) $ where $ST^{\mH}_{\el} (f^{\mH})$ is a sum of stable orbital integrals of $\mH$ with respect to $f^{\mH}$ and $ST^{\mH}_{\disc} (f^{\mH})$ is, loosely speaking, the traces of all automorphic representations of $ \mH(\A) $ evaluated against $f^{\mH}$. This hypothesis is proven in Section \S \ref{STell=STdisc}.  

Once we have done the destabilisation step, we can put everything into Equation (\ref{itm : Mantovan's equality}) and derive the averaging formula. However, at this point the equality (\ref{itm : Mantovan's equality}) is still quite complicated and we need to solve a lifting problem in order to extract the desired information. More precisely, for our choice of connected reductive group $\mb{GU}$ over $\Q$ such that $\mb{GU}_{\Q_p} = \GU$ and a cuspidal $L$-parameter $ \mb{\psi} $ of $\mb{GU}_{\Q_p}$, we need to construct global $L$-parameters $ \Dot{\mb{\psi}}$ lifting $\psi$ and satisfying a number of conditions. For instance, we need to precisely control the centralizer group of $\mb{\psi}$ in $\widehat{\mb{GU}}_{\Q_p}$. These lifting problems are studied in \cite{ArthurBook}, \cite{KMSW} and we adapt their arguments to the unitary similitude case (Section \S \ref{global liftings}). 

With the endoscopic averaging formula in hand, we prove the Kottwitz conjecture in \S6. To do so, we observe that $\Red^{\mc{H}^{\mf{e}}}_b(S\Theta_{\psi^{\mH}})=0$ whenever $b$ is non-basic and $\psi$ is supercuspidal. Hence, in this case, the only term on the left-hand side of the endoscopic averaging formula is the one for $b$ basic. We then combine the formulas for each elliptic $\mc{H}^{\mf{e}}$ to deduce the conjecture.

\subsection{Acknowledgements}
We would like to thank Tasho Kaletha for helpful discussions related to the arithmetic and geometric normalizations of the local Langlands correspondence. We are also grateful to Pascal Boyer, Laurent Fargues, and Sug Woo Shin for many helpful conversations regarding this paper. The first author was partially supported by NSF RTG grants DMS-1646385 and DMS-1840234.  The second author was supported by ERC Consolidator Grant 770936: NewtonStrat.

 \section{Automorphic representations}{\label{section1}}
\subsection{The groups}
Let $F$ be a field of characteristic $0$, $E$ a quadratic extension of $F$ and fix an algebraic closure $\ov{F}$. Let $J \in \GL_n(F)$ be the anti-diagonal matrix defined by $J=(J_{i,j})$ such that $J_{i,j} =(-1)^{i+1}\delta_{i, n+1-j}$. We now define quasi-split groups $\U^*_{E/F}(n)$ and $\GU^*_{E/F}(n)$ over $F$ as follows. We define $\U^*_{E/F}(n)(\ov{F}) = \GL_n(\ov{F})$ and $\GU^*_{E/F}(n)(\ov{F})=\GL_n(\ov{F}) \times \GL_1(\ov{F})$. Then we give $\GU^*_{E/F}(n)(\ov{F})$ an action of $\Gamma_F :=\Gal(\ov{F}/F)$ whereby $\sigma \in \Gal(\ov{F}/F)$ acts by
\begin{equation*}
    \sigma_{\GU} : \begin{cases} (g, c) \mapsto (\sigma(g), \sigma(c)) & \sigma \in \Gamma_E \\ (g, c) \mapsto (cJ \sigma(g)^{-t}J^{-1}, \sigma(c)) & \sigma \notin \Gamma_E \end{cases}.
\end{equation*}
We get an action of $\Gamma_F$ on $\U^*_{E/F}(n)(\ov{F})$ by restriction.

We also need to define slightly more general groups $\G(\U_1 \times \dots \times \U_k)^*$ defined by
\begin{equation*}
   \G(\U^*(n_1) \times ... \times \U^*(n_k)) := \left\{ (g_1, \cdots, g_k) \in \GU^*_1 \times \cdots \times \GU^*_k | c(g_1) = \cdots =c(g_k) \right\}.
\end{equation*}

In this paper, we only need to consider the case where $F$ is one of $\Q_v$ or $\Q$. We now fix for once and for all a prime $p$ and a quadratic imaginary extension $E/\Q$ that is inert at $p$. At each place $v$ of $\Q$ we get a rank two etale algebra $E_v$ over $\Q_v$. Since we will not change $E$, we can unambiguously use the notations $\U^*(n)$ and $\GU^*_n$ for the global groups we have defined and $\U^*_{\Q_v}(n)$ and $\GU^*_{\Q_v}(n)$ for the local groups (for $v$ that do not split over $E$). 

The global groups we consider in this paper will be inner forms of $\GU^*_n$ coming from hermitian forms. Namely, let $V$ be an $n$-dimensional $E$-vector space equipped with a hermitian form $\langle  \bullet , \bullet  \rangle$. Let $\GU(V)$, (resp. $\U(V) $) be the algebraic groups defined over $\Q$ by 
\[
\GU(V)(R) = \{ (g, c(g)) \in \GL(V \otimes_{\Q} R) \times \Gm(R) \mid \langle gx,gy \rangle  = c(g) \langle x, y \rangle, x,y \in  V \otimes_{\Q} R \}
\]
\[
\U(V)(R) = \{ g \in \GL(V \otimes_{\Q} R) \mid  \langle gx,gy \rangle  = \langle x, y \rangle , x,y \in  V \otimes_{\Q} R \}
\]
for any $ \Q $-algebra $R$.

In this paper we will assume that $n$ is an odd number and that the localization $\GU(V)_{\Q_v}$ at every finite place $v$ is quasi-split. Such groups exist and the quasi-split condition we impose at the finite places does not constrain the isomorphism class of the group at the archimedean place. Indeed we can define
\begin{equation*}
    I_{r,s} := \begin{pmatrix} I_r & 0 \\
    0 & -I_s \end{pmatrix},
\end{equation*}
where $I_r$ is the $r \times r$ identity matrix. Then for $V$ an $n$-dimensional $E$-vector space, 
\begin{equation*}
    \langle x , y \rangle := \sigma(x)^tI_{r,s}y,
\end{equation*}
for $r+s$ odd and $\sigma \in \Gamma_{E/\Q}$ the nontrivial element, gives a unitary similitude group of type $(r,s)$ at the archimedean place that is quasi-split at the finite places.

We need to verify that the groups $\GU(V)$ and $\U(V)$ arise as extended pure inner twists of $\GU^*(n)$ and $\U^*(n)$ respectively. Recall that a group $G$ over $F$ arises as an extended pure inner twist of $G^*$ if there exists a tuple $(\varrho, z)$ such that $\varrho: G^* \to G$ is an isomorphism over some finite extension $K/F$ and $z \in Z^1_{\bas}(\mc{E}_3(K/F), G^*(K))$ is such that for each $\sigma \in \Gamma_{K/F}$ and each $e \in \mc{E}_3(K/F)$ projecting to $\sigma$, we have
\begin{equation*}
    \varrho^{-1} \circ \sigma(\varrho) = \Int(z(e)).
\end{equation*}
The set $Z^1_{\bas}(\mc{E}_3(K/F), G^*(K))$ is defined as in \cite{Kot9}. In the case that $G^*$ has connected center, it is known by \cite[Proposition 10.4]{Kot9} that all inner twists of $G^*$ come from extended pure inner twists. In our case, we have $Z(\U^*(n))=\U(1)$ and $Z(\GU^*(n))=\Res_{E/\Q}\Gm$ so this is indeed the case. We can also consider extended pure inner twists for connected reductive groups over $F=\Q_v$. The definition is the same except for we have $z \in Z^1_{\bas}(\mc{E}_{\iso}(K/F), G^*(K))$ (where $\mc{E}_{\iso}(K/F)$ is the local gerb $\mc{E}(K/F)$ in \cite{Kot9}). As in \cite{Kot9}, we define:
\begin{equation*}
    \mb{B}(F, G) := \varprojlim\limits_K H^1_{\bas}(\mc{E}_3(K/F), G(K))
\end{equation*}
for $F$ a number field and
\begin{equation*}
    \mb{B}(F, G) :=  \varprojlim\limits_K H^1_{\bas}(\mc{E}_{\iso}(K/F), G(K)),
\end{equation*}
for $F$ a finite extension of $\Q_v$.





A maximal torus $T$ defined over $\Q_v$ of $\GU^*_{\Q_v}(n)$ and with maximal split rank is given by the diagonal subgroup. We have
\[
T(\Q_v) = \left\{ (t_1, \cdots, t_n) \in (E_v^{\times})^{n} |  \exists c \in \Q_v^{\times}, \forall i \in \{ 1, \cdots, n \}, t_{i} \sigma(t_{ n + 1 - i}) = c \right\}.
\]

The  maximal split subtorus of $T$ is isomorphic to $(\Q_v^{\times})^{\frac{n-1}{2}} \times \Q_v^{\times}$. The relative Weyl group is $W_{\re} = (\Z/2\Z)^{\frac{n-1}{2}} \rtimes S_{\frac{n-1}{2}}$ where $S_{\frac{n-1}{2}}$ is the permutation group of $\{ 1, \cdots , \frac{n-1}{2}\}$. The normalizer of $A$ inside $\GU^*(n)(\Q_v)$ is generated by $A$ and the following elements: 
\[
S_{i,j} = \left( \begin{array}{ccc}
I^{i,j}_{\frac{n-1}{2}} & & \\
& 1 & \\
& & I^{\frac{n+1}{2} - i,\frac{n+1}{2} - j}_{\frac{n-1}{2}}
\end{array} \right), \qquad A_k = I^{\frac{n+1}{2} - k, \frac{n+1}{2} + k}_n
\]
where $ 1 \leq i,j,k \leq \frac{n-1}{2} $ and $I_n^{i,j}$ is the matrix with $1$ in the positions $(i,j), (j,i)$ and $(k,k)$ for $k \neq i,j$ and $0$ elsewhere.  

A minimal parabolic subgroup of $\GU^*_{\Q_v}(n)$ is
\[
P_{\text{min}} = \left\{ \left( \begin{array}{ccccccc}
    t_1 & &&&& &*   \\
    & \ddots & & &  &  \\
     & & t_{\frac{n-1}{2}} & & &  \\
     & & & x \\
     & & & & c \sigma(t_{\frac{n-1}{2}}^{-1}) \\
     & & & & & \ddots \\
     0 & & & & & & c\sigma(t_1^{-1})
\end{array} \right) | t_i, x \in E_v^{\times}, x\sigma(x) = c \right\} \bigcap  \GU^*(n)(\Q_v).
\]

From the description of unitary similitude groups, we see that there is an embedding $ E_v^{\times} \hookrightarrow Z({\GU})(\Q_v) $ given by $ t \longmapsto \diag( t, \cdots, t ) $. The tuple $(P_{\min}, T, \{E_{i,i+1}\}_{1 \leq i \leq n-1})$ gives a $\Gamma_{\Q_v}$-stable splitting of $\U^*_{\Q_v}(n)$.

We can identify $\widehat{\GU^*_{\Q_v}(n)}$ with  $\GL_n(\C) \times \C^{\times}$ and $\widehat{\U^*_{\Q_v}(n)}$ with $\GL_n(\C)$. We fix the standard $F$-splittings of $\GL_n(\C) \times \C^{\times}$ and $\GL_n(\C)$ consisting of the $(\widehat{T}, \widehat{B}, \{E_{i, i+1}\}_{1 \leq i \leq n-1})$ where $\widehat{T}$ and $\widehat{B}$ are the diagonal subgroup and upper triangular subgroup respectively. The action of the Weil group $W_{\Q_v}$ on these dual groups factors through $\Gal(E_v/\Q_v)$ and the non-trivial element $\sigma$ of $W_{E_v/\Q_v}$ acts via 
\begin{equation*}
    \sigma((g,c)) = (J {g}^{-t} J^{-1}, c\det(g))
\end{equation*}
and
\begin{equation*}
  \sigma(g) = (J {g}^{-t} J)  
\end{equation*}
respectively (see \cite[pg 38]{Mo} for details).
 
A maximal torus defined over $\Q_v$ of $\G( \U_1 \times \cdots \times \U_k )^*_{\Q_v}$ with maximal split rank is given by
\begin{equation*}
T = \left\{ \begin{matrix}  ((t_{1,1}, \cdots, t_{1, n_1}), \cdots, (t_{k, 1}, \cdots, t_{k, n_k})) \\ \in (E_v^{\times})^{n_1 + \cdots n_k} \end{matrix} \biggm\vert \begin{matrix}
\exists c \in \Q_v^{\times}, \forall i \in \{ 1, \cdots, k \}, \forall j \in \{ 1, \cdots, n_i \} \\ t_{i,j} \sigma(t_{i, n_i + 1 - j}) = c \end{matrix} \right\}.
\end{equation*}

If we denote $I$, resp. $J$ the set of indexes $i$ such that $ n_i $ is odd, resp. even, then a maximal split sub-torus of $T$ is isomorphic to
\[
\displaystyle A = \Q_p^{\times} \times (\prod_{i \in I} (\Q_p^{\times})^{\frac{n_i - 1}{2}}) \times (\prod_{j \in J} (\Q_p^{\times})^{\frac{n_j}{2}}).
\]
And the relative Weyl group is 
\begin{equation*}
    W_{\re} = \displaystyle (\prod_{i \in I} (\Z/2\Z)^{\frac{n_i-1}{2}} \rtimes S_{\frac{n_i-1}{2}}) \times (\prod_{j \in J} (\Z/2\Z)^{\frac{n_j}{2}} \rtimes S_{\frac{n_j}{2}}).
\end{equation*}
 
\begin{lemma} \phantomsection \label{itm : GU = ZxU}
We have the equality $ \G( \U^*(n_1) \times \cdots \times \U^*(n_k) )(\Q_v) = (\U^*(n_1) \times \cdots \times \U^*(n_k))(\Q_v) E_v^{\times}$ where $E^{\times}_v$ embeds into $\G(\U^*(n_1) \times ... \times \U^*(n_k))^*$ via the diagonal embedding. 
\end{lemma}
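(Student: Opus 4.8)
The plan is to prove the statement by reducing everything to a question about similitude factors, using the fact that $n$ (and hence each $n_i$ associated to an odd unitary factor, at least one of which must appear) gives enough parity freedom over a local field. First, recall that membership of $(g_1,\dots,g_k) \in \GU^*_1 \times \cdots \times \GU^*_k$ in $\G(\U^*(n_1)\times\cdots\times\U^*(n_k))(\Q_v)$ is exactly the condition that the similitude factors $c(g_i) \in \Q_v^\times$ all coincide, say equal to $c$. On the other side, an element of $(\U^*(n_1)\times\cdots\times\U^*(n_k))(\Q_v)\,E_v^\times$ is of the form $(u_1 t,\dots,u_k t)$ with $u_i \in \U^*(n_i)(\Q_v)$ and $t \in E_v^\times$ embedded diagonally; its similitude factor in each component is $c(u_i t) = t\sigma(t) = \Norm_{E_v/\Q_v}(t)$. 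So the inclusion ``$\supseteq$'' is clear, and the content of the lemma is the reverse inclusion: given $(g_1,\dots,g_k)$ with common similitude factor $c \in \Q_v^\times$, I must write it as $(u_1 t,\dots,u_k t)$ for a single $t$.

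The key reduction is therefore: \emph{given $c \in \Q_v^\times$ that is a similitude factor of $\GU^*_{\Q_v}(n_i)$ for each $i$, find $t \in E_v^\times$ with $\Norm_{E_v/\Q_v}(t) = c$.} Once such $t$ is found, each $g_i t^{-1}$ has trivial similitude factor, hence lies in $\U^*(n_i)(\Q_v)$, and we are done. So the heart of the matter is a norm-surjectivity statement. Here the parity of $n_i$ enters: for $\GU^*_{E_v/\Q_v}(n_i)$, the image of the similitude map $c \colon \GU^*(n_i)(\Q_v) \to \Q_v^\times$ is all of $\Q_v^\times$ when $n_i$ is odd, while it is $\Norm_{E_v/\Q_v}(E_v^\times)$ when $n_i$ is even. (This is standard: for $n_i$ odd one can scale by $E_v^\times$ via the diagonal embedding to adjust the similitude factor, since $\det$ on the unitary group surjects onto norm-one elements and an odd power of a scalar can realize any class modulo norms; for $n_i$ even the similitude factor of any element is forced to be a norm because $c(g)^{n_i} = \Norm_{E_v/\Q_v}(\det g)$ and $n_i$ even makes $c(g)^{n_i}$ a square, but more precisely one checks directly from the form.) In our situation $n = \sum n_i$ is odd, so at least one $n_i$ is odd; but actually I only need the common value $c$ to be realized, and the constraint from the even factors is that $c \in \Norm_{E_v/\Q_v}(E_v^\times)$, which is automatically consistent. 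Thus for the common similitude factor $c$: if some $n_i$ is odd, we still need $c \in \Norm(E_v^\times)$ to find the diagonal $t$ — but wait, this is guaranteed because if there is an even factor then $c$ is a norm, and if \emph{all} factors are odd we argue differently.

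The cleanest path: let $c$ be the common similitude factor. \textbf{Case 1:} $c \in \Norm_{E_v/\Q_v}(E_v^\times)$. Pick $t$ with $\Norm(t) = c$; then $g_i t^{-1} \in \U^*(n_i)(\Q_v)$ for all $i$ and $(g_1,\dots,g_k) = (g_1 t^{-1},\dots,g_k t^{-1})\cdot t$, done. \textbf{Case 2:} $c \notin \Norm_{E_v/\Q_v}(E_v^\times)$. Then by the even-case computation above no $n_i$ can be even (an even factor forces $c$ to be a norm), so every $n_i$ is odd. Now I use that for odd $n_i$ there is an element $w_i \in \U^*(n_i)(\Q_v)$ — or rather, I modify by the diagonal $E_v^\times \subset \GU^*(n_i)$: the point is that $\GU^*(n_i)(\Q_v)/\U^*(n_i)(\Q_v)\,E_v^\times$ injects into $\Q_v^\times/\Norm(E_v^\times)$ via $c$, and I claim this map is zero, i.e. any $g_i$ can be written $u_i t_i$ with $u_i$ unitary and $t_i$ a scalar in $E_v^\times$ — no, this is exactly the $k=1$ case of the lemma. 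So instead I observe: scaling $g_i$ by a scalar $\lambda \in \Q_v^\times$ (embedded diagonally, $\lambda \in \Q_v^\times \subset E_v^\times$) changes $c$ to $\lambda^2 c$; and more usefully, one shows directly using the explicit form $J$ that for odd $n_i$ the similitude map $c\colon \GU^*(n_i)(\Q_v)\to\Q_v^\times$ is surjective, so there is $h_i \in \GU^*(n_i)(\Q_v)$ with $c(h_i) = c$ and $h_i$ of the ``standard'' shape making $h_i = (\text{unitary}) \cdot (\text{diagonal scalar in } E_v^\times)$ visibly. The main obstacle — and the step requiring care — is precisely this odd-$n_i$ surjectivity/structure statement and showing it forces the diagonal scalar to be chosen \emph{uniformly} across $i$; concretely one must check that the coset of $c$ modulo $\Norm(E_v^\times)$ can always be hit by the same $t$ in all factors, which works because the obstruction in each odd factor lives in the \emph{same} group $\Q_v^\times/\Norm_{E_v/\Q_v}(E_v^\times)$ and the surjectivity says the relevant map is onto, hence a single $t$ adjusting all the factors simultaneously exists. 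I would write this out by first handling $k=1$ (the structure of $\GU^*(n)(\Q_v)$ for $n$ odd, via the explicit torus and the computation $c(g)^n = \Norm(\det g)$ together with odd $n$), then bootstrapping to general $k$ by choosing $t$ once using the common $c$.
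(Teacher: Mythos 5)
Your overall strategy --- reduce the lemma to the statement that the common similitude factor $c$ of $(g_1,\dots,g_k)$ lies in $\Norm_{E_v/\Q_v}(E_v^\times)$, and then divide by a diagonal $t\in E_v^\times$ with $t\sigma(t)=c$ --- is exactly the paper's strategy. But the parity dichotomy you invoke is reversed, and this breaks the argument. For the quasi-split group $\GU^*_{E_v/\Q_v}(m)$ the image of the similitude character is $\Norm_{E_v/\Q_v}(E_v^\times)$ when $m$ is \emph{odd}, and all of $\Q_v^\times$ when $m$ is \emph{even}. One sees this from the diagonal torus: the defining condition $t_i\sigma(t_{m+1-i})=c$ has a middle entry $t_{(m+1)/2}\sigma(t_{(m+1)/2})=c$ exactly when $m$ is odd, forcing $c$ to be a norm (equivalently: $c(g)^m=\Norm_{E_v/\Q_v}(\det g)$ and $c(g)^2=\Norm_{E_v/\Q_v}(c(g))$ are both norms, and $\gcd(m,2)=1$ for $m$ odd); when $m$ is even, elements such as $\diag(a,1,\dots,1)$ realize every $a\in\Q_v^\times$, and the relation $c(g)^m\in\Norm_{E_v/\Q_v}(E_v^\times)$ is vacuous. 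Your justification for the opposite claim ("an odd power of a scalar can realize any class modulo norms") cannot work: scaling by the diagonal copy of $E_v^\times$ only ever produces norms, which is precisely why the odd case is the \emph{constrained} one.

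With the corrected dichotomy your Case 1 is the whole proof: since $n=\sum n_i$ is odd, some $n_i$ is odd, so the common similitude factor is automatically a norm, one picks $t$ with $t\sigma(t)=c$, and each $g_it^{-1}$ is unitary. Your Case 2 is then vacuous --- and it had better be, because as written it is irreparable by the means you propose: if $c\notin\Norm_{E_v/\Q_v}(E_v^\times)$ there is \emph{no} $t\in E_v^\times$ with $t\sigma(t)=c$, so no "uniform $t$" can exist and the asserted decomposition would simply be false (as it is, e.g., when every $n_i$ is even). You half-notice the circularity when your argument collapses back to "exactly the $k=1$ case of the lemma". For comparison, the paper pins down the image of $c$ via the Bruhat decomposition ($c$ is trivial on Weyl representatives and on the unipotent radical, so $c(\GU^*(n)(\Q_v))=c(T)$, and the odd middle torus entry gives $c(T)=\{x\sigma(x)\mid x\in E_v^\times\}$); your determinant relation gives an equally serviceable route once the parities are put right.
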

\begin{proof}
For simplicity, we prove the equality when $k = 1$. The general case follows by the same argument.

We just need to show that $c(E_v^{\times}) = c(\GU^*(n)(\Q_v))$. Because $\GU^*(n)(\Q_v)$ is quasi-split, we have the Bruhat decomposition $ \displaystyle \GU^*(n)(\Q_v) = \coprod_{w \in W_{\text{re}}} P_{\text{min}} \cdot  w \cdot P_{\text{min}} $. We see that $ c( P_{\text{min}} \cdot  w \cdot P_{\text{min}} ) = c(P_{\text{min}} \cdot  w) $ and $c(w) = 1$ by the above description of the normalizer of  $A$. Hence, $c(\GU^*(n)(\Q_p)) = c(P_{\text{min}})$ and then $ c(\GU^*(n)(\Q_p)) = c(T)$ since $c(\U_{P_{\text{min}}}) = 1$ where $\U_{P_{\text{min}}}$ is the unipotent radical of ${P_{\text{min}}}$. By the assumption $n$ is odd and the description of $T$, we have $c(\GU^*(n)(\Q_v)) = \{ x\sigma(x) | x \in E_v^{\times} \}$. Moreover, by the above injection $ E_v^{\times} \hookrightarrow Z({\GU^*(n)})(\Q_v) $, we also see that $ c(E_v^{\times}) = \{ x\sigma(x) | x \in E_v^{\times} \}$. Therefore $c(E_v^{\times}) = c(\GU^*(n)(\Q_v))$.
\end{proof}

We now recall some facts from the theory of endoscopy.
\begin{definition}{(cf. \cite[Definition 2.1]{BM2})}
    A \emph{refined endoscopic datum} for $G$ a connected reductive group over $F$ is a triple $(\mathrm{H},s,\eta)$ such that
    \begin{itemize}
        \item $\mH$ is a quasisplit reductive group over $F$.\\
        \item $s \in Z(\widehat{\mH})^{\Gamma_F}$\\
        \item $\eta: \widehat{\mH} \to \widehat{\G}$ such that the conjugacy class of $\eta$ is $\Gamma_F$-stable and $\eta(\widehat{\mH})=Z_{\widehat{G}}(\eta(s))^{\circ}$.
    \end{itemize}
    
    Suppose that $(\mH,s,\eta), (\mH', s', \eta')$ are refined endoscopic data. Then we say that an isomorphism $\alpha: \mH \to \mH'$ is an isomorphism of endoscopic data if $\widehat{\alpha}(s')=s$ and $\eta \circ \widehat{\alpha}$ and $\eta'$ are conjugate in $\widehat{\G}$.
    
    We say that a refined endoscopic datum $(\mH,s,\eta)$ is \emph{elliptic} if $(Z(\widehat{\mH})^{\Gamma_F})^{\circ} \subset Z(\widehat{\G})$.
    
    We denote the set of isomorphism classes of refined endoscopic data of $\G$ by $\mc{E}^r(\G)$.
\end{definition}
We record a set of representatives for the isomorphism classes of refined elliptic endoscopic data for $\U^*(n_1) \times ... \times \U^*(n_k)$ and $\G(\U^*(n_1) \times ... \times \U^*(n_k))$. The description will be analogous in the local case. Compare with \cite[Proposition 2.3.1]{Mo} but note that we have more isomorphism classes because we consider refined endoscopic data. For each $i$, choose non-negative natural numbers $n^+_i$ and $n^-_i$ such that $n^+_i + n^-_i = n_i$. 

Then, in the unitary case, let $\mathrm{H}$ be the group $\U^*(n^+_1) \times \U^*(n^-_1) \times ... \times \U^*(n^+_k) \times \U^*(n^-_k)$, let $\eta$ be the block diagonal embedding of dual groups and let\\ $s= ( I_{n^+_1}, -I_{n^-_1}, ..., I_{n^+_k}, -I_{n^-_k})$. These elliptic endoscopic data are all non-isomorphic and give a representative of each elliptic isomorphism class. 

In the unitary similitude case we let $\mathrm{H}$ be $G(\U^*(n^+_1) \times \U^*(n^-_1) \times ... \times \U^*(n^+_k) \times \U^*(n^-_k))$, let $\eta$ be the block diagonal embedding of dual groups, and let $s= (I_{n^+_1}, -I_{n^-_1}, ..., I_{n^+_k}, -I_{n^-_k}, 1)$. We further require that $n^-_1+ ... + n^-_k$ is even.

In each case, we can extend $\eta$ to get a map $\Leta$ of $L$-groups. This is done explicitly in \cite[Proposition 2.3.2]{Mo} (c.f \cite[pg52]{KMSW}).



\subsection{Automorphic representations of unitary groups}
We start by considering a local field  ${\Q_v}$ for $v$ any place of $\Q$. The local Langlands group is defined by $\mc{L}_{\Q_v} : = W_{\R} $ if $v = \infty$ and by $ W_{\Q_p} \times \SU(2)$ if $v = p$ is a prime. We also set $ \prescript{L}{}{\G} = \widehat{\G} \rtimes W_{\Q_v} $ as a topological group where $ \widehat{\G} $ is the Langlands dual group of $\G$. In our case we see that $\prescript{L}{}{\U^*_{\Q_v}(n)} = \GL_n(\C) \ltimes W_{\Q_v} $ and the group $W_{E_v}$ acts trivially on $\GL_n(\C)$.
\begin{definition}
A local $ L $-parameter for a connected reductive group $ \G $ defined over $ \Q_v $ is a continuous morphism $ \phi: \mc{L}_{\Q_v} \longrightarrow \prescript{L}{}{\G}$ which commutes with the canonical projections of $\mc{L}_{\Q_v} $ and $ \prescript{L}{}{\G} $ to $ W_{\Q_v} $ and such that $ \phi $ sends semisimple elements to semisimple elements.
\end{definition}
We denote $\Phi(\G)$ the set of $\widehat{\G}$-conjugacy classes of $L$-parameters. An $L$-parameter $\phi$ is called bounded, resp. discrete, if its image in $\prescript{L}{}{\G}$ projects to a relatively compact subset of $\widehat{\G}$, resp. if its image is not contained in any proper parabolic subgroup of $\prescript{L}{}{\G}$. We denote by $\Phi_{\text{bdd}}(\G)$, resp. $\Phi_2(\G)$ the subsets of $\Phi(\G)$ consisting of bounded $L$-parameters (resp. discrete parameters).

For global classifications, we will also need the notion of a local Arthur parameter.
\begin{definition}
A local $A$-parameter for a connected reductive group $\G$ defined over $\Q_v$ is a continuous morphism $ \psi: \mc{L}_{\Q_v} \times \SU(2) \longrightarrow \prescript {L}{}{\G}$ such that the image of $ \psi|_{\mc{L}_{\Q_v}} $ is a bounded $L$ -parameter.
\end{definition}
We denote by $ \Psi(\G) $ the set of equivalence classes of $A$-parameters. We also denote the set $ \Psi^{+}(\G) $ of the equivalence classes of continuous morphisms $ \psi $ as above but where $ \psi|_{\mc{L}_{\Q_v}} $ is not necessarily bounded . An $A$-parameter $ \psi $ (or $ \psi \in \Psi^{+}(\G) $) is said to be generic if $ \psi|_{\SU(2)} $ is trivial.

We also have a ``base change'' morphism of $L$-groups (\cite[page 9]{C.P.Mok}):
\[
\eta_B : \prescript{L}{}{\U^*_{\Q_v}(n)} \longrightarrow  \prescript{L}{}{\Res_{E_v/\Q_v}(\GL_{n, E_v})},	
\]
which allows us to identify the $L$-parameters of $\U^*_{\Q_v}(n)$ with self-dual $L$-parameters of $\GL_{n, E_v}$. In particular, if $ \phi \in \Phi(\G) $ is an $L$-parameter then its image by the base change map is just $ \Phi_{|\mc{L}_{E_v}} $.

For each $A$-parameter $ \psi \in \Psi^{+}(\G) $ we define centralizer groups as below, which play an important role in the local and global theory:
\[
S_{\psi} := \text{Cent}(\text{Im}\psi, \widehat{\G}), \quad \overline{S}_{\psi} := S_{\psi} / Z(\widehat{\G})^{\Gamma}, \quad \overline{\mathcal{S}}_{\psi} := \pi_0 (\overline{S}_{\psi}),
\]  
\[
 S_{\psi}^{\text{rad}} := (S_{\psi} \cap \widehat{\G}_{\text{der}})^{\circ}, \quad S_{\psi}^{\natural} := S_{\psi} / S_{\psi}^{\text{rad}}.
\]

We also need to introduce some notation for representations. We denote the set of isomorphism classes of irreducible admissible representations of a connected reductive group $\G$ by $\Pi(\G)$. We denote the set of tempered, essentially square integrable, and unitary representations by $\Pi_{\temp}(\G)$, $\Pi_2(\G)$, and $\Pi_{\unit}(\G)$ respectively. We denote $\Pi_{\temp}(\G) \cap \Pi_{2}(\G)$ by $\Pi_{2, \temp}(\G)$.
 
The following theorem gives the local Langlands correspondence for extended pure inner twists of $\U^*_{\Q_v}(n)$. We first fix some more notation.



 
Fix an odd natural number $n$ and let $\U$ and $(\varrho, z)$ be an extended pure inner twist of $\U^*_{\Q_v}(n)$. Fix a $\Gamma_{\Q_v}$ invariant splitting of $\widehat{\U}$.  Then $(\varrho, z)$ induces a unique isomorphism $\widehat{\U} \cong \widehat{\U^*_{\Q_v}(n)}$ preserving the chosen $\Gamma_{\Q_v}$-splittings. The cocycle $z$ and the map
\begin{equation*}
    \mb{B}(\Q_v, \U) \to X^*(Z(\widehat{\U})^{\Gamma_{\Q_v}})
\end{equation*}
defines a character $\chi_z \in X^*(Z(\widehat{\U})^{\Gamma_{\Q_v}})$ by $z \mapsto \chi_z$.
Fix a non-trivial character $\varphi: \Q_v \to \C^{\times}$. Together with our chosen splitting of $\U^*_{\Q_v}(n)$, this gives a Whittaker datum $\mf{w}$ of $\U^*_{\Q_v}(n)$. Attached to each refined endoscopic datum $(\mathrm{H}, s, \eta)$ of $\U$ we have a canonical local transfer factor $\Delta[\mf{w}, \varrho, z]$ normalized as in \cite[\S4.1]{BM2}. These transfer factors correspond to the $\Delta_D$ factors of \cite[\S 5]{KS2}.  Since $\U$ is simply connected, we can extend $\eta$ to a map $\Leta$ of $L$-groups.

We stress that in this paper, we are using the \emph{geometric normalization} of the Langlands correspondence. This means that our Artin map is normalized to map a geometric Frobenius morphism to a uniformizer and explains why we normalize our transfer factors using the $\Delta_D$ normalization. This normalization is consistent with \cite{HT1} and \cite{BM2} but is the inverse of the normalization in \cite{KMSW}.

\begin{theorem}(\cite[Theorem. 1.6.1]{KMSW}, \cite[Theorem. 2.5.1, Theorem. 3.2.1]{C.P.Mok}) \phantomsection \label{itm: local}

Fix an odd natural number $n$ and let $\U$ and $(\varrho, z)$ be an extended pure inner twist of $\U^*_{\Q_v}(n)$. Fix a non-trivial character $\varphi: \Q_v \to \C^{\times}$. Together with our chosen splitting of $\U^*_{\Q_v}(n)$, this gives a Whittaker datum $\mf{w}$ of $\U^*_{\Q_v}(n)$. Then,
\begin{enumerate}
    \item For each generic $\psi \in \Psi(\U^*_{\Q_v}(n))$, there exists a finite set $\Pi_{\psi}(\U, \varrho)$ endowed with a morphism to $ \Pi_{\text {unit}} (\U)$. 
    Our choice of $\mf{w}$ defines a map
\[
\iota_{\mf{w}}: \Pi_{\psi} (\U, \varrho) \longrightarrow \Irr (S^{\natural}_{\psi}, \chi_z), \qquad \pi \mapsto \langle \pi, - \rangle,
\]
where $\Irr (S^{\natural}_{\psi}, \chi_z)$ denotes the set of irreducible representations of $S^{\natural}_{\psi}$ restricting on $Z(\widehat{U})^{\Gamma_{\Q_v}}$ to $\chi_z$. 
\item The morphism $ \Pi_{\psi}(\U, \varrho) \longrightarrow \Pi_{\text{unit}}(\U) $ is injective and its image is contained in $ \Pi_{\text{temp}}(\U) $. If $ \Q_v $ is non-Archimedean then the map $\Pi_{\psi} (\U, \varrho) \longrightarrow \Irr(S^{\natural}_{\psi})$ is a bijection.
\item For each $\pi \in \Pi_{\text{unit}}(\U)$ in the image of $\Pi_{\psi}(\U, \varrho)$, the central character $\omega_{\pi} :  Z(\U) \longrightarrow \C^{\times} $ has a Langlands parameter given by the composition
\[
\mc{L}_{\Q_v} \xrightarrow{\phi_{\psi}} \prescript{L}{}{\U} \xrightarrow{\det \rtimes \id} \C^{\times} \rtimes W_{{\Q_v}}.
\]
\item Let $(\mathrm{H}, s, \Leta)$ be a refined endoscopic datum and let $\psi^{\mH} \in \Psi(\mH)$ be a generic parameter such that $\Leta \circ \psi^{\mH} = \psi$. If $f^{\mH} \in \mathcal{H}(\mathrm{H})$ and $f \in \mathcal{H}(\U)$ are two $\Delta[\mf{w}, \varrho, z]$-matching functions then we have
\begin{equation*}
 \displaystyle \sum_{\pi^{\mH} \in \Pi_{\psi^{\mH}}(\mH)} \langle \pi^{\mH}, s_{\psi^{\mH}} \rangle\tr(\pi^{\mH} \mid f^{\mH})   = e(\U) \sum_{\pi \in \Pi_{\psi}(\U, \varrho)} \langle \pi, \Leta(s) \cdot s_{\psi}\rangle \tr(\pi \mid f),
 \end{equation*}
 where $e( \cdot)$ is the Kottwitz sign.
\item We have 
\begin{equation*}
    \Pi_{\text{temp}}(\U) = \coprod_{\psi \in \Phi_{\text{bdd}} (\U^*_{\Q_v}(n))} \Pi_{\psi} (\U, \varrho)
\end{equation*}
and
\begin{equation*}
    \Pi_{\text{2, temp}} (\U) = \coprod_{\psi \in \Phi_{\text{2, bdd}} (\U^*_{\Q_v}(n))} \Pi_{\psi} (\U, \varrho).
\end{equation*}
\end{enumerate}
\end{theorem}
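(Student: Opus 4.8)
The plan is to deduce the theorem from the local Langlands correspondence for $\U^*_{\Q_v}(n)$ as established in \cite[Theorem 1.6.1]{KMSW} and \cite[Theorem 2.5.1, Theorem 3.2.1]{C.P.Mok}, which prove statements (1)--(5) but with respect to the \emph{arithmetic} normalization of the Artin map (and, correspondingly, with transfer factors of the type opposite to the $\Delta_D$ factors of \cite[\S5]{KS2}). The key input for passing to the geometric normalization is Kaletha's analysis in \cite{KalContra} of the compatibility of the correspondence with the contragredient: switching normalizations amounts, on the spectral side, to the involution $\pi \mapsto \pi^{\vee}$ of $\Pi_{\mathrm{unit}}(\U)$ and, on the dual side, to postcomposition $\phi \mapsto \widehat{C}\circ\phi$ with an $L$-group extension of the Chevalley involution $\widehat{C}$ of $\widehat{\U}$. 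Concretely, writing $(\Pi^{\mathrm{ar}}_{\psi}(\U,\varrho),\iota^{\mathrm{ar}}_{\mf{w}})$ for the data of \cite{KMSW} and \cite{C.P.Mok}, I would \emph{define} $\Pi_{\psi}(\U,\varrho):=\Pi^{\mathrm{ar}}_{\widehat{C}\circ\psi}(\U,\varrho)$, with its morphism to $\Pi_{\mathrm{unit}}(\U)$ composed with $\pi\mapsto\pi^{\vee}$; Kaletha's identity $(\Pi^{\mathrm{ar}}_{\psi})^{\vee}=\Pi^{\mathrm{ar}}_{\widehat{C}\circ\psi}$ then says this is also the contragredient of the arithmetic packet of $\psi$ itself, so the characterisation is symmetric in the two normalizations.

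First I would check that (1), (2) and (5) transport formally. The contragredient is an involution of $\Pi_{\mathrm{unit}}(\U)$ preserving $\Pi_{\mathrm{temp}}(\U)$ and $\Pi_{2,\mathrm{temp}}(\U)$, and the twist $\psi\mapsto\widehat{C}\circ\psi$ is an involution of $\Psi(\U^*_{\Q_v}(n))$ preserving genericity, boundedness and discreteness, since $\widehat{C}$ stabilizes a Borel pair of $\widehat{\U}$ and commutes with the $W_{\Q_v}$-action up to an inner automorphism. Because $\widehat{C}$ normalizes $Z(\widehat{\U})^{\Gamma_{\Q_v}}$ and carries $S_{\psi}$ to $S_{\widehat{C}\circ\psi}$, hence $S^{\natural}_{\psi}$ to $S^{\natural}_{\widehat{C}\circ\psi}$, it identifies $\mathrm{Irr}(S^{\natural}_{\psi},\chi_z)$ with $\mathrm{Irr}(S^{\natural}_{\widehat{C}\circ\psi},\chi_z)$, using that $\widehat{C}$ acts trivially on the $2$-torsion group $Z(\widehat{\U})^{\Gamma_{\Q_v}}$ and hence fixes $\chi_z$. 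Transporting $\iota^{\mathrm{ar}}_{\mf{w}}$ through these identifications produces $\iota_{\mf{w}}$ together with the required injectivity, the bijectivity in the $p$-adic case, and the two partitions in (5).

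For (3), the central character $\omega_{\pi}$ of $\pi$ equals $\omega_{\pi^{\vee}}^{-1}$, and $\pi^{\vee}$ lies in the arithmetic packet of $\psi$ by construction, so \cite{KMSW} gives its arithmetic Langlands parameter as $(\det\rtimes\id)\circ\phi_{\psi}$; applying the abelian case of the normalization dictionary (inversion on the $\GL_1$-side) and $\det\circ\widehat{C}=\det^{-1}$ yields the stated formula for the geometric parameter of $\omega_{\pi}$. The substantive point is (4). Here the plan is to follow how the endoscopic character identity behaves under the three simultaneous changes $\pi\mapsto\pi^{\vee}$, $f\mapsto\check{f}$ (with $\check{f}(g)=f(g^{-1})$) and the passage from the transfer factor of \cite{KMSW} to $\Delta[\mf{w},\varrho,z]$: by \cite{KalContra}, together with the comparison of transfer factors in \cite[\S5]{KS2}, the functions $\check{f}^{\mH}$ and $\check{f}$ are $\Delta[\mf{w},\varrho,z]$-matching precisely when $f^{\mH}$ and $f$ are matching for the opposite factor, and the arithmetic identity for $(\mH,s,\Leta)$, the parameter $\widehat{C}\circ\psi$ and the pair $(\check{f}^{\mH},\check{f})$ transforms into exactly the stated identity for $(\mH,s,\Leta)$, $\psi$ and $(f^{\mH},f)$. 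On both sides the internal pairings $\langle\pi^{\mH},s_{\psi^{\mH}}\rangle$ and $\langle\pi,\Leta(s)\cdot s_{\psi}\rangle$ are carried along by the isomorphisms $S^{\natural}_{\bullet}\cong S^{\natural}_{\widehat{C}\circ\bullet}$ induced by $\widehat{C}$, which send $s_{\psi}$ to $s_{\widehat{C}\circ\psi}$ and fix $\Leta(s)$ since $s\in Z(\widehat{\mH})^{\Gamma_{\Q_v}}$, so the two sides agree term by term; the Kottwitz sign $e(\U)$ is insensitive to the normalization.

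The hard part will be precisely this bookkeeping in (4): checking that the Chevalley twist on the dual side, the contragredient on the spectral side and the $\Delta_D$-renormalization of the transfer factors conspire to reproduce the character identity with \emph{no} residual sign or character twist, and that the hypotheses under which \cite{KalContra} operates genuinely apply to extended pure inner twists of $\U^*_{\Q_v}(n)$ in the sense of \cite{Kot9} -- in particular that Kaletha's Whittaker-normalized packet pairings and his cohomological conventions are compatible with those used in \cite{KMSW} and in \cite{BM2}. Once this comparison is pinned down, everything else is a formal transport of structure.
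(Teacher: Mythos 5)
Your proposal follows essentially the same route as the paper: the geometric packets are defined as contragredients of the arithmetic packets of \cite{KMSW} and \cite{C.P.Mok} (equivalently, via the Chevalley twist $\psi\mapsto{}^LC\circ\psi$, using Kaletha's contragredient compatibility, which the paper establishes for unitary groups in Propositions \ref{itm : distribution-dual} and \ref{itm : trace - dual}), and the endoscopic character identity is transported by simultaneously inverting the test functions, passing to $\mf{w}^{-1}$ and $s^{-1}$, and switching between the $\Delta'$ and $\Delta_D$ transfer-factor normalizations (Lemmas \ref{arithmetic - geometric normalization} and \ref{traceeq}). The bookkeeping you flag as the hard part is exactly what the paper carries out in those auxiliary statements, so your plan is correct and matches the paper's proof.
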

\begin{proof}
The contents of this theorem appear in the works of Mok (\cite[Theorem 2.5.1]{C.P.Mok}) and Kaletha--Minguez--Shin--White (\cite[Theorem 1.6.1]{KMSW}) except using the \emph{arithmetic normalization} of the Langlands correspondence. Hence our main task is to explain how we can use these arithmetically normalized correspondences to define a geometrically normalized correspondence.

For $\psi \in \Psi(\U^*_{\Q_v}(n))$ a generic parameter, we let $\Pi^A_{\psi}(\U, \varrho)$ denote the packet of representations assigned to $\psi$ by \cite[Theorem 1.6.1]{KMSW} (the letter $A$ stands for \emph{arithmetic normalization}) and define $\Pi_{\psi}(\U, \varrho)$ to consist of the contragredients of the representations in $\Pi^A_{\psi}(\U, \varrho)$. By the compatibility of the local Langlands correspondence with contragredients (proven in our case in Proposition \ref{itm : trace - dual}, cf. \cite[Equation (1.2)]{KalContra}), this is the same as saying that the packet $\Pi_{\psi}(\U, \varrho)$ of \cite{KMSW} is assigned to the parameter $\LL C \circ \psi$ where $\LL C$ is the extension to $\LU^*_{\Q_v}$ of the Chevalley involution, $\widehat{C}$, of $\widehat{G}$ as described in \cite[p. 3-4]{KalContra}.

We now define $\iota_{\mf{w}}$. For convenience, we will denote by $\iota^A_{\mf{w}}$ the maps given by the arithmetic normalization. Then we define for $\pi^{\vee} \in \Pi_{\psi}(\U, \varrho)$ that 
\begin{equation*}
    \iota_{\mf{w}}(\pi^{\vee}) = \iota^A_{\mf{w}^{-1}}(\pi)^{\vee},
\end{equation*}
where if $\mf{w}$ is the Whittaker datum $(B, \psi_{\Q_v})$, then $\mf{w}^{-1}$ is the datum $(B, \psi^{-1}_{\Q_v})$. Equivalently by taking the contragredient, we have
\begin{equation*}
    \iota_{\mf{w}}(\pi) = \iota^A_{\mf{w}}(\pi) \circ \widehat{C}^{-1}.
\end{equation*}

We now verify the endoscopy character identity which is (4) in the theorem. Fix $f \in \mc{H}(\U)$ and $f^{\mH} \in \mc{H}(\mH)$ a $\Delta[\mf{w}, \varrho, z]$-matching function. 

By Lemma \ref{arithmetic - geometric normalization}, we have that if $i_{\U}: \U(\Q_p) \to \U(\Q_p)$ is the inverse map, then $f^{\mH} \circ i_{\mH}$ and $f \circ i_{\U}$ are matching for the transfer factors $\Delta'[\mf{w}^{-1}, \varrho, z]$ with respect to the endoscopic datum $(\mH, s^{-1}, \Leta)$.  We use the letter $\Delta$ ($=\Delta_D$) resp. $ \Delta' $ to denote the transfer factors that are compatible with the geometric normalization resp. arithmetic normalization of the local Artin reciprocity map. 
Then we will show in Proposition \ref{itm : distribution-dual} that
\begin{align*}
       \sum_{\pi^{\mH} \in \Pi_{\psi^{\mH}}(\mH)} \langle \pi^{\mH}, s_{\psi^{\mH}} \rangle\tr(\pi^{\mH} \mid f^{\mH}) & = \sum_{\pi^{\mH} \in \Pi^A_{\LL C_{\mH} \circ \psi^{\mH}}(\mH)} \langle \pi^{\mH}, s_{\psi^{\mH}} \rangle\tr(\pi^{\mH} \mid f^{\mH})\\
       & =\sum_{\pi^{\mH} \in \Pi^A_{\psi^{\mH}}(\mH)} \langle \pi^{\mH}, s_{\psi^{\mH}} \rangle\tr(\pi^{\mH} \mid f^{\mH} \circ i_{\mH}).
\end{align*}

We now apply the endoscopic character identity proven in \cite[Theorem 1.6.1]{KMSW} to get that the above equals
\begin{align*}
  &e(\U) \sum_{\pi \in \Pi^A_{\psi}(\U, \varrho)} \tr(\iota^A_{\mf{w}^{-1}}(\pi) \mid  \Leta(s^{-1}) \cdot s_{\psi}) \tr(\pi \mid f \circ i) \\
  =&e(\U) \sum_{\pi \in \Pi_{\psi}(\U, \varrho)} \tr(\iota_{\mf{w}}(\pi)^{\vee} \mid  \Leta(s^{-1}) \cdot s_{\psi}) \tr(\pi^{\vee} \mid f \circ i). 
\end{align*}
Now, since $\tr(\pi^{\vee} \mid f) = \tr(\pi \mid f \circ i)$ (by Lemma \ref{traceeq}), we get that the above equals
\begin{equation*}
 e(\U) \sum_{\pi \in \Pi_{\psi}(\U, \varrho)} \tr( \iota_{\mf{w}}(\pi) \mid  \Leta(s) \cdot s_{\psi}) \tr(\pi \mid f),
\end{equation*}
as desired.
\end{proof}

\begin{proposition} \phantomsection \label{itm : distribution-dual}
Let $ \psi \in \Psi(\U^*_{\Q_v}(n)) $ be a tempered Langlands parameter. Then we have an equality of linear forms on $ \mathcal{H}(\U^*_{\Q_v}(n)) $
  \[
  \sum_{\pi \in \Pi^A_{\psi}(\U)} \langle \pi, s_{\psi} \rangle\tr(\pi \mid f) = \sum_{\pi \in \Pi^A_{\LL C_{\U} \circ \psi}(\U)} \langle \pi, s_{\psi} \rangle\tr(\pi \mid f \circ i_{\U}).
  \]
\end{proposition}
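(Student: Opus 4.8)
The plan is to reduce the statement to Kaletha's results on the compatibility of the local Langlands correspondence with the Chevalley involution and the contragredient, carefully tracking how the arithmetic normalization interacts with the inverse map on the group. First I would recall from \cite{KalContra} that for the quasi-split group $\U^*_{\Q_v}(n)$ (and its inner form $\U$), taking the contragredient of every representation in an $L$-packet $\Pi^A_{\psi}(\U)$ produces the packet $\Pi^A_{\LL C_{\U}\circ\psi}(\U)$ attached to the parameter twisted by the $L$-group Chevalley involution $\LL C_{\U}$; moreover the pairing transforms by $\langle \pi^{\vee}, s\rangle = \langle \pi, s^{-1}\rangle$ appropriately, but here we only need the packet-level statement since we are summing the \emph{stable} distribution (for tempered $\psi$ all $\langle\pi,s_{\psi}\rangle$ agree with a sign coming from $s_{\psi}$, which is what makes the identity clean). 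The second ingredient is the elementary trace identity $\tr(\pi\mid f\circ i_{\U}) = \tr(\pi^{\vee}\mid f)$ for $i_{\U}$ the inversion map, which is Lemma \ref{traceeq} in the excerpt.

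The key steps, in order, are: (1) Write the right-hand side sum over $\Pi^A_{\LL C_{\U}\circ\psi}(\U)$ and use the compatibility with the Chevalley involution to reindex it as a sum over $\pi \in \Pi^A_{\psi}(\U)$ of $\langle \pi^{\vee}, s_{\psi}\rangle \tr(\pi^{\vee}\mid f\circ i_{\U})$, after checking that the character $s_{\psi}\in S_{\psi}$ is fixed (up to the relevant identification) by the Chevalley involution so the pairing is unchanged — this uses that $s_{\psi}$ is the canonical element associated to a tempered parameter and $\widehat{C}$ acts trivially on it, or more precisely that $\langle \pi^{\vee}, \widehat{C}(s_{\psi})\rangle = \langle \pi, s_{\psi}\rangle$. (2) Apply Lemma \ref{traceeq} to rewrite $\tr(\pi^{\vee}\mid f\circ i_{\U}) = \tr((\pi^{\vee})^{\vee}\mid f) = \tr(\pi\mid f)$. (3) Conclude that the right-hand side equals $\sum_{\pi\in\Pi^A_{\psi}(\U)}\langle\pi,s_{\psi}\rangle\tr(\pi\mid f)$, which is the left-hand side. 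One should also remark that both sides are genuinely linear forms on $\mathcal{H}(\U^*_{\Q_v}(n))$ — but the statement as written is for $\U$ an inner twist, so I would state it for $\mathcal{H}(\U)$ and use the inner-twist version of Kaletha's compatibility result, citing the relevant extension in \cite{KalContra} or deducing it from the quasi-split case via the pure-inner-twist formalism of \cite{Kot9}.

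The main obstacle I anticipate is pinning down precisely how the element $s_{\psi}$ and the pairing $\langle\,,\,\rangle$ behave under the Chevalley involution $\widehat{C}$ together with the inversion $i_{\U}$ on the group side — that is, reconciling ``$\iota^A_{\mf w}(\pi)\circ\widehat{C}^{-1}$'' with ``$\iota^A_{\mf w^{-1}}(\pi^{\vee})^{\vee}$'' at the level of the distinguished characters, since a priori there are two Whittaker-normalization conventions and two inversions (on $W_{\Q_v}$ via the Artin map, and on $\U(\Q_v)$) that must be shown to cancel. Concretely, the subtlety is that $\widehat{C}$ sends $s_{\psi}$ to a conjugate of $s_{\psi}^{-1}$ rather than fixing it, so one has to use $\langle\pi^{\vee},-\rangle = \langle\pi,-\rangle\circ(\text{inverse on }S^{\natural}_{\psi})$ to absorb the discrepancy; verifying this inversion-compatibility of the internal parametrization (again from \cite{KalContra}, or from the endoscopic character identities for $\U$ already available) is the delicate bookkeeping. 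Once that is isolated, the rest is the formal manipulation sketched above.
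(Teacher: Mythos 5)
Your proposed route has a genuine circularity problem within the logical structure of this argument. You deduce the identity from ``the compatibility of the local Langlands correspondence with the contragredient'' for unitary groups — i.e.\ from the statement that $\Pi^A_{\LL C_{\U}\circ\psi}(\U)=\{\pi^{\vee}:\pi\in\Pi^A_{\psi}(\U)\}$ together with the corresponding transformation of the internal parametrization. But for unitary groups that contragredient statement is not an available input: it is exactly Proposition \ref{itm : trace - dual} of the paper, which is proved \emph{from} Proposition \ref{itm : distribution-dual}, mirroring the order of \cite{KalContra} where the contragredient theorem (Theorem 4.9 there) is deduced from the stable-character identity (Theorem 4.8 there). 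Kaletha's paper does not hand you the contragredient compatibility for these groups unconditionally; verifying his Theorem 4.8 in this setting is precisely the content of the proposition you are asked to prove. So step (1) of your plan assumes the conclusion's harder half, and the ``delicate bookkeeping'' you flag at the end (reconciling $\iota^A_{\mf{w}}(\pi)\circ\widehat{C}^{-1}$ with $\iota^A_{\mf{w}^{-1}}(\pi^{\vee})^{\vee}$) is not a technical afterthought — it is the theorem.

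The paper's proof avoids this by never invoking the contragredient theorem. For a tempered $\psi$ the left-hand side is the stable character $S\Theta_{\psi}$, and by \cite[Theorem 3.2.1]{C.P.Mok} and \cite[Theorem 1.6.1 (4)]{KMSW} it is the \emph{unique} distribution on $\mathcal{H}(\U^*_{\Q_v}(n))$ matching the Whittaker-normalized twisted character $T\Theta^{\mf{w}}_{\xi\circ\psi}$ of $\GL_n(E_v)$ under $(\theta,\omega)$-twisted transfer. One then runs Kaletha's argument for \cite[Theorem 4.8]{KalContra}: the behavior of the twisted $\GL_n$ character under inversion and the Chevalley involution is known explicitly (Proposition 4.4, Corollaries 4.5 and 4.7 of loc.\ cit.), and the uniqueness of the matching distribution forces $S\Theta_{\LL C_{\U}\circ\psi}(f)=S\Theta_{\psi}(f\circ i_{\U})$. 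Your Lemma \ref{traceeq} ingredient is fine, and your instinct to state everything for the quasi-split group is correct (the proposition is indeed an identity on $\mathcal{H}(\U^*_{\Q_v}(n))$), but to make your argument non-circular you would have to replace the appeal to the contragredient theorem by exactly this twisted-endoscopic characterization.
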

\begin{proof}
Thanks to the results in \cite{C.P.Mok}, \cite{KMSW}, the arguments in \cite[Theorem 4.8]{KalContra} also work in our case. Indeed, the group $\U^*_{\Q_v}(n)$ can be extended to an endoscopic datum $ \mf{e} = ( \U^*_{\Q_v}(n), \mathcal{U}, s, \xi ) $ of the triple $ (\GL_n, \theta, 1) $ for a suitable outer automorphism of $\GL_n$ preserving the standard splitting. Then $ \xi \circ \psi $ is a Langlands parameter of $\GL_n$ and denote by $\rho$ the representation of $\GL_n(\Q_p)$ assigned to  $ \xi \circ \psi $ by the local Langlands correspondence. There is a unique isomorphism $ X : \rho \longrightarrow \rho \circ \theta $ which preserves the $\mf{w}$-Whittaker functionals. Then we have the distribution
\[
f^n \longmapsto T\Theta^{\mf{w}}_{\xi \circ \psi} (f^n) = \tr \Big( v \longmapsto \int_{\GL_n(\Q_p)} f^n(g) \rho (g) X v dg \Big).
\]

Then by \cite[Theorem 3.2.1]{C.P.Mok} and \cite[Theorem 1.6.1 (4)]{KMSW} the linear form $ \displaystyle f \longmapsto S\Theta_{\psi} (f) = \sum_{\pi \in \Pi^A_{\psi}(\U)} \langle \pi, s_{\psi} \rangle\tr(\pi \mid f) $ is the unique distribution on $ \mathcal{H}(\U^*_{\Q_v}(n)) $ having the properties that 
\[
S\Theta_{\psi} (f) = T\Theta^{\mf{w}}_{\xi \circ \psi} (f^n)
\]
for all $ f \in \mathcal{H}(\U^*_{\Q_v}(n)) $ and $ f^n \in \mathcal{H}(\GL_n(\Q_p)) $ such that the $ ( \theta, \omega ) $-twisted orbital integrals of $f^n$ match the stable integrals of $f$ with respect to $ \Delta'[ \mf{w}, \mf{e}, z_{\mf{e}}  ] $. 

Once we have this characterization, the proof of \cite[Theorem 4.8]{KalContra} works without any change since Proposition 4.4, Corollary 4.5 and Corollary 4.7 in loc. cit. are valid for quasi-split unitary groups.
\end{proof}

\begin{proposition} \phantomsection \label{itm : trace - dual}
 Let $ \psi \in \Psi(\U^*_{\Q_v}(n)) $ be a tempered Langlands parameter and $\mf{w}$ be a Whittaker datum. Let $\pi$ be a representation in $\Pi^A_{\psi}(\U)$ and denote $ \iota^A_{\mf{w}} (\pi) = \rho $. Then
\begin{enumerate}
    \item[$\bullet$] The contragredient representation $ \pi^{\vee} $ belongs to the $L$-packet $\Pi^A_{ \prescript{L}{}{C} \circ \psi}(\U)$,
    \item[$\bullet$] $ \iota^A_{\mf{w}^{-1}}(\pi^{\vee}) = (\rho \circ C^{-1})^{\vee} $.
\end{enumerate}
\end{proposition}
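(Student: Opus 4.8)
The plan is to adapt Kaletha's analysis of the contragredient in the local Langlands correspondence (\cite[Theorem~4.8]{KalContra} and the surrounding results of \cite[\S 4]{KalContra}) to our setting, with Proposition~\ref{itm : distribution-dual} as the key input. I would first treat the quasi-split case $\U = \U^*_{\Q_v}(n)$, where the first assertion is already almost immediate from Proposition~\ref{itm : distribution-dual}: since $\psi$ is tempered one has $s_{\psi} = 1$, and combining that proposition with $\tr(\pi^{\vee} \mid f) = \tr(\pi \mid f \circ i_{\U})$ (Lemma~\ref{traceeq}) and the linear independence of characters shows $\{\pi^{\vee} : \pi \in \Pi^A_{\psi}(\U^*)\} = \Pi^A_{\LL C \circ \psi}(\U^*)$ as multisets. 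The content is the second assertion, the transformation of the Whittaker-normalized pairing, and for this I would return to the twisted endoscopic characterization used to prove Proposition~\ref{itm : distribution-dual}.

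Recall from there that $\U^*_{\Q_v}(n)$ sits in a twisted endoscopic datum $\mf{e} = (\U^*_{\Q_v}(n), \mathcal{U}, s, \xi)$ of $(\GL_n, \theta, 1)$, that $\rho$ is the representation of $\GL_n(\Q_v)$ attached to $\xi \circ \psi$, and that the pair $(\Pi^A_{\psi}(\U^*), \iota^A_{\mf{w}})$ is characterized by the identity $S\Theta_{\psi}(f) = T\Theta^{\mf{w}}_{\xi \circ \psi}(f^n)$ for matching $f, f^n$, where $T\Theta^{\mf{w}}$ is built from the intertwiner $X \colon \rho \to \rho \circ \theta$ normalized by the $\mf{w}$-Whittaker functional. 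On the $\GL_n$ side I would invoke the strong form of local Langlands: $\rho^{\vee}$ has parameter $(\xi \circ \psi)^{\vee}$, which, the Chevalley involution of $\GL_n(\C)$ being inner-conjugate to $g \mapsto g^{-t}$, is conjugate to $\widehat{C}_{\GL_n} \circ (\xi \circ \psi)$. Choosing the Chevalley involution $\widehat{C}$ of $\widehat{\U^*_{\Q_v}(n)}$ to preserve the fixed $\Gamma_{\Q_v}$-splitting, extending it to $\LL C$ following \cite[\S 3]{KalContra}, and checking that $\widehat{C}$ is compatible with the base-change embedding $\eta_B$ (hence that $\xi$ intertwines $\LL C$ with $\widehat{C}_{\GL_n}$), one gets that $\rho^{\vee}$ is attached to $\xi \circ (\LL C \circ \psi)$. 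Then, applying the inverse maps $f \mapsto f \circ i_{\U^*}$ and $f^n \mapsto f^n \circ i_{\GL_n}$, the characterizing identity for $\psi$ transforms into one for $\LL C \circ \psi$, with $T\Theta^{\mf{w}}_{\xi \circ \psi}$ replaced by $T\Theta^{\mf{w}^{-1}}_{(\xi \circ \psi)^{\vee}}$ because $X$ dualizes to the $\mf{w}^{-1}$-normalized intertwiner of $\rho^{\vee}$; that the transfer factors $\Delta'$ (now with $s$ replaced by $s^{-1}$ and $\mf{w}$ by $\mf{w}^{-1}$) and the $(\theta, \omega)$-twisted orbital integrals match under this operation is exactly \cite[Proposition~4.4, Corollary~4.5 and Corollary~4.7]{KalContra}, which, as already noted in the proof of Proposition~\ref{itm : distribution-dual}, hold for quasi-split unitary groups. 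Comparing the two characterizations and using linear independence of characters across the packet then yields $\iota^A_{\mf{w}^{-1}}(\pi^{\vee}) = (\iota^A_{\mf{w}}(\pi) \circ C^{-1})^{\vee}$, that is, both assertions for $\U = \U^*_{\Q_v}(n)$ (and, by the same argument, for any quasi-split product of unitary groups).

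For a general extended pure inner twist $(\varrho, z)$ I would bootstrap from the quasi-split case using the endoscopic character identities of Theorem~\ref{itm: local}(4). Given $f \in \mathcal{H}(\U)$ and a $\Delta[\mf{w}, \varrho, z]$-matching $f^{\mH} \in \mathcal{H}(\mH)$, Lemma~\ref{arithmetic - geometric normalization} gives that $f \circ i_{\U}$ and $f^{\mH} \circ i_{\mH}$ are matching for the datum $(\mH, s^{-1}, \Leta)$; writing Theorem~\ref{itm: local}(4) for the pair $(\psi, \psi^{\mH})$, substituting these inverse-twisted test functions, and using the already-proven identity on the (quasi-split) group $\mH$ together with $\tr(\pi^{\vee} \mid f) = \tr(\pi \mid f \circ i_{\U})$, one obtains, after ranging over all refined endoscopic data, an equality of linear forms in $f$ which by linear independence of the characters of $\Pi^A_{\LL C \circ \psi}(\U)$ isolates the individual representations and gives both assertions for $\U$. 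Here one also checks that $\widehat{C}$ sends $\chi_z$ to $\chi_z^{-1}$, so that $(\iota^A_{\mf{w}}(\pi) \circ C^{-1})^{\vee}$ indeed restricts correctly on $Z(\widehat{\U})^{\Gamma_{\Q_v}}$; this follows from the $\Gamma_{\Q_v}$-equivariant description of $\widehat{C}$ and the compatibility of $\mb{B}(\Q_v, \U) \to X^*(Z(\widehat{\U})^{\Gamma_{\Q_v}})$ with duality. I expect the main obstacle to be exactly this normalization bookkeeping in the quasi-split step: matching the passage from $\mf{w}$ to $\mf{w}^{-1}$ in the Whittaker normalization of the twisted character of $\rho$ with the transformation $\rho \mapsto (\rho \circ C^{-1})^{\vee}$ of the component-group pairing, and checking that $\widehat{C}$ may be chosen compatibly with the chosen splitting and with $\eta_B$. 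Once \cite[Proposition~4.4, Corollary~4.5, Corollary~4.7]{KalContra} are granted for quasi-split unitary groups, the remaining arguments are formal.
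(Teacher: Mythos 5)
Your proposal is correct and follows the same route as the paper, whose proof consists of the observation that Kaletha's argument for \cite[Theorem 4.9]{KalContra} carries over verbatim once Proposition \ref{itm : distribution-dual} supplies the analogue of his Theorem 4.8. Your final paragraph is precisely that argument: the endoscopic character identities for all $s$, the transfer-factor inversion of Lemma \ref{arithmetic - geometric normalization} (equivalently \cite[Proposition 4.4, Corollaries 4.5, 4.7]{KalContra}), Proposition \ref{itm : distribution-dual} applied to the quasi-split endoscopic groups $\mH$, and Fourier inversion on $\ov{\mc{S}}_{\psi}$; taking the trivial twist it also covers the quasi-split case, so the proof is complete. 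One caveat about your middle paragraph: the twisted base-change identity $S\Theta_{\psi}(f) = T\Theta^{\mf{w}}_{\xi \circ \psi}(f^n)$ characterizes only the stable linear form, i.e.\ the $s=1$ identity, in which every member of the packet enters with the same coefficient; so ``comparing the two characterizations'' can only yield the first bullet ($\{\pi^{\vee}\} = \Pi^A_{\LL C \circ \psi}$) and cannot by itself pin down $\iota^A_{\mf{w}^{-1}}(\pi^{\vee})$ for an individual $\pi$. The second bullet, even for quasi-split $\U$, genuinely requires ranging over all $s \in S_{\psi}$ as in your last paragraph, so the quasi-split detour should be deleted or rephrased as establishing only the first assertion. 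Finally, Proposition \ref{itm : distribution-dual} is stated for $\U^*_{\Q_v}(n)$ but is applied to the endoscopic groups, which are products of quasi-split unitary groups; the extension to products is immediate (stable characters and Chevalley involutions factor) but should be recorded.
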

\begin{proof}
These results are completely analogous to \cite[Theorem 4.9]{KalContra}. The same arguments carry over to our case since an analogue of Theorem 4.8 in loc.cit. is still valid for unitary groups (Proposition \ref{itm : distribution-dual}).
\end{proof}
We also have the following basic fact.
\begin{lemma}{\label{traceeq}}
    For $(\pi, V)$ an admissible representation of $\G(\Q_p)$ for $\G$ a reductive group and $f \in \mc{H}(\G)$, we have
\begin{equation*}
    \tr(\pi^{\vee} \mid f) = \tr(\pi \mid f \circ i).
\end{equation*}
\end{lemma}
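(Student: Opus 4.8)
The plan is to verify this by a direct computation with Haar measure, using the fact that the contragredient of $(\pi, V)$ can be realized (as a vector space) on the same underlying space $V$ but with the action twisted through the inverse map on the group. More precisely, recall that for a smooth admissible representation $(\pi, V)$, the smooth dual $(\pi^\vee, \tilde V)$ is the space of smooth vectors in the full linear dual, with $\pi^\vee(g) = \pi(g^{-1})^t$. The distribution character $f \mapsto \tr(\pi \mid f)$ is computed via the operator $\pi(f) = \int_{\G(\Q_p)} f(g) \pi(g)\, dg$ acting on $V$, which is of finite rank because $f$ is compactly supported and locally constant and $\pi$ is admissible, so its trace is defined.

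First I would write out $\pi^\vee(f) = \int_{\G(\Q_p)} f(g)\, \pi^\vee(g)\, dg = \int_{\G(\Q_p)} f(g)\, \pi(g^{-1})^t\, dg$. Substituting $g \mapsto g^{-1}$ in the integral and using that Haar measure on $\G(\Q_p)$ is both left- and right-invariant, hence invariant under inversion (a unimodular group, being reductive, or in any case one checks $d(g^{-1}) = \delta(g)^{-1} dg$ and reductive groups are unimodular so $\delta \equiv 1$), I get $\pi^\vee(f) = \int_{\G(\Q_p)} f(g^{-1})\, \pi(g)^t\, dg = \bigl(\int_{\G(\Q_p)} (f\circ i)(g)\, \pi(g)\, dg\bigr)^t = \pi(f\circ i)^t$, where $i(g) = g^{-1}$. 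Since transposition of a finite-rank operator preserves the trace, $\tr(\pi^\vee \mid f) = \tr\bigl(\pi^\vee(f)\bigr) = \tr\bigl(\pi(f\circ i)^t\bigr) = \tr\bigl(\pi(f\circ i)\bigr) = \tr(\pi \mid f\circ i)$, which is the claimed identity.

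The only genuinely substantive point is the unimodularity of $\G(\Q_p)$ (so that inversion preserves Haar measure): this is standard for connected reductive groups over a local field, since the modulus character is a continuous homomorphism to $\R_{>0}$ and the group is generated by compact subgroups together with the center, or one simply invokes that reductive groups are unimodular. One should also take a moment to justify that the transpose of the finite-rank operator $\pi(f\circ i)$ on $V$, viewed as acting on the dual, really is $\pi^\vee(f\circ i)$ restricted appropriately — but since $\pi(f\circ i)$ has finite-rank image contained in the smooth vectors and factors through a finite-dimensional quotient, its transpose is automatically a well-defined finite-rank operator on $\tilde V$ with the same trace, so no subtlety arises. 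I expect no real obstacle here; the lemma is essentially a bookkeeping exercise, and the main care is simply to get the measure-theoretic substitution and the transpose-trace identity stated cleanly.
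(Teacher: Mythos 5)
Your argument is correct and is essentially the same as the paper's: both realize $\pi^{\vee}(f)$ as the transpose of $\pi(f\circ i)$ via the substitution $g\mapsto g^{-1}$ and unimodularity of $\G(\Q_p)$, then conclude since transposition preserves the trace of a finite-rank operator. The paper simply makes the finite-rank reduction explicit by restricting to $K$-invariants, $(V^{\vee})^K=(V^K)^*$, for a compact open $K$ under which $f$ is bi-invariant, which is the same bookkeeping you describe.
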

\begin{proof}
Pick some compact open $K \subset \G(\Q_p)$ such that $f$ is $K$-bi-invariant.  Let $(\pi^{\vee}, V^{\vee})$ denote the contragredient of $\pi$ so that $V^{\vee} \subset V^*$ is the subspace of smooth vectors in the dual vector space $V^*$ of $V$. Then we note that $(V^{\vee})^K = (V^K)^*$ since each vector in $(V^*)^K$ is by definition smooth. 

The operator $\pi^{\vee}(f)$ acts on $(V^K)^*$ as the dual of the operator $\pi(f \circ i)$. Indeed for $v^* \in (V^K)^*$ and $w \in V^K$
\begin{align*}
    \pi^{\vee}(f)v^*(w) & = \int\limits_{\G(\Q_p)} f(g)\pi^{\vee}(g)v^*(w)dg\\
    &= \int\limits_{\G(\Q_p)} (f \circ i)(g^{-1}) v^*(\pi(g^{-1})w)dg\\
    &= v^* \left(\int\limits_{\G(\Q_p)} (f \circ i)(g) \pi(g)wdg \right)\\
    &= \pi(f \circ i)^*v^*(w),
\end{align*}
where the third equality uses that $\G$ is unimodular. This implies the desired equality of traces.
\end{proof}

When we consider global parameters, we will also need a version of Theorem \ref{itm: local} for $\psi \in \Psi^+(\U^*_{\Q_v}(n))$. The following theorem is essentially contained within the union of the remarks in \cite[pg33]{C.P.Mok} and \cite[\S 1.6.4]{KMSW}.
\begin{theorem}{\label{localECIpsi+}}
Fix an odd natural number $n$ and let $\U$ and $(\varrho, z)$ be an extended pure inner twist of $\U^*_{\Q_v}(n)$. Fix a non-trivial character $\varphi: \Q_v \to \C^{\times}$. Together with our chosen splitting of $\U^*_{\Q_v}(n)$, this gives a Whittaker datum $\mf{w}$ of $\U^*_{\Q_v}(n)$. Then,
\begin{enumerate}
    \item For each  generic $\psi \in \Psi^+(\U^*_{\Q_v}(n))$, there exists a finite set $\Pi_{\psi}(\U, \varrho)$ of possibly reducible or non-unitary representations of $\U$.
    Our choice of $\mf{w}$ defines a map
\[
\iota_{\mf{w}}: \Pi_{\psi} (\U, \varrho) \longrightarrow \Irr (S^{\natural}_{\psi}, \chi_z), \qquad \pi \mapsto \langle \pi, - \rangle,
\]
where $\Irr (S^{\natural}_{\psi}, \chi_z)$ denotes the set of irreducible representations of $S^{\natural}_{\psi}$ with central character $\chi_z$. Each $\pi \in \Pi_{\psi}(\U, \varrho)$ has a central character $\omega_{\pi}$, these characters are the same for each element of $\Pi_{\psi}(\U, \varrho)$.
\item Let $(\mathrm{H}, s, \Leta)$ be a refined endoscopic datum and let $\psi^{\mH} \in \Psi^+(\mH)$ be a generic parameter such that $\Leta \circ \psi^{\mH} = \psi$. If $f^{\mH} \in \mathcal{H}(\mathrm{H})$ and $f \in \mathcal{H}(\U)$ are two $\Delta[\mf{w}, \varrho, z]$-matching functions then we have
\begin{equation*}
 \displaystyle \sum_{\pi^{\mH} \in \Pi_{\psi^{\mH}}(\mH)} \langle \pi^{\mH}, s_{\psi^{\mH}} \rangle\tr(\pi^{\mH} \mid f^{\mH})   = e(\U) \sum_{\pi \in \Pi_{\psi}(\U, \varrho)} \langle \pi, \Leta(s) \cdot s_{\psi}\rangle \tr(\pi \mid f),
 \end{equation*}
 where $e( \cdot)$ is the Kottwitz sign.
\end{enumerate}
\end{theorem}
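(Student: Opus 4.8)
The plan is to deduce Theorem~\ref{localECIpsi+} from the tempered case, Theorem~\ref{itm: local}, by means of the Langlands classification, following the template indicated in \cite[p.~33]{C.P.Mok} and \cite[\S1.6.4]{KMSW}, and then re-normalizing exactly as in the proof of Theorem~\ref{itm: local}. First I would perform a parabolic descent of the parameter: every Levi subgroup of $\U^*_{\Q_v}(n)$ has the shape $M^*\cong \GL_{n_1,E_v}\times\cdots\times\GL_{n_r,E_v}\times\U^*_{\Q_v}(m)$, so a generic $\psi\in\Psi^+(\U^*_{\Q_v}(n))$ is, up to $\widehat{\U^*_{\Q_v}(n)}$-conjugacy, the image under the standard embedding $\widehat{M^*}\hookrightarrow\widehat{\U^*_{\Q_v}(n)}$ of a twist $\psi_{M^*,\nu}$ of a bounded generic $\psi_{M^*}\in\Phi_{\mathrm{bdd}}(M^*)$ by a point $\nu$ in the open positive Weyl chamber of $a_M^*$. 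Since $\nu$ is valued in a real form of $Z(\widehat{M^*})$ and is fixed by $S_\psi$, one gets canonical identifications $S_\psi=S_{\psi_{M^*}}$ and $S^\natural_\psi=S^\natural_{\psi_{M^*}}$ compatible with the action of $Z(\widehat{\U})^{\Gamma}$ and hence with $\chi_z$.

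Next I would construct the packet. The extended pure inner twist $(\varrho,z)$ restricts to one $(\varrho_M,z_M)$ of $M$ (all these Levi subgroups being relevant). Applying Theorem~\ref{itm: local} to the unitary factor of $M$ and the classical local Langlands correspondence over $E_v$ to the $\GL$-factors yields $\Pi_{\psi_{M^*}}(M,\varrho_M)$ together with $\iota_{\mf{w}_M}$, and I would set
\[
\Pi_\psi(\U,\varrho):=\{\, \mathrm{Ind}_P^{\U}(\pi_M\otimes\nu)\ :\ \pi_M\in\Pi_{\psi_{M^*}}(M,\varrho_M)\,\},
\]
with $\iota_{\mf{w}}(\mathrm{Ind}_P^{\U}(\pi_M\otimes\nu))=\iota_{\mf{w}_M}(\pi_M)$ under $S^\natural_\psi\cong S^\natural_{\psi_{M^*}}$. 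The common central character of the packet is that of $\pi_M\otimes\nu$ restricted to $Z(\U)$ (same for all members because it is for $M$, by Theorem~\ref{itm: local}(3)), which gives (1); independence of the auxiliary data $(M^*,\psi_{M^*},\nu)$ follows since any two choices are $W(\U^*,A_{M^*})$-conjugate and the correspondence for $M$ is equivariant.

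For the endoscopic character identity (2), given a refined endoscopic datum $(\mathrm{H},s,\Leta)$ and $\psi^{\mH}\in\Psi^+(\mH)$ with $\Leta\circ\psi^{\mH}=\psi$, I would decompose $\psi^{\mH}=\psi^{\mH}_{M_{\mathrm{H}},\nu}$ through a Levi $M_{\mathrm{H}}$ of $\mathrm{H}$, produce from the standard descent formalism (cf. \cite{ArthurBook}, \cite{KMSW}) a refined endoscopic datum $(M_{\mathrm{H}},s_M,\Leta_M)$ of $M$ with $\Leta_M\circ\psi^{\mH}_{M_{\mathrm{H}}}=\psi_{M^*}$, and invoke the compatibility of the $\Delta_D$-normalized transfer factors with constant terms to see that $(f^{\mH})_{M_{\mathrm{H}}}$ and $f_M$ are $\Delta[\mf{w}_M,\varrho_M,z_M]$-matching whenever $f^{\mH}$ and $f$ are $\Delta[\mf{w},\varrho,z]$-matching. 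Then the tempered identity (Theorem~\ref{itm: local}(4) for $M$, trivial on the $\GL$-factors), van Dijk's formula $\tr(\mathrm{Ind}_P^{\U}(\pi_M\otimes\nu)\mid f)=\tr(\pi_M\otimes\nu\mid f_M)$ on both groups, and the compatibilities $\langle\mathrm{Ind}_P(\pi_M),-\rangle=\langle\pi_M,-\rangle$, $s_{\psi^{\mH}}\leftrightarrow s_{\psi^{\mH}_{M_{\mathrm{H}}}}$, $\Leta(s)\cdot s_\psi\leftrightarrow\Leta_M(s_M)\cdot s_{\psi_{M^*}}$, and $e(\U)=e(M)$ (the $\GL$-factors are split and the unitary factors of $\U$ and $M$ carry the same Kottwitz invariant) assemble into the asserted identity. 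Finally, this yields the arithmetically normalized statement, so I would pass to the geometric normalization by repeating verbatim the renormalization in the proof of Theorem~\ref{itm: local} (replacing $\psi$ by $\LL C\circ\psi$, $\mf{w}$ by $\mf{w}^{-1}$, $s$ by $s^{-1}$, and composing $\iota_{\mf{w}}$ with $\widehat{C}^{-1}$), using that Propositions~\ref{itm : distribution-dual} and~\ref{itm : trace - dual} extend to $\Psi^+$ because the Chevalley involution preserves Levi subgroups and intertwines parabolic induction, so that the $\Psi^+$ case reduces to the tempered one.

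The hard part, I expect, will be Step three: not any single ingredient, but the consistent bookkeeping of the parabolic descent of transfer factors, the matching of the $\langle-,s\rangle$-pairings through the identifications $S^\natural_\psi\cong S^\natural_{\psi_{M^*}}$ and $S^\natural_{\psi^{\mH}}\cong S^\natural_{\psi^{\mH}_{M_{\mathrm{H}}}}$, and the Kottwitz signs, all carried out in the $\Delta_D$/geometric normalization and for inner forms. This is routine in spirit --- it is precisely what Arthur, Mok and Kaletha--Minguez--Shin--White do --- but it is exactly the place where one must verify that every normalization here agrees with those of the cited references.
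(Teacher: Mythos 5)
Your proposal is correct and follows essentially the same route as the paper: parabolic descent of the parameter to a Levi $M^*$, definition of the packet by (possibly reducible) parabolic induction from $\Pi_{\psi_{M^*}}(M,\varrho_{M^*})$, identification $S^{\natural}_{\psi}\cong S^{\natural}_{\psi_{M^*}}$, and the endoscopic identity via descent of the endoscopic datum to $(\mH_{M^*},s_{M^*},\Leta_{M^*})$, van Dijk's constant-term formula, the compatibility of the normalized transfer factors with the $|D|^{\pm 1/2}$ descent factors, and $e(\U)=e(M)$. The only real difference is your final renormalization step: since Theorem \ref{itm: local} is already stated in the geometric normalization, the paper performs the descent directly from it and never needs to extend Propositions \ref{itm : distribution-dual} and \ref{itm : trace - dual} to $\Psi^{+}$, so that step of yours is superfluous (though harmless).
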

\begin{proof}
We sketch the proof following ideas in \cite{KMSW} and \cite{C.P.Mok}. The proof of (1) is in \cite[\S 1.6.4]{KMSW}. They choose a standard parabolic subgroup $P^*=M^*N_{P^*}$ of $\U^*_{\Q_v}(n)$ that transfers to $\U$, a parameter $\psi_{M^*} \in \Psi(M^*)$ and a character $\lambda \in \Hom_{\Q_v}(M^*, \Gm)$ such that the induced central parameter $\phi_{\lambda}  : W_{\Q_v} \to \LM^*$ satisfies that $\psi_{M^*} \cdot \phi_{\lambda}$ agrees with $\psi$ under the $L$-embedding $\LM^* \hookrightarrow \LU^*_{\Q_v}(n)$. Choose a representative $(\varrho, z)$ in its equivalence class so that the restriction $(\varrho_{M^*}, z_{M^*})$ to $M^*$ is also an extended pure inner twist.

They then define $\Pi_{\psi}(\U, \varrho_{M^*})$ by
\begin{equation*}
    \Pi_{\psi}(\U, \varrho) := \{ I^{\U}_{P}(\pi_M \otimes \chi_{\lambda}) \mid \pi_M \in \Pi_{\psi_{M^*}}(M, \varrho_{M^*})\},
\end{equation*}
where $I^G_P$ denotes normalized parabolic induction and $\chi_{\lambda}$ is the character of $M(\Q_v)$ corresponding to $\lambda$. Note that by definition of parabolic induction, if $\pi_M$ has central character $\omega_{\pi_M}$, then $I^{\U}_P(\pi_M)$ will have central character $\delta^{\frac{1}{2}}_P \omega_{\pi_M}$. Since each element of $\Pi_{\psi_{M^*}}(M, \varrho_{M^*})$ has the same central character, this will also be true of $\Pi_{\psi}(\U, \varrho)$.

From the explicit description of $S_{\psi}$ given in \cite[pg 62]{KMSW}, it follows that $S_{\psi}=S_{\psi_{M^*}}$. In \cite[\S 1.6.4]{KMSW} they show that $S^{\rad}_{\psi}Z(\widehat{\U^*_{\Q_v}(n)})^{\Gamma_{\Q_v}} = S^{\rad}_{\psi}Z(\widehat{M^*})^{\Gamma_{\Q_v}}$ and that $\chi_z$ and $\chi_{z_{M^*}}$ both extend uniquely to give the same character $\tilde{\chi}_z$ of $S^{\rad}_{\psi}Z(\widehat{\U^*_{\Q_v}(n)})^{\Gamma_{\Q_v}}$ that is trivial on $S^{\rad}_{\psi}$. Now, we have an identification
\begin{equation*}
    \Irr(S^{\natural}_{\psi}, \chi_z) = \Irr(S^{\natural}_{\psi_{M^*}}, \chi_{z_{M^*}}),
\end{equation*}
since both parametrize irreducible representations of $S_{\psi}$ that restrict to $\tilde{\chi}_z$ on $S^{\rad}_{\psi}Z(\widehat{\U^*_{\Q_v}(n)})^{\Gamma_{\Q_v}}$. One can now define
\begin{equation*}
    \langle \pi , s \rangle  = \langle \pi_M , s_{M^*} \rangle
\end{equation*}
for $s \in S^{\natural}_{\psi}$.

It remains to verify the endoscopic character identity. Fix a refined endoscopic datum $(\mH,s, \Leta)$ for $\U^*_{\Q_v}(n)$ such that $\psi = \Leta \circ \psi^{\mH}$ for some $\psi^{\mH} \in \Psi^+(\mH)$. Then $\Leta(s) \in S_{\psi} \subset \widehat{M}$. By  \cite[Proposition 3.10]{BM2}, there exists a refined endoscopic datum $(\mH_{M^*}, s_{M^*}, \Leta_{M^*})$ and parameter $\psi^{\mH_{M^*}} \in \Psi(\mH_{M^*})$ corresponding to the pair $(\psi_{M^*}, \Leta(s))$. It is clear from construction that under the map $Y: \mc{E}^r(M^*) \to \mc{E}^r(\U^*_{\Q_v})$ of \cite[\S 2.5]{BM2}, the image of the class of $(\mH_{M^*}, s_{M^*}, \Leta_{M^*})$ equals the class of  $(\mH,s, \Leta)$. Now by \cite[Proposition 2.20]{BM2}, we can choose a refined datum equivalent to $(\mH_{M^*}, s_{M^*}, \Leta_{M^*})$ fitting into an embedded datum $(\mH, \mH_{M^*}, s, \Leta)$. We observe that $\mH_{M^*}$ is a Levi subgroup of $\mH$.

Now, $\Leta|_{M^*}$ induces a map $Z(\widehat{M^*}) \hookrightarrow Z(\widehat{\mH_{M^*}})$ and hence $\phi_{\lambda}$ yields a central parameter $\psi_{\lambda_{\mH}}$ of $\LL\mH_{M^*}$. It is easy to see that by definition $\psi^{\mH_{M^*}} \cdot \phi_{\lambda_{\mH}} = \psi^{\mH}$ under the natural inclusion $\LL\mH_{M^*} \hookrightarrow \LL\mH$. Hence, we can define a packet $\Pi_{\psi^{\mH}}(\mH)$ and pairing
\begin{equation*}
    \langle \cdot, \cdot \rangle : \Pi_{\Psi^{\mH}}(\mH) \times S^{\natural}_{\psi^{\mH}} \to \C^{\times},
\end{equation*}
using the above procedure.

We need to verify that the resulting pairing satisfy the endoscopic character identity. Let $f, f^{\mH}$ be $\Delta[\mf{w},\varrho,z]$-matching functions. Let $f_P \in \mc{H}(M^*)$ and $f^{\mH}_{P^*_{\mH}} \in \mc{H}(\mH_{M^*})$ be the corresponding constant term functions. By the paragraph at the top of page $237$ and the remark on page $239$ of \cite{vanDijk} it follows that
\begin{equation*}
    \tr(I^{\U}_P(\pi_M) \mid f) =  \tr(\pi_M \mid f_P),
\end{equation*}
and similarly for $f^{\mH}_{P^*_{\mH}}$. We can restrict the splitting of $\U^*_{\Q_v}(n)$ to $M^*$ and together with the character $\phi$, this gives a Whittaker datum $\mf{w}_{M^*}$. By \cite[Proposition 5.3]{BM2}, the corresponding canonical transfer factor $\Delta[\mf{w}_{M^*}, \varrho_{M^*}, z_{M^*}]$ satisfies
\begin{equation*}
    \Delta[\mf{w}_{M^*}, \varrho_{M^*}, z_{M^*}](\gamma_{\mH}, \gamma) = |D^{\U^*_{\Q_v}(n)}_{M^*}(\gamma)|^{-\frac{1}{2}} |D^{\mH}_{\mH_{M^*}}(\gamma^{\mH})|^{\frac{1}{2}} \Delta[\mf{w}, \varrho, z](\gamma_{\mH}, \gamma)
\end{equation*}
for regular $\gamma \in M^*(\Q_v), \gamma^{\mH} \in \mH_{M^*}(\Q_v)$ and where we recall that  $D^G_{M}(\gamma)$ is defined to equal $\mathrm{det}(1- \mathrm{ad}(m))|_{\mathrm{Lie}(G) \setminus \mathrm{Lie}(M)}$.

We now claim that $f_P$ and $f^{\mH}_{P^*_{\mH}}$ are $\Delta[\mf{w}_{M^*}, \varrho_{M^*}, z_{M^*}]$-matching. If we can show this then we will have
\begin{align*}
\sum_{\pi^{\mH} \in \Pi_{\psi^{\mH}}(\mH)} \langle \pi^{\mH}, s_{\psi^{\mH}} \rangle\tr(\pi^{\mH} \mid f^{\mH})   & = \sum_{\pi_{\mH_{M^*}} \in \Pi_{\psi^{\mH_{M^*}}}(\mH_{M^*})} \langle \pi_{\mH_{M^*}}, s_{\psi} \rangle\tr(\pi_{\mH_{M^*}} \mid f^{\mH}_{P^*_{\mH}})\\
&= e(M) \sum_{\pi_M \in \Pi_{\psi_{M^*}}(M, \varrho_{M^*})} \langle \pi_M, \Leta(s) \cdot s_{\psi_{M^*}}\rangle \tr(\pi_M \mid f_P)\\
&=e(\U) \sum_{\pi \in \Pi_{\psi}(\U, \varrho)} \langle \pi, \Leta(s) \cdot s_{\psi}\rangle \tr(\pi \mid f),
\end{align*}
as desired. Note that in the above we use that $e(M)=e(\U)$ which is part of the proposition on page $292$ of \cite{Kot4}.

Suppose $\gamma_{\mH} \in \mH_{M^*}(\Q_v)$ and $\gamma \in M(\Q_v)$ are strongly regular elements that transfer to each other. Then by \cite[Lemma 9]{vanDijk}, we have the following equality of orbital integrals (and analogously for $f^{\mH}$):
\begin{equation*}
    O^{\U}_{\gamma}(f)=|D^{\U}_M(\gamma)|^{- \frac{1}{2}} O^{M}_{\gamma}(f_P),
\end{equation*}
and hence, since $f$ and $f^{\mH}$ are $\Delta[\mf{w}, \varrho, z]$-matching:
\begin{align*}
    SO^{\mH_{M^*}}_{\gamma_{\mH}}(f^{\mH}_{P^*_{\mH}}) & = |D^{\mH}_{\mH_{M^*}}(\gamma_{\mH})|^{ \frac{1}{2}}SO^{\mH}_{\gamma_{\mH}}(f^{\mH})\\
    & = |D^{\mH}_{\mH_{M^*}}(\gamma_{\mH})|^{ \frac{1}{2}} \sum\limits_{\gamma' \sim_{st} \gamma} \Delta[\mf{w}, \varrho, z](\gamma_{\mH}, \gamma') O^{\U}_{\gamma'}(f)\\
    &  = |D^{\mH}_{\mH_{M^*}}(\gamma_{\mH})|^{ \frac{1}{2}} |D^{\U}_M(\gamma)|^{- \frac{1}{2}} \sum\limits_{\gamma' \sim_{st, \U} \gamma} \Delta[\mf{w}, \varrho, z](\gamma_{\mH}, \gamma') O^{M}_{\gamma'}(f_P)\\
    & = \sum\limits_{\gamma' \sim_{st, M} \gamma} \Delta[\mf{w}_{M^*}, \varrho_{M^*}, z_{M^*}](\gamma_{\mH}, \gamma') O^{M}_{\gamma'}(f_P),
\end{align*}
as desired. Note that we use that the number of conjugacy classes of $\gamma$ in the stable class is the same for $\U$ and $M$ (this follows from the injection $H^1(\Q_v, M) \hookrightarrow H^1(\Q_v, \U)$).

\end{proof}
\textbf{The global classification} \text{} 

We now consider the global situation. Recall that we have fixed a quadratic imaginary extension $E/\Q$ and are considering global unitary groups $\U=\U(V)$ that are quasi-split at the finite places and with fixed quasi-split inner form $\U^*(n)$. Due to the lack of global $L$-group, we rely on the cuspidal automorphic representations of $\GL_n(\mathbb{A}_E)$ to define the notion of global parameters as in \cite{ArthurBook} (cf. \cite{KMSW}). Let $\Psi(N)$ denote the set of all formal sums 
\[
\psi^n = \ell_1 (\pi_1 \boxtimes \nu_1) \boxplus \cdots \boxplus \ell_r (\pi_r \boxtimes \nu_r),
\]
where $ \ell_i $ are positive integers, $\pi_i$ are cuspidal automorphic representations of $\GL_n(\mathbb{A}_E)$ and $\nu_i$ are algebraic representations of $\SL_n(\C)$ such that $\pi_i \boxtimes \nu_i$ are pairwise disjoint.

We denote $(\pi \boxtimes \nu)^* = \pi^* \boxtimes \nu$ where $\pi^* = (\pi^c)^{\vee}$ the conjugate dual representation of $\pi$. Now for $\psi = \ell_1 (\pi_1 \boxtimes \nu_1) \boxplus \cdots \boxplus \ell_r (\pi_r \boxtimes \nu_r) $, we say that $\psi$ is self-dual if there exists an involution $ i \longmapsto i^{*} $ of $ \{ 1, \cdots , r \} $ such that $(\pi_i \boxtimes \nu_i)^{*} = \pi_{i^{*}} \boxtimes \nu_{i^{*}}$ and $\ell_i = \ell_{i^*}$. From a self-dual formal sum $ \psi^n $, we can construct a group $\mathcal{L}_{\psi^n}$ and a map (\cite[p. 22, 23; definition 2.4.3]{C.P.Mok})
\[
	\widetilde{\psi}^n : \mathcal{L}_{\psi^n} \times \SL_2(\C) \longrightarrow  \prescript{L}{}{\GL_{n,E}}. 
\]

Recall that we have a base change map $ \eta_B : \prescript{L}{}{\U^*(n)} \longrightarrow \prescript{L}{}{\GL_{n,E}} $.
\begin{definition}
The set of global $L$-parameters $ \Psi (\U^*(n), \eta_B)$ of $\U^*(n)$ is the set consisting of pairs $(\psi^n, \widetilde{\psi})$ where $ \psi^n $ is a self-dual formal sum and $\widetilde{\psi} : \mathcal{L}_{\psi^n} \times \SL_2(\C) \longrightarrow \prescript{L}{}{\U^*(n)} $ is a map such that $\widetilde{\psi^n} = \eta_B \circ \widetilde{\psi}$. 
\end{definition}	

We remark that $\widetilde{\psi}$ is determined by the base change map $\eta_B$ and $\widetilde{\psi}^n$, and 
as in the local case, from the map $\widetilde{\psi}$, we can define various centralizer groups $ S_{\widetilde{\psi}} $, $ \ov{S}_{\widetilde{\psi}} $, $ \overline{\mathcal{S}}_{\widetilde{\psi}} $, $ S^{\natural}_{\widetilde{\psi}} $. 

There is a localization morphism $\psi \longmapsto \psi_v $ from $\Psi(\U^*(n))$ to $\Psi^{+}(\U^*_{\Q_v}(n))$ \cite[p. 18,19]{C.P.Mok}. More precisely, if $v$ is a place of $\Q$ that splits in $E$ then $E_v = E_w \times E_{\overline{w}}$ and $\U^*_{\Q_v}(n)  \cong  \GL_{n, E_w}$ where $w$, $\overline{w}$ are the primes of $E$ above $v$. Moreover, there is an identification $\Q_v = E_w$ and therefore we can define $\psi_v = \psi^n_w$. If $v$ is a place of $\Q$ that does not split in $E$ then $E_v$ is a quadratic extension of $\Q_v$. By \cite[Theorem. 2.4.10]{C.P.Mok} the localization $\psi^n_v$ of $\psi^n$ factors through the base change map $\eta_B$ so that it defines a parameter $\psi_v$ in $\Psi^{+}(\U^*_{\Q_v}(n))$. 

According to Theorem \ref{itm: local} and Theorem \ref{localECIpsi+}, for each $\psi_v \in \Psi^+(\U_v)$ we have a packet $\Pi_{\psi_v}(\U_v)$ together with a map
\[
\Pi_{\psi_v} (\U_v) \longrightarrow \text{Irr}(S^{\natural}_{\psi_v}), \qquad \pi_v \mapsto \langle \pi_v, - \rangle.
\]

We denote
\[
\Pi_{\psi}(\U, \varrho) := \left\lbrace  \bigotimes_v \pi_v : \pi_v \in \Pi_{\psi_v}(\U_v, \varrho_v) \ | \ \langle\pi_v, - \rangle = 1 \quad \text{for almost all $v$} \right\rbrace .
\]

Since the localization maps $ \psi \longmapsto \psi_v $ induce the localization maps $ S^{\natural}_{\psi} \longrightarrow S^{\natural}_{\psi_v} $ for centralizer groups (\cite[pg 71]{KMSW}), we can associate to each $ \bigotimes_v \pi_v \in \Pi_{\psi}(\U, \varrho) $ a character of $S^{\natural}_{\psi}$ by the following formula
\[
\langle \pi,s \rangle := \prod_v \langle \pi_v, s_v \rangle, \quad s \in S^{\natural}_{\psi}
\]
where $s_v$ is the image of $s$ by the localization morphism $S^{\natural}_{\psi} \longrightarrow S^{\natural}_{\psi_v}$. The global pairing $\langle \pi, \cdot \rangle$ descends to a character of $\ov{\mathcal{S}}_{\psi}$ (see \cite[pg 89]{KMSW}).

\begin{definition} \phantomsection \label{itm : paquets}
	Let $ \Pi_{\psi}(\U, \varrho, \epsilon_{\psi}) := \left\lbrace \pi \in \Pi_{\psi} ( \U, \varrho ) : \langle \pi , - \rangle_{\varrho} = \epsilon_{\psi} \right\rbrace $ where $\epsilon_{\psi}$ is the Arthur character of $\ov{\mathcal{S}}_{\psi}$. If $\psi$ is a generic parameter then $\epsilon_{\psi} \equiv 1 $. 
\end{definition} 

\begin{theorem}(\cite{KMSW} theorem $ 1.7.1) $ \phantomsection \label{itm: global}
 There is an isomorphism of $\U(\mathbb{A})$-modules
\[
L^2_{\text{disc}}(\U(\Q)\setminus \U(\mathbb{A})) \simeq \bigoplus_{\psi \in \Psi(\U^*(n))} L^2_{\text{disc}, \psi}(\U(\Q) \setminus \U (\mathbb{A})).
\]

If $ \psi = \phi $ is generic then
\begin{enumerate}
\item [$\bullet$] $L^2_{\text{disc}, \phi} (\U(\Q) \setminus \U (\mathbb{A})) = 0 $ if $ \psi \notin \Psi_2 (\U^*(n)) $.
\item [$\bullet$] $L^2_{\text{disc}, \phi} (\U(\Q) \setminus \U (\mathbb{A})) \simeq \bigoplus_{\pi \in \Pi_{\phi} (\U, \epsilon_{\psi})} \pi $ if $ \psi \in \Psi_2(\U^*(n)) $.
\end{enumerate}
In particular, if $ \pi $ is an automorphic representation of the unitary group $ \U $ belonging to a generic global packet then $ m_{\pi} = 1$.
\end{theorem}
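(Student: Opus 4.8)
The plan is to deduce this from \cite[Theorem 1.7.1]{KMSW} and the analogous results of \cite{C.P.Mok}, which give exactly this statement but phrased with the \emph{arithmetic} normalization of local reciprocity, hence with the arithmetically normalized local packets $\Pi^A_{\psi_v}$ appearing in the proof of Theorem \ref{itm: local} in place of ours. The first point is that much of the assertion is insensitive to the normalization: the index set $\Psi(\U^*(n))$ is defined purely in terms of self-dual formal sums of cuspidal automorphic representations of $\GL_n(\A_E)$, with no reference to a normalization of local reciprocity. Adopting the natural convention that $L^2_{\disc,\psi}$ denotes the subspace of $L^2_{\disc}$ whose local constituents at the unramified places are prescribed by the geometrically normalized packets of Theorem \ref{itm: local} (equivalently, by the KMSW packets attached to the contragredient parameters), the Hilbert-space decomposition $L^2_{\disc}=\bigoplus_\psi L^2_{\disc,\psi}$ and the vanishing of $L^2_{\disc,\phi}$ for generic $\phi\notin\Psi_2(\U^*(n))$ will follow from the corresponding assertions of \cite[Theorem 1.7.1]{KMSW} by re-indexing along the involution $\psi\mapsto\psi^\vee$ of $\Psi(\U^*(n))$ (the one sending a formal sum to its contragredient, equivalently precomposing local parameters with the Chevalley involution $\LL C$), which preserves $\Psi_2(\U^*(n))$. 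Thus the substantive point is the multiplicity formula for $\phi\in\Psi_2(\U^*(n))$.

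For the multiplicity formula itself I would apply the global contragredient functor. Complex conjugation on unitary automorphic representations is a $\C$-antilinear isometry of $L^2_{\disc}(\U(\Q)\backslash\U(\A))$ intertwining the $\U(\A)$-action with its contragredient; it therefore preserves multiplicities and carries the summand that \cite{KMSW} attach to a parameter $\psi$ onto the summand our convention attaches to $\psi$, because by Theorem \ref{itm: local} our local packet $\Pi_{\psi_v}(\U_v,\varrho_v)$ consists precisely of the contragredients of the members of the KMSW packet $\Pi^A_{\psi_v}(\U_v,\varrho_v)$. Tensoring these local identifications up identifies $\Pi_\psi(\U,\varrho)$ with the relevant KMSW global packet, and the condition $\langle\pi,-\rangle=\epsilon_\psi$ matches the KMSW condition under the $\widehat C$-equivariant isomorphism $S^\natural_\psi\xrightarrow{\sim}S^\natural_{\psi^\vee}$ and the relation $\iota_{\mf{w}}(\pi^\vee)=\iota^A_{\mf{w}^{-1}}(\pi)^\vee$ recorded in Theorem \ref{itm: local}, together with Proposition \ref{itm : trace - dual}; here one uses that the Arthur sign $\epsilon_\psi$ depends only on symplectic root numbers, hence neither on the normalization nor on the Whittaker datum, and that $\epsilon_\psi\equiv 1$ in the generic case. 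Granting this, \cite[Theorem 1.7.1]{KMSW} applied to $\psi$ becomes the displayed isomorphism for $\psi$, and the final assertion $m_\pi=1$ is then immediate: for generic $\phi\in\Psi_2(\U^*(n))$ the space $L^2_{\disc,\phi}\cong\bigoplus_{\pi\in\Pi_\phi(\U,1)}\pi$ is multiplicity-free, since distinct members of $\Pi_\phi(\U,1)$ already differ at some finite place $v$ because $\iota_{\mf{w}_v}\colon\Pi_{\phi_v}(\U_v,\varrho_v)\to\Irr(S^\natural_{\phi_v})$ is injective by Theorem \ref{itm: local}(2).

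I expect the only genuine difficulty to be organizational bookkeeping rather than new mathematical input: one must keep straight the interplay between (i) the contragredient on automorphic representations, (ii) the Chevalley involution $\widehat C$ and its effect on the centralizer groups $S^\natural_\psi$ and on the characters entering the packet data, (iii) the switch $\mf{w}\leftrightarrow\mf{w}^{-1}$ of global Whittaker datum built into our definition of $\iota_{\mf{w}}$, and (iv) the verification that $\psi\mapsto\psi^\vee$ leaves $\Psi_2(\U^*(n))$ and the sign $\epsilon_\psi$ invariant. Each of these ingredients is either elementary or already available (in \cite{KalContra}, \cite{KMSW}, \cite{C.P.Mok}, or in Proposition \ref{itm : trace - dual}, Theorem \ref{itm: local} and Theorem \ref{localECIpsi+} above), so the argument is essentially a careful translation of the KMSW/Mok classification through the contragredient.
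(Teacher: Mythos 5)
Your proposal is correct in substance and rests on the same foundation as the paper's treatment, namely a direct appeal to \cite[Theorem 1.7.1]{KMSW} (and \cite{C.P.Mok}); but the two differ in where they spend their effort. The paper offers essentially no argument: it states the theorem as a citation and, in the remark that follows, checks only the one hypothesis of the KMSW theorem that is not automatic in its setting, namely that $\U$ arises as a genuine \emph{pure} inner twist of $\U^*(n)$ (not merely an extended pure inner twist), which holds because $\U$ comes from a hermitian form. You omit this point, and you should add it, since it is the hypothesis the source theorem actually imposes. Conversely, the bookkeeping you carry out --- re-indexing the spectral decomposition along $\psi\mapsto\psi^\vee$, transporting the multiplicity formula through the global contragredient, matching $\iota_{\mf{w}}(\pi^\vee)=\iota^A_{\mf{w}^{-1}}(\pi)^\vee$ and the character condition $\langle\pi,-\rangle=\epsilon_\psi$ across the Chevalley involution, and noting that $\epsilon_\psi$ and $\Psi_2(\U^*(n))$ are preserved --- is exactly the translation between the arithmetic and geometric normalizations that the paper performs carefully at the local level (in the proof of Theorem \ref{itm: local} and Proposition \ref{itm : trace - dual}) but passes over in silence at the global level. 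Your version is therefore the more complete one on that front, and the individual steps you invoke (antilinearity of complex conjugation on $L^2_{\disc}$, injectivity of $\iota_{\mf{w}_v}$ from Theorem \ref{itm: local}(2), normalization-independence of the index set $\Psi(\U^*(n))$) are all sound; the only genuine omission is the pure inner twist verification noted above.
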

\begin{remark}
We remark that the theorem as we have stated it here is proved unconditionally in \cite{KMSW}. The careful reader will note that the theorem of \cite{KMSW} requires that $\U$ arises as a pure inner twist of $\U^*(n)$. Indeed, this will be true since we are assuming $\U$ comes from a hermitian form (see \cite[pg 18]{KMSW}).
\end{remark}
\subsection{Automorphic representations of unitary similitude groups}
In this section, we want to transfer the results about automorphic representations from unitary groups to unitary similitude groups (with an odd number of variables). We begin with the local case.

Let $v$ be a finite place of $\Q$ that does not split over $E$, let $n$ be an odd positive integer, and let $\GU$ be an inner form of $\GU^*_{\Q_v}(n)$ and denote the corresponding unitary group by $\U$. Fix a  $\Gamma_{\Q_v}$-invariant splitting of $\GU$ and restrict to get a $\Gamma_{\Q_v}$-invariant splitting of $\U$. Fix also a character $\varphi: \Q_v \to \C^{\times}$. This data gives us Whittaker data $\mf{w}_{\GU}$ and $\mf{w}_{\U}$ of $\GU$ and $\U$ respectively.

We give $\GU$ the structure of an extended pure inner twist $(\varrho_{\GU}, z_{\GU})$. We can restrict $\varrho$ to $\U$ to get an inner twist $\varrho_{\U} : \U^*_{\Q_v}(n) \to \U$. We give this the structure of an extended pure inner twist $(\varrho_{\U}, z_{\U})$. Note that since we are assuming $n$ is odd, $\GU$ will automatically be quasi-split. By Lemma \ref{itm : GU = ZxU}, we have $\GU(\Q_v) = \U(\Q_v)E_v^{\times}$ and then the following result:
\begin{corollary} \label{itm : GU = U E}
There is a natural bijection between the set $\Pi(\GU)$ and the set of pairs $(\pi, \chi)$ such that $\pi \in \Pi(\U)$ and $\chi$ is a character of $E^{\times}_v$ such that $\chi|_{E_v^{\times} \bigcap \U(\Q_v)} = \omega_{\pi}|_{E_v^{\times} \bigcap \U(\Q_v)}$ where $\omega_{\pi}$ is the central character of $\pi$.
\end{corollary}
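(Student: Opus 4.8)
\emph{Proof sketch.} The plan is to leverage the decomposition $\GU(\Q_v) = \U(\Q_v)E_v^\times$ of Lemma \ref{itm : GU = ZxU}, together with the fact that the diagonal embedding identifies $E_v^\times$ with the \emph{full} center $Z(\GU)(\Q_v)$, so that $E_v^\times$ is a \emph{central} subgroup of $\GU(\Q_v)$. Set $A := \U(\Q_v) \cap E_v^\times$; inspecting the description of $T$ one sees $A$ consists of the scalars $\diag(t,\dots,t)$ with $t\sigma(t) = 1$, so $A = Z(\U)(\Q_v)$. The bijection will be
\[
\Pi \longmapsto \bigl(\Pi|_{\U(\Q_v)},\, \omega_\Pi\bigr),
\]
where $\omega_\Pi$ is the central character of $\Pi$, regarded as a character of $E_v^\times = Z(\GU)(\Q_v)$.

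First I would check this map lands in the claimed set. Since $E_v^\times$ is central, Schur's lemma forces it to act on $\Pi$ through the scalar character $\omega_\Pi$. Hence every $\U(\Q_v)$-stable subspace of $\Pi$ is automatically $E_v^\times$-stable, thus $\GU(\Q_v)$-stable because $\GU(\Q_v) = \U(\Q_v)E_v^\times$; therefore $\pi := \Pi|_{\U(\Q_v)}$ is irreducible, and it is smooth and admissible because $\Pi$ is. The compatibility $\omega_\Pi|_A = \omega_\pi|_A$ is automatic: an element $a \in A$ acts on $\Pi$ both by $\omega_\Pi(a)$ (as $a \in E_v^\times$) and by $\omega_\pi(a)$ (as $a \in Z(\U)(\Q_v)$), so the two restrictions agree.

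Conversely, given a pair $(\pi,\chi)$ with $\chi|_A = \omega_\pi|_A$, I would define $\Pi$ on the underlying space of $\pi$ by $\Pi(uz)v := \chi(z)\,\pi(u)v$ for $u \in \U(\Q_v)$, $z \in E_v^\times$. This is well defined: if $u_1z_1 = u_2z_2$, put $a := u_2^{-1}u_1 = z_2z_1^{-1} \in A$, so $\chi(z_2) = \chi(a)\chi(z_1) = \omega_\pi(a)\chi(z_1)$, which gives $\chi(z_1)\pi(u_1) = \chi(z_2)\pi(u_2)$. As $E_v^\times$ is central in $\GU(\Q_v)$, this is a genuine representation; it is smooth because a vector fixed by a compact open $K \subset \U(\Q_v)$ on which $\chi$ is trivial on some compact open $U \subset E_v^\times$ is fixed by the compact open subgroup $KU$ of $\GU(\Q_v)$, and admissibility follows in the same way. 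Irreducibility of $\Pi$ follows from that of $\pi$ exactly as above, and the two assignments are visibly mutually inverse.

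There is no serious obstacle here; the argument is elementary group theory plus Schur's lemma. The one essential input is that $\GU(\Q_v) = \U(\Q_v)E_v^\times$ with $E_v^\times$ central, i.e.\ that $E_v^\times$ already realizes all similitude factors (Lemma \ref{itm : GU = ZxU}) — this is precisely the odd-rank phenomenon flagged in the introduction, and it is what makes the naive restriction argument work; in the even case one would instead face the nontrivial quotient $\GU(\Q_v)/\U(\Q_v)Z(\GU)(\Q_v)$ and a genuine Clifford-theoretic analysis.
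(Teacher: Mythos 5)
Your argument is correct and is exactly the standard deduction the paper intends: the corollary is stated as an immediate consequence of Lemma \ref{itm : GU = ZxU}, and your restriction/extension argument via the central subgroup $E_v^\times = Z(\GU)(\Q_v)$, with Schur's lemma and the surjectivity $\GU(\Q_v)=\U(\Q_v)E_v^\times$, is the proof the paper leaves implicit. No gaps.
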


We use this corollary to define $A$-packets of representations for $\GU$ and the associated $A$-parameters. Fix $\chi$ a character of $Z(\GU)$ corresponding to a morphism $ \tilde{\chi} : \mc{L}_{\Q_v} \longrightarrow \widehat{\GU^*_{\Q_v}(n)}_{\ab} \rtimes W_{\Q_v} = (\C^{\times} \times \C^{\times}) \rtimes W_{\Q_v} $, and a parameter $\psi \in \Psi^+(\U^*_{\Q_v}(n))$ given by $\psi: \mc{L}_{\Q_v} \times \SU(2) \to \LU = \GL_n(\C) \rtimes W_{\Q_v}$ such that $\omega_\pi|_{E_v^{\times} \bigcap \U(\Q_v)} = \chi|_{E_v^{\times} \bigcap \U(\Q_v)}$ for each $\pi \in \Pi_{\psi}(\U, \varrho_{\U})$. 

We can view $\LGU = \GL_n(\C) \times \C^{\times} \rtimes W_{\Q_v} $ as a product of $\LU = \GL_n(\C) \rtimes W_{\Q_v} $ and $\widehat{\GU^*_{\Q_v}(n)}_{\ab} \rtimes W_{\Q_v} = (\C^{\times} \times \C^{\times}) \rtimes W_{\Q_v} $ over $ \C^{\times} \rtimes W_{\Q_v} $ where the first projection is given by $ g \rtimes \alpha \longmapsto \det g \rtimes \alpha $ and the second is given by $ (x, y) \rtimes \alpha \longmapsto x \rtimes \alpha $. The above pair $ (\psi, \chi) $ then defines a unique morphism $ \Tilde{\psi} : \mc{L}_{\Q_v} \times \SU(2)  \longrightarrow \GL_n(\C) \times \C^{\times} \rtimes W_{\Q_v} $. Conversely, each  $\tilde{\psi} \in \Psi^+(\GU^*_{\Q_v}(n))$ gives rise to a pair $(\psi, \chi)$. We summarize these relationships in the following commutative diagram:

\begin{center}
 \begin{tikzpicture}[scale = 1]
\draw (0.4, 2.6) node {$\mc{L}_{\Q_v} \times \SU(2)$};
\draw (8, -1.5) node {$ \square $};
\draw (5, 0) node {$\GL_n(\C) \times \C^{\times} \rtimes W_{\Q_v}$};
\draw (11, 0) node {$\C^{\times} \times \C^{\times} \rtimes W_{\Q_v}$};
\draw (5, -3) node {$\GL_n(\C) \rtimes W_{\Q_v}$};
\draw (11, -3) node {$\C^{\times} \rtimes W_{\Q_v}$};
\draw[->] (0.4, 2.35) -- (5, 0.3) node[midway, below]{$ \Tilde{\psi} $} ;
\draw[->] (0.4, 2.35) -- (4.75, -2.5) node[midway, below]{$\psi$};
\draw[->] (0.4, 2.35) -- (10.75, 0.3) node[midway, above]{$\tilde{\chi}$};
\draw[->] (6.75, 0) -- (9.5, 0) node[midway, below]{$(\det \times \Id) \rtimes \Id$};
\draw[->] (6.45, -3) -- (10.1, -3) node[midway, above]{$\det \rtimes \Id$};
\draw[->] (5, -0.5) -- (5, -2.5) node[midway, right]{$\text{pr}_1$};
\draw[->] (11, -0.5) -- (11, -2.5) node[midway, right]{$\text{pr}_1$};
\end{tikzpicture}   
\end{center}

We define the associated $A$-packet of representations for $ \GU({\Q_v}) $ to be the set 
\begin{equation*}
    \Pi_{\tilde{\psi}}(\GU, \varrho_{\GU}) := \left\{ (\pi, \chi) | \pi \in \Pi_{\psi}(\U, \varrho), \omega_{\pi}|_{E_v^{\times} \bigcap \U(\Q_v)} = \chi|_{E_v^{\times} \bigcap \U(\Q_v)} \right\}.
\end{equation*}

We now use the internal structure of $\Pi_{\psi}(\U, \varrho_{\U})$ to describe that of $\Pi_{\tilde{\psi}}(\GU, \varrho_{\GU})$. Let us first describe the relations between the various centralizer groups for $\psi$ and $\tilde{\psi}$. 

\begin{lemma} \label{itm : centralizer of GU and U}
With $\psi$ and $\tilde{\psi}$ as above, we have
\[
 S_{\tilde{\psi}} = S_{\psi}^+ \times \C^{\times}, \qquad \overline{\mathcal{S}}_{\psi} = \overline{\mathcal{S}}_{\tilde{\psi}}, \qquad S^{\natural}_{\tilde{\psi}}= \pi_0(S^+_{\psi}) \times \C^{\times},
\]
where $ S_{\psi}^+ = \{ g \in S_{\psi} | \det g = 1 \} $.
\end{lemma}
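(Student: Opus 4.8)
The plan is to unwind the definitions of the centralizer groups using the commutative square relating $\tilde\psi$ to the pair $(\psi,\chi)$, and to compute each invariant by hand inside $\widehat{\GU^*_{\Q_v}(n)} = \GL_n(\C)\times\C^\times$. The starting point is the observation that $\widehat{\GU}$ decomposes as the fibre product of $\widehat{\U}=\GL_n(\C)$ and $\widehat{\GU}_{\ab}=\C^\times\times\C^\times$ over $\C^\times$ via $(\det\rtimes\id,\mathrm{pr}_1)$, so that an element of $\widehat{\GU}$ centralizing $\mathrm{Im}\,\tilde\psi$ is precisely a pair $(g,y)$ with $g$ centralizing $\mathrm{Im}\,\psi$ \emph{and} $\det g$ equal to the scalar by which $\mathrm{pr}_1\circ\tilde\chi$ differs on the ambient $\C^\times$ — but since $\C^\times$ is central in $\GL_n(\C)\times\C^\times$, the constraint from $\tilde\chi$ is automatic and the only condition is on $g$. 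First I would make precise that $S_{\tilde\psi}=\{(g,y): g\in\GL_n(\C),\ y\in\C^\times,\ g\ \text{centralizes}\ \mathrm{Im}\,\psi,\ \det g = 1\}$; the determinant-one condition appears because the image of $\tilde\psi$ in the $\GL_n(\C)$-factor is constrained by the relation $\det\circ\,\tilde\psi = (\text{the }\GL_1\text{-part of }\tilde\chi)$ coming from part (3) of Theorem \ref{itm: local} applied to the central character, forcing any centralizing $g$ to preserve that determinant, i.e.\ to have $\det g=1$ after normalizing. This identifies $S_{\tilde\psi}=S_\psi^+\times\C^\times$.

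Next I would compute $\overline{\mathcal S}_{\tilde\psi}=\pi_0(S_{\tilde\psi}/Z(\widehat{\GU})^{\Gamma})$. Here $Z(\widehat{\GU})^\Gamma$ is the $\C^\times\times\C^\times$ modulo the Galois action described on page 38 of \cite{Mo}, which cuts out a copy of $\C^\times$ (the diagonal/similitude direction) times the finite part; the key point is that the $\C^\times$-factor of $S_{\tilde\psi}$ is exactly absorbed into $Z(\widehat{\GU})^\Gamma$, so $\overline S_{\tilde\psi}\cong S_\psi^+/(S_\psi^+\cap Z(\widehat{\U})^\Gamma)$. Then I would compare this with $\overline{\mathcal S}_\psi = \pi_0(S_\psi/Z(\widehat{\U})^\Gamma)$. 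The inclusion $S_\psi^+\hookrightarrow S_\psi$ induces an isomorphism on $\pi_0$ after quotienting by the center because $S_\psi = S_\psi^+\cdot Z(\widehat{\U})^\Gamma\cdot(\text{scalars})$: every element of $S_\psi$ can be rescaled by a scalar matrix (which lies in $Z(\GL_n(\C))$, and scalars with the right determinant lie in the connected group $\C^\times$) to have determinant one, and scalar matrices are connected to the identity, so they do not change the component. This gives $\overline{\mathcal S}_\psi = \overline{\mathcal S}_{\tilde\psi}$.

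Finally, for $S^\natural_{\tilde\psi} = S_{\tilde\psi}/S^{\rad}_{\tilde\psi}$ where $S^{\rad}_{\tilde\psi}=(S_{\tilde\psi}\cap\widehat{\GU}_{\der})^\circ$, I would note that $\widehat{\GU}_{\der}=\SL_n(\C)\times 1$, so $S^{\rad}_{\tilde\psi}=(S_\psi^+\cap\SL_n(\C))^\circ\times 1 = S^{\rad}_\psi$ (using that $S_\psi^+\subset\SL_n(\C)$ already, hence $S^{\rad}_{\tilde\psi}=(S_\psi^+)^\circ$, which one checks equals $S^{\rad}_\psi$ by the same scalar-rescaling argument since $\widehat{\U}_{\der}=\SL_n(\C)$). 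Therefore $S^\natural_{\tilde\psi} = (S_\psi^+\times\C^\times)/(S_\psi^+)^\circ = \pi_0(S_\psi^+)\times\C^\times$, as claimed. The main obstacle I anticipate is pinning down exactly how $Z(\widehat{\GU})^\Gamma$ sits inside $\GL_n(\C)\times\C^\times$ and verifying that its intersection and image interact with $S_\psi^+$ and $\C^\times$ in the way asserted — i.e.\ the bookkeeping of the Galois action on the two $\C^\times$ factors of $Z(\widehat{\GU^*})$ is where all the content lies; the $\SL_n$-versus-$\GL_n$ comparisons and the component-group statements then follow formally from the fact that scalar matrices are central and connected.
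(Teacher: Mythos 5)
Your overall architecture is right and parts (2) and (3) are recoverable, but the justification of the key step — why $S_{\tilde{\psi}} = S_{\psi}^+\times\C^{\times}$ rather than $S_{\psi}\times\C^{\times}$ — is wrong. You assert that "the constraint from $\tilde{\chi}$ is automatic" because $\C^{\times}$ is central, and you then source the condition $\det g=1$ from the relation $\det\circ\tilde{\psi}=(\text{the }\GL_1\text{-part of }\tilde{\chi})$ via part (3) of Theorem \ref{itm: local}. That is a non sequitur: conjugation by $(g,c)$ always preserves determinants of elements, so "preserving the determinant of the image" imposes nothing on $\det g$. The actual source of the condition is the Galois action on $\widehat{\GU^*_{\Q_v}(n)}$, namely $\sigma((g,c))=(Jg^{-t}J^{-1}, c\det g)$ for $\sigma$ over the nontrivial element of $\Gal(E_v/\Q_v)$. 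Conjugating $(x,t)\rtimes\sigma\in\mathrm{Im}\,\tilde{\psi}$ by $(g,c)$ gives $(gx(Jg^tJ^{-1}),\ t\det(g)^{-1})\rtimes\sigma$, so the similitude coordinate is twisted by $\det(g)^{-1}$ and one needs $\det g=1$ in addition to $g\in S_{\psi}$. In your fibre-product language: the constraint from $\tilde{\chi}$ is exactly the nontrivial one, because $(\C^{\times}\times\C^{\times})\rtimes W_{\Q_v}$ is \emph{not} abelian (the Galois action sends $(x,y)$ to $(x^{-1},yx)$), and requiring $(\det g, c)$ to centralize $\mathrm{Im}\,\tilde{\chi}$ forces $\det g=1$. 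As written, your argument would yield $S_{\tilde{\psi}}=S_{\psi}\times\C^{\times}$, which is false.

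Two smaller points. In your computation of $\overline{\mathcal{S}}_{\psi}=\overline{\mathcal{S}}_{\tilde{\psi}}$, the claim that every element of $S_{\psi}$ can be rescaled "by a scalar matrix, connected to the identity" to have determinant one does not work as stated: general scalars $z\Id_n$ do not lie in $S_{\psi}$ (the same Galois twist forces $z^2=1$), so the only available scalars are $\pm\Id_n$, and $-\Id_n$ need not lie in the identity component of $S_{\psi}$. What saves the conclusion is not connectedness but the quotient by $Z(\widehat{\U})^{\Gamma_{\Q_v}}=\{\pm\Id_n\}$ itself: one first checks that $(\det g)^2=1$ for all $g\in S_{\psi}$ (take determinants in $gx(Jg^tJ^{-1})=x$), and since $n$ is odd $\det(-\Id_n)=-1$, so $S_{\psi}=S_{\psi}^+\sqcup(-\Id_n)S_{\psi}^+$ and $S_{\psi}^+\cong S_{\psi}/\{\pm\Id_n\}$. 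Also, $Z(\widehat{\GU})^{\Gamma_{\Q_v}}$ is exactly $\Id_n\times\C^{\times}$ for $n$ odd (no extra "finite part"), which is what makes $\overline{S}_{\tilde{\psi}}=S_{\psi}^+$ immediate. Your treatment of $S^{\natural}_{\tilde{\psi}}$ via $\widehat{\GU}_{\der}=\SL_n(\C)\times 1$ agrees with the paper and is fine.
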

\begin{proof}
For $(g , c)$ and $(x, t)$ in $\GL_n(\C) \times \C^{\times} $ and $\sigma \in W_{\Q_v}$ projecting to the nontrivial element of $\Gal(E_v, \Q_v)$, we have
\begin{align*}
    (g , c) \cdot (x, t) \rtimes \sigma \cdot (g^{-1} , c^{-1}) &= (gx , ct) \rtimes \sigma \cdot (g^{-1} , c^{-1}) \\
  &= \big( gx( J g^t J^{-1}) , t \det g^{-1} \big) \rtimes \sigma,
\end{align*}
where the second equality comes from the action of $\sigma$ on $(g^{-1} \times c^{-1})$. In particular, $(g, c) \in S_{\tilde{\psi}}$ if and only if $g \in S_{\psi}$ and $\det g = 1$. In other words, $S_{\psi}^+ \times \C^{\times} = S_{\tilde{\psi}}$. 

We now prove that $\overline{\mathcal{S}}_{\psi} = \overline{\mathcal{S}}_{\tilde{\psi}}$. By a direct calculation, we see that $ Z(\widehat{\U^*_{\Q_v}(n)})^{\Gamma_{\Q_v}} = \big\{ \pm \Id_n \big\} \simeq \Z/ 2\Z $ and $ Z(\widehat{\GU^*_{\Q_v}(n)})^{\Gamma_{\Q_v}} = \Id_n \times \C^{\times} $ (because $n$ odd). Hence $ \overline{\mathcal{S}}_{\tilde{\psi}} = \pi_0(S_{\psi}^+) $. We also remark that the equality $ gx( Jg^t J^{-1} ) = x $ implies $ (\det g)^2 = 1 $. Therefore, for every $ g \in S_{\psi} $ we have $ (\det g)^2 = 1 $. Moreover, since $ \det (- \Id_n) = -1 $, we have $ S^+_{\psi}  \cong S_{\psi} / Z(\widehat{\U^*_{\Q_v}(n)})^{\Gamma_{\Q_v}}$. Thus, $ \overline{\mathcal{S}}_{\tilde{\psi}} = \overline{\mathcal{S}}_{\psi} $ as desired.

Finally, we have $S^{\rad}_{\tilde{\psi}} = (S_{\tilde{\psi}} \cap (\SL_n(\C) \times 1))^{\circ} = (S^+_{\psi})^{\circ}$ which implies the description of $S^{\natural}_{\tilde{\psi}}$ in the statement of the lemma.
\end{proof}

We now construct a pairing:
\begin{equation*}
    \langle \cdot , \cdot \rangle_{\GU} : \Pi_{\tilde{\psi}}(\GU, \varrho_{\GU}) \times S^{\natural}_{\tilde{\psi}} \longrightarrow \C^{\times}. 
\end{equation*}
Let $(\pi, \chi) \in \Pi_{\tilde{\psi}}(\GU, \rho_{\GU})$. Then $\pi \in \Pi_{\psi}(\U, \varrho_{\U})$ and by Theorems \ref{itm: local} and \ref{localECIpsi+} there is an associated character $\langle \pi, \cdot \rangle_{\U} : S^{\natural}_{\psi} \longrightarrow \C^{\times}$. Note that since $S^{\rad}_{\psi} = (S^+_{\psi})^{\circ}$, we can restrict this character to $S^+_{\psi} / S^{\rad}_{\psi} $, and this restricted character factors to give a character of $\pi_0(S^+_{\psi})$. Via $z_{\GU}$ and the map 
\begin{equation*}
    \kappa: \mb{B}(\Q_v, \GU) \longrightarrow X^*(Z(\widehat{\GU})^{\Gamma_{\Q_v}})=X^*(1 \times \C^{\times}),
\end{equation*}
we get a character $\chi_{z_{\GU}}$ of $1 \times \C^{\times}$. Then in Lemma \ref{itm : centralizer of GU and U}, we showed that $S^{\natural}_{\tilde{\psi}} = \pi_0(S^+_{\psi}) \times \C^{\times}$. Hence we define
\begin{equation*}
    \langle (\pi, \chi) , (s, c) \rangle_{\GU} = \langle \pi, s \rangle_{\U} \chi_{z_{\GU}}(c). 
\end{equation*}

Suppose that $\tilde{\psi} \in \Psi(\GU^*_{\Q_v}(n))$ is generic. We show that $(\pi, \chi) \mapsto \langle (\pi, \chi), \cdot \rangle_{\GU}$ is bijective onto $\Irr( S^{\natural}_{\tilde{\psi}}, \chi_{z_{\GU}})$ by constructing an inverse. Pick a character $\mu_{\GU}$ of $S^{\natural}_{\tilde{\psi}}$ which restricts on $Z(\widehat{\GU})^{\Gamma_{\Q_v}}$ to the character $\chi_{z_{\GU}}$. Since  $\pi_0(S^+_{\psi})$ and $Z(\widehat{\U})^{\Gamma_{\Q_v}}$ generate $S^{\natural}_{\psi}$ and have trivial intersection, there is then a unique character $\mu_{\U}$ of $S^{\natural}_{\psi}$ that restricts to $\chi_{z_{\U}}$ on $Z(\widehat{\U})^{\Gamma_{\Q_v}}$ and $\mu_{\GU}$ on $\pi_0(S^+_{\psi})$. By $(2)$ of Theorem \ref{itm: local}, there then exists a $\pi \in \Pi_{\psi}(\U, \varrho_{\U})$ that gets mapped to $\mu_{\U}$, and by construction, $(\pi, \chi)$ maps to $\mu_{\GU}$. Hence $\mu_{\GU} \mapsto (\pi,  \chi)$ is our desired inverse.

We have now proven
\begin{theorem}{\label{localpairingGU}}
Parts $(1)$ and $(2)$ of Theorem \ref{itm: local} and Part $(1)$ of Theorem \ref{localECIpsi+} hold for $\GU$ for nonarchimedean $v$.
\end{theorem}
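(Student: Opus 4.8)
The plan is to unwind what Theorem \ref{localpairingGU} is actually asserting and observe that the work has essentially been done in the preceding discussion. Theorem \ref{itm: local}(1) and Theorem \ref{localECIpsi+}(1) together say that, for a generic parameter, there is a finite packet $\Pi_{\tilde{\psi}}(\GU, \varrho_{\GU})$ of representations of $\GU(\Q_v)$, that each member has a well-defined central character, and that the association $(\pi,\chi) \mapsto \langle (\pi,\chi), -\rangle_{\GU}$ lands in $\Irr(S^{\natural}_{\tilde{\psi}}, \chi_{z_{\GU}})$; Theorem \ref{itm: local}(2) adds (in the nonarchimedean case) that this map is a bijection. So the proof is really an assembly of the constructions made just above the statement: the packet $\Pi_{\tilde{\psi}}(\GU, \varrho_{\GU})$ was defined via Corollary \ref{itm : GU = U E} as the set of pairs $(\pi,\chi)$ with matching central characters, the pairing $\langle \cdot,\cdot\rangle_{\GU}$ was constructed explicitly using $\langle \pi, \cdot\rangle_{\U}$ and the character $\chi_{z_{\GU}}$ coming from $z_{\GU}$, and an explicit inverse to $(\pi,\chi) \mapsto \langle(\pi,\chi),-\rangle_{\GU}$ was written down. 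So I would present the proof as a short verification that collects these pieces.

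Concretely, first I would recall that $\Pi_{\tilde{\psi}}(\GU,\varrho_{\GU})$ is finite: it surjects onto $\Pi_{\psi}(\U,\varrho_{\U})$ (finite by Theorem \ref{itm: local}(1)) with fibers the set of characters $\chi$ of $E_v^\times$ extending $\omega_\pi|_{E_v^\times \cap \U(\Q_v)}$, and by the structure of $\GU(\Q_v) = \U(\Q_v)E_v^\times$ from Lemma \ref{itm : GU = ZxU} this fiber is a torsor under the finite group of characters of $E_v^\times/(E_v^\times\cap\U(\Q_v))\cdot(\text{something})$; more precisely one only cares about the packet being finite, which follows since the relevant quotient is $\GU(\Q_v)/\U(\Q_v)Z$ finite-type. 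Then I would note that the central character statement is immediate: by Corollary \ref{itm : GU = U E} a member of $\Pi_{\tilde\psi}(\GU,\varrho_{\GU})$ is literally the datum of $(\pi,\chi)$ with $\chi$ a character of $Z(\GU)(\Q_v) \supset E_v^\times$, so $\chi$ itself records the central character, and the condition $\omega_\pi|_{E_v^\times\cap\U(\Q_v)} = \chi|_{E_v^\times\cap\U(\Q_v)}$ forces these to agree with the central character of $\pi$ where they overlap; since all $\pi$ in $\Pi_\psi(\U,\varrho_\U)$ have the same central character (Theorem \ref{localECIpsi+}(1)), this is consistent. Finally, for Part (2) I would simply invoke the explicit construction already given: the pairing $\langle(\pi,\chi),(s,c)\rangle_{\GU} = \langle\pi,s\rangle_\U\,\chi_{z_{\GU}}(c)$ is visibly a character of $S^{\natural}_{\tilde\psi} = \pi_0(S^+_\psi)\times\C^\times$ (using Lemma \ref{itm : centralizer of GU and U}) restricting to $\chi_{z_{\GU}}$ on $Z(\widehat{\GU})^{\Gamma_{\Q_v}} = 1\times\C^\times$, and the inverse $\mu_{\GU}\mapsto(\pi,\chi)$ constructed above — using that $\pi_0(S^+_\psi)$ and $Z(\widehat\U)^{\Gamma_{\Q_v}}$ generate $S^{\natural}_\psi$ with trivial intersection, plus the bijectivity in Theorem \ref{itm: local}(2) for $\U$ — shows the map is a bijection onto $\Irr(S^{\natural}_{\tilde\psi},\chi_{z_{\GU}})$.

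I expect the only genuine subtlety, and hence the main obstacle, to be bookkeeping around the characters and the exact relationship $S^{\natural}_\psi \cong \pi_0(S^+_\psi)\times Z(\widehat\U)^{\Gamma_{\Q_v}}$: one must be careful that restricting $\langle\pi,-\rangle_\U$ from $S^{\natural}_\psi$ to $S^+_\psi/S^{\rad}_\psi$ and then factoring through $\pi_0(S^+_\psi)$ is legitimate (this uses $S^{\rad}_\psi = (S^+_\psi)^\circ$ from the proof of Lemma \ref{itm : centralizer of GU and U}), and that the decomposition of $S^{\natural}_\psi$ as an internal direct product — which is what makes the construction of $\mu_\U$ from $\mu_{\GU}$ unique — really holds. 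Since $\det(-\Id_n) = -1 \neq 1$, the subgroup $Z(\widehat\U)^{\Gamma_{\Q_v}} = \{\pm\Id_n\}$ meets $S^+_\psi$ trivially, and together they generate $S_\psi$ (as every $g\in S_\psi$ has $\det g \in \{\pm1\}$), so modulo $S^{\rad}_\psi$ we indeed get $S^{\natural}_\psi \cong \pi_0(S^+_\psi)\times(\Z/2\Z)$; granting this, everything else is a direct check. So in the writeup I would state the proof as: \emph{This is an immediate consequence of the constructions preceding the statement together with Corollary \ref{itm : GU = U E}, Lemma \ref{itm : centralizer of GU and U}, and the bijectivity part of Theorem \ref{itm: local}(2) applied to $\U$,} and then spell out the three verifications (finiteness, central characters, bijectivity of the pairing) in a couple of sentences each.
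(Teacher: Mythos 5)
Your proposal matches the paper's treatment: Theorem \ref{localpairingGU} is stated there immediately after the constructions you cite (``We have now proven\dots''), so its proof is precisely the assembly of Corollary \ref{itm : GU = U E}, Lemma \ref{itm : centralizer of GU and U}, the explicit pairing $\langle(\pi,\chi),(s,c)\rangle_{\GU}=\langle\pi,s\rangle_{\U}\chi_{z_{\GU}}(c)$, and the inverse built from the internal direct product decomposition of $S^{\natural}_{\psi}$ together with part $(2)$ of Theorem \ref{itm: local} for $\U$. One small correction: in $\Pi_{\tilde{\psi}}(\GU,\varrho_{\GU})$ the character $\chi$ is fixed by the parameter $\tilde{\psi}=(\psi,\chi)$, so the packet is in bijection with $\Pi_{\psi}(\U,\varrho_{\U})$ and finiteness is immediate --- the fibers you describe (all extensions of $\omega_{\pi}|_{E_v^{\times}\cap\U(\Q_v)}$ to $E_v^{\times}$) would in fact be infinite if $\chi$ were allowed to vary.
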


In the archimedean case, these results are known by work of Langlands and Shelstad.

In the next section, we will prove that this pairing also satisfies the endoscopic character identities.

We record the following proposition for later use.
\begin{proposition} \cite[section 8.4.4]{Moe}
Let $\phi : W_{\Q_v} \times \SU(2) \longrightarrow \prescript{L}{}{\U^*_{\Q_v}(n)}$ be a discrete $L$-parameter which is trivial over $\SU(2)$. Then the packet $\Pi_{\phi}(\U, \varrho_{\U})$ contains only supercuspidal representations. These $L$-parameters are called supercuspidal.
\end{proposition}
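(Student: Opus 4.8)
The plan is to deduce this from M\oe glin's explicit classification of discrete series $L$-packets for quasi-split unitary groups. First I would set up the reduction. Since $\phi$ is discrete — hence bounded, its base change to $W_{E_v}$ being a sum of conjugate-self-dual representations — we have $\phi \in \Phi_2(\U^*_{\Q_v}(n))$, and by part (5) of Theorem~\ref{itm: local} every member of $\Pi_{\phi}(\U, \varrho_{\U})$ is (essentially) square-integrable. In the situation of the proposition $\U$ is quasi-split at the finite place $v$ (indeed $\U = \U^*_{\Q_v}(n)$ since $n$ is odd), so this is the packet constructed in \cite{KMSW}, \cite{C.P.Mok}. Because supercuspidality of an admissible representation is intrinsic to the representation — in particular it is unaffected by the passage between the geometric and arithmetic normalizations of the correspondence and is stable under the contragredient (cf.\ Lemma~\ref{traceeq}) — it is enough to work with the arithmetically normalized packets and to prove that any square-integrable $\pi$ appearing in $\Pi_{\phi}(\U)$ has all proper Jacquet modules equal to zero.

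Next I would translate the hypothesis into a statement about Jordan blocks. Under the base change map $\eta_B$ the parameter $\phi$ corresponds to a conjugate-self-dual representation $\phi^{n} = \eta_B \circ \phi$ of $W_{E_v} \times \SU(2)$, which decomposes as $\bigoplus_{i}\rho_i \boxtimes S_{a_i}$ with the summands pairwise distinct and each conjugate-self-dual of the sign $(-1)^{n-1}$, where $S_{a}$ denotes the $a$-dimensional representation of the Deligne $\SU(2)$. The assumption that $\phi$ is trivial on $\SU(2)$ forces $a_i = 1$ for every $i$, so the associated set of Jordan blocks is $\mathrm{Jord}(\phi) = \{(\rho_i, 1)\}_i$; in particular it contains no block $(\rho, a)$ with $a \ge 2$.

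Finally I would invoke M\oe glin's results \cite[\S 8.4.4]{Moe}. She parametrizes the members of the discrete series packet $\Pi_{\phi}(\U)$ as $\pi(\phi, \epsilon)$ by sign characters $\epsilon$ on $\mathrm{Jord}(\phi)$ and computes each Jacquet module $\mathrm{Jac}_{\rho|\cdot|^{x}}\,\pi(\phi, \epsilon)$ explicitly; her formulas show that such a Jacquet module can be nonzero only when $\mathrm{Jord}(\phi)$ contains a block $(\rho, b)$ with $b \ge 2$ (equivalently, her combinatorial criterion for $\pi(\phi, \epsilon)$ to be supercuspidal — every $(\rho, a) \in \mathrm{Jord}(\phi)$ with $a \ge 3$ has $(\rho, a-2) \in \mathrm{Jord}(\phi)$, together with certain sign conditions on $\epsilon$ attached to the blocks with $a \le 2$ — is satisfied vacuously once all blocks have $a = 1$). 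Since every block of $\mathrm{Jord}(\phi)$ has second coordinate $1$, it follows that all these Jacquet modules vanish for every admissible $\epsilon$, hence every $\pi \in \Pi_{\phi}(\U, \varrho_{\U})$ is supercuspidal. I expect the only real difficulty to be bookkeeping: matching the normalization of the local Langlands correspondence used here, and the conjugate-self-duality signs specific to unitary groups, with M\oe glin's conventions. If one wanted a fully self-contained proof, the genuine obstacle would be reproving the relevant portion of the M\oe glin--Tadi\'c classification of discrete series, namely the Jacquet-module formulas for the members of a discrete series packet; since that is exactly the content of \cite[\S 8.4.4]{Moe}, we simply quote the statement.
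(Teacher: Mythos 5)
Your proposal is correct and is essentially the paper's own argument: the paper offers no independent proof but simply cites M\oe glin \cite[\S 8.4.4]{Moe}, and your reduction (discreteness of $\phi$ plus triviality on $\SU(2)$ forces all Jordan blocks to be of the form $(\rho,1)$, whence M\oe glin's supercuspidality criterion holds vacuously for every $\epsilon$) is exactly the standard unfolding of that citation. The extra care you take with normalizations is harmless but not needed, since supercuspidality is intrinsic to the representation.
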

\begin{corollary}
From the above description of local $L$-packets of $\GU$, it follows that the $L$-packet of a supercuspidal $L$-parameter of $\GU$ will consist entirely of supercuspidal representations.
\end{corollary}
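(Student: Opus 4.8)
The plan is to reduce the claim to the corresponding statement for the unitary group $\U$, namely the preceding proposition of \cite{Moe}, using the description of the $L$-packets of $\GU$ obtained above together with the identity $\GU(\Q_v) = \U(\Q_v) Z(\GU)(\Q_v)$ from Lemma \ref{itm : GU = ZxU} (recall $Z(\GU)(\Q_v) = E_v^{\times}$).

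First I would check that if $\tilde{\psi}$ is a supercuspidal $L$-parameter of $\GU$ --- that is, a discrete $L$-parameter trivial on $\SU(2)$ --- then the parameter $\psi$ of $\U$ attached to $\tilde{\psi}$ through the commutative diagram above, i.e. the composite $\mathrm{pr}_1 \circ \tilde{\psi}$, is again a supercuspidal $L$-parameter. Triviality on $\SU(2)$ is clear, and discreteness follows from Lemma \ref{itm : centralizer of GU and U}: since $S_{\tilde{\psi}} = S_{\psi}^{+} \times \C^{\times}$ while $Z(\widehat{\GU^*_{\Q_v}(n)})^{\Gamma_{\Q_v}} = \Id_n \times \C^{\times}$, one has $\overline{S}_{\tilde{\psi}} = S_{\psi}^{+}$, and $S_{\psi}^{+} \cong S_{\psi}/Z(\widehat{\U^*_{\Q_v}(n)})^{\Gamma_{\Q_v}} = \overline{S}_{\psi}$; hence $\overline{S}_{\tilde{\psi}}$ is finite precisely when $\overline{S}_{\psi}$ is, so $\tilde{\psi}$ is discrete exactly when $\psi$ is. (Alternatively, the proper Levi subgroups of $\LGU$ and of $\LU$ correspond to one another, and $\tilde{\psi}$ factors through the former if and only if $\psi$ factors through the latter.) By the preceding proposition, every representation in $\Pi_{\psi}(\U, \varrho_{\U})$ is then supercuspidal.

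It remains to pass from $\U$ to $\GU$. Let $\sigma \in \Pi_{\tilde{\psi}}(\GU, \varrho_{\GU})$. By the construction of this packet and Corollary \ref{itm : GU = U E}, $\sigma$ corresponds to a pair $(\pi, \chi)$ with $\pi \in \Pi_{\psi}(\U, \varrho_{\U})$ and $\chi = \omega_{\sigma}$; since $\GU(\Q_v) = \U(\Q_v) Z(\GU)(\Q_v)$ with $Z(\GU)(\Q_v)$ central, $\sigma|_{\U(\Q_v)} \cong \pi$. A matrix coefficient of $\sigma$ evaluated at $uz$ with $u \in \U(\Q_v)$, $z \in Z(\GU)(\Q_v)$, equals $\chi(z)$ times a matrix coefficient of $\pi$ at $u$; thus its support is $Z(\GU)(\Q_v)$-invariant, and its image in $\GU(\Q_v)/Z(\GU)(\Q_v) \cong \U(\Q_v)/(\U(\Q_v)\cap Z(\GU)(\Q_v))$ coincides with the image of the support of a matrix coefficient of $\pi$. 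Because $\pi$ is supercuspidal, that support is relatively compact modulo $Z(\U)(\Q_v)$, and since $Z(\U)(\Q_v) \subseteq Z(\GU)(\Q_v)$ its image in the displayed quotient is relatively compact. Hence every matrix coefficient of $\sigma$ is compactly supported modulo $Z(\GU)(\Q_v)$, so $\sigma$ is supercuspidal, as desired.

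I do not expect a real obstacle: this is a formal corollary. The two points deserving a line of care are the transfer of the ``does not factor through a proper Levi'' condition between $\GU$ and $\U$, handled by the centralizer computation of Lemma \ref{itm : centralizer of GU and U}, and the fact that supercuspidality of $\sigma$ is to be tested modulo the full center $Z(\GU)(\Q_v)$ and not merely modulo $Z(\U)(\Q_v)$, which is innocuous because $Z(\U)(\Q_v) \subseteq Z(\GU)(\Q_v)$.
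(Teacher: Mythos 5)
Your proposal is correct and follows exactly the route the paper intends: the paper states this corollary without proof as an immediate consequence of the construction $\Pi_{\tilde{\psi}}(\GU,\varrho_{\GU}) = \{(\pi,\chi)\}$ together with $\GU(\Q_v)=\U(\Q_v)E_v^{\times}$, and your argument simply supplies the details (transfer of discreteness via the centralizer computation, and compact support of matrix coefficients modulo $Z(\GU)(\Q_v)$). Nothing further is needed.
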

\begin{remark}
Suppose that $\phi$ is as above and $(H, s, \Leta)$ is an elliptic endoscopic datum and $\phi^{\mH} : W_{\Q_v} \times \SU(2) \longrightarrow \prescript{L}{}{\mH}$ an $L$-parameter such that $\Leta \circ \phi^{\mH} = \phi$. Then $\phi^{\mH}$ is also supercuspidal and hence the packet $\Pi_{\phi^{\mH}}(\mH)$ contains only supercuspidal representations.
\end{remark}

\subsection{The global classification for unitary similitude groups}\label{globalGUparams}

Fix a Hermitian form $V$ and global group $\U=\U(V)$ and $\GU=\GU(V)$. As in the local case, we give $\GU$ and $\U$ the structure of extended pure inner twists $(\varrho_{\GU}, z_{\GU})$ and $(\varrho_{\U}, z_{\U})$.

We begin by recalling the following result which relates automorphic representations of $\U(\mathbb{A})$ and of $\GU(\mathbb{A})$.

\begin{proposition}(\cite[Section. CHL.IV.C, Proposition 1.1.4]{CHLN}). \phantomsection \label{itm: from U to GU}
    Fix $n \in \N$ odd. Let $ \pi $ be an irreducible automorphic representation of $ \GU(\mathbb{A}) $ whose restriction to $ \U(\mathbb{A}) $ contains an irreducible automorphic representation $ \sigma $. If $ \sigma $ has multiplicity $1$ in the discrete spectrum of $ \U(\mathbb{A}) $ then $ \pi $ has multiplicity $1$ in the discrete spectrum of $ \GU(\mathbb{A}) $. Moreover, $\pi$ is the unique automorphic representation of $\GU(\A)$ with central character $\chi$ and containing $\sigma$ in its restriction.
\end{proposition}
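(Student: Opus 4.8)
The plan is to reduce the statement to the multiplicity-one hypothesis on $\sigma$ via the global decomposition $\GU(\A)=\U(\A)\cdot Z(\A)$, where $Z:=Z(\GU)\cong\Res_{E/\Q}\Gm$ is embedded diagonally. First I would globalize Lemma \ref{itm : GU = ZxU}. Since $\ker c=\U$, it suffices to show $c(\GU(\A))=c(Z(\A))=N_{\mathbb{A}_E/\A}(\mathbb{A}_E^{\times})$. At every finite place $v$ the group $\GU(V)_{\Q_v}$ is quasi-split (as $n$ is odd), so Lemma \ref{itm : GU = ZxU} gives $c(\GU(\Q_v))=N_{E_v/\Q_v}(E_v^{\times})$; at $v=\infty$ the signature $(r,s)$ has $r+s$ odd, hence $r\neq s$, so the Hermitian forms $I_{r,s}$ and $-I_{r,s}$ are inequivalent and therefore $c(\GU(\R))=\R_{>0}=N_{\C/\R}(\C^{\times})$. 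Taking the restricted product over all $v$ and using that the idele norm is the product of the local norm maps yields $c(\GU(\A))=N_{\mathbb{A}_E/\A}(\mathbb{A}_E^{\times})=c(Z(\A))$; dividing any $g\in\GU(\A)$ by an element of $Z(\A)$ of the same similitude then lands it in $\U(\A)$.

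Granting this, the formal consequences come next. If $(\pi,V)$ is irreducible with central character $\chi$, then $Z(\A)$ acts on $V$ by the scalar $\chi$, so every $\U(\A)$-stable subspace of $V$ is $\GU(\A)=\U(\A)Z(\A)$-stable; hence $\pi|_{\U(\A)}$ is irreducible, and $\sigma\cong\pi|_{\U(\A)}$ necessarily. Furthermore $\pi$ is determined up to isomorphism by the pair $(\pi|_{\U(\A)},\chi)$: a $\U(\A)$-equivariant isomorphism between two such representations automatically intertwines the $Z(\A)$-actions, both equal to $\chi$, and hence the full $\GU(\A)$-action. This already gives the asserted uniqueness, up to isomorphism, of an automorphic $\pi$ with central character $\chi$ whose restriction contains $\sigma$.

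For multiplicity one I would restrict automorphic forms. Let $V_\pi\subset L^2_{\mathrm{disc}}(\GU(\Q)\backslash\GU(\A))$ be the $\pi$-isotypic subspace; all its forms have central character $\chi$. Restriction to $\U(\A)$ sends $\varphi\in V_\pi$ to a function on $\U(\Q)\backslash\U(\A)$, defining a $\U(\A)$-equivariant map $R$ which is injective, since on $\GU(\A)=\U(\A)Z(\A)$ a form is determined by its restriction to $\U(\A)$ together with $\chi$. Standard reduction theory --- finiteness of $\Vol(\U(\Q)\backslash\U(\A))$ (as $\U$ has no nontrivial $\Q$-rational character), together with the Hasse norm theorem for the quadratic extension $E/\Q$, which identifies $\U(\A)\cap\GU(\Q)Z(\A)$ with $\U(\Q)Z(\U)(\A)$ --- shows that $R$ takes values in $L^2_{\mathrm{disc}}(\U(\Q)\backslash\U(\A))$. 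By $\U(\A)$-equivariance its image lies in the $\sigma$-isotypic part, which by hypothesis is a single copy of $\sigma$. Since $V_\pi|_{\U(\A)}\cong\sigma^{\oplus m_\pi}$, an injective $\U(\A)$-map $\sigma^{\oplus m_\pi}\hookrightarrow\sigma$ forces $m_\pi\leq 1$, hence $m_\pi=1$; combined with the previous paragraph, $\pi$ occurs with multiplicity one and is the unique automorphic representation of $\GU(\A)$ with central character $\chi$ containing $\sigma$ in its restriction.

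I expect the main obstacle to be the claim that $R$ lands in $L^2_{\mathrm{disc}}(\U)$: one must check that restricting square-integrable forms across the central torus $Z$ preserves both square-integrability --- this is the fundamental-domain comparison where the Hasse norm theorem enters --- and discreteness --- the image is $\sigma$-isotypic with $\sigma$ in the discrete spectrum. Everything else is formal once $\GU(\A)=\U(\A)Z(\A)$ is in hand; this square-integrability bookkeeping is exactly what is carried out in \cite[Section CHL.IV.C]{CHLN}, which I would cite for that step.
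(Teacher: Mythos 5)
The paper gives no proof of this proposition: it is quoted directly from \cite[Section CHL.IV.C, Proposition 1.1.4]{CHLN}, and the only ingredient the paper extracts from that source is the identity $L^2(\Sigma\backslash\U(\mathbb{A}),\chi_{\U})=\Res^{\GU(\mathbb{A})}_{\U(\mathbb{A})}L^2(\Gamma\backslash\GU(\mathbb{A}),\chi)$ with $\Gamma=\GU(\Q)Z(\GU)(\mathbb{A})$ and $\Sigma=\Gamma\cap\U(\mathbb{A})$. Your argument is a correct reconstruction of exactly that mechanism: the adelic decomposition $\GU(\mathbb{A})=\U(\mathbb{A})Z(\mathbb{A})$ (valid here because $n$ is odd and $\GU(V)_{\Q_v}$ is quasi-split at all finite places), the identification $\Sigma=\U(\Q)Z(\U)(\mathbb{A})$ via the Hasse norm theorem, and the resulting injection of the $\pi$-isotypic space into the $\sigma$-isotypic part of the discrete spectrum of $\U$. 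The one step you flag as a possible obstacle, square-integrability of the restriction, is in fact immediate: since $\GU(\mathbb{A})=\U(\mathbb{A})Z(\mathbb{A})$, the inclusion $\U(\mathbb{A})\hookrightarrow\GU(\mathbb{A})$ induces a homeomorphism $\Sigma\backslash\U(\mathbb{A})\cong\Gamma\backslash\GU(\mathbb{A})$ identifying the invariant measures up to a constant, so restriction preserves the $L^2$-norm up to scaling, and discreteness follows because the image is a closed irreducible subrepresentation.
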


Let $\chi$ be an automorphic central character of $\GU(\mathbb{A})$ and $\chi_{\U} := \chi|_{ Z(\U(\mathbb{A}))}$ its restriction to the center of $\U(\mathbb{A})$. Consider $\Dot{\psi}$ a generic $A$-parameter for a global unitary group whose automorphic representations have $\chi_{\U}$ as central character. The generic condition ensures the multiplicity one property of these automorphic representations by Theorem \ref{itm: global}. As in the local case, a pair $(\Dot{\psi}, \chi)$ satisfying the above conditions determines a generic $A$-parameter for $\GU$. In the following, we will denote such an $A$-parameter by $\widetilde{\Dot{\psi}}$ if $\chi$ is clear from the context. We define the associated $A$-packet $\Pi_{\widetilde{\Dot{\psi}}}(\GU, \varrho_{\GU}, \epsilon_{\widetilde{\Dot{\psi}}})$ to consist of the $\pi$ whose central character is $\chi$ and whose restriction to $\U(\mathbb{A})$ belongs to $\Pi_{\Dot{\psi}}(\U, \varrho_{\U}, \epsilon_{\Dot{\psi}})$. 

Now, by the proof of \cite[Section. CHL.IV.C, Proposition 1.1.4]{CHLN}, we have
\[
L^2( \Sigma \setminus \U(\mathbb{A}), \chi_{\U}) = \Res^{\GU(\mathbb{A})}_{\U(\mathbb{A})} L^2( \Gamma \setminus \GU(\mathbb{A}), \chi),
\]
where $\Gamma = \GU(\Q)Z(\GU)(\mathbb{A})$ and $ \Sigma = \Gamma \cap \U(\mathbb{A}) $. In particular, it follows from Theorem \ref{itm: global} that we can lift every representation $\sigma \in \Pi_{\Dot{\psi}}(\U, \varrho_{\U}, \epsilon_{\Dot{\psi}})$ to a representation of $\GU(\mathbb{A})$ whose central character is $\chi$. Combining with Proposition \ref{itm: from U to GU}, we see that there is a bijection between $\Pi_{\Dot{\psi}}(\U, \varrho_{\U}, \epsilon_{\Dot{\psi}})$ and $\Pi_{\widetilde{\Dot{\psi}}}(\GU, \varrho_{\GU}, \epsilon_{\widetilde{\Dot{\psi}}})$.

We now give a description of $\Pi_{\widetilde{\Dot{\psi}}}(\GU, \varrho_{\GU}, \epsilon_{\widetilde{\Dot{\psi}}})$ in the spirit of Definition \ref{itm : paquets}. We have defined global generic $A$-parameters of $\GU$ in terms of their counterpart for $\U$. We define the centralizer groups for such parameters of $\GU$ using the analogous groups for $\U$ and using Lemma \ref{itm : centralizer of GU and U} as our guide.
\begin{remark}
It would perhaps be possible to define these parameters and their centralizer groups in analogy with our definitions for $\U$ using cuspidal automorphic representations and the methods of \cite{ArthurBook}, \cite{C.P.Mok}, and \cite{KMSW}. For simplicity, we choose not to do this in the present paper. 
\end{remark}
\begin{definition}{\label{globalGUcentralizers}}
    Let $\widetilde{\Dot{\psi}}=(\Dot{\psi}, \chi) \in \Psi(\GU^*(n))$ be a generic parameter. We define:
    \[
 S_{\widetilde{\Dot{\psi}}} := S_{\Dot{\psi}}^+ \times \C^{\times}, \qquad \overline{\mathcal{S}}_{\widetilde{\Dot{\psi}}} := \overline{\mathcal{S}}_{\Dot{\psi}}, \qquad S^{\natural}_{\widetilde{\Dot{\psi}}} := \pi_0(S^+_{\Dot{\psi}}) \times \C^{\times}
\]
    
\end{definition}

We now discuss localization. First, by the localization map for algebraic cocycles (see \cite[\S 7]{Kot9}), the extended pure inner twists $(\varrho_{\GU}, z_{\GU})$ and $(\varrho_{\U}, z_{\U})$ give rise to local extended pure inner twists $(\varrho_{\GU_{\Q_v}}, z_{\GU_{\Q_v}})$ and $(\varrho_{U_{\Q_v}}, z_{\U_{\Q_v}})$ for each place $v$ of $\Q$.

Let $(\Dot{\psi}, \chi)$ be a generic $A$-parameter. At each place $v$ of $\Q$, we get a local parameter $\Dot{\psi}_v$ as well as a local character $\chi_v$. We define the localization of $(\Dot{\psi}, \chi)$ at $v$ to be $(\Dot{\psi}_v, \chi_v)$. The localization map 
$S^{\natural}_{\Dot{\psi}} \longrightarrow S^{\natural}_{\Dot{\psi}_v}$ restricts to give a map $S^+_{\Dot{\psi}} \to S^+_{\Dot{\psi}_v}$ and hence we get a localization map
\begin{equation*}
    S^{\natural}_{\widetilde{\Dot{\psi}}} \longrightarrow S^{\natural}_{\widetilde{\Dot{\psi}}_v}.
\end{equation*}
Similarly, we get a localization map $\ov{\mc{S}}_{\widetilde{\Dot{\psi}}} \longrightarrow \ov{\mc{S}}_{\widetilde{\Dot{\psi}}_v}$.

We now define:
\[
\Pi_{\widetilde{\Dot{\psi}}}(\GU, \varrho_{\GU}) := \left\lbrace  \bigotimes_v \pi_v : \pi_v \in \Pi_{\widetilde{\Dot{\psi}}_v}(\GU_{\Q_v}, \varrho_{\GU_{\Q_v}}) \mid \langle\pi_v, - \rangle_{\GU_{\Q_v}} = 1 \quad \text{for almost all $v$} \right\rbrace .
\]

We associate to each $ \pi = \bigotimes_v \pi_v \in \Pi_{\widetilde{\Dot{\psi}}}(\GU, \varrho_{\GU}) $ a character of $S^{\natural}_{\widetilde{\Dot{\psi}}}$. Each $\pi_v$ corresponds to a pair $(\pi'_v, \chi_v)$ where $\pi'_v \in \Pi(\U)$. We then define a global pairing by the formula
\[
\langle  \pi, (s,c) \rangle_{\GU} := \prod_v \langle (\pi'_v, \chi_v), (s_v, c_v) \rangle_{\GU_{\Q_v}}, \quad (s,c) \in S^{\natural}_{\widetilde{\Dot{\phi}}},
\]
where $(s_v, c_v)$ is the image of $(s,c)$ under the localization map defined above. We claim that $\langle \pi, \cdot \rangle$ descends to a character on $\ov{\mc{S}}_{\widetilde{\Dot{\psi}}}$. Indeed, by definition we have
\begin{equation*}
    \langle \pi, (s, c) \rangle_{\GU} = \prod_v \langle \pi'_v, s_v \rangle_{\U_{\Q_v}} \chi_{z_{\GU_{\Q_v}}}(c_v).
\end{equation*}
We showed previously that $\prod\limits_v \langle \pi'_v, s_v \rangle_{\U_{\Q_v}}$ descends to $\ov{\mc{S}}_{\widetilde{\Dot{\psi}}} = \ov{\mc{S}}_{\Dot{\psi}}$ and $\prod\limits_v \chi_{z_{\GU_{\Q_v}}}(c_v)$ is trivial by \cite[Proposition 15.6]{Kot9}.

\begin{proposition} \label{global multiplicity formula}
    For $\widetilde{\Dot{\psi}}$ a generic $A$-parameter of $\GU$, we have the following equality of sets:
\[
\Pi_{\widetilde{\Dot{\psi}}}(\GU, \varrho_{\GU}, \epsilon_{\widetilde{\Dot{\psi}}}) = \left\lbrace \pi \in \Pi_{\widetilde{\Dot{\psi}}}(\GU, \varrho_{\GU}) \mid \langle \pi, \cdot \rangle_{\GU} \equiv \epsilon_{\widetilde{\Dot{\psi}}} \right\rbrace.
\]
We note that since we are assuming $\widetilde{\Dot{\psi}}$ is generic, we in fact have $\epsilon_{\widetilde{\Dot{\psi}}}=1$.
\end{proposition}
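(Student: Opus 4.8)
The plan is to deduce the proposition from the multiplicity formula for $\U$, i.e.\ Theorem \ref{itm: global} together with Definition \ref{itm : paquets}, by transporting it through the placewise description of $\GU$-packets supplied by Corollary \ref{itm : GU = U E} and the factorization of the global pairing $\langle\cdot,\cdot\rangle_{\GU}$ recorded above. Write $\widetilde{\Dot{\psi}}=(\Dot{\psi},\chi)$ and $\chi_{\U}=\chi|_{Z(\U)(\A)}$; recall that by hypothesis $\chi_{\U}$ is the central character attached to $\Dot{\psi}$, so by part (3) of Theorem \ref{itm: local} every member of $\Pi_{\Dot{\psi}}(\U,\varrho_{\U})$ has central character $\chi_{\U}$, and that $\epsilon_{\widetilde{\Dot{\psi}}}=\epsilon_{\Dot{\psi}}=1$ by genericity.

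First I would establish the bijection $\Pi_{\widetilde{\Dot{\psi}}}(\GU,\varrho_{\GU})\xrightarrow{\sim}\Pi_{\Dot{\psi}}(\U,\varrho_{\U})$ sending $\pi=\bigotimes_v\pi_v$ to $\sigma=\bigotimes_v\pi'_v$, where $\pi_v\leftrightarrow(\pi'_v,\chi_v)$ under Corollary \ref{itm : GU = U E}. The local compatibility $\omega_{\pi'_v}|_{E_v^{\times}\cap\U(\Q_v)}=\chi_v|_{E_v^{\times}\cap\U(\Q_v)}$ needed to recover $\pi_v$ from $(\pi'_v,\chi_v)$ is automatic, since $\omega_{\pi'_v}=\chi_{\U,v}=\chi_v|_{Z(\U)(\Q_v)}$; thus $(\pi'_v,\chi_v)$ runs over $\Pi_{\widetilde{\Dot{\psi}}_v}(\GU_{\Q_v},\varrho_{\GU_{\Q_v}})$ exactly as $\pi'_v$ runs over the local $\U$-packet. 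Since moreover $z_{\GU}$ localizes to a trivial cocycle, hence $\chi_{z_{\GU_{\Q_v}}}$ is trivial, for almost all $v$, the condition ``$\langle\pi_v,-\rangle_{\GU_{\Q_v}}=1$ for almost all $v$'' is equivalent to ``$\langle\pi'_v,-\rangle_{\U_{\Q_v}}=1$ for almost all $v$'', which gives the claimed bijection. Because $\pi|_{\U(\A)}=\sigma$ and $\pi$ has central character $\chi$, this bijection restricts to the bijection $\Pi_{\Dot{\psi}}(\U,\varrho_{\U},\epsilon_{\Dot{\psi}})\xrightarrow{\sim}\Pi_{\widetilde{\Dot{\psi}}}(\GU,\varrho_{\GU},\epsilon_{\widetilde{\Dot{\psi}}})$ obtained above from Proposition \ref{itm: from U to GU}.

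Next I would feed in the factorization of the global pairing: for $(s,c)\in S^{\natural}_{\widetilde{\Dot{\psi}}}=\pi_0(S^+_{\Dot{\psi}})\times\C^{\times}$ (Lemma \ref{itm : centralizer of GU and U}, Definition \ref{globalGUcentralizers}) we have
\[ \langle\pi,(s,c)\rangle_{\GU}=\left(\prod_v\langle\pi'_v,s_v\rangle_{\U_{\Q_v}}\right)\left(\prod_v\chi_{z_{\GU_{\Q_v}}}(c_v)\right)=\langle\sigma,s\rangle_{\U}, \]
where the second factor is identically trivial by \cite[Proposition 15.6]{Kot9} and $\langle\sigma,\cdot\rangle_{\U}$ is the global pairing of $\sigma$ on $\ov{\mc{S}}_{\Dot{\psi}}=\ov{\mc{S}}_{\widetilde{\Dot{\psi}}}$. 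Hence $\langle\pi,\cdot\rangle_{\GU}\equiv\epsilon_{\widetilde{\Dot{\psi}}}$ if and only if $\langle\sigma,\cdot\rangle_{\U}\equiv\epsilon_{\Dot{\psi}}$, which by Theorem \ref{itm: global} and Definition \ref{itm : paquets} holds precisely when $\sigma\in\Pi_{\Dot{\psi}}(\U,\varrho_{\U},\epsilon_{\Dot{\psi}})$. Transporting this through the bijection of the previous paragraph gives $\pi\in\Pi_{\widetilde{\Dot{\psi}}}(\GU,\varrho_{\GU},\epsilon_{\widetilde{\Dot{\psi}}})$, which is the asserted set equality.

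I expect the main obstacle to be the bookkeeping in the first step rather than anything structurally deep: one must verify that the placewise data $(\pi'_v)_v$ genuinely assemble into a restricted tensor product with central character $\chi_{\U}$, that the local central-character constraint of Corollary \ref{itm : GU = U E} is automatic, and that the ``almost all $v$'' conditions transfer between the $\U$- and $\GU$-pairings (using that $\chi_v$ and $z_{\GU_{\Q_v}}$ are unramified for almost all $v$). Once this is in place, the proposition follows formally from the unitary group case.
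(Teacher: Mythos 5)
Your proposal is correct and follows essentially the same route as the paper: identify both sides with the corresponding objects for $\U$ via the local bijections $\pi_v \leftrightarrow (\pi'_v,\chi_v)$ and the restriction map, observe that $\langle\pi,\cdot\rangle_{\GU}$ and $\langle\sigma,\cdot\rangle_{\U}$ define the same character of $\ov{\mc{S}}_{\widetilde{\Dot{\psi}}}=\ov{\mc{S}}_{\Dot{\psi}}$ (the $\C^{\times}$-factor being killed by \cite[Proposition 15.6]{Kot9}), and then invoke Theorem \ref{itm: global} together with Definition \ref{itm : paquets}. The paper's proof is just a terser version of yours, taking the packet bijection and the central-character bookkeeping as already established in the preceding discussion.
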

\begin{proof}
The left-hand side consists of all pairs $(\pi, \chi)$ such that $\pi \in \Pi_{\Dot{\psi}}(\U, \varrho_{\U}, \epsilon_{\Dot{\psi}})$. By definition, we have
\begin{equation*}
    \Pi_{\Dot{\psi}}(\U, \varrho_{\U}, \epsilon_{\Dot{\psi}}) = \left\lbrace \pi \in \Pi_{\Dot{\psi}}(\U, \varrho_{\U}) \mid \langle \pi, \cdot \rangle_{\U} \equiv \epsilon_{\Dot{\psi}} \right\rbrace.
\end{equation*}
Hence we just need to show that $\langle \pi, \cdot \rangle_{\U}$ is trivial if and only if $\langle (\pi, \chi), \cdot \rangle_{\GU}$ is. But this is clear since these are the same character of $\ov{\mc{S}}_{\widetilde{\Dot{\psi}}} = \ov{\mc{S}}_{\Dot{\psi}}$.
\end{proof}

\begin{remark}{\label{GU to products}}
For our purposes, we also need to generalise the above description of automorphic representations to the groups $\G( \U_1 \times \cdots \times \U_k )(\A)$ with $n_1 + \cdots + n_k = n$ odd. In this case, Proposition \ref{itm: from U to GU} still holds true (\cite[Section. CHL.IV.C, Proposition 1.3.5]{CHLN}) and then the above process can be applied without any major change.
\end{remark}

\section{Endoscopic character identities}{\label{sectionECIGU}}
Let $E/\Q_p$ be a quadratic extension and $n$ an odd natural number. Our goal in this section is to prove the endoscopic character identities for elliptic endoscopic groups of $\G(\U(n_1) \times \cdots \times \U(n_r))$ with $ n_1 + \cdots + n_r = n$ and $\U(n_i)$ is an inner form of $\U^*_{\Q_p}(n_i)$. We prove this using the fact that these identities hold for $\U(n_1) \times \cdots \times \U(n_r)$ as in \cite{C.P.Mok}, \cite{KMSW}. 
\begin{itemize}
    \item We first show that if the endoscopic character identities hold for $\U(n_1) \times \cdots \times \U(n_r)$, then they also hold for $\U(n_1) \times \cdots \times \U(n_r) \times \Res_{E/\Q_p}\Gm$ where we note that $\Res_{E/\Q_p}\Gm$ is the center of $\G(\U(n_1) \times \cdots \times \U(n_r))$.\\
    \item We then show that if the endoscopic character identities hold for $\U(n_1) \times \cdots \times \U(n_r) \times \Res_{E/\Q_p}\Gm$, then they hold for $\G(\U(n_1) \times \cdots \times \U(n_r))$.\\
\end{itemize}

We recall the statement of the endoscopic character identity for an extended pure inner twist $\G, (\varrho, z)$ of a quasi-split reductive group $\G^*$ over $\Q_p$ with refined endoscopic datum $(\mH, s, \LL \eta)$. Fix a local Whittaker datum $\mf{w}$ of $\G^*$ giving a Whittaker normalized transfer factor $\Delta[\mf{w}, \varrho, z]$ (as in \cite[\S 4.3]{KalTai}) between $(\mH, s, \LL \eta)$ and $\G$. Suppose that $f \in \mathcal{H}(\G)$ and $f^{\mH} \in \mathcal{H}(\mH)$ are $\Delta[\mf{w}, \varrho, z]$-matching functions. 

Now, let $\psi \in \Psi^+(\G^*)$ and  $\psi^{\mH} \in \Psi^+(\mH)$ be such that $\psi =  \Leta \circ \psi^{\mH}$. Let $\Pi_{\psi^{\mH}}(\mH), \Pi_{\psi}(\G, \varrho)$ denote the respective $A$-packets for the parameters. Then the endoscopic character identity states that 
\begin{equation}
    \sum\limits_{\pi^{\mH} \in \Pi_{\psi^{\mH}}(\mH)} \langle \pi^{\mH}, s_{\psi^{\mH}}\rangle \tr( \pi^{\mH} \mid  f^{\mH})= e(\G)\sum\limits_{\pi \in \Pi_{\psi}} \langle \pi, s \cdot s_{\psi} \rangle \tr( \pi \mid f),
\end{equation}
where $\langle \pi, s \rangle$ is as defined in Theorem \ref{itm: local} and Theorem \ref{localECIpsi+}.  The elements $s_{\psi}$ and $s_{\psi^{\mH}}$ are defined to be the image of $(1, -I)$ under $\psi$ and $\psi^{\mH}$ respectively and $e(G)$ is the Kottwitz sign.

According to a theorem of Harish-Chandra, the trace distribution $f \mapsto \tr( \pi \mid f)$ is given by integrating against the Harish-Chandra character, which is a locally constant function $\mathcal{F}_{\pi}$ of $\G(\Qp)_{sr}$ (where $\G(\Qp)_{sr}$ denotes the strongly regular semisimple elements of $\G(\Qp)$). Then the above identity is equivalent to the equality
\begin{align*}
      &\int\limits_{\mH(\Qp)_{sr}}  \sum\limits_{\pi^{\mH} \in \Pi_{\psi^{\mH}}(\mH)} \langle \pi^{\mH}, s_{\psi^{\mH}} \rangle f^{\mH}(g)\mathcal{F}_{\pi^{\mH}}(g)dg\\
      =&e(\G)\int\limits_{\G(\Qp)_{sr}}  \sum\limits_{\pi \in \Pi_{\psi}(\G, \varrho)} \langle \pi, s \cdot s_{\psi} \rangle f(g)\mathcal{F}_{\pi}(g)dg.
\end{align*}
We remark that a Harish-Chandra character exists for representations $I^G_P(\pi)$ by \cite[Theorem 3]{vanDijk} and that this holds even in the case where the induction is not irreducible. Hence, $\pi \in \Pi_{\psi}(\G, \varrho)$ have Harish-Chandra characters even in the case where $\psi \in \Psi^+(\G^*)$.
\subsection{Endoscopic Identities for \texorpdfstring{$\U(n_1) \times \cdots \times \U(n_r) \times \Res_{E/ \Q_p}\Gm$}{U(n) x ResGm}}{\label{ECIproduct}}

In this section we use the notation $\U$ to denote the group $\U(n_1) \times \cdots \times \U(n_r)$. Our goal is to prove the endoscopic character identities for $\U \times \Res_{E/\Q_p}\Gm$ using the fact that these identities are known for $\U$ by \cite{KMSW} (Theorems \ref{itm: local} and \ref{localECIpsi+} in this paper). 

In fact, we will prove the following stronger result. Fix quasi-split reductive groups $\G^*_i$ for $i \in \{ 1, 2\}$. Let $\G_i, (\varrho_i, z_i)$ be extended pure inner twists of $\G^*_i$. Let $(\mH_i, s_i, \LL \eta_i)$ be refined endoscopic for $\G_i$. We denote by $(\mH_1 \times \mH_2, s_1 \times s_2 , \LL \eta_1 \times \LL \eta_2)$ the corresponding endoscopic datum of $\G_1 \times \G_2$. Fix a character $\varphi: \Q_p \to \C^{\times}$ and $\Q_p$-splittings of $\G^*_i$. This induces a Whittaker datum $\mf{w}_i$ of $\G^*_i$ as well as the Whittaker datum $\mf{w}_1 \times \mf{w}_2$ of $\G^*_1 \times \G^*_2$.

We will prove that if the endoscopic character identities are satisfied for $\G_i $ and $ (\mH_i, s_i, \LL \eta_i) $ then they are also satisfied for $\G_1 \times \G_2$ and $ (\mH_1 \times \mH_2, s_1 \times s_2, \LL \eta_1 \times \LL \eta_2) $.

Fix $\psi^{\G_1 \times \G_2} \in \Psi^+(\G^*_1 \times \G^*_2)$ and suppose $\psi^{\mH_1 \times \mH_2} \in \Psi^+(\mH_1 \times \mH_2)$ is such that $\psi^{\G_1 \times \G_2} = (\LL\eta_1 \times \LL \eta_2) \circ \psi^{\mH_1 \times \mH_2}$. Then $\psi^{\mH_1 \times \mH_2}$ factors as a product of parameters $\psi^{\mH_1}$ of $\mH_1$ and $\psi^{\mH_2}$ of $\mH_2$. As a result, $\psi^{\G_1 \times \G_2}$ factors as a product of parameters $\psi^{\G_1}$ of $\G_1$ and $\psi^{\G_2}$ of $\G_2$ such that $\psi^{G_i} = \LL\eta_i \circ \psi^{\mH_i}$.


We need to show that for $\Delta[\mf{w}_1 \times \mf{w}_2, \varrho_1 \times \varrho_2, z_1 \times z_2]$-matching functions $f \in \mathcal{H}(\G_1 \times \G_2)$ and $f' \in \mathcal{H}(\mH_1\times\mH_2)$, the following identity holds:
\begin{equation} \phantomsection \label{itm : Gm x U}
      \int\limits_{(\mH_1\times\mH_2)(\Q_p)_{sr}} \sum\limits_{\pi' \in \Pi_{\psi^{\mH_1\times\mH_2}}} \langle \pi', s_{\psi^{\mH_1 \times \mH_2}} \rangle f'(g)\mathcal{F}_{\pi'}(g)dg
\end{equation}
\begin{equation*}
      = e(\G_1 \times \G_2) \int\limits_{ (\G_1 \times \G_2)(\Q_p)_{sr}}  \sum\limits_{\pi \in \Pi_{\psi^{\G_1 \times \G_2}}} \langle \pi, s \cdot s_{\psi^{\G_1 \times \G_2}} \rangle f(g)\mathcal{F}_{\pi}(g)dg.
\end{equation*}

The packets $\Pi_{\psi^{\mH_1\times\mH_2}}(\mH_1 \times \mH_2)$ resp. $\Pi_{\psi^{\G_1 \times \G_2}}(\G_1 \times \G_2, \varrho_1 \times \varrho_2)$ consist of representations of the form $ \pi^{\mH_1} \boxtimes \pi^{\mH_2} $ resp. $ \pi^{\G_1} \boxtimes \pi^{\G_2} $ where $\pi^{\mH_i}$ resp. $\pi^{\G_i}$ are representations in $\Pi_{\psi^{\mH_i}}(\mH_i)$ resp. $\Pi_{\psi^{\G_i}}(\G_i, \varrho_i)$. The pairings $\langle \pi^{\mH_1} \boxtimes \pi^{\mH_2} , \bullet \rangle$ resp. $\langle \pi^{\G_1} \boxtimes \pi^{\G_2} , \bullet \rangle$ are defined as $\langle \pi^{\mH_1} , \bullet \rangle \cdot \langle \pi^{\mH_2} , \bullet \rangle $ resp. $\langle \pi^{\G_1} , \bullet \rangle \cdot \langle \pi^{\G_2} , \bullet \rangle $. It is not difficult to see that $ \mathcal{F}_{\pi^{\G_1} \boxtimes \pi^{\G_2}} = \mathcal{F}_{\pi^{\G_1}} \boxtimes \mathcal{F}_{\pi^{\G_2}} $. It is also a basic property of the Kottwitz sign that $e(\G_1 \times \G_2)=e(\G_1)e(\G_2)$.

Moreover, a function $f \in \mathcal{H}(\G_1 \times \G_2)$ can be written as a sum of functions of the form $ f_1 \boxtimes f_2 $ where $f_1 \in \mathcal{H}(\G_1)$ and $f_2 \in \mathcal{H}(\G_2)$.
 Hence, for every such $ f_1 \boxtimes f_2 $ we have an equality between the following quantities
\[
e(\G_1 \times \G_2) \int\limits_{ (\G_1 \times \G_2)(\Q_p)_{sr} }  \sum\limits_{\pi \in \Pi_{\psi^{\G_1 \times \G_2}}} \langle \pi, s \cdot s_{\psi^{\G_1 \times \G_2}} \rangle (f_1 \boxtimes f_2)(x)\mathcal{F}_{\pi}(x)dx,
\]
and
\[
e(\G_1)\int\limits_{\G_1(\Q_p)_{sr}} \sum\limits_{\pi^{\G_1} \in \Pi_{\psi^{\G_1}}} \langle \pi^{\G_1}, s \cdot s_{\psi^{\G_1}} \rangle f_1 (x) \mathcal{F}_{\pi^{\G_1}}(x)dx
\]
\[
 \cdot e(\G_2)\int\limits_{\G_2(\Q_p)_{sr}} \sum\limits_{\pi^{\G_2} \in \Pi_{\psi^{\G_2}}} \langle \pi^{\G_2}, s \cdot s_{\psi^{\G_2}} \rangle f_2 (y) \mathcal{F}_{\pi^{\G_2}} (y)dy .
\]

Similarly, for every $ f^{\mH_1}_1 \boxtimes f^{\mH_2}_2 $ with $f^{\mH_1}_1 \in \mathcal{H}(\mH_1)$ a matching function of $f_1$ and $ f^{\mH_2}_2 \in \mathcal{H}(\mH_2) $ a matching function of $f_2$ we have an equality between 
\[
\int\limits_{(\mH_1 \times \mH_2)(\Q_p)_{sr}} \sum\limits_{\pi' \in \Pi_{\psi^{\mH_1 \times \mH_2}}} \langle \pi', s_{\psi^{\mH_1 \times \mH_2}} \rangle (f^{\mH_1}_1 \boxtimes f^{\mH_2}_2) (x)\mathcal{F}_{\pi'}(x)dx
\]
and
\[
\int\limits_{\mH_1(\Q_p)_{sr}}\sum\limits_{\pi^{\mH_1} \in \Pi_{\psi^{\mH_1}}} \langle \pi^{\mH_1}, s_{\psi^{\mH_1}} \rangle f^{\mH_1}_1 (x) \mathcal{F}_{\pi^{\mH_1}}(x)dx \int\limits_{\mH_2(\Q_p)_{sr}} \sum\limits_{\pi^{\mH_2} \in \Pi_{\psi^{\mH_2}}} \langle \pi^{\mH_2}, s_{\psi^{\mH_2}} \rangle f_2^{\mH_2} (y) \mathcal{F}_{\pi^{\mH_2}} (y)dy  .
\]

Therefore, in order to prove the equation (\ref{itm : Gm x U}), it suffices to prove that for each $f_1 \boxtimes f_2 \in \mc{H}(\G_1 \times \G_2)$, we may choose a $\Delta[\mf{w}_1 \times \mf{w}_2, \varrho_1 \times \varrho_2, z_1 \times z_1]$-matching function $f^{\mH_1}_1 \boxtimes f^{\mH_2}_2 \in \mathcal{H}(\mH_1 \times \mH_2) $ such that $ f^{\mH}_i \in \mathcal{H}(\mH_i) $ and $f_i \in \mathcal{H}(\G_i)$ are $\Delta[\mf{w}_i, \varrho_i, z_i]$-matching.  This follows from the following lemma.

\begin{lemma}{\label{prodmatch}}
If $ f^{\mH_i}_i \in \mathcal{H}(\mH_i) $ and $f_i \in \mathcal{H}(\G_i)$ are $\Delta[\mf{w}_i, \varrho_i, z_i]$-matching functions then $ f^{\mH_1}_1 \boxtimes f^{\mH_2}_2  \in \mathcal{H}(\mH_1 \times \mH_2) $ and $ f_1 \boxtimes f_2 \in \mathcal{H}( \G_1 \times \G_2) $ are $\Delta[\mf{w}_1 \times \mf{w}_2, \varrho_1 \times \varrho_2, z_1 \times z_2]$-matching functions.
\end{lemma}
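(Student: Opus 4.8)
The plan is to reduce the statement to the multiplicativity, over the two factors, of the three ingredients that define the notion of matching functions: the combinatorics of strongly regular semisimple stable conjugacy classes together with the norm correspondence, the orbital integrals, and the Whittaker-normalized transfer factors. Recall that $f^{\mH}\in\mathcal{H}(\mH)$ and $f\in\mathcal{H}(\G)$ are $\Delta[\mf{w},\varrho,z]$-matching exactly when, for every strongly $\G$-regular semisimple $\gamma^{\mH}\in\mH(\Q_p)$, one has
\[
SO^{\mH}_{\gamma^{\mH}}(f^{\mH})=\sum_{\gamma}\Delta[\mf{w},\varrho,z](\gamma^{\mH},\gamma)\,O^{\G}_{\gamma}(f),
\]
the sum running over the finitely many $\G(\Q_p)$-conjugacy classes $\gamma$ in the stable class that is a norm of $\gamma^{\mH}$, and the right-hand side being $0$ when no such class exists. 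Since every element of $\mathcal{H}(\G_1\times\G_2)$ is a finite linear combination of pure tensors $f_1\boxtimes f_2$ and the matching condition is linear in each function, it suffices to treat $f_1\boxtimes f_2$ and $f^{\mH_1}_1\boxtimes f^{\mH_2}_2$.

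First I would record the elementary product compatibilities. A strongly regular semisimple element of $(\mH_1\times\mH_2)(\Q_p)$ is a pair $\gamma^{\mH_1}\times\gamma^{\mH_2}$ with each component strongly regular semisimple; its stable class is the product of the component stable classes, and the set of conjugacy classes inside a product stable class is the product of the two sets of conjugacy classes (this is product-compatibility of the relevant $H^1$'s, since the centralizing tori decompose as products). The same holds for $\G_1\times\G_2$. The norm correspondence attached to $(\mH_1\times\mH_2,s_1\times s_2,\LL\eta_1\times\LL\eta_2)$ is the product of the two norm correspondences, so $\gamma^{\mH_1}\times\gamma^{\mH_2}$ is a norm of $\gamma_1\times\gamma_2$ if and only if $\gamma^{\mH_i}$ is a norm of $\gamma_i$ for $i=1,2$. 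Finally, since $(f_1\boxtimes f_2)(x_1,x_2)=f_1(x_1)f_2(x_2)$ and the Haar measure on a product group is the product of the Haar measures, orbital integrals factor, $O^{\G_1\times\G_2}_{\gamma_1\times\gamma_2}(f_1\boxtimes f_2)=O^{\G_1}_{\gamma_1}(f_1)\,O^{\G_2}_{\gamma_2}(f_2)$, and likewise $SO^{\mH_1\times\mH_2}_{\gamma^{\mH_1}\times\gamma^{\mH_2}}(f^{\mH_1}_1\boxtimes f^{\mH_2}_2)=SO^{\mH_1}_{\gamma^{\mH_1}}(f^{\mH_1}_1)\,SO^{\mH_2}_{\gamma^{\mH_2}}(f^{\mH_2}_2)$.

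The only point requiring genuine care — and the one I expect to be the main obstacle — is the multiplicativity of the Whittaker-normalized transfer factor,
\[
\Delta[\mf{w}_1\times\mf{w}_2,\varrho_1\times\varrho_2,z_1\times z_2](\gamma^{\mH_1}\times\gamma^{\mH_2},\gamma_1\times\gamma_2)=\Delta[\mf{w}_1,\varrho_1,z_1](\gamma^{\mH_1},\gamma_1)\,\Delta[\mf{w}_2,\varrho_2,z_2](\gamma^{\mH_2},\gamma_2).
\]
To prove this I would unwind the definition of $\Delta[\mf{w},\varrho,z]$ from \cite[\S4.1]{BM2} (the $\Delta_D$ normalization of \cite[\S5]{KS2}) and observe that every input to it is the product, or disjoint union, of the corresponding inputs for the two factors: the absolute root data, the chosen $a$- and $\chi$-data, the splitting invariants, the dual group $\widehat{\G_1^*\times\G_2^*}=\widehat{\G_1^*}\times\widehat{\G_2^*}$ with the $L$-embedding $\LL\eta_1\times\LL\eta_2$, the Whittaker datum $\mf{w}_1\times\mf{w}_2$ (coming from the product $\Q_p$-splitting and the same additive character $\varphi$), and the rigidifying cocycle $z_1\times z_2\in Z^1_{\bas}(\mc{E}_{\iso}(K/\Q_p),(\G_1^*\times\G_2^*)(K))$ with components $z_i$. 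Consequently each constituent term of the transfer factor, and the Whittaker normalization constant (which depends only on $\mf{w}$ and the rigidifying data), splits as a product over $i\in\{1,2\}$; alternatively one may invoke the product-compatibility of Langlands–Shelstad–Kottwitz transfer factors directly.

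Putting the pieces together, for strongly $(\G_1\times\G_2)$-regular $\gamma^{\mH_1}\times\gamma^{\mH_2}$ one computes
\begin{align*}
SO^{\mH_1\times\mH_2}_{\gamma^{\mH_1}\times\gamma^{\mH_2}}(f^{\mH_1}_1\boxtimes f^{\mH_2}_2)
&= SO^{\mH_1}_{\gamma^{\mH_1}}(f^{\mH_1}_1)\,SO^{\mH_2}_{\gamma^{\mH_2}}(f^{\mH_2}_2)\\
&= \Big(\sum_{\gamma_1}\Delta[\mf{w}_1,\varrho_1,z_1](\gamma^{\mH_1},\gamma_1)\,O^{\G_1}_{\gamma_1}(f_1)\Big)\Big(\sum_{\gamma_2}\Delta[\mf{w}_2,\varrho_2,z_2](\gamma^{\mH_2},\gamma_2)\,O^{\G_2}_{\gamma_2}(f_2)\Big)\\
&= \sum_{\gamma_1\times\gamma_2}\Delta[\mf{w}_1\times\mf{w}_2,\varrho_1\times\varrho_2,z_1\times z_2](\gamma^{\mH_1}\times\gamma^{\mH_2},\gamma_1\times\gamma_2)\,O^{\G_1\times\G_2}_{\gamma_1\times\gamma_2}(f_1\boxtimes f_2),
\end{align*}
where the second equality is the matching hypothesis for the two factors and the third uses the product compatibilities of the second paragraph to recombine the double sum into a single sum over conjugacy classes in the product stable class, together with the transfer-factor multiplicativity of the third paragraph; when $\gamma^{\mH_1}\times\gamma^{\mH_2}$ is not a norm, some $\gamma^{\mH_i}$ fails to be a norm and both sides vanish. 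This is precisely the matching condition for $f_1\boxtimes f_2$ and $f^{\mH_1}_1\boxtimes f^{\mH_2}_2$ relative to $\Delta[\mf{w}_1\times\mf{w}_2,\varrho_1\times\varrho_2,z_1\times z_2]$, which finishes the proof.
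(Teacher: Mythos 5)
Your proposal is correct and follows essentially the same route as the paper: factor the (stable) orbital integrals and the stable-conjugacy combinatorics componentwise, and reduce everything to the multiplicativity of the Whittaker-normalized transfer factor under products. You rightly identify that multiplicativity as the only substantive point; the paper isolates it as a separate lemma (Lemma \ref{itm : product of transfer factor}) and carries out exactly the term-by-term verification ($\epsilon_L$, $\Delta_I$, $\Delta_{II}$, $\Delta_{III_{2,D}}$, $\Delta_{IV}$, and the $\langle \inv[z](\cdot,\cdot), s\rangle$ pairing) that you sketch but do not execute.
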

\begin{proof}
Pick $\gamma_{\mH} = ( \gamma_{\mH_1}, \gamma_{\mH_2} ) \in (\mH_1 \times \mH_2)(\Q_p)$ such that $\gamma_{\mH}$ is strongly regular and transfers to a strongly regular $\gamma = (\gamma_1, \gamma_2) \in (\G_1 \times \G_2)(\Q_p)$. Then we need to show that
 \begin{equation} \phantomsection \label{itm : need to prove that}
     SO^{\mH_1 \times \mH_2}_{\gamma_{\mH}}(f^{\mH_1}_1 \boxtimes f^{\mH_2}_2) = \sum\limits_{\gamma' \sim_{st} \gamma} \Delta[\mf{w}_1 \times \mf{w}_2, \varrho_1 \times \varrho_2, z_1 \times z_1](\gamma_{\mH}, \gamma') O^{\G_1 \times \G_2}_{\gamma'}(f_1 \boxtimes f_2).
 \end{equation}
where the sum is taken over the set of $ \gamma' $ that are stably conjugate to $ \gamma $. 
 
By definition, for $ \gamma_i \in \G_i$ and $f_i \in C_c^{\infty}(\G_i)$ we have
\[
O^{\G_1 \times \G_2}_{\gamma_1 \times \gamma_2} (f_1 \boxtimes f_2) = O^{\G_1}_{\gamma_1} (f_1) O^{\G_2}_{\gamma_2} (f_2).
\]

Moreover, an element $(\gamma'_1, \gamma'_2) \in \mH_1 \times \mH_2 $ is stable conjugate to $(\gamma_1, \gamma_2) \in \mH_1 \times \mH_2 $ if and only if $ \gamma'_1 $ is stable conjugate to $ \gamma_1 $ in $ \mH_1 $ and $ \gamma'_2 $ is stable conjugate in $ \gamma_2 $ in $\mH_2$. Therefore we have 
\[
SO^{\mH_1 \times \mH_2}_{\gamma_{\mH}} (f^{\mH_1}_1 \boxtimes f^{\mH_2}_2) = SO^{\mH_1}_{\gamma_{\mH_1}} (f^{\mH_1}_1) SO^{\mH_2}_{\gamma_{\mH_2}} (f^{\mH_2}_2).
\]
and similarly
\begin{equation} \phantomsection \label{itm : matching functions}
 \sum\limits_{\gamma' \sim_{st} \gamma} \Delta[\mf{w}_1 \times \mf{w}_2, \varrho_1 \times \varrho_2, z_1 \times z_1](\gamma_{\mH}, \gamma') O^{\G_1 \times \G_2}_{\gamma'}(f_1 \boxtimes f_2)
 \end{equation}
 \begin{equation*}
      = \sum\limits_{\gamma_1' \sim_{st} \gamma_1 , \gamma_2' \sim_{st} \gamma_2} \Delta[\mf{w}_1 \times \mf{w}_2, \varrho_1 \times \varrho_2, z_1 \times z_1](\gamma_{\mH}, \gamma') O^{\G_1}_{\gamma_1} (f_1) O^{\G_2}_{\gamma_2} (f_2).  
 \end{equation*}

We will prove in Lemma \ref{itm : product of transfer factor} that 
\begin{equation*}
\Delta[\mf{w}_1 \times \mf{w}_2, \varrho_1 \times \varrho_2, z_1 \times z_1](\gamma_{\mH}, \gamma') = \Delta[\mf{w}_1, \varrho_1, z_1](\gamma_{\mH_1}, \gamma_1') \Delta[\mf{w}_2, \varrho_2, z_2](\gamma_{\mH_2}, \gamma_2').
\end{equation*}
We can then rewrite the right hand side of (\ref{itm : matching functions}) as
\[
 \big(\sum\limits_{\gamma_1' \sim_{st} \gamma_1} \Delta[\mf{w}_1, \varrho_1, z_1](\gamma_{\mH_1}, \gamma_1') O^{\G_1}_{\gamma_1} (f_1) \big) \big(\sum\limits_{\gamma_2' \sim_{st} \gamma_2} \Delta[\mf{w}_2, \varrho_2, z_2](\gamma_{\mH_2}, \gamma_2') O^{\G_2}_{\gamma_2} (f_2) \big),
\]
and because $ f_i^{\mH_i} $ and $ f_i $ are $\Delta[\mf{w}_i, \varrho_i, z_i]$ matching functions, this is exactly 
\[
SO^{\mH_1}_{\gamma_{\mH_1}} (f^{\mH}_1) SO^{\mH_2}_{\gamma_{\mH_2}} (f^{\mH}_2).
\]

In other words, Equation (\ref{itm : need to prove that}) is true.
\end{proof}

\subsection{Endoscopic Identities for \texorpdfstring{$\G(\U(n_1) \times ... \times \U(n_r))$}{\GU(N)}}{\label{ECIforGU}} \textbf{}

We now have the endoscopic character identities for $\U \times \Res_{E/ \Q_p}\Gm$ and need to show they also hold for $\GU$ where we use the letter $\GU$ to denote the group $\G(\U(n_1) \times \cdots \times \U(n_r))$  till the end of this section. Recall that we have a surjection of algebraic groups
\begin{equation}{\label{P}}
    P: \U \times \Res_{E/ \Q_p}\Gm \twoheadrightarrow \GU,
\end{equation}
with kernel $\U(1)$.

We fix quasi-split groups $\U^* \times \Res_{E/\Q_p} \Gm$ and $\GU^*$ as well as extended pure inner twists $(\varrho_{\U}, z_{\U})$ and $(\varrho_{\GU}, z_{\GU})$ for $\U \times \Res_{E/\Q_p} \Gm$ and $\GU$ respectively. The extended pure inner twist $(\varrho_{\U}, z_{\U})$ restricts to give extended pure inner twists $(\varrho'_{\U}, z'_{\U})$ and $(\varrho_{\Gm}, z_{\Gm})$ of $\U$ and $\Gm$ respectively.  We can choose these extended pure inner twists such that the projection $P$ takes $\varrho_{\U}$ to $\varrho_{\GU}$ and such that $z_{\U}$ and $z_{\GU}$ coincide under the map $\mb{B}(\Q_p, \U^* \times \Res_{E/ \Q_p} \Gm) \longrightarrow \mb{B}(\Q_p, \GU^*)$. We fix compatible $\Gamma_{\Q_p}$-splittings of these groups as well as a character $\varphi: \Q_p \to \C^{\times}$. Hence we get compatible Whittaker data which we denote by $\mf{w}_{\U}$ and $\mf{w}_{\GU}$ respectively.

A crucial input in the case we consider (where $n = n_1 + \cdots + n_r$ is odd) is that the projection $P$ is also a surjection on $\Qp$-points. This follows from Lemma \ref{itm : GU = ZxU}. Hence we get a map 
\begin{equation*}
    \Irr(\GU(\Qp)) \hookrightarrow \Irr((\U \times \Res_{E/ \Q_p}\Gm)(\Qp)),
\end{equation*}
given by pullback. The image of this map is the set of irreducible representations $\pi \boxtimes \chi$ such $\pi|_{\U(1)(\Q_p)}= \chi|_{\U(1)(\Q_p)}$. If this is satisfied by a single member of an $A$-packet of $\U \times \Res_{E/ \Q_p}\Gm$, then it will be satisfied by the entire packet since elements of an $A$-packet have the same central character (\cite[Theorem 1.6.1]{KMSW} and Theorem \ref{localECIpsi+}). In light of Theorem \ref{localpairingGU}, the $A$-packets of $\GU$ are in a natural way a subset of the $A$-packets of $\U \times \Res_{E/ \Q_p}\Gm$.

Since the kernel of $P$ is compact, any $f \in \mathcal{H}(\GU)$ lifts to an element $f' \in \mathcal{H}(\U \times \Res_{E/\Q_p}\Gm)$. Suppose $\pi$ is an admissible representation of $\GU(\Q_p)$ and $\pi'$ is the lift to $\Irr(\U \times \Res_{E/ \Q_p}\Gm)$. Then to prove the endoscopic character identities for $\GU$ it will be necessary to relate $\tr(\pi \mid f)$ and $\tr(\pi' \mid f')$. We have 
\begin{align*}
    \pi'(f')v & =\int\limits_{(\U \times \Res_{E/ \Q_p}\Gm)(\Q_p)}f'(g)\pi'(g)vdg=\int\limits_{\GU(\Q_p)} f(g) \pi(g)v dg \int\limits_{\U_F(1)(\Q_p)} dz\\
    &=\Vol(\U(1)(\Q_p))\pi(f)v,  
\end{align*}
where the middle equality holds by \cite[(3.21)]{PR}.

Analogously in the endoscopic case, we have a map 
\begin{equation}{\label{P^H}}
P^{\mH}: \mH \times \Res_{E/ \Q_p}\Gm \twoheadrightarrow \G(\mH),
\end{equation}
with kernel $\U(1)$ where $\mH = \displaystyle \prod_{i = 1}^r \U(n_i^+) \times \U(n_i^-) $ such that $ n_i = n_i^+ + n_i^- $ is an endoscopic group of $\U$ and $\G(\mH)$ is the associated similitude group. Suppose $n = n_1 + \cdots + n_r$ is odd. By Lemma \ref{itm : GU = ZxU}, the map is a surjection on $\Qp$-points.

We fix a refined endoscopic datum $(\G(\mH), s, \Leta)$ for $\GU$ as in Section $1$. The map $\Res_{E/ \Q_p} \Gm \cong Z(\GU) \hookrightarrow \GU$ induces a map of $L$-groups $\LL\GU \longrightarrow \LL(\Res_{E/ \Q_p} \Gm)$. We get an analogous map for $\G(\mH)$ and since $\widehat{\Res_{E/ \Q_p} \Gm}$ is the abelianization of $\widehat{\G(\mH)}$, we get a commutative diagram
\begin{equation*}
\begin{tikzcd}
\LL\G(\mH) \arrow[r, "\Leta"] \arrow[d] & \LL\GU \arrow[d]  \\
 \LL(\Res_{E/ \Q_p} \Gm) \arrow[r, "\LL \lambda "] & \LL(\Res_{E/ \Q_p} \Gm).
\end{tikzcd}    
\end{equation*}

We now fix an endoscopic datum of $\U \times \Res_{E/ \Q_p} \Gm$ which we denote by $(\mH \times \Res_{E/ \Q_p}, s', \Leta')$. We fix $\Leta'$ such that the restriction to $\widehat{\mH}$ induces an elliptic endoscopic datum for $\U$ as in Section $1$ and compatible with our datum for $\GU$ and such that $\Leta'$ restricted to $\Res_{E/ \Q_p} \Gm$ is just $\LL\lambda$. In particular, we have a commutative diagram:
\begin{equation}{\label{UResGUcommutativediagram}}
\begin{tikzcd}
\LL(\mH \times \Res_{E/ \Q_p} \Gm)  \arrow[r, "\Leta'"] & \LL(\U \times \Res_{E/ \Q_p} \Gm) \\
\LL\G(\mH) \arrow[u, "\LL P^{\mH}"] \arrow[r, swap, "\Leta"] & \LGU \arrow[u, swap, "\LL P"].   
\end{tikzcd}    
\end{equation}

We now prove the following lemma. 
\begin{lemma}{\label{matchlift}}
Using the above normalizations, if $f \in \mc{H}(\GU)$ and $f^{\mH} \in \mc{H}(\G(\mH))$  are $\Delta[\mf{w}_{\GU}, \varrho_{\GU}, z_{\GU}]$-matching, then the pullbacks $f' \in \mc{H}(\U \times \Res_{E/ \Q_p} \Gm)$ and $f'^{\mH} \in \mc{H}(\mH \times \Res_{E/ \Q_p} \Gm)$ are $\Delta[\mf{w}_{\U}, \varrho_{\U}, z_{\U}]$-matching.
\end{lemma}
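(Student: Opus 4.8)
To establish the lemma I would unwind the definition of matching: $f'\in\mc{H}(\U\times\Res_{E/\Q_p}\Gm)$ and $f'^{\mH}\in\mc{H}(\mH\times\Res_{E/\Q_p}\Gm)$ are $\Delta[\mf{w}_\U,\varrho_\U,z_\U]$-matching precisely when, for every strongly regular semisimple $\gamma^{\mH}\in(\mH\times\Res_{E/\Q_p}\Gm)(\Q_p)$ transferring to $\gamma\in(\U\times\Res_{E/\Q_p}\Gm)(\Q_p)$,
\[
SO^{\mH\times\Res_{E/\Q_p}\Gm}_{\gamma^{\mH}}(f'^{\mH})=\sum_{\gamma'\sim_{st}\gamma}\Delta[\mf{w}_\U,\varrho_\U,z_\U](\gamma^{\mH},\gamma')\,O^{\U\times\Res_{E/\Q_p}\Gm}_{\gamma'}(f').
\]
The plan is to deduce this from the analogous identity for the $\Delta[\mf{w}_{\GU},\varrho_{\GU},z_{\GU}]$-matching pair $(f,f^{\mH})$ on $\GU$ and $\G(\mH)$ by pushing both sides through $P$ and $P^{\mH}$, using two inputs: (i) the behaviour of (stable) orbital integrals under pullback along a surjection with compact central kernel, and (ii) the compatibility of Whittaker-normalized transfer factors under $P$.

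For (i): since $\ker P=\ker P^{\mH}=\U(1)$ is compact and central and $P,P^{\mH}$ are surjective on $\Q_p$-points (Lemma \ref{itm : GU = ZxU}), a strongly regular semisimple conjugacy class of $\GU(\Q_p)$ pulls back to a finite union of strongly regular semisimple classes of $(\U\times\Res_{E/\Q_p}\Gm)(\Q_p)$ that differ by translation by $\U(1)(\Q_p)$, and the centralizing maximal tori correspond under $P$. Fixing Haar measures compatibly (so that $dg=dg_{\GU}\,dz_{\U(1)}$, and similarly on the tori), a Fubini argument over the compact fiber then gives, for $\gamma_0'\in\GU(\Q_p)$ strongly regular and $f\in\mc{H}(\GU)$,
\[
\sum_{\gamma'\mapsto\gamma_0'}O^{\U\times\Res_{E/\Q_p}\Gm}_{\gamma'}(f\circ P)=\Vol(\U(1)(\Q_p))\cdot O^{\GU}_{\gamma_0'}(f),
\]
where the sum is over the rational classes $\gamma'$ in the relevant stable class with $P(\gamma')$ rationally conjugate to $\gamma_0'$, and likewise $SO^{\mH\times\Res_{E/\Q_p}\Gm}_{\gamma^{\mH}}(f^{\mH}\circ P^{\mH})=\Vol(\U(1)(\Q_p))\cdot SO^{\G(\mH)}_{P^{\mH}(\gamma^{\mH})}(f^{\mH})$. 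The key point is that the extra factor $\Vol(\U(1)(\Q_p))$ is literally the same on the $\mH$-side and the $\U$-side, so it cancels in the matching identity.

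For (ii): I would show that for matching $\gamma^{\mH}\leftrightarrow\gamma$ chosen so that $P^{\mH}(\gamma^{\mH})\leftrightarrow P(\gamma)$ is also a matching pair for $\GU$,
\[
\Delta[\mf{w}_\U,\varrho_\U,z_\U](\gamma^{\mH},\gamma)=\Delta[\mf{w}_{\GU},\varrho_{\GU},z_{\GU}]\big(P^{\mH}(\gamma^{\mH}),P(\gamma)\big).
\]
This is where the compatible normalizations set up before the lemma are used: $\mf{w}_\U$ and $\mf{w}_{\GU}$ come from compatible $\Gamma_{\Q_p}$-splittings and the same additive character $\varphi$; $z_\U$ and $z_{\GU}$ agree under $\mb{B}(\Q_p,\U^*\times\Res_{E/\Q_p}\Gm)\to\mb{B}(\Q_p,\GU^*)$; and the $L$-embeddings sit in the commutative square \eqref{UResGUcommutativediagram}. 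Since $\ker P=\U(1)$ is central, the root data of $\U\times\Res_{E/\Q_p}\Gm$ and of $\GU$ agree away from this kernel, so $\Delta_{IV}$ (the Weyl discriminant ratio) is unchanged and compatible $a$-data and $\chi$-data can be chosen; the $\Delta_{III}$-term (and the $\epsilon$-factor of the $\Delta_D$-normalization of \cite{KS2}) then also match because the contribution of the central $\U(1)$ is trivial. Here one additionally uses the factorization of $\Delta$ over the $\Res_{E/\Q_p}\Gm$-factor (Lemma \ref{itm : product of transfer factor}) to reduce to the genuinely unitary part.

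Combining (i) and (ii): substituting $f'^{\mH}=f^{\mH}\circ P^{\mH}$, applying the matching of $f$ and $f^{\mH}$ on $\GU$, rewriting the transfer factors via (ii), and regrouping the sum over $\gamma'$ according to the value of $P(\gamma')$ yields
\begin{align*}
SO^{\mH\times\Res_{E/\Q_p}\Gm}_{\gamma^{\mH}}(f'^{\mH})
&=\Vol(\U(1)(\Q_p))\,SO^{\G(\mH)}_{P^{\mH}\gamma^{\mH}}(f^{\mH})\\
&=\Vol(\U(1)(\Q_p))\sum_{\gamma_0'\sim_{st}P\gamma}\Delta[\mf{w}_{\GU},\varrho_{\GU},z_{\GU}](P^{\mH}\gamma^{\mH},\gamma_0')\,O^{\GU}_{\gamma_0'}(f)\\
&=\sum_{\gamma'\sim_{st}\gamma}\Delta[\mf{w}_\U,\varrho_\U,z_\U](\gamma^{\mH},\gamma')\,O^{\U\times\Res_{E/\Q_p}\Gm}_{\gamma'}(f'),
\end{align*}
which is the desired matching; the last step again uses that $\gamma'\mapsto P(\gamma')$ gives a compatible correspondence between the rational classes in the two stable classes because $P$ is surjective on $\Q_p$-points. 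I expect the main obstacle to be the transfer-factor compatibility (ii): one must verify that the Whittaker-normalized $\Delta_D$-factor is genuinely insensitive to the central kernel $\U(1)$ under our chosen splittings, additive character, and cocycles $z_\U,z_{\GU}$ — in particular that the $\varrho$-twisting and the $z$-terms transform correctly. A secondary, bookkeeping-type difficulty is tracking the finite indices attached to $H^1(\Q_p,\U(1))\cong\Q_p^\times/N_{E/\Q_p}E^\times$ — the sizes of the fibers of $\gamma'\mapsto P(\gamma')$ on rational classes and of the cokernels $T_\gamma(\Q_p)/P(\widetilde{T}_{\gamma}(\Q_p))$ — and checking that, once stable orbital integrals are used throughout, these assemble into exactly the single constant $\Vol(\U(1)(\Q_p))$.
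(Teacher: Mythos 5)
Your proposal is correct and follows essentially the same route as the paper: pull the matching identity back through $P$ and $P^{\mH}$, match orbital integrals via the correspondence of rational classes inside a stable class, and invoke a transfer-factor compatibility between $\Delta[\mf{w}_{\U},\varrho_{\U},z_{\U}]$ and $\Delta[\mf{w}_{\GU},\varrho_{\GU},z_{\GU}]$. Two points where the paper is sharper and which you should tighten: the fibers of $P$ on rational classes \emph{within a fixed stable class} are singletons (stable conjugacy fixes the $\Res_{E/\Q_p}\Gm$-coordinate, so translation by $\ker P=\U(1)$ moves you out of the stable class), and since $\ker P$ lies in the centralizing torus the quotients $(\U\times\Res_{E/\Q_p}\Gm)(\Q_p)/T_{(\gamma,z)}(\Q_p)$ and $\GU(\Q_p)/T_{\gamma z}(\Q_p)$ are literally equal, so the orbital integrals agree exactly with no $\Vol(\U(1)(\Q_p))$ bookkeeping; and the transfer-factor identity is not mere ``insensitivity to the central kernel'' but a genuine computation (Lemma \ref{itm : transfer factors for U and GU}), combining the product decomposition, the $\Delta_{III_2}$-character of the $\Res_{E/\Q_p}\Gm$-factor, the comparison of $\langle\inv[z_{\U}],s\rangle$ and $\langle\inv[z_{\GU}],s\rangle$ terms, and the Langlands--Shelstad central-translation lemma.
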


We begin by proving an auxiliary lemma.

\begin{lemma} \phantomsection \label{itm : conjugacy classes}
For $(\gamma, z) \in \U \times \Res_{E/ \Q_p} \Gm(\Qp)$, the map $P$ gives a bijection between conjugacy classes in $\U \times \Res_{E/ \Q_p} \Gm(\Qp)$ that are stably conjugate to $(\gamma, z)$ and conjugacy classes in $\GU(\Qp) $ that are stably conjugate to $\gamma z$. The analogous result also holds for the map $P^{\mH}$.
\end{lemma}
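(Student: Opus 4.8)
The plan is to exploit the fact that the kernel of $P$ is the central torus $\U(1)$, which is anisotropic (hence $\U(1)(\Q_p)$ is compact and, more importantly, $H^1(\Q_p, \U(1))$ is controlled by Tate--Nakayama), and to trace through the usual dictionary between rational conjugacy classes inside a stable class and a Galois-cohomology set. First I would set up notation: write $T = \U(1)$ for the kernel, so that we have a central extension $1 \to T \to \U \times \Res_{E/\Q_p}\Gm \xrightarrow{P} \GU \to 1$ of $\Q_p$-groups. For a strongly regular semisimple $(\gamma, z)$ with centralizer $I = Z_{\U \times \Res\Gm}(\gamma,z)$ (a torus, since the element is strongly regular), the set of conjugacy classes within the stable class of $(\gamma,z)$ is parametrized by $\ker\big(H^1(\Q_p, I) \to H^1(\Q_p, \U \times \Res_{E/\Q_p}\Gm)\big)$, and similarly the conjugacy classes in the stable class of $P(\gamma,z) = \gamma z$ are parametrized by $\ker\big(H^1(\Q_p, \bar I) \to H^1(\Q_p, \GU)\big)$, where $\bar I = P(I) = Z_{\GU}(\gamma z)$ is the image centralizer. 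Because $T$ is central, $P$ identifies $I/T \cong \bar I$, and surjectivity of $P$ on $\Q_p$-points (Lemma \ref{itm : GU = ZxU}) together with centrality gives $1 \to T \to I \to \bar I \to 1$ exact as $\Q_p$-groups.

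Next I would run the long exact cohomology sequences for $1 \to T \to I \to \bar I \to 1$ and for $1 \to T \to \U \times \Res_{E/\Q_p}\Gm \to \GU \to 1$ and compare them via the natural maps. The map $P$ induces a map from the first kernel to the second; I want to show it is a bijection. Injectivity: a class in $H^1(\Q_p, I)$ mapping into the image of $H^1(\Q_p, T)$ and also dying in $H^1(\Q_p, \U\times\Res\Gm)$ must already be trivial, because $H^1(\Q_p, T) = H^1(\Q_p, \U(1))$ injects into $H^1(\Q_p, \U\times\Res\Gm)$ — indeed $\U(1) = Z(\U^*(n))$ is a direct factor-type central subtorus and the relevant restriction map on $H^1$ is injective (this can be checked by Tate--Nakayama duality, or by the explicit description $H^1(\Q_p,\U(1)) \cong \Z/2\Z$ sitting inside $H^1(\Q_p,\U)$). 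Surjectivity: given a conjugacy class in the stable class of $\gamma z$, represented by a cocycle in $H^1(\Q_p, \bar I)$ dying in $H^1(\Q_p,\GU)$, lift it to $H^1(\Q_p, I)$; the obstruction to such a lift lies in $H^2(\Q_p, T)$, and one checks the image of this obstruction in $H^2(\Q_p, T)$ computed via $\GU$ already vanishes (since the class comes from $\GU$), so a compatible lift exists, and it automatically lies in the kernel defining conjugacy classes stably conjugate to $(\gamma,z)$.

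The argument for $P^{\mH}$ is formally identical: $\mH = \prod_i \U(n_i^+)\times\U(n_i^-)$ is again a product of (quasi-split, odd-or-even) unitary groups, $\G(\mH)$ its similitude group, the kernel of $P^{\mH}$ is again $\U(1)$ sitting centrally, and since $n = \sum n_i$ is odd Lemma \ref{itm : GU = ZxU} gives surjectivity on $\Q_p$-points, so the same diagram chase applies verbatim. I expect the main obstacle to be the bookkeeping in the comparison of the two long exact sequences — specifically, verifying that the $H^2(\Q_p,\U(1))$-obstruction to lifting a cocycle from $\bar I$ to $I$ is genuinely controlled by (and cancelled by) the corresponding obstruction for the ambient groups, rather than anything deep; once centrality of $T$ and the injectivity of $H^1(\Q_p,\U(1)) \to H^1(\Q_p,\U\times\Res_{E/\Q_p}\Gm)$ are in hand, the rest is a standard, if slightly fiddly, diagram chase. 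Alternatively, one can phrase the whole thing concretely in terms of the explicit descriptions of the tori and of $H^1(\Q_p, \U(1)) \cong \Q_p^\times/\mathrm{Nm}(E_v^\times)$, but the cohomological formulation is cleaner and more transparent.
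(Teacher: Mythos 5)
Your cohomological route is workable, but it is genuinely different from — and substantially heavier than — the argument the paper actually uses. The paper's proof is elementary and avoids Galois cohomology entirely: the forward direction is immediate, and for the converse one observes that (stable) conjugacy in $\GU(\Qp)$ preserves the similitude factor, so any $g$ stably conjugate to $\gamma z$ satisfies $c(gz^{-1})=1$ and hence lifts explicitly to $(gz^{-1},z)\in (\U\times\Res_{E/\Q_p}\Gm)(\Qp)$; injectivity then follows by writing a conjugating element $x\in\GU(\Qp)$ as $x=ur$ with $u\in\U(\Qp)$ and $r$ central (using Lemma \ref{itm : GU = ZxU}), so that $u$ already does the conjugation and $(u,1)$ conjugates the lifts. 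What your approach buys is a uniform framework (kernels of $H^1(\Q_p,I)\to H^1(\Q_p,G)$) that would generalize to settings where no explicit section like $g\mapsto (gz^{-1},z)$ is available; what it costs is all the non-abelian $H^1$ bookkeeping you anticipate, plus three points you should make explicit. First, the parametrization of rational classes inside a stable class by $\ker\bigl(H^1(\Q_p,I)\to H^1(\Q_p,G)\bigr)$ requires $I$ connected, so your argument only covers strongly regular semisimple $(\gamma,z)$, whereas the lemma (and the paper's proof) is stated for arbitrary elements; this suffices for the application in Lemma \ref{matchlift} but is a genuine narrowing of scope. Second, the injectivity of $H^1(\Q_p,\U(1))\to H^1(\Q_p,\U)$ is itself a consequence of $n$ odd (the diagonal twist changes the discriminant by $t^{n}$), exactly parallel to the oddness hypothesis needed for surjectivity of $P$ on $\Qp$-points — so your cohomological input and the paper's group-theoretic input are two faces of the same fact, and you should flag the dependence on parity in the injectivity step as well, not only in the surjectivity step. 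Third, in the surjectivity half you must not only produce a lift to $H^1(\Q_p,I)$ killing the $H^2(\Q_p,\U(1))$ obstruction, but then correct it by an element of $H^1(\Q_p,\U(1))$ so that it dies in $H^1(\Q_p,\U\times\Res_{E/\Q_p}\Gm)$; this uses the compatibility of the $H^1(\Q_p,\U(1))$-actions on $H^1(\Q_p,I)$ and $H^1(\Q_p,\U\times\Res_{E/\Q_p}\Gm)$ and again the injectivity above. With those points supplied the diagram chase closes, but the paper's two-line similitude-factor argument is the more economical proof here.
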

\begin{proof}
Clearly, if $(\gamma', z')$ is conjugate or stable conjugate to $(\gamma, z)$ in $\U \times \Res \Gm (\Qp) $, then $\gamma' z$ and $\gamma z$ are conjugate or stably conjugate in $\GU (\Qp) $. Now, suppose that $g, \gamma z \in \GU(\Qp)$ are conjugate or stably conjugate. Then they must have the same similitude factor. In particular, this means that $gz^{-1}$ has trivial similitude factor and so $(gz^{-1}, z) \in \U \times \Res_{E/ \Q_p} \Gm(\Qp)$ and clearly $P(gz^{-1},z)=g$. 

We now aim to show that $(gz^{-1}, z)$ is conjugate or stably conjugate to $(\gamma ,z)$. To simplify the notation, we just show that $(gz^{-1}, z)$ and $(\gamma, z)$ are conjugate (although the argument to show stable conjugacy is similar). 

Let $x \in \GU(\Qp)$ be such that $xgx^{-1}=\gamma z$. We want to show that $x$ can be chosen to be an element of $\U(\Qp)$. Since the map $P$ is surjective on $\Q_p$ points, we can write $x=ur$ such that $u \in \U(\Qp)$ and $r \in \Res_{E/ \Q_p} \Gm (\Qp) $. Then $r$ lies in the center of $\GU(\Qp)$ and hence we have $ugu^{-1}=\gamma z$ as desired. Finally, we finish the argument by observing that that $(u,1)(gz^{-1}, z) (u,1)^{-1}=(\gamma ,z)$ since the restriction of $P$ to the first component is an injection.
\end{proof}

We now prove Lemma \ref{matchlift}.
\begin{proof}
Choose a strongly regular semisimple  $(\gamma_{\mH}, z) \in \mH \times \Res_{E/\Q_p} \Gm (\Qp)$ that transfers to a strongly regular $(\gamma, z) \in \U \times \Res_{E/ \Q_p} \Gm (\Qp)$. Then we need to show that
\begin{equation*}
    SO_{(\gamma_{\mH}, z)}({f'^{\mH}})= \sum\limits_{(\gamma', z) \sim_{st} (\gamma, z)} \Delta[\mf{w}_{\U}, \varrho_{\U}, z_{\U}]((\gamma_{\mH}, z), (\gamma', z))O_{(\gamma, z)}(f').
\end{equation*}
Expanding, this is equivalent to showing that
\begin{equation*}
    \sum\limits_{(\gamma'_{\mH}, z) \sim_{st} (\gamma_{\mH}, z)}\int\limits_{\mH \times \Res_{E/\Q_p} \Gm/ T_{(\gamma'_{\mH}, z)}} {f^{\mH}}'(h(\gamma'_{\mH}, z)h^{-1})dh
\end{equation*}
equals
\begin{equation*}
    \sum\limits_{(\gamma', z) \sim_{st} (\gamma, z)} \Delta[\mf{w}_{\U}, \varrho_{\U}, z_{\U}]((\gamma_{\mH}, z), (\gamma', z))\int\limits_{\U \times \Res_{E/\Q_p} \Gm/T_{(\gamma, z)}} f'(g(\gamma, z)g^{-1})dg.
\end{equation*}
Note that the kernels of $P^{\mH}, P$ are contained within $T_{(\gamma_{\mH}, z)}$ and $T_{(\gamma', z)}$ respectively. Hence we have $(\U \times \Res_{E/\Q_p} \Gm)(\Q_p)/T_{(\gamma, z)}(\Q_p)= \GU(\Q_p)/ T_{\gamma z}(\Q_p)$ and the analogous statement also holds for $P^{\mH}$.

By Lemma \ref{itm : conjugacy classes}, we can rewrite the equation above as 
\begin{equation*}
    \sum\limits_{\gamma'_{\mH}z \sim_{st} \gamma_{\mH} z}\int\limits_{\G(\mH)/ T_{\gamma'_{\mH}z}} f^{\mH}(h\gamma'_{\mH} zh^{-1})dh
\end{equation*}
equals
\begin{equation*}
    \sum\limits_{\gamma' z \sim_{st} \gamma z} \Delta[\mf{w}_{\U}, \varrho_{\U}, z_{\U}]((\gamma_{\mH}, z), (\gamma', z))\int\limits_{\GU /T_{\gamma' z}} f(g \gamma' zg^{-1})dg.
\end{equation*}
In Lemma \ref{itm : transfer factors for U and GU} we prove that there is an equality of transfer factors:
\begin{equation*}
    \Delta[\mf{w}_{\U}, \varrho_{\U}, z_{\U}]((\gamma_{\mH}, z) , (\gamma', z))=\Delta[\mf{w}_{\GU}, \varrho_{\GU}, z_{\GU}](\gamma_{\mH} z , \gamma'z).
\end{equation*}
Hence, the above equation reduces to 
\begin{equation*}
    SO_{\gamma_{\mH}z}(f^{\mH})=\sum\limits_{\gamma'z \sim_{st} \gamma z} \Delta[\mf{w}_{\GU}, \varrho_{\GU}, z_{\GU}](\gamma_{\mH}z, \gamma'z)O_{\gamma'z}(f),
\end{equation*}
which is true by assumption.
\end{proof}

With this lemma in hand, we now prove the endoscopic character identities. Pick a parameter $\psi \in \Psi^+(\GU^*_{\Q_p}(n))$ and let $\psi' \in \Psi^+(\U^*_{\Q_p}(n) \times \Res_{E/ \Q_p} \Gm)$ be the composition of $\psi$ with the map $\LL\GU \to \LL(\U^*_{\Q_p}(n) \times \Res_{E/ \Q_p} \Gm)$. We suppose $\psi$ factors through $\LL\G(\mH)$ and pick $\psi_{\G(\mH)}$ so that $\psi = \Leta \circ \psi_{\G(\mH)}$. We can write $\psi' = \psi_{\U} \times \psi_{\Gm}$ where $\psi_{\U}$ is the image of $\psi$ under the map $\LL\GU \to \LU$. Diagram \ref{UResGUcommutativediagram} implies there is a parameter $\psi'_{\mH}$ such that $\psi'=\Leta' \circ \psi'_{\mH}$. 

For a packet $\Pi_{\psi}(\GU, \varrho_{\GU}, z_{\GU})$, and matching functions $f \in \mc{H}(\GU)$, $f^{\mH} \in \mc{H}( \G(\mH))$ we have by definition 
\begin{equation*}
    e(\GU)\sum\limits_{\pi \in \Pi_{\psi}} \langle \pi, s \cdot s_{\psi} \rangle_{\GU} \tr( \pi \mid f)
    = e(\GU) \sum\limits_{(\pi, \chi) \in \Pi_{\psi}} \langle \pi, s \cdot s_{\psi_{\U}} \rangle_{\U} \chi_{z_{\GU}}(c)  \tr((\pi, \chi) \mid f).
\end{equation*}
We showed above that there is a natural bijection between $\Pi_{\psi}(\GU, \varrho_{\GU})$ and $\Pi_{\psi'}(\U \times \Res_{E/ \Q_p} \Gm, \varrho_{\U})$ and we related the traces of corresponding representations. The pairing $\langle \cdot, \cdot \rangle_{\U \times \Res_{E/ \Q_p} \Gm} : \Pi_{\psi'}(\U \times \Res_{E/\Q_p} \Gm) \times S^{\natural}_{\psi'} \longrightarrow \C^{\times}$ is given as a product of the pairing for $\U$ and $\Res_{E/ \Q_p} \Gm$. Hence we have the above equals
\begin{equation*}
    e(\GU)\frac{1}{\Vol(\U(1)(\Q_p))} \sum\limits_{\pi' \in \Pi_{\psi'}} \langle \pi', s \cdot s_{\psi'} \rangle_{\U \times \Res_{E/\Q_p} \Gm} \tr(\pi' \mid f').
\end{equation*}
Now, using that $e(\GU) = e(\U)=e(\U \times \Res_{E/ \Q_p} \Gm)$ (see \cite[pg. 292]{Kot4}) we can apply the previously established endoscopic character identity for $\U \times \Res_{E/ \Q_p} \Gm$ to get that the above equals
\begin{equation*}
    \frac{1}{\Vol(\U(1)(\Q_p))} \sum\limits_{\pi'_{\mH} \in \Pi_{\psi'_{\mH}}}\langle \pi'_{\mH}, s_{\psi'_{\mH}} \rangle_{\mH \times \Res_{E/ \Q_p} \Gm} \tr(\pi'_{\mH} \mid f'^{\mH}).
\end{equation*}
Finally we relate this to $\G(\mH)$ using that $\G(\mH)$ and $\mH \times \Res_{E/ \Q_p} \Gm$ are both assumed to be trivial extended pure inner forms so that the pairings are especially simple. We get:
\begin{equation*}
\sum\limits_{\pi_{\G(\mH)} \in \Pi_{\psi_{\G(\mH)}}}\langle \pi_{\G(\mH)}, s_{\psi_{\G(\mH)}} \rangle_{\G(\mH)} \tr(\pi_{\G(\mH)} \mid f^{\mH}),
\end{equation*}
which is the desired formula.

\subsection{Transfer factor identities} 

In this subsection, we prove a number of identities relating various transfer factors. These identities are used in the previous subsections. Remark that we use the letter $\Delta$ resp. $ \Delta' $ to denote the transfer factors that are compatible with the geometric normalization resp. arithmetic normalization of the local Artin reciprocity map.

\subsubsection{Transfer factors of a product}

We temporarily return to the notation of \S \ref{ECIproduct}.  We denote by $\G$ the group $\G_1 \times \G_2$ and by $\G^*$ the group $\G^*_1 \times \G^*_2$.


We prove the following lemma 
\begin{lemma} \phantomsection \label{itm : product of transfer factor}
Let $ (\gamma_1, \gamma_2) \in (\mH_1 \times \mH_2) (\Q_p)_{sr} $ and $(\delta_1, \delta_2)  \in (\G_1 \times \G_2)(\Q_p)_{sr}$ be related elements. We have
\begin{equation*}
    \Delta[\mf{w}_1 \times \mf{w}_1, \varrho_1 \times \varrho_2, z_1 \times z_2]((\gamma_1,\gamma_2), (\delta_1, \delta_2)) = \Delta[\mf{w}_1, \varrho_1, z_1](\gamma_1, \delta_1) \Delta[\mf{w}_2, \varrho_2, z_2](\gamma_2, \delta_2).
\end{equation*}
\end{lemma}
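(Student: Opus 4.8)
The plan is to reduce the statement to the explicit formula for the Langlands–Shelstad transfer factor and observe that every constituent factor is multiplicative in the obvious way under products. Recall that $\Delta[\mf{w},\varrho,z]$ is built (as in \cite{KS2}, \cite{Kot9}, and the normalization of \cite[\S4.1]{BM2}) as a product of the terms $\Delta_I$, $\Delta_{II}$, $\Delta_{III}$ (more precisely the Whittaker-normalized versions $\Delta_I \Delta_{II} \Delta_{IV}$ together with the invariant term involving $\inv(\gamma,\delta)$ paired against $s$, using the cocycle $z$). First I would fix, for each $i\in\{1,2\}$, the auxiliary data entering the definition of $\Delta[\mf{w}_i,\varrho_i,z_i]$: an admissible embedding $T_{\mH_i}\hookrightarrow\G_i^*$, $a$-data and $\chi$-data for the roots of $T_i$ in $\G_i^*$, and the splitting $\mathrm{spl}_{\G_i^*}$ underlying $\mf{w}_i$. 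Then I would observe that the natural choice of auxiliary data for $\G^*=\G_1^*\times\G_2^*$ with Whittaker datum $\mf{w}_1\times\mf{w}_2$ is precisely the product of these: the splitting is $\mathrm{spl}_{\G_1^*}\times\mathrm{spl}_{\G_2^*}$, the set of roots of $T_1\times T_2$ is the disjoint union of the roots of $T_1$ and of $T_2$ (there are no "mixed" roots), and $a$-data and $\chi$-data split accordingly.

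With this compatible choice in place, each factor is manifestly multiplicative. The term $\Delta_{II}$ is a product over roots, hence over the disjoint union it factors as the product of the two $\Delta_{II}$'s; likewise $\Delta_I$, which depends on the splitting and the $a$-data through a sign/Tate-cohomology expression indexed by roots, and $\Delta_{IV}$ (the Weyl discriminant factor $|D|^{1/2}$), which is literally multiplicative since $D^{\G_1\times\G_2}_{(\delta_1,\delta_2)} = D^{\G_1}_{\delta_1}\cdot D^{\G_2}_{\delta_2}$. For the cohomological term $\Delta_{III}$ (equivalently the pairing $\langle \inv(\gamma,\delta), s\rangle$ in the Kottwitz-style formulation using $z$), one uses that $\mb{B}(\Q_p,\G_1^*\times\G_2^*) = \mb{B}(\Q_p,\G_1^*)\times\mb{B}(\Q_p,\G_2^*)$ compatibly with the localization maps to the character groups of $Z(\widehat{\G_i^*})^{\Gamma}$, that $z_1\times z_2$ and $s_1\times s_2$ are the product data, and that the Tate–Nakayama / Kottwitz pairing is bi-multiplicative with respect to these product decompositions; hence $\inv((\gamma_1,\gamma_2),(\delta_1,\delta_2))$ pairs against $s_1\times s_2$ as the product of the two pairings. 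Assembling the four factors gives the claimed identity.

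The main obstacle is purely bookkeeping: one must check that the $z$-twisted invariant $\inv[\mf{w},\varrho,z]$ of \cite[\S4.1]{BM2}, which is the feature distinguishing this normalization from the classical Langlands–Shelstad setup, genuinely decomposes as a product over the two factors. This amounts to verifying that the cocycle $z_1\times z_2 \in Z^1_{\bas}(\mc{E}_{\iso},(\G_1^*\times\G_2^*)(\ov{\Q}_p))$ induces, under the relevant cohomological pairing, the product of the contributions of $z_1$ and $z_2$ — which follows from functoriality of $\mb{B}(\Q_p,-)$ and of the refined local transfer factors of \cite{Kot9} under the projections $\G_1^*\times\G_2^*\to\G_i^*$, together with the fact that $\widehat{\G_1^*\times\G_2^*}=\widehat{\G_1^*}\times\widehat{\G_2^*}$ with $Z(\widehat{\G_1^*\times\G_2^*})^{\Gamma_{\Q_p}} = Z(\widehat{\G_1^*})^{\Gamma_{\Q_p}}\times Z(\widehat{\G_2^*})^{\Gamma_{\Q_p}}$. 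Once this compatibility is recorded, no genuine computation remains; everything else is the standard observation that a product over a disjoint union of root systems is a product of the two subproducts.
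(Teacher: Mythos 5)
Your proposal is correct and follows essentially the same route as the paper: choose product auxiliary data (splittings, $a$-data, $\chi$-data over the disjoint union of root systems) and check term-by-term multiplicativity of $\epsilon_L$, $\Delta_I$, $\Delta_{II}$, $\Delta_{IV}$, $\Delta_{III_{2,D}}$ and the pairing $\langle \inv[z](\delta,\delta^*),s\rangle$, the last via functoriality of $\mb{B}(\Q_p,-)$ and the Kottwitz map. The only looseness is that you conflate $\Delta_{III_2}$ with the $\inv[z]$-pairing term (it is $\Delta_{III_1}$ that the pairing replaces); the paper treats $\Delta_{III_{2,D}}$ separately and verifies that the cocycle $a$ coming from the Langlands--Shelstad $L$-embedding $\xi$ decomposes as $a_1\times a_2$ by unwinding the construction of $n(\sigma)$ and $r_p(w)$, a bookkeeping step your "$\chi$-data split accordingly" covers in spirit but does not carry out.
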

\begin{proof}
Each transfer factor is a product of terms
\[
\epsilon^{\G^*_i}_L(V^{\G^*_i}, \varphi){\Delta^{\G^*_i}_{I}} \Delta^{\G^*_i}_{II} \Delta^{\G^*_i}_{III_{2,D}} \Delta^{\G^*_i}_{IV}\langle \inv[z_i](\delta_i, \delta^*_i), s_i \rangle^{-1}.
\]

We state everything for $\G_i$ but the definitions are analogous for $\G$. We now explain the terms in the above formula. Notably, all the terms except the last only depend on $\G^*_i$ and $\mH_i$ (as opposed to $\G_i$). Fix a $\delta^*_i \in \G^*_i(\Q_p)$ such that $\delta^*_i$ is stably conjugate to $\varrho^{-1}_i(\delta)$. Recall that we have fixed $\Q_p$-splittings $(T_i, B_i, \{X_{i,\alpha}\})$ for $\G^*_i$ as well as the $\Q_p$-splitting $(T=T_1 \times T_2, B=B_1 \times B_2,  \{X_{\alpha}\} = \{X_{1,\alpha}\} \coprod  \{X_{2,\alpha}\})$ of $\G^*$. 

Now, $V^i$ is the degree $0$ virtual Galois representation $X^*(T_i) \otimes \C - X^*(T_i^{\mH_i}) \otimes \C$ and $\varphi$ is the additive character we fixed in order to define our Whittaker datum. The term $ \epsilon^{\G^*_i}_L(V^i, \varphi) $ is the local $\epsilon$-factor of this representation normalized as in \cite[\S 3.6]{Tate}. We also know that $\epsilon_L(V, \varphi)$ is additive for degree $0$ virtual representations $V$ (see \cite[Theorem. 3.4.1]{Tate}), therefore $ \epsilon^{\G^*}_L(V^{\G^*}, \varphi) = \epsilon^{\G^*_1}_L(V^{\G^*_1}, \varphi) \epsilon^{\G^*_2}_L(V^{\G^*_2}, \varphi)$. 

We denote by $S_i$ the centralizer of $\delta^*_i$ and $S_i^{\mH_i}$ the centralizer of $\gamma_i$ so that $ S = S_1 \times S_2$ and $ S^{\mH} = S_1^{\mH_1} \times S_2^{\mH_2}$ are the centralizers of $(\delta_1, \delta_2)$ resp. $(\gamma_1, \gamma_2)$.

We put $ D_{\G}((\delta_1, \delta_2)) = | \prod\limits_{\alpha} (\alpha(\delta_1, \delta_2)-1) |^{1/2} $ where the product is over all roots of $S$ in $\G$. Similarly $ D_{\G_i}(\delta_i) = | \prod\limits_{\alpha} (\alpha(\delta_i)-1) |^{1/2} $ where the product is over all roots of $S_i$ in $\G_i$. In particular we have
\[
D_{\G}((\delta_1, \delta_2)) = D_{\G_1}(\delta_1) D_{\G_2}(\delta_2).
\]

We define $D_{\mH}(\gamma_1, \gamma_2)$  and $D_{\mH_i}(\gamma_i)$ analogously and we also have the equality
\[
D_{\mH}(\gamma_1, \gamma_2) = D_{\mH_1}(\gamma_1) D_{\mH_2}(\gamma_2).
\]

By definition $\Delta_{IV} = D_{\G} D_{\mH}^{-1}$ so that we have
\[
\Delta_{IV}^{\G^*}((\gamma_1, \gamma_2), (\delta_1, \delta_2)) = \Delta_{IV}^{\G^*_1}(\gamma_1, \delta_1) \Delta_{IV}^{\G^*_2}(\gamma_2, \delta_2).
\]

For the other terms in the definition of the transfer factors, we need to explain the notions of $a$-data and $\chi$-data. A set of $a$-data for the set $R(\T,\G)$ of absolute roots of $S$ in $\G$ is a function 
\[
R(\T,\G) \longrightarrow \overline{\Q}_p^{\times}, \quad \alpha \longmapsto a_{\alpha}
\]
which satisfies $a_{-\lambda} = -a_{\lambda}$ and $a_{\sigma \lambda} = \sigma (a_{\lambda})$ for $\sigma \in \Gamma_{\Q_p}$. We recall the notion of $\chi$-data. For $\alpha \in R(\T, \G)$, we set $\Gamma_{\alpha} = \Stab(\alpha, \Gamma)$ and $\Gamma_{\pm \alpha} = \Stab(\{ \alpha, - \alpha \}, \Gamma)$ and denote $\mF_{\alpha}$, $\mF_{\pm \alpha}$ the fixed fields of $\Gamma_{\alpha}$ resp. $\Gamma_{\pm \alpha}$. A set of $\chi$-data is then a set of characters
\[
\chi_{\alpha} : \mF_{\alpha}^{\times} \longrightarrow \C^{\times} 
\]
satisfying the conditions $\chi_{\sigma \alpha} = \chi_{\alpha} \circ \sigma^{-1}, \chi_{-\alpha} = \chi_{\alpha}^{-1}$ and if $[\mF_{\alpha} : \mF_{\pm \alpha}] = 2 $ then $\chi_{\alpha}|_{\mF_{\pm \alpha}^{\times}} $ is non-trivial but trivial on the subgroup of norms from $\mF^{\times}_{\alpha}$.

Since $\Gamma_{\Q_p}$ acts on $\G^*_{\overline{\Q}_p}$ and preserves $(\G^*_i)_{\overline{\Q}_p}$, we see that if $(a_{\alpha})_{\alpha \in R(S_i, \G^*_i)}$ and $(\chi_{\alpha})_{\alpha \in R(S_i, \G^*_i)}$ are $a$-data resp $\chi$-data of $(S_i, \G^*_i)$ then $(a_{\alpha})_{\alpha \in R(S, \G^*)}$ and $(\chi_{\alpha})_{\alpha \in R(S, \G)}$ are $a$-data resp $\chi$-data of $(S, \G^*)$.

Now, we define 
\[
\Delta_{II}^{\G^*_i} = \prod_{\alpha} \chi_{\alpha} \left( \dfrac{\alpha(\delta_i) - 1}{a_{\alpha}} \right)
\]
where the product is taken over the set $R(S_i, \G_i) \setminus \varphi_{\gamma, \delta}^{*, -1} R(S^{\mH_i}_i, \mH_i) $. We have a similar formula for $\Delta_{II}^{\G^*}$ in which the product runs over the set $R(S, \G^*) \setminus \varphi_{\gamma, \delta}^{*, -1} R(S^{\mH}, \mH)  = R(S_1, \G^*_1) \setminus \varphi_{\gamma, \delta}^{*, -1} R(S^{\mH_1}_1, \mH_1)  \bigsqcup R(S_2, \G^*_2) \setminus \varphi_{\gamma, \delta}^{*, -1} R(S^{\mH_2}_2, \mH_2) $. In particular we have
\[
\Delta_{II}^{\G^*} = \Delta_{II}^{\G^*_1} \Delta_{II}^{\G^*_2}.
\]

Next, we want to show that 
\[
\Delta_I^{\G^*} = \Delta_I^{\G^*_1} \Delta_I^{\G^*_2}.
\]

First, for $i \in \{1,2\} $ one constructs an element $\lambda_i \in H^1(\Gamma_{\Q_p}, (S_i)_{sc})$ and then uses the Tate-Nakayama duality for tori in order to get a pairing $ \langle \cdot , \cdot \rangle $ between $H^1(\Gamma_{\Q_p}, (S_i)_{sc})$ and $\pi_0([\widehat{S_i}/Z(\widehat{\G^*_i})]^{\Gamma_{\Q_p}})$. One can view $s_i$ as an element of $[Z(\widehat{\mH}_i)/Z(\widehat{\G^*_i})]^{\Gamma_{\Q_p}}$, embed the latter into $\widehat{S^{\mH}_i} / Z(\widehat{\G^*_i})$, and transport it to $\widehat{S_i} / Z(\widehat{\G^*_i})$ by the admissible isomorphism $\varphi_{\gamma, \delta}$. We then define
\[
\Delta_I^{\G^*_i} = \langle \lambda_i,s_i \rangle.
\]

Because $S = S_1 \times S_2$ and $S_{sc} = (S_1)_{sc} \times (S_2)_{sc}$, to show the necessary product relation for this term, it is enough to show that $\lambda = \lambda_1 \times \lambda_2$.

We recall the construction of $\lambda$ for $\G^*$ and $S$. Write $\Omega(T,\G^*)$ for the absolute Weyl group and let $g \in \G^*$ be such that $gTg^{-1} = S$. For each $\sigma \in \Gamma_{\Q_p}$ there exists $\omega(\sigma) \in \Omega(T,\G^*)$ such that for all $t \in T$
\[
\omega(\sigma)\sigma(t) = g^{-1}\sigma(gtg^{-1})g.
\]

Let $\omega(\sigma) = s_{\alpha_1} \cdots s_{\alpha_k}$ be a reduced expression and let $n_i$ be the image of $\left( \begin{array}{cc}
     0 & 1  \\ 
    -1 & 0
\end{array} \right)$ under the homomorphism $\SL_2 \longrightarrow \G^*$ attached to the simple root vector $X_{\alpha_i}$. Then $n(\alpha) = n_1 \cdots n_k$ is independent of the choice of the reduced expression. So $\lambda \in H^1(\Gamma_{\Q_p}, S_{sc})$ is defined by the following $1$-cocycle
\[
\sigma \longmapsto g(\prod_{\alpha} \alpha^{\vee}(a_{\alpha})n(\sigma)[g^{-1}\sigma(g)]^{-1})g^{-1}
\]
where the product runs over the subset $\{ \alpha > 0, \sigma^{-1} \alpha < 0 \}$ of $R(S, \G^*)$ where positivity is determined by the Borel subgroup $gBg^{-1}$. The construction is analogous for $\G^*_i$.

Now, we have
\begin{enumerate}
    \item $B = B_1 \times B_2$,
    \item $T = T_1 \times T_2$, $S = S_1 \times S_2$,
    \item $R(S, \G^*) = R(S_1, \G^*_1) \bigsqcup R(S_2, \G^*_2)$ so that $(X_{\alpha})_{\G^*} = (X_{\alpha})_{\G^*_1} \bigsqcup (X_{\alpha})_{\G^*_2}$.
\end{enumerate}

We see that $\Omega(T, \G^*) = \Omega(T_1, \G^*_1) \times \Omega(T_2, \G^*_2)$ and we can take $g = g_1 \times g_2$ so that $\omega(\sigma)_{\G^*} = \omega(\sigma)_{\G^*_1} \times \omega(\sigma)_{\G^*_2} $. Therefore, $n(\sigma)_{\G^*} = n(\sigma)_{\G^*_1} \times n(\sigma)_{\G^*_2} $. We conclude that $\lambda = \lambda_1 \times \lambda_2$.

We are now going to show that 
\[
\Delta_{III_{2,D}}^{\G^*} = \Delta_{III_{2,D}}^{\G^*_1} \Delta_{III_{2,D}}^{\G^*_2}.
\]

The construction is as follows. First, we associate to the fixed $\chi$-datum a $\widehat{\G^*}$-embedding $\xi_{\G^*} : \prescript{L}{}{S} \longrightarrow \prescript{L}{}{\G^*}$ (\cite{LS86}, 2.6). Next via the admissible isomorphism $\varphi_{\gamma, \delta}$ the $\chi$-datum can be transferred to $S^{\mH}$ and gives an $L$-embedding $\xi^{\mH} : \prescript{L}{}{S}^{\mH} \longrightarrow \LL \mH$. The admissible isomorphism $\varphi_{\gamma, \delta}$ also provides dually an $L$-isomorphism $\prescript{L}{}{\varphi}_{\gamma, \delta} : \prescript{L}{}{S} \longrightarrow \prescript{L}{}{S}^{\mH}$. The composition $\xi' = \prescript{L}{}{\eta} \circ \xi^{\mH} \circ \prescript{L}{}{\varphi}_{\gamma, \delta}$ gives another $L$-embedding $\prescript{L}{}{S} \longrightarrow \prescript{L}{}{\G^*}$. Via conjugation by an element of $\widehat{\G^*}$, we can arrange that $\xi_{\G^*}$ and $\xi'$ coincide on $\widehat{S}$ so that $\xi' = a \cdot \xi_{\G}$ for some $a \in Z^{1}(W_{\Q_p}, \widehat{S})$.

The term $\Delta^{\G^*}_{III_{2,D}}$ is given by $\langle a, \delta \rangle $ where the paring $\langle \cdot , \cdot \rangle$ is the Langlands duality for tori under the geometric normalization. More precisely, the element $a$ of $Z^{1}(W_{\mF}, \widehat{S})$ is an $L$-parameter of $S$. By the local Langland correspondence for tori, $a$ gives rise to a character $ \langle a, \cdot  \rangle$ of $S$.  

In our case, we have $ S = S_1 \times S_2 $ and $\delta = (\delta_1 , \delta_2)$ so it suffices to show that $a = a_1 \times a_2$. In order to verify that, we need to review carefully the formation of the $L$-embedding $ \xi : \prescript{L}{}{S} \longrightarrow \prescript{L}{}{\G} $ associated to a $\chi$-datum \cite[\S 2.6]{LS86}.

Fix a Borel pair $(\widehat{B}, \widehat{T})$ of $\widehat{\G^*}$ as well as a Borel subgroup $B_S$ (possibly not defined over $\Q_p$) of $\G^*$ containing $S$. The pair $(B_S, S)$ yields a set of positive coroots of $S$ and equivalently a set of elements of $X^*(\widehat{S})$. Then $\xi$ is defined so that the restriction to $\widehat{S}$ maps $\widehat{S}$ to $\widehat{T}$ by the unique isomorphism mapping our chosen subset of $X^*(\widehat{S})$ to the set of positive roots of $\widehat{T}$ determined by $\widehat{T}$.

Then, to specify $\xi$ we have only to give a homomorphism $w \mapsto \xi(w) = \xi_0 (w) \times w$ where $\xi_0(w) \in \mathrm{Norm}(\widehat{T}, \widehat{\G^*})$. We require that if $w \mapsto \sigma$ under $W_{\Q_p} \longrightarrow \Gamma_{\Q_p}$ then $\Int(\xi(w))$ acts on $\widehat{\T}$ as the transport by $\xi$ of the action of $\sigma \in \Gamma_{\Q_p}$ on $\widehat{S}$.

We then define
\[
\xi(w) = r_p(w)n(\sigma) \times w
\]
for $w \in W_{\Q_p}$ and $w \mapsto \sigma$ under $W_{\Q_p} \longrightarrow \Gamma_{\Q_p} $. The term $n(\sigma)$ is defined above, in the definition of $\Delta_I$ and we have already seen that $n(\sigma) = n(\sigma)_{\G_1} \times n(\sigma)_{\G_2} $.

We recall briefly the construction of $r_p(w)$. We denote by $\mR$ the set $R^{\vee}(\G^*,S)$ and define $\Sigma$ to be the group of automorphisms of $\mc{R}$ generated by $\Gamma_{\Q_p}$ and $\epsilon$ where $\epsilon$ acts on $X_*(S)$ by $\epsilon (t) = - t$ (as in\cite[Lemma 2.1A]{LS86}). The group $\Sigma$ acts on $\mR$ and divides it into $\Sigma$-orbits $\mR = \mR_1 \bigsqcup \cdots \bigsqcup \mR_k$. For each $\Sigma$-orbit $\mR_i$, we define an element $r_p^i(w)$ and then take the product over the orbits to obtain $r_p(w)$. Since $\mR_{\G^*} = \mR_{\G^*_1} \bigsqcup \mR_{\G^*_2}$ and the group $\Sigma$ preserves $\mR_{\G^*_1}$, $\mR_{\G^*_1}$, we have that $r_p(w)_{\G^*} = r_p(w)_{\G^*_1} \times r_p(w)_{\G^*_2}$. This implies the desired product identity for $\Delta^{\G^*}_{III_{2, D}}$.

Finally, we show that 
\begin{equation*}
    \langle \inv[z_1 \times z_2]((\delta_1, \delta_2), (\delta^*_1, \delta^*_2), s_1 \times s_2 \rangle=\langle \inv[z_1](\delta_1, \delta^*_1), s_1 \rangle\langle \inv[z_2](\delta_2, \delta^*_2), s_2 \rangle.
\end{equation*}
We have a natural isomorphism $\mb{B}(\Q_p, S) = \mb{B}(\Q_p, S_1 \times S_2)$ that maps the class of $g^{-1}(z_1 \times z_2)\sigma(g)$ to the product of the classes of $g^{-1}_1z_1\sigma(g_1)$ and $g^{-1}_2z_2\sigma(g_2)$. Moreover, this product decomposition respects the Kottwitz maps $\kappa_i: \mb{B}(\Q_p, S_i) \to X^*(\widehat{S_i})^{\Gamma_{\Q_p}}$ defining the above pairings. This implies the desired product formula.
\end{proof}
\subsubsection{Transfer factor and changing the normalization}

\begin{lemma}{\label{arithmetic - geometric normalization}}
Let $f \in \mc{H}(\U)$ and $f^{\mH} \in \mc{H}(\mH)$ be $\Delta[\mf{w}^{-1}, \varrho, z]$-matching functions for an endoscopic datum $(\mH, s, \Leta)$ of $\U$. If $i_{\U}: \U(\Q_p) \to \U(\Q_p)$ and $i_{\mH}: \mH(\Q_p) \to \mH(\Q_p)$ are the inverse functions, then $f^{\mH} \circ i_{\mH}$ and $f \circ i_{\U}$ are matching for the transfer factors $\Delta'[\mf{w}, \varrho, z]$ with respect to the endoscopic datum $(\mH, s^{-1}, \Leta)$.
\end{lemma}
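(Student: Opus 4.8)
The plan is to reduce the statement to a single identity between transfer factors for the two normalizations of the local Artin map, and then to verify that identity by passing through the Kottwitz--Shelstad factorization. First I would record the elementary fact that for $f \in \mc{H}(\U)$ and strongly regular semisimple $\gamma \in \U(\Qp)$ one has $O^{\U}_{\gamma}(f \circ i_{\U}) = O^{\U}_{\gamma^{-1}}(f)$, since $x \mapsto x^{-1}$ is a measure-preserving homeomorphism of $\U(\Qp)$ and $i_{\U}(g\gamma g^{-1}) = g\gamma^{-1}g^{-1}$. Because inversion carries the conjugacy classes inside the stable class of $\gamma_{\mH}$ bijectively onto those inside the stable class of $\gamma_{\mH}^{-1}$, summing gives $SO^{\mH}_{\gamma_{\mH}}(f^{\mH} \circ i_{\mH}) = SO^{\mH}_{\gamma_{\mH}^{-1}}(f^{\mH})$; and inversion is compatible with the norm correspondence, since stable conjugacy and the transfer of semisimple conjugacy classes are detected over $\ov{\Q}_p$ via an isomorphism of abstract Cartan subgroups, which is a homomorphism of algebraic tori and hence commutes with inversion, so $\gamma_{\mH}$ transferring to $\gamma$ forces $\gamma_{\mH}^{-1}$ to transfer to $\gamma^{-1}$. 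Substituting these into the definition of ``$\Delta'[\mf{w}, \varrho, z]$-matching for $(\mH, s^{-1}, \Leta)$'' and into the hypothesis that $f, f^{\mH}$ are ``$\Delta[\mf{w}^{-1}, \varrho, z]$-matching for $(\mH, s, \Leta)$'', the lemma reduces to the identity
\[
\Delta'[\mf{w}, \varrho, z]_{(\mH, s^{-1}, \Leta)}(\gamma_{\mH}, \delta) = \Delta[\mf{w}^{-1}, \varrho, z]_{(\mH, s, \Leta)}(\gamma_{\mH}^{-1}, \delta^{-1})
\]
for all related strongly regular semisimple pairs $(\gamma_{\mH}, \delta)$.

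I would then prove this identity by running through the factorization $\epsilon_L \cdot \Delta_I \cdot \Delta_{II} \cdot \Delta_{III_2} \cdot \Delta_{IV} \cdot \langle \inv[z](\delta, \delta^*), s \rangle^{-1}$, tracking the four simultaneous operations: inverting both arguments, replacing $s$ by $s^{-1}$, replacing $\mf{w} = (B, \varphi)$ by $\mf{w}^{-1} = (B, \varphi^{-1})$, and switching the normalization of the Artin map; the point is that these are arranged so that the net effect on each term is trivial. Concretely, $\Delta_{IV}$ is unchanged by $(\gamma_{\mH}, \delta) \mapsto (\gamma_{\mH}^{-1}, \delta^{-1})$ because $\prod_{\alpha} \alpha(\delta) = 1$ over the full (even-cardinality) set of roots, so $D_{\U}(\delta^{-1}) = D_{\U}(\delta)$ and similarly over $\mH$; the term $\langle \inv[z](\delta, \delta^*), s\rangle$ is unchanged under inversion of $\delta$, since the same element conjugating $\delta^*$ to $\varrho^{-1}(\delta)$ conjugates $(\delta^*)^{-1}$ to $\varrho^{-1}(\delta)^{-1}$, so the cochain computing the class in $\mb{B}(\Qp, S)$ is literally unchanged, and it is then inverted by $s \mapsto s^{-1}$; the only genuinely normalization-sensitive term is $\Delta_{III_2}$, which is built from the Langlands correspondence for the torus $S$, and passing from the geometric to the arithmetic Artin map replaces the resulting character $\langle a, \cdot\rangle$ of $S(\Qp)$ by $\langle a, \cdot\rangle^{-1} = \langle a, (\cdot)^{-1}\rangle$, exactly compensating the inversion of $\delta$; the $\Q_p$-splitting of $\U^*_{\Qp}(n)$ entering $\Delta_I$ and $\Delta_{III_2}$ through the elements $n(\sigma)$ is untouched by $\mf{w} \mapsto \mf{w}^{-1}$, so only the additive character changes, altering $\epsilon_L(V, \varphi)$ by the sign $(\det V)(-1)$ of the degree-zero virtual Galois representation $V = X^*(T)\otimes\C - X^*(T^{\mH})\otimes\C$, and one checks this sign cancels against the residual contributions of $\Delta_I$ and $\Delta_{II}$ under inversion of $\delta$ and of $s$. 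All of these are precisely the computations carried out in \cite[\S 4]{KalContra} (Proposition 4.4 and its corollaries), which apply verbatim to quasi-split unitary groups and their inner twists; the one extra observation is that for $\GL_n$, $\U^*_{\Qp}(n)$ and their inner forms plain inversion and the Chevalley involution induce the same map on stable semisimple conjugacy classes, so Kaletha's identities, phrased via the Chevalley involution, translate directly into the statement above with $i_{\U}$ and $i_{\mH}$.

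The main obstacle is exactly the bookkeeping in the second step: verifying that the three compensating twists ($s \mapsto s^{-1}$, $\mf{w} \mapsto \mf{w}^{-1}$, and the change of normalization) are inserted in precisely the right places so that the product of all six contributions to the transfer factor is $1$ — a single misplaced inverse in $\Delta_I$, $\Delta_{III_2}$, the $\epsilon$-factor, or the invariant term would break the argument. For this reason the final write-up would carry out Step 1 in full and then reduce Step 2 to a careful citation of the term-by-term identities of \cite{KalContra}, rather than reproving them.
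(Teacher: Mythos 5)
Your proposal is correct and follows essentially the same route as the paper's proof: reduce via $O^{\U}_{\gamma}(f\circ i_{\U})=O^{\U}_{\gamma^{-1}}(f)$ and the compatibility of inversion with stable conjugacy to the single identity $\Delta'[\mf{w},\varrho,z](\gamma_{\mH},\gamma')=\Delta[\mf{w}^{-1},\varrho,z](\gamma_{\mH}^{-1},(\gamma')^{-1})$ (with $s$ replaced by $s^{-1}$), and then verify it term by term through the Kottwitz--Shelstad factorization, with the decisive cancellation being exactly the one you flag, namely $\epsilon_L(V,\varphi)\,\Delta_{I,a}=\epsilon_L(V,\varphi^{-1})\,\Delta_{I,a^{-1}}$ together with the inversion of the $a$- and $\chi$-data, as in Kaletha's contragredient paper. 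The only addition in the paper is that the same computation is also recorded for twisted endoscopic data (with $\Delta_{III_2}^{\text{new}}$ and $\Delta_I^{\text{new}}$), which is needed where the lemma's argument is invoked for the twisted datum of $\GL_n$.
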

\begin{proof}
We consider first the ordinary endoscopic case. Suppose $ \gamma_{\mH} \in \mH(\Q_p) $ is strongly regular and transfers to a strongly regular element $ \gamma \in \U (\Q_p) $. By hypothesis, we have
\[
SO^{\mH}_{\gamma_{\mH}}(f^{\mH} ) = \sum\limits_{\gamma' \sim_{st} \gamma} \Delta[\mf{w}^{-1}, \varrho, z](\gamma_{\mH}, \gamma') O^{\U}_{\gamma'}(f).
\]

Then we need to show that 
\[
SO^{\mH}_{\gamma_{\mH}}(f^{\mH} \circ i_{\mH} ) = \sum\limits_{\gamma' \sim_{st} \gamma} \Delta'[\mf{w}, \varrho, z](\gamma_{\mH}, \gamma') O^{\U}_{\gamma'}(f \circ i_{\U}).
\]

Since $SO^{\mH}_{\gamma_{\mH}}(f^{\mH} \circ i_{\mH} ) = SO^{\mH}_{\gamma^{-1}_{\mH}}(f^{\mH} )$ and $O^{\U}_{\gamma'}(f \circ i_{\U}) = O^{\U}_{(\gamma')^{-1}}(f) $, it suffices to show that the transfer factor $\Delta[\mf{w}^{-1}, \varrho, z](\gamma^{-1}_{\mH}, (\gamma')^{-1})$ with respect to the endoscopic datum $(\mH, s, \Leta)$ is the same as the transfer factor $\Delta'[\mf{w}, \varrho, z](\gamma_{\mH}, \gamma')$ with respect to the endoscopic datum $(\mH, s^{-1}, \Leta)$. 

Recall that the transfer factor $\Delta'[\mf{w}, \varrho, z]$ is a product of terms
\[
\epsilon_L(V, \varphi){\Delta^{-1}_I} \Delta_{II} \Delta_{III_2} \Delta_{IV}\langle \inv[z](\gamma, \gamma^*), s \rangle
\]
which we need to use $\chi$-data and $a$-data in order to define and moreover the transfer factors \emph{do not depend} on the choices of $\chi$-data and $a$-data. 

By \cite[Section 5.1]{KS2}, the transfer factor $\Delta[\mf{w}, \varrho, z]$ is defined by the same formula, except that one replaces the term $ \Delta_{III_2} $ by $ \Delta_{III_2, D} $, inverts $\Delta_I$ and inverts $\langle \inv[z](\delta, \delta^*), s \rangle$. If one keeps track of the dependence on $\chi$-data and $a$-data, then $ \Delta_{III_2, D, \chi^{-1}} (\gamma^{-1}_{\mH}, (\gamma')^{-1}) = \Delta_{III_2, \chi} (\gamma_{\mH}, \gamma') $. 

By using the definitions of the terms appearing in the transfer factors which we recalled in Lemma \ref{itm : product of transfer factor}, we have 
\begin{equation*}
   \epsilon_L(V, \varphi) \Delta^{-1}_{I, a}[s^{-1}] \Delta_{IV} (\gamma_{\mH}, \gamma') = \epsilon_L(V, \varphi) \Delta_{I, a}[s] \Delta_{IV} (\gamma^{-1}_{\mH}, (\gamma')^{-1}), 
\end{equation*}
 since these terms do not depend on $\chi$-data and where the $\Delta_{I,a}[s]$ notation keeps track of whether we plug in  $s$ or $s^{-1}$ into the pairing defining $\Delta_I$. Moreover $ \Delta_{II, \chi^{-1}, a^{-1}} (\gamma^{-1}_{\mH}, (\gamma')^{-1}) = \Delta_{II, \chi, a} (\gamma_{\mH}, \gamma') $. Thus we have
\begin{align*}
    &\Delta'[\mf{w}, \varrho, z](\gamma_{\mH}, \gamma')\\
     = &\epsilon_L(V, \varphi)\Delta^{-1}_{I, a} [s^{-1}]\Delta_{IV} (\gamma_{\mH}, \gamma')\Delta_{II, \chi, a} (\gamma_{\mH}, \gamma')\Delta_{III_2, \chi} (\gamma_{\mH}, \gamma')\langle \inv[z](\gamma, \gamma^*), s^{-1} \rangle \\
    = &\epsilon_L(V, \varphi)\Delta_{I, a}[s]  \Delta_{IV} (\gamma^{-1}_{\mH}, (\gamma')^{-1})\Delta_{II, \chi^{-1}, a^{-1}} (\gamma^{-1}_{\mH}, (\gamma')^{-1})\\
    &\cdot \Delta_{III_2, D, \chi^{-1}} (\gamma^{-1}_{\mH}, (\gamma')^{-1})\langle \inv[z](\gamma^{-1}, (\gamma^{-1})^*), s \rangle^{-1}.
\end{align*}

Therefore $\Delta[\mf{w}, \varrho, z](\gamma^{-1}_{\mH}, (\gamma')^{-1})$ with respect to the endoscopic datum $(\mH, s, \Leta)$ is nearly the same as $\Delta'[\mf{w}, \varrho, z](\gamma_{\mH}, \gamma')$ with respect to the endoscopic datum $(\mH, s^{-1}, \Leta)$. The only difference is that in the above second product, the term $\Delta_I$ is defined with respect to $a$-data and the term $\Delta_{II}$ is defined with respect to the $a^{-1}$-data. However, the  $\Delta_I$ and $\epsilon_L(V, \varphi)$ terms also depend on the Whittaker datum. According to \cite[page 16]{KalContra}, we have $ \epsilon_L(V, \varphi)  \cdot \Delta_{I, a} (\gamma^{-1}_{\mH}, (\gamma')^{-1}) = \epsilon_L(V, \varphi^{-1})  \cdot \Delta_{I, a^{-1}} (\gamma^{-1}_{\mH}, (\gamma')^{-1}) $.

Since inverting the character $ \varphi $ leads to the inverse Whittaker datum $ \mf{w}^{-1} $, the second product is actually the transfer factor $\Delta[\mf{w}^{-1}, \varrho, z](\gamma^{-1}_{\mH}, (\gamma')^{-1})$ with respect to the endoscopic datum $(\mH, s, \Leta)$. 

For the twisted endoscopic case, the same arguments still work. Indeed, in this case $ H = G \rtimes \theta $ and we need to show that
\[
SO^{\mH}_{\gamma_{\mH}}(f^{\mH} \circ i_{\mH} ) = \sum\limits_{\gamma' \sim_{st} \gamma} \Delta'[\mf{w}, \varrho, z](\gamma_{\mH}, \gamma') O^{\U}_{\gamma'}(f \circ i_{\U}).
\]

Since $SO^{\mH}_{\gamma_{\mH}}(f^{\mH} \circ i_{\mH} ) = SO^{\mH}_{\gamma^{-1}_{\mH}}(f^{\mH} )$ and $O^{\U}_{\gamma'}(f \circ i_{\U}) = O^{\U}_{(\gamma')^{-1}}(f) $, it suffices to show that the transfer factor $\Delta[\mf{w}^{-1}, \varrho, z](\gamma^{-1}_{\mH}, (\gamma')^{-1})$ with respect to the endoscopic datum $(\mH, s, \Leta)$ is the same as the transfer factor $\Delta'[\mf{w}, \varrho, z](\gamma_{\mH}, \gamma')$ with respect to the endoscopic datum $(\mH, s^{-1}, \Leta)$. By the results in \cite[Sections 5.3, 5.4]{KS2}, we know that the twisted transfer factor $\Delta'[\mf{w}, \varrho, z]$ is a product of terms
\[
\epsilon_L(V, \varphi){(\Delta^{\text{new}}_I)^{-1}} \Delta_{II} \Delta^{-1}_{III_2} \Delta_{IV}\langle \inv[z](\delta, \delta^*), s \rangle
\]
and the twisted transfer factor $\Delta_D[\mf{w}, \varrho, z]$ is a product of terms
\[
\epsilon_L(V, \varphi){\Delta^{\text{new}}_I} \Delta_{II} \Delta^{\text{new}}_{III_2} \Delta_{IV}\langle \inv[z](\delta, \delta^*), s \rangle^{-1}.
\]

Since $\Delta^{\text{new}}_{III_2}$ is the term $\Delta_{III_2}$ computed for the inverse set of $\chi$-data, we see that $ \Delta^{\text{new}}_{III_2, \chi^{-1}} (\gamma^{-1}_{\mH}, (\gamma')^{-1}) = \Delta_{III_2, \chi} (\gamma_{\mH}, \gamma') $. Moreover $(\Delta^{\text{new}}_I)^{-1} (\gamma_{\mH}, \gamma')[s^{-1}] = \Delta^{\text{new}}_I (\gamma_{\mH}, \gamma')[s] $. Thus we have
\begin{align*}
    &\Delta'[\mf{w}, \varrho, z](\gamma_{\mH}, \gamma') \\
    =& \epsilon_L(V, \varphi) (\Delta^{\text{new}}_{I, a})^{-1}[s^{-1}] \Delta_{IV} (\gamma_{\mH}, \gamma')\Delta_{II, \chi, a} (\gamma_{\mH}, \gamma')\Delta^{-1}_{III_2, \chi} (\gamma_{\mH}, \gamma')\langle \inv[z](\gamma, \gamma^*), s^{-1} \rangle \\
    = &\epsilon_L(V, \varphi) \Delta^{\text{new}}_{I, a}[s] \Delta_{IV} (\gamma^{-1}_{\mH}, (\gamma')^{-1})
    \Delta_{II, \chi^{-1}, a^{-1}} (\gamma^{-1}_{\mH}, (\gamma')^{-1})\\ 
    & \cdot \Delta^{\text{new}}_{III_2, \chi^{-1}} (\gamma^{-1}_{\mH}, (\gamma')^{-1}) \langle \inv[z](\gamma^{-1}, (\gamma^{-1})^*), s \rangle^{-1}.
\end{align*}

As in the standard endoscopy case, the second product is actually the twisted transfer factor $\Delta[\mf{w}^{-1}, \varrho, z](\gamma^{-1}_{\mH}, (\gamma')^{-1})$ with respect to the endoscopic datum $(\mH, s, \Leta)$. 
\end{proof}

\subsubsection{Endoscopy for \texorpdfstring{$\Res_{E/\Q_p} \Gm$}{Res Gm}}

We now study the endoscopy of $\Res_{E/ \Q_p} \Gm$.

We must have  $\mH = \Res_{E/\Q_p} \Gm$ and pick $s \in \widehat{\mH}^{\Gamma_{\Q_p}}$. We will be most interested in the case where  $\Leta|_{\widehat{\mH}}$ is the identity map and so we assume this is the case. Then $\Leta$ is determined up to conjugacy by an element of $H^1(W_{\Q_p}, \widehat{\Res_{E/ \Q_p} \Gm})$. By the Langlands correspondence for tori, this cocycle corresponds to a character $\lambda$ of $\Res_{E/ \Q_p} \Gm(\Q_p) = E^{\times}$.

We now study transfer factors for the endoscopic datum $(\mH, s, \LL \lambda)$ of $\Res_{E/ \Q_p} \Gm$. Recall we have fixed an extended pure inner twist $(\varrho_{\Gm}, z_{\Gm})$ of $\Res_{E/ \Q_p} \Gm$ such that $\varrho_{\Gm}: (\Res_{E/\Q_p} \Gm)^* \to \Res_{E/\Q_p} \Gm$. Consider $z_{\mH} \in \mH(\Q_p)$ which transfers to $z \in \Res_{E/ \Q_p} \Gm$ and $z^* \in (\Res_{E/\Q_p} \Gm)^*$. Our goal is to compute the transfer factor $ \Delta[\mf{w}_{\Gm}, \varrho_{\Gm}, z_{\Gm}](z_{\mH}, z) $.
 
 \begin{lemma} \phantomsection \label{itm : L-embedding}
 We have 
 \[
 \Delta[\mf{w}_{\Gm}, \varrho_{\Gm}, z_{\Gm}](z_{\mH}, z)  = \lambda (z^*)\langle \inv[z_{\Gm}](z, z^*), s \rangle^{-1} .
 \]
 \end{lemma}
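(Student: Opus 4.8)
The plan is to exploit the fact that $\Res_{E/\Q_p}\Gm$ is a torus, so that the transfer factor degenerates dramatically: the root system of a torus is empty, and therefore the terms $\Delta_I$, $\Delta_{II}$, $\Delta_{IV}$ and the $\epsilon$-factor $\epsilon_L(V,\varphi)$ are all trivial. Indeed, $V = X^*(T)\otimes\C - X^*(T^{\mH})\otimes\C$ is the zero virtual representation since $T = T^{\mH} = \Res_{E/\Q_p}\Gm$ (both $\G^* = \mH = \Res_{E/\Q_p}\Gm$ here), so $\epsilon_L(V,\varphi) = 1$; there are no roots, so the products defining $\Delta_I$, $\Delta_{II}$, $\Delta_{IV}$ are empty and equal to $1$; and likewise there is no $a$-data or $\chi$-data to choose. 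Hence only $\Delta_{III_{2,D}}$ and the $\langle\inv[z_{\Gm}](z,z^*),s\rangle^{-1}$ term survive, and
\[
\Delta[\mf{w}_{\Gm},\varrho_{\Gm},z_{\Gm}](z_{\mH},z) = \Delta_{III_{2,D}}(z_{\mH},z)\,\langle\inv[z_{\Gm}](z,z^*),s\rangle^{-1}.
\]

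It then remains to identify $\Delta_{III_{2,D}}(z_{\mH},z)$ with $\lambda(z^*)$. First I would recall the construction of $\Delta_{III_{2,D}}$ from Lemma \ref{itm : product of transfer factor}: one forms the $L$-embedding $\xi_{\G^*}\colon {}^LS \to {}^L\G^*$ from the $\chi$-data, the analogous $\xi^{\mH}$, the $L$-isomorphism ${}^L\varphi_{\gamma,\delta}$, and then $\xi' = {}^L\eta \circ \xi^{\mH}\circ {}^L\varphi_{\gamma,\delta}$, writes $\xi' = a\cdot\xi_{\G^*}$ with $a\in Z^1(W_{\Q_p},\widehat S)$, and sets $\Delta_{III_{2,D}} = \langle a,\delta\rangle$ under geometric Langlands duality for tori. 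In the present case $S = S^{\mH} = \Res_{E/\Q_p}\Gm$, the admissible isomorphism $\varphi_{\gamma,\delta}$ is the identity, and there is no $\chi$-data (empty root set), so $\xi_{\G^*}$ and $\xi^{\mH}$ are the ``trivial'' $L$-embeddings ${}^LS \to {}^LS$. Therefore $\xi' = {}^L\eta = {}^L\lambda$ as an $L$-embedding ${}^LS\to{}^LS$, and the cocycle $a$ is precisely the cocycle in $H^1(W_{\Q_p},\widehat{\Res_{E/\Q_p}\Gm})$ defining $\LL\lambda$, which by construction corresponds to the character $\lambda$ of $E^\times$ under the Langlands correspondence for tori. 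Evaluating at $\delta = z$ — but being careful about which representative is used — gives $\Delta_{III_{2,D}}(z_{\mH},z) = \lambda(z^*)$, where $z^*$ appears because the transfer factor is evaluated on the quasi-split representative, consistent with the appearance of $z^*$ in the $\inv[z_{\Gm}](z,z^*)$ term.

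I expect the main obstacle to be bookkeeping rather than substance: namely, verifying that the argument $z^*$ (rather than $z$) is the correct input, i.e. tracking through the definition of $\Delta_{III_{2,D}}$ in \cite{KS2} how the inner twist $(\varrho_{\Gm},z_{\Gm})$ enters, and confirming that $\Delta_{III_{2,D}}$ depends only on the quasi-split datum while the inner-form dependence is entirely carried by $\langle\inv[z_{\Gm}](z,z^*),s\rangle^{-1}$. Once the degeneration of $\Delta_I,\Delta_{II},\Delta_{IV},\epsilon_L$ is recorded and the identification $a\leftrightarrow\lambda$ is made via Langlands duality for tori, the lemma follows; I would close by remarking that the normalization is the geometric one ($\Delta = \Delta_D$), which is exactly why $\Delta_{III_{2,D}}$ and not $\Delta_{III_2}$ appears, and why the formula reads $\lambda(z^*)\langle\inv[z_{\Gm}](z,z^*),s\rangle^{-1}$.
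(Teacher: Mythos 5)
Your proposal is correct and follows essentially the same route as the paper: the paper likewise observes that $\epsilon_L(V,\varphi)$, $\Delta_I$, $\Delta_{II}$, $\Delta_{IV}$ all degenerate to $1$ for the torus $\Res_{E/\Q_p}\Gm$, identifies $\Delta_{III_2}$ with $\lambda(z^*)$ by comparing $\xi' = \eta\circ\xi_e\circ\prescript{L}{}{\varphi_{z^*,z^*}}$ with the identity embedding, and lets the final term contribute $\langle\inv[z_{\Gm}](z,z^*),s\rangle^{-1}$. The only cosmetic difference is that the paper justifies the triviality of $\Delta_I$ via the triviality of $\widehat{S}/Z(\widehat{\Res_{E/\Q_p}\Gm})$ rather than the emptiness of the root system, which amounts to the same thing.
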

 \begin{proof}
 We will calculate each term in the definition of transfer factor. The virtual representation $V$ in this case is $0$ so that the factor $\epsilon (V, \varphi) = 1$. The terms $\Delta_{IV}$, $\Delta_{II}$ are trivial since $\Res_{E/ \Q_p} \Gm$ has no absolute roots. The term $\Delta_I$ is trivial since the group $\widehat{S}/Z(\widehat{\Res_{E/ \Q_p} \Gm})$ is trivial.
 
 We now compute $\Delta_{III_2}$. The $L$-maps $\xi_{(\Res_{E/\Q_p} \Gm)^*}$, $\xi_{\mH}$ and $\prescript{L}{}{\varphi_{z^*,z^*}}$ are all the identity. Hence, by comparing $\xi' = \eta \circ \xi_e \circ \prescript{L}{}{\varphi_{z^*,z^*}}$ with $\xi_{\Res_{E/\Q_p} \Gm}$, we see that $\Delta_{III_2} = \lambda(z^*)$.
 
The final term then contributes the factor $\langle \inv[z_{\Gm}](z, z^*), s \rangle^{-1}$, completing the argument.
 \end{proof}

\subsubsection{Transfer factors for \texorpdfstring{ $\GU$ and $\U \times \Res_{E/\Q_p} \Gm$}{GU and U times Res}}  

We use the notation of \S \ref{ECIforGU}. We denote the Whittaker datum and extended pure inner twists of $\U$ induced by restriction from $\GU$ by $\mf{w}'_{\U}$ and $(\varrho'_{\U}, z'_{\U})$. We record the following lemma:
\begin{lemma} \phantomsection \label{itm : Compare U and GU}
Suppose that $\gamma_H \in \mH(\Q_p)$ and $\gamma \in \U(\Q_p)$ are strongly regular and related. Then we have the following equality
\begin{equation*}
    \Delta[\mf{w}'_{\U}, \varrho'_{\U}, z'_{\U}](\gamma_H, \gamma)= \Delta[\mf{w}_{\GU}, \varrho_{\GU}, z_{\GU}](\gamma_{\mH}, \gamma)\langle \inv[z_{\GU}](\gamma, \gamma^*), s \rangle\langle \inv[z'_{\U}](\gamma, \gamma^*) , s \rangle^{-1}.
\end{equation*}
\end{lemma}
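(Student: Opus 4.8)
The plan is to prove the identity by comparing the two transfer factors term by term, using the standard decomposition of the $\Delta_D$-normalized (geometric) transfer factor as a product
\[
\Delta[\mf{w}, \varrho, z](\gamma_{\mH}, \gamma) = \epsilon_L(V, \varphi)\,\Delta_I\,\Delta_{II}\,\Delta_{III_{2,D}}\,\Delta_{IV}\,\langle \inv[z](\gamma, \gamma^*), s\rangle^{-1},
\]
as recalled in the proof of Lemma \ref{itm : product of transfer factor}. All data entering the two computations are compatible: the stable conjugate $\gamma^* \in \U^*(\Q_p) \subset \GU^*(\Q_p)$ of $\varrho^{-1}_{\GU}(\gamma)$ is the same because $(\varrho'_{\U}, z'_{\U})$ is the restriction of $(\varrho_{\GU}, z_{\GU})$; the Whittaker datum $\mf{w}'_{\U}$ is the restriction of $\mf{w}_{\GU}$ and is built from the same additive character $\varphi$ and compatible pinnings; and we may use a single set of $a$-data and $\chi$-data in both computations. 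I will show that the five factors $\epsilon_L(V, \varphi), \Delta_I, \Delta_{II}, \Delta_{III_{2,D}}, \Delta_{IV}$ take the same value whether computed for $(\mH, \U)$ or for $(\G(\mH), \GU)$; dividing the two product expressions then leaves exactly the ratio of the two $\langle \inv[z](\gamma, \gamma^*), s\rangle^{-1}$ terms, which is the asserted correction factor.

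The structural fact driving the argument is that $\GU$ and $\U$ have the same derived group $\prod_i \SU(n_i)$, and likewise $\G(\mH)$ and $\mH$. Consequently, writing $S = Z_{\GU}(\gamma)$ for the centralizer torus and $S_{\U} = Z_{\U}(\gamma) = S \cap \U$, we have $S_{sc} = (S_{\U})_{sc}$ and an equality of absolute root sets $R(S, \GU) = R(S, \U)$, and similarly on the endoscopic side. This gives at once that $\Delta_{IV}$ agrees in the two computations, the Weyl discriminants $D_{\GU}(\gamma)$ and $D_{\G(\mH)}(\gamma_{\mH})$ being products over the same root sets as $D_{\U}(\gamma)$ and $D_{\mH}(\gamma_{\mH})$; likewise $\Delta_{II}$ agrees, being the same product of $\chi$-data values over the common index set $R(S, \GU) \setminus \varphi_{\gamma,\delta}^{*, -1} R(S^{\mH}, \mH)$. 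For $\Delta_I$, the class $\lambda \in H^1(\Gamma_{\Q_p}, S_{sc})$ depends only on $S_{sc}$, the chosen $a$-data, and the elements $n(\sigma)$ built from the pinning of the common derived group; the pairing is then taken against the image of $s$ in $\pi_0([\widehat S / Z(\widehat{\GU})]^{\Gamma_{\Q_p}})$, and since $\widehat S/Z(\widehat{\GU})$ and $\widehat{S_{\U}}/Z(\widehat{\U})$ are both canonically the maximal torus of the common adjoint dual group, with the two interpretations of $s$ having the same image there, $\Delta_I$ agrees as well.

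For the $\epsilon_L$ term, I will observe that restricting the exact sequence $1 \to \U(1) \to \U \times \Res_{E/\Q_p}\Gm \to \GU \to 1$ to the relevant centralizer tori yields an exact sequence of $\Gamma_{\Q_p}$-modules
\[
0 \to X^*(S) \otimes \C \to \big(X^*(S_{\U}) \oplus X^*(\Res_{E/\Q_p}\Gm)\big) \otimes \C \to X^*(\U(1)) \otimes \C \to 0,
\]
together with the exactly parallel sequence having $S^{\G(\mH)}$ and $S^{\mH}$ in place of $S$ and $S_{\U}$, with the same $\Res_{E/\Q_p}\Gm$ and $\U(1)$ contributions. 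Subtracting the two resulting identities in the Grothendieck group of $\Gamma_{\Q_p}$-representations, the central and kernel terms cancel, so the degree-zero virtual representation $V$ attached to $(\G(\mH), \GU)$ equals the one attached to $(\mH, \U)$. Since $\epsilon_L$ is additive on degree-zero virtual representations, as already used in the proof of Lemma \ref{itm : product of transfer factor}, the factor $\epsilon_L(V, \varphi)$ agrees in the two computations.

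The main obstacle is the comparison of the $\Delta_{III_{2,D}}$ terms, since here the dual tori $\widehat S$ genuinely differ by the central factor dual to $\Res_{E/\Q_p}\Gm$. Recall $\Delta_{III_{2,D}} = \langle a, \gamma\rangle$, where $a \in Z^1(W_{\Q_p}, \widehat S)$ measures the discrepancy between the $\chi$-data $L$-embedding $\xi \colon \prescript{L}{}{S} \to \prescript{L}{}{\GU^*}$ and the composite $\xi' = \Leta \circ \xi^{\mH} \circ \prescript{L}{}{\varphi}_{\gamma,\delta}$, and $\langle\cdot,\gamma\rangle$ denotes evaluation at $\gamma$ of the character of $S(\Q_p)$ attached to $a$ by the Langlands correspondence for tori in the geometric normalization. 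I will check that, under the surjection $\widehat S \twoheadrightarrow \widehat{S_{\U}}$ dual to $S_{\U} \hookrightarrow S$, the cocycle $a$ of the $\GU$-computation maps to the corresponding cocycle of the $\U$-computation: this uses that the $\chi$-data and the admissible isomorphism $\varphi_{\gamma,\delta}$ are literally the same on both sides (again because the root data coincide), and that the $L$-embeddings are compatible, which is precisely the commutativity of the diagram \eqref{UResGUcommutativediagram} together with the fact that $\Leta'$ restricts on the $\widehat{\U}$-factor to the $L$-embedding of the chosen endoscopic datum for $\U$. Functoriality of the Langlands correspondence for tori along $S_{\U} \hookrightarrow S$ then identifies the character of $S_{\U}(\Q_p)$ attached to the image cocycle with the restriction to $S_{\U}(\Q_p)$ of the character of $S(\Q_p)$ attached to $a$; since $\gamma$ has trivial similitude we have $\gamma \in S_{\U}(\Q_p)$, and evaluating at $\gamma$ gives that $\Delta_{III_{2,D}}$ agrees. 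Combining the five equalities with the product decomposition, we obtain $\Delta[\mf{w}'_{\U}, \varrho'_{\U}, z'_{\U}](\gamma_{\mH}, \gamma) = C \cdot \langle \inv[z'_{\U}](\gamma, \gamma^*), s\rangle^{-1}$ and $\Delta[\mf{w}_{\GU}, \varrho_{\GU}, z_{\GU}](\gamma_{\mH}, \gamma) = C \cdot \langle \inv[z_{\GU}](\gamma, \gamma^*), s\rangle^{-1}$ with the same common factor $C$; dividing and rearranging yields the claimed identity.
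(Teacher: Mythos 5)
Your argument is correct, but it is worth knowing that the paper disposes of this lemma in one line: it simply cites \cite[Lemma 3.6]{Xu} ``adapted to the non-quasisplit setting.'' What you have written is, in effect, a self-contained proof of that cited result in the extended-pure-inner-twist normalization: a term-by-term comparison of the factors $\epsilon_L(V,\varphi)$, $\Delta_I$, $\Delta_{II}$, $\Delta_{III_{2,D}}$, $\Delta_{IV}$, all of which are insensitive to replacing $\U$ by $\GU$ because the two groups share a derived group (hence the same absolute roots of the centralizer torus, the same $S_{sc}$, the same $n(\sigma)$ and $r_p(w)$ data, and the same image of $s$ in the adjoint dual torus), leaving only the discrepancy between the two $\langle \inv[z](\gamma,\gamma^*), s\rangle^{-1}$ terms. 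The two steps where you do genuine work --- the cancellation of the central and $\U(1)$ contributions in the virtual representation $V$ via additivity of $\epsilon_L$, and the compatibility of the $\chi$-data $L$-embeddings under $\widehat{S}\twoheadrightarrow\widehat{S_{\U}}$ combined with functoriality of the Langlands correspondence for tori for $\Delta_{III_{2,D}}$ --- are exactly the points that require care, and your treatment of the latter mirrors the diagram-chase the paper itself performs in Lemma \ref{itm : transfer factors for U and GU} to identify the characters $\lambda$ and $\lambda'$. The only caveat is bookkeeping: one should say explicitly, as you implicitly do, that $\gamma$ and $\gamma^*$ lie in $S_{\U}(\Q_p)$ and $S_{\U^*}(\Q_p)$ because they have trivial similitude factor, so that evaluating the restricted character is legitimate; with that, the division of the two product expressions gives precisely the asserted identity. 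So your route is more explicit than the paper's (which buys self-containedness at the cost of length), while the paper's citation buys brevity at the cost of leaving the adaptation to the reader.
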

\begin{proof}
This is \cite[Lemma 3.6]{Xu} adapted to the non-quasisplit setting.
\end{proof}

Finally, we prove the following lemma:
\begin{lemma} \phantomsection \label{itm : transfer factors for U and GU}
Suppose $(\gamma, z) \in (\U \times \Res_{E/ \Q_p} \Gm)(\Q_p)_{sr}$ and $(\gamma_{\mH}, z_{\mH}) \in (\mH \times \Res_{E/ \Q_p} \Gm)(\Q_p)_{sr}$ are related. Then we have an equality of transfer factors
\[
\Delta[\mf{w}_{\U}, \varrho_{\U}, z_{\U}]((\gamma_{\mH}, z_{\mH}), (\gamma, z)) = \Delta[\mf{w}_{\GU}, \varrho_{\GU}, z_{\GU}](\gamma_Hz_{\mH}, \gamma z).
\]
\end{lemma}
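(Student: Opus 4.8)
The plan is to reduce the claimed identity to the three structural results already in hand — the factorisation of transfer factors over a product (Lemma~\ref{itm : product of transfer factor}), the comparison of $\U$- and $\GU$-transfer factors (Lemma~\ref{itm : Compare U and GU}), and the explicit transfer factor for $\Res_{E/\Q_p}\Gm$ (Lemma~\ref{itm : L-embedding}) — together with a direct computation of the effect on the $\GU$-transfer factor of multiplying by a central element.

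First I would invoke Lemma~\ref{itm : product of transfer factor}. By the normalisations chosen in \S\ref{ECIforGU}, the Whittaker datum $\mf{w}_{\U}$, the extended pure inner twist $(\varrho_{\U},z_{\U})$ and the endoscopic datum $(\mH\times\Res_{E/\Q_p}\Gm, s', \Leta')$ all factor as products of the corresponding data for $\U$ and for $\Res_{E/\Q_p}\Gm$, so that
$$\Delta[\mf{w}_{\U},\varrho_{\U},z_{\U}]((\gamma_{\mH},z_{\mH}),(\gamma,z)) = \Delta[\mf{w}'_{\U},\varrho'_{\U},z'_{\U}](\gamma_{\mH},\gamma)\cdot\Delta[\mf{w}_{\Gm},\varrho_{\Gm},z_{\Gm}](z_{\mH},z).$$
To the first factor I apply Lemma~\ref{itm : Compare U and GU}, rewriting it as $\Delta[\mf{w}_{\GU},\varrho_{\GU},z_{\GU}](\gamma_{\mH},\gamma)$ times the invariant pairings $\langle\inv[z_{\GU}](\gamma,\gamma^*),s\rangle$ and $\langle\inv[z'_{\U}](\gamma,\gamma^*),s\rangle^{-1}$; to the second I apply Lemma~\ref{itm : L-embedding}, rewriting it as $\lambda(z^*)\langle\inv[z_{\Gm}](z,z^*),s\rangle^{-1}$, where $\lambda$ is the character of $E^{\times}$ carved out by the central component $\LL\lambda$ of the endoscopic datum in the commutative diagrams of \S\ref{ECIforGU}.

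The remaining input is the behaviour of $\Delta[\mf{w}_{\GU},\varrho_{\GU},z_{\GU}](\gamma_{\mH}z_{\mH},\gamma z)$ under the central twist by $(z_{\mH},z)$. Running through the factorisation of the transfer factor recalled in the proof of Lemma~\ref{itm : product of transfer factor}: the terms $\epsilon_L(V,\varphi)$, $\Delta_I$, $\Delta_{II}$ and $\Delta_{IV}$ are unchanged, because the centralizer tori, the virtual Galois module $V$, the admissible isomorphism, the $a$-data cocycle, and the root values $\alpha(\gamma z)=\alpha(\gamma)$, $\alpha(\gamma_{\mH}z_{\mH})=\alpha(\gamma_{\mH})$ (roots vanish on the centre) are unaffected; the term $\Delta_{III_{2,D}}=\langle a,\delta\rangle$ acquires the factor $\langle a,z\rangle$, which equals $\lambda(z^*)$ since on the central torus $Z(\GU)\subset S_{\GU}$ the $\chi$-data contributions to the two $L$-embeddings being compared cancel, leaving the cocycle underlying $\LL\lambda$; and, taking $z^*=\varrho_{\GU}^{-1}(z)$ (legitimate as inner twists are trivial on centres), the same trivialising element serves for $(\gamma,\gamma^*)$ and for $(\gamma z,\gamma^*z^*)$, so the final invariant term is unchanged. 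Thus $\Delta[\mf{w}_{\GU},\varrho_{\GU},z_{\GU}](\gamma_{\mH}z_{\mH},\gamma z)=\lambda(z^*)\,\Delta[\mf{w}_{\GU},\varrho_{\GU},z_{\GU}](\gamma_{\mH},\gamma)$.

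Comparing the two computations, the lemma reduces to the identity
$$\langle\inv[z_{\GU}](\gamma,\gamma^*),s\rangle\,\langle\inv[z'_{\U}](\gamma,\gamma^*),s\rangle^{-1}\,\langle\inv[z_{\Gm}](z,z^*),s\rangle^{-1}=1,$$
which I would obtain from the fact that $z_{\GU}$ is the image of $z_{\U}=z'_{\U}\times z_{\Gm}$ under $P\colon\mb{B}(\Q_p,\U^*\times\Res_{E/\Q_p}\Gm)\to\mb{B}(\Q_p,\GU^*)$: unwinding the definition of $\inv$ and using that $P$ restricts compatibly to the relevant maximal tori gives $\inv[z_{\GU}](\gamma,\gamma^*)=P_*\big(\inv[z'_{\U}](\gamma,\gamma^*)\times[z_{\Gm}]\big)$, while $\inv[z_{\Gm}](z,z^*)=[z_{\Gm}]$ (a torus invariant depends only on the cocycle class), and the three pairings against $s$ then cancel by functoriality of the Kottwitz/Tate--Nakayama pairing under the map $\widehat{P}$ dual to $P$. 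I expect this last bookkeeping to be the main obstacle — keeping track of the groups $\mb{B}(\Q_p,S^*)$ for $S^*\in\{S_{\GU}^*,S_{\U}^*,(\Res_{E/\Q_p}\Gm)^*\}$, of the dual maps induced by $P$ and $P^{\mH}$, and of the interplay of $s$, $s'$, $\lambda$ and the chosen $\chi$-data, and in particular pinning down the direction of the central character (that it is $\lambda$ and not $\lambda^{-1}$ or $\lambda\circ N_{E/\Q_p}$) — though it is of the same nature as the term-by-term verifications already carried out in Lemma~\ref{itm : product of transfer factor} and Lemma~\ref{arithmetic - geometric normalization}.
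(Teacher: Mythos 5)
Your proposal is correct and, at its two ends, coincides with the paper's proof: both open with the factorisation of Lemma~\ref{itm : product of transfer factor} followed by Lemma~\ref{itm : Compare U and GU} and Lemma~\ref{itm : L-embedding}, and both close by absorbing the three $\langle \inv[\cdot],s\rangle$ terms using that $z_{\U}=z'_{\U}\times z_{\Gm}$ maps to $z_{\GU}$ under $P$ together with the compatibility of the Kottwitz map with products and with $P$. The divergence is in the middle. The paper does not compute the central twist of the $\GU$-factor term by term; it invokes \cite[Lemma 4.4A]{LS86} to write $\Delta[\mf{w}_{\GU},\varrho_{\GU},z_{\GU}](\gamma_{\mH}z_{\mH},\gamma z)$ as $\Delta[\mf{w}_{\GU},\varrho_{\GU},z_{\GU}](\gamma_{\mH},\gamma)\,\lambda'(z^*)$ times the ratio $\langle\inv[z_{\GU}](\gamma,\gamma^*),s\rangle\langle\inv[z_{\GU}](\gamma z,\gamma^*z^*),s\rangle^{-1}$ for some character $\lambda'$ of $Z(\GU)(\Q_p)$, and then the whole lemma reduces to $\lambda'=\lambda$, which is proved by a commutative diagram comparing the $\chi$-data $L$-embeddings $\xi^{\GU}$ and $\xi^{\G(\mH)}$ restricted to the central torus. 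Your route instead reproves the relevant case of \cite[Lemma 4.4A]{LS86} by hand, which buys you a cleaner final reduction (the invariant terms on the right-hand side genuinely do not move, given your choice $z^*=\varrho_{\GU}^{-1}(z)$ and the same conjugating element), at the cost of having to justify the one nontrivial assertion yourself: that $\Delta_{III_{2,D}}$ changes by exactly $\lambda(z^*)$ and not by some other character of the centre. Your one-line justification ("the $\chi$-data contributions to the two $L$-embeddings cancel on $Z(\GU)$") is precisely the content of the paper's diagram chase — that the maps $\LL(\Res_{E/\Q_p}\Gm)\to\LL(\Res_{E/\Q_p}\Gm)$ induced by $\xi^{\GU}$ and $\xi^{\G(\mH)}$ are the identity because $n(\sigma)$ and $r_p(w)$ act trivially on the central torus, so that the discrepancy cocycle $a$ restricts on $\widehat{\Res_{E/\Q_p}\Gm}$ to the cocycle defining $\LL\lambda$ — and it should be spelled out at that level of detail, since it is the only point where the specific compatibility of $\Leta$ and $\Leta'$ from diagram~(\ref{UResGUcommutativediagram}) actually enters. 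With that expansion your argument is complete and equivalent to the paper's.
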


\begin{proof}
First of all, by Lemma \ref{itm : product of transfer factor} we have
\[
\Delta[\mf{w}_{\U}, \varrho_{\U}, z_{\U}]((\gamma_{\mH}, z_{\mH}), (\gamma, z)) = \Delta[\mf{w}'_{\U}, \varrho'_{\U}, z'_{\U}](\gamma_H, \gamma) \cdot \Delta[\mf{w}_{\Gm}, \varrho_{\Gm}, z_{\Gm}] (z_{\mH}, z). 
\]
By Lemma \ref{itm : L-embedding}, this equals
\begin{equation*}
    \Delta[\mf{w}'_{\U}, \varrho'_{\U}, z'_{\U}](\gamma_H, \gamma) \cdot \lambda (z^*)\langle \inv[z_{\Gm}](z, z^*), s \rangle^{-1},
\end{equation*}
and by Lemma \ref{itm : Compare U and GU} we have
\[
\Delta[\mf{w}'_{\U}, \varrho'_{\U}, z'_{\U}](\gamma_H, \gamma)= \Delta[\mf{w}_{\GU}, \varrho_{\GU}, z_{\GU}](\gamma_H, \gamma)\langle \inv[z_{\GU}](\gamma, \gamma^*), s \rangle\langle \inv[z'_{\U}](\gamma, \gamma^*) , s \rangle^{-1}.
\]
Since the Kottwitz set and the Kottwitz map $\kappa$ respect products, we get that
\begin{equation*}
    \langle \inv[z'_{\U}](\gamma, \gamma^*), s \rangle\langle \inv[z_{\Gm}](z,z^*), s \rangle = \langle \inv[z_{\U}]((\gamma, z), (\gamma^*, z^*)), s \rangle.
\end{equation*}
By the functoriality of the Kottwitz map,
\begin{equation*}
    \langle \inv[z_{\U}]((\gamma, z), (\gamma^*, z^*)), s \rangle = \langle \inv[z_{\GU}](\gamma z, \gamma^* z^*), s \rangle.
\end{equation*}
Hence we get
\begin{equation*}
    \Delta[\mf{w}_{\U}, \varrho_{\U}, z_{\U}]((\gamma_{\mH}, z_{\mH}), (\gamma, z))
\end{equation*}
\begin{equation*}
    =  \Delta[\mf{w}_{\GU}, \varrho_{\GU}, z_{\GU}](\gamma_H, \gamma)\langle \inv[z_{\GU}](\gamma, \gamma^*), s \rangle\langle \inv[z_{\GU}](\gamma z, \gamma^* z^*), s \rangle^{-1}.
\end{equation*}

On the other hand, by \cite[Lemma 4.4A]{LS86}, there is a character $\lambda'$ on $(\Res_{E/ \Q_p} \Gm)(\Q_p)$ such that
\[
\Delta[\mf{w}_{\GU}, \varrho_{\GU}, z_{\GU}](\gamma_{\mH}z_{\mH}, \gamma z)
\]
\[
= \Delta[\mf{w}_{\GU}, \varrho_{\GU}, z_{\GU}](\gamma_{\mH}, \gamma)\lambda'(z^*)\langle \inv[z_{\GU}](\gamma, \gamma^*), s \rangle\langle \inv[z_{\GU}](\gamma z, \gamma^* z^*), s \rangle^{-1}.
\]

Hence, it remains to show that $\lambda'(z^*) = \lambda(z^*)$. We recall that $\lambda$ is the character arising from the construction of the $\Delta_{III_2}$-term of the transfer factor for $\Res_{E/ \Q_p} \Gm$. From the description in \cite[Lemma 4.4A]{LS86}, $\lambda'$ is the restriction to $Z(\GU)=\Res_{E/ \Q_p} \Gm$ of the character arising from the $\Delta_{III_2}$-term of the transfer factor for $\GU$.

The characters $\lambda$ and $\lambda'$ are determined by the failure of the following diagram to commute:
\begin{equation*}
\begin{tikzcd}
\LL(\Res_{E/\Q_p} \Gm) \arrow[ddd] & & & \LL(\Res_{E/\Q_p} \Gm) \arrow[ddd] \arrow[lll] \\
 & \LL S^{\G(\mH)} \arrow[d, swap, "\xi^{\G(\mH)}"] \arrow[lu] & \LL S \arrow[d, "\xi^{\GU}"] \arrow[ur] \arrow[l, swap, "\LL\varphi_{z^*,z^*}"] & \\
 & \LL \G(\mH) \arrow[r, swap, "\Leta"] \arrow[ld] & \LL \GU \arrow[rd] & \\
\LL(\Res_{E/\Q_p} \Gm) \arrow[rrr, "\LL \lambda"] & & & \LL(\Res_{E/\Q_p} \Gm)
\end{tikzcd}    
\end{equation*}
We explain this diagram. The objects $S^{\G(\mH)}$ and $S$ are maximal tori in their respective groups that are isomorphic by an admissible embedding $\LL \varphi_{z^*,z^*}$. The maps $\xi^{\G(\mH)}$ and $\xi^{\GU}$ are the $L$-embeddings constructed in \cite[\S (2.6)]{LS86} from a choice of $\chi$-data. The lower two diagonal maps are induced by the embeddings $\Res_{E/\Q_p} \Gm \cong Z(\GU) \hookrightarrow \GU$ and $\Res_{E/\Q_p} \Gm \cong Z(\G(\mH)) \hookrightarrow \G(\mH)$. Since the images of these embeddings lie in the image of the embeddings $S \hookrightarrow G$ and $S^{\G(\mH)} \hookrightarrow \G(\mH)$ respectively, we get induced maps $\Res_{E/\Q_p} \Gm \hookrightarrow S^{\G(\mH)}$ and $\Res_{E/\Q_p} \Gm \hookrightarrow S$. These induce the upper diagonal maps in the above diagram. The outer vertical arrows are then defined so that the left and right trapezoids commute. Note that by definition of $n(w)$ and $r_p(w)$ the vertical maps $\LL(\Res_{E/ \Q_p} \Gm) \to \LL(\Res_{E/ \Q_p} \Gm)$ are both the identity. The bottom trapezoid commutes by construction. Finally the top map is defined so that the top trapezoid commutes and will agree with $\Leta$ on $\widehat{\Res_{E/\Q_p} \Gm}$ and map $(1,w)$ to $(1,w)$.

Then the outer square fails to commute by the cocycle $\lambda \in Z^1(W_{\Q_p}, \widehat{\Res_{E/ \Q_p} \Gm})$ and the inner square fails to commute by $\lambda' \in  Z^1(W_{\Q_p}, \widehat{T})$. Since the trapezoids all commute, these cocycles agree under the natural map $ Z^1(W_{\Q_p}, \widehat{T}) \longrightarrow Z^1(W_{\Q_p}, \widehat{\Res_{E/ \Q_p} \Gm})$. This is the desired result.
\end{proof}

\section{Properties of the local and global correspondences}
In this section we prove a number of properties and compatibilities of the local and global Langlands correspondences. These properties are needed to derive our main theorem.

\subsection{Unramified representations}{\label{unramifiedGU}}
In this subsection we suppose that $v$ is a finite place of $\Q$ and that $E_v/\Q_v$ is unramified. We let $\GU, (\id, 1)$ and $\U, (\id, 1)$ be the trivial extended pure inner twists of $\GU^*_{\Q_v}(n)$ and $\U^*_{\Q_v}(n)$ respectively. Let $\GU(\Z_p)$ be the standard hyperspecial subgroup. Then we say that $\pi$ is $\GU(\Z_p)$-spherical if it has nontrivial $\GU(\Z_p)$-invariants.
\begin{proposition}
Let $\psi : L_{\Q_v} \longrightarrow \prescript{L}{}{\GU^*_{\Q_v}(n)}  \in \Psi^+(\GU^*_{\Q_v}(n))$ be a generic parameter. Then $ \Pi_{\psi}(\GU, \id)$ contains a $\GU(\Z_v)$-spherical representation if and only if $\psi$ is unramified. In that case, $ \Pi_{\psi}(\GU, \id)$ contains a unique $\GU(\Z_p)$-spherical representation $\pi$, which satisfies $ \langle \pi, \cdot \rangle = 1 $. The same results hold true for $\U$.
\end{proposition}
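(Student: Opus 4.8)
The plan is to reduce the statement for $\GU$ to the corresponding statement for $\U = \U^*_{\Q_v}(n)$, which is part of the compatibility of the local correspondence of \cite{C.P.Mok}, \cite{KMSW} with the unramified (Satake) parametrization. First I would record the $\U$ case: for generic $\psi_{\U} \in \Psi^+(\U^*_{\Q_v}(n))$, the parameter $\psi_{\U}$ is unramified (trivial on the inertia subgroup of $W_{\Q_v}$, equivalently factoring through the unramified quotient) if and only if $\Pi_{\psi_{\U}}(\U,\id)$ contains a representation with nonzero $\U(\Z_v)$-fixed vectors, in which case exactly one member of the packet has this property, its space of $\U(\Z_v)$-fixed vectors is one-dimensional, and it pairs trivially with $S^{\natural}_{\psi_{\U}}$. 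Our geometric normalization differs from the arithmetic one of \cite{KMSW} by composing with the Chevalley involution $\LL C$ of $\LU$ --- which covers $\id_{W_{\Q_v}}$, hence carries unramified parameters to unramified parameters --- and then passing to contragredients --- which preserves the property of having nonzero $\U(\Z_v)$-fixed vectors, since $(\pi^{\vee})^{\U(\Z_v)} = (\pi^{\U(\Z_v)})^{*}$ (see the proof of Lemma~\ref{traceeq}). Hence the $\U$ statement above is insensitive to this change of normalization and may be quoted in this form.

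Next I would carry out the reduction. By Lemma~\ref{itm : GU = ZxU}, $\GU(\Q_v) = \U(\Q_v) E_v^{\times}$ with $E_v^{\times} \hookrightarrow Z(\GU)(\Q_v)$ via $t\mapsto\diag(t,\dots,t)$; by Corollary~\ref{itm : GU = U E}, $\Pi(\GU)$ is in bijection with the set of pairs $(\pi,\chi)$, $\pi\in\Pi(\U)$ and $\chi$ a character of $E_v^{\times}$ agreeing with $\omega_\pi$ on $E_v^{\times}\cap\U(\Q_v)$; and the construction of the local $A$-packets for $\GU$ presents a generic $\psi\in\Psi^+(\GU^*_{\Q_v}(n))$ as a pair $(\psi_{\U},\chi)$, where $\psi_{\U}$ is the image of $\psi$ under the projection $\LGU\to\LU$ and $\chi$ is the character of $Z(\GU)(\Q_v)=E_v^{\times}$ corresponding under the Langlands correspondence for tori to the central component of $\psi$, so that $\Pi_{\psi}(\GU,\id)=\{(\pi,\chi):\pi\in\Pi_{\psi_{\U}}(\U,\id)\}$. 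Since $E_v/\Q_v$ is unramified, $\GU(\Z_v)=\U(\Z_v)\,\mc{O}_{E_v}^{\times}$ and $\U(\Z_v)=\GU(\Z_v)\cap\U(\Q_v)$, so the representation attached to $(\pi,\chi)$ has nonzero $\GU(\Z_v)$-fixed vectors exactly when $\pi$ has nonzero $\U(\Z_v)$-fixed vectors and $\chi$ is unramified; likewise, restricting $\psi$ to the inertia subgroup through the two projections out of $\LGU$ used in the packet construction shows that $\psi$ is unramified exactly when both $\psi_{\U}$ and $\chi$ are. Combining these equivalences with the $\U$ case gives the equivalence for $\GU$, and in the unramified case the unique spherical member of $\Pi_{\psi_{\U}}(\U,\id)$ yields the unique spherical member $(\pi,\chi)$ of $\Pi_{\psi}(\GU,\id)$. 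Finally, since $(\id,1)$ is the trivial extended pure inner twist, the character $\chi_{z_{\GU}}$ of $Z(\widehat{\GU})^{\Gamma_{\Q_v}}$ attached to $z_{\GU}$ via $\kappa$ is trivial, so $\langle(\pi,\chi),(s,c)\rangle_{\GU}=\langle\pi,s\rangle_{\U}\,\chi_{z_{\GU}}(c)=1$ by the $\U$ case.

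The step I expect to be the main obstacle is the $\U$ case outside the tempered range, i.e.\ for non-tempered $\psi_{\U}\in\Psi^+$, where the packet consists of the possibly reducible parabolically induced representations $I^{\U}_{P}(\pi_M\otimes\chi_\lambda)$ of Theorem~\ref{localECIpsi+} and ``containing a representation with nonzero $\U(\Z_v)$-fixed vectors'' must be read as allowing a reducible member with a spherical Jordan--H\"older constituent. Here I would use that the twisting character $\chi_\lambda$ occurring in the Langlands data is itself unramified, and that by the Iwasawa decomposition $\U(\Q_v)=P(\Q_v)\U(\Z_v)$ the space $I^{\U}_{P}(\pi_M\otimes\chi_\lambda)^{\U(\Z_v)}$ is nonzero precisely when $\pi_M$ has nonzero $M(\Z_v)$-fixed vectors, in which case it is one-dimensional; so a member of $\Pi_{\psi_{\U}}(\U,\id)$ has nonzero $\U(\Z_v)$-fixed vectors exactly for the spherical $\pi_M\in\Pi_{\psi_M}(M)$ (unique when it exists). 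This reduces existence, uniqueness and the converse implication to the tempered case on the Levi $M$ --- standard consequences of Theorem~\ref{itm: local} together with compatibility with the Satake isomorphism --- and the triviality of the pairing on the spherical member is inherited from $M$ through the construction of the pairing in Theorem~\ref{localECIpsi+}.
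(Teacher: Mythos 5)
Your reduction steps coincide with the paper's: the passage from $\GU$ to $\U$ via Lemma~\ref{itm : GU = ZxU} and Corollary~\ref{itm : GU = U E} (including the observation that sphericality and triviality of the pairing are preserved under the lift, and that the trivial twist makes $\chi_{z_{\GU}}$ trivial), and the treatment of non-tempered $\psi\in\Psi^{+}$ by noting that $I^{\U}_P(\pi_M\otimes\chi_\lambda)$ is spherical exactly when $\pi_M$ is, are exactly what the paper does. The difference is in the core $\U$-case for bounded parameters, which you quote as ``part of the compatibility of the local correspondence with the Satake parametrization'' but which the paper actually proves.

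Concretely, the forward direction ($\psi$ unramified $\Rightarrow$ unique spherical member with trivial pairing) is indeed quotable from \cite[Proposition 7.4.3, Theorem 2.5.1]{C.P.Mok}, and your remark that the change from arithmetic to geometric normalization is harmless (the Chevalley involution preserves unramifiedness and contragredients preserve $\U(\Z_v)$-fixed vectors) is correct and worth saying. But the converse --- a \emph{ramified} generic parameter has \emph{no} spherical member in its packet --- is not a stated compatibility in \cite{C.P.Mok} or \cite{KMSW}; it is the content of Ta\"ibi's Lemma 4.1.1, which the paper reproves here. The argument runs: if $\psi$ is ramified so is $\eta_B\circ\psi$, the characteristic function $f_N$ of $\GL_n(\mc{O}_{E_v})\rtimes\theta$ satisfies $f_N(\eta_B\circ\psi)=0$, and the twisted fundamental lemma plus the twisted character identity give $\sum_{\pi\in\Pi_\psi}\tr(\pi\mid f)=0$ for $f=\mathbf{1}_{\U(\Z_v)}$; repeating this with the (untwisted) fundamental lemma for each elliptic endoscopic datum kills every twisted sum $\sum_\pi\langle\pi,x\rangle\tr(\pi\mid f)$, and Fourier inversion over $\ov{\mc{S}}_\psi$ then forces $\tr(\pi\mid f)=0$ for every member of the packet. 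If you intend your write-up to stand on its own, you should either supply this endoscopic argument or cite Ta\"ibi's lemma explicitly rather than folding it into a generic appeal to Satake compatibility; as written, this is the one step of your proposal that is asserted rather than justified.
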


\begin{proof}
We first consider the case where $\psi \in \Psi(\GU^*_{\Q_v}(n))$ (or $\Psi(\U^*_{\Q_v}(n))$). By Corollary \ref{itm : GU = U E}, we see that a spherical representation $\pi$ of $\U(\Q_v)$ lifts to a spherical representation $\widetilde{\pi}$ of $\GU(\Q_v)$ and vice versa. Moreover, by the construction local packets for $\GU(\Q_v)$, we have that $ \langle \pi, \cdot \rangle = 1 $ if and only if $ \langle \widetilde{\pi}, \cdot \rangle = 1 $. Therefore it suffices to prove the proposition for unitary groups.

We mimic the proof of Lemma $ 4.1.1 $ in \cite{Tai2}. Denote $f$ the characteristic function of the standard special maximum compact subgroup of $ \U(\Q_v) $. If $ \psi $ is unramified then by proposition \cite[Proposition 7.4.3]{C.P.Mok} we have
\[
 1 = \sum_{\pi \in \Pi_{\psi}} \tr (\pi \mid f).
\]

In other words, the packet $\Pi_{\psi}$ contains an unramified representation. The uniqueness comes from Theorem $ 2.5.1 a $ in  \cite{C.P.Mok}.

Suppose now that $ \psi $ is ramified. Then the base change $L$-parameter $\eta_B \circ \psi $ is also ramified. By the local Langlands correspondence for $\GL_n(E_v)$, one gets a representation $\pi$ of $\GL_n(E_v)$ corresponding to $\eta_B \circ \psi$. Then, as in \cite[\S 3.2]{C.P.Mok}, one lifts $\pi$ to a representation $\tilde{\pi}$ of $\GL_n(E_v) \rtimes \theta \subset \GL_n(E_v) \rtimes \langle \theta \rangle$, where $\theta$ is the automorphism $g \mapsto J_n \sigma(g)^{-t} J^{-1}_n$ of $\Res_{E_v/ \Q_v} \GL_{n, E_v}$. Hence the corresponding representation of $\GL_n(\mc{O}_{E_v}) \rtimes \theta $ is ramified. We want to show that $ \sum\limits_{\pi \in \Pi_{\psi}} \langle \pi , x \rangle \tr (\pi \mid \ f) = 0 $ for every $ x \in \ov{\mathcal{S}}_{\psi}$. If we denote $ f_N $ the characteristic function of $\GL_n(\mc{O}_{E_v}) \rtimes \theta $ then $ f_N (\eta_B \circ \psi) = 0$. The twisted fundamental lemma implies that $ f_N $ is the twisted transfer of $f$ and hence by \cite[Theorem 3.2.1 a]{C.P.Mok} we have 
\[
\sum_{\pi \in \Pi_{\psi}} \tr (\pi \mid f) = \sum_{\pi \in \Pi_{\psi}} \langle \pi , 1 \rangle \tr (\pi \mid f) = f_N( \eta_B \circ \psi ) = 0.
\]

By the same argument we have $ \displaystyle \sum_{\pi^{\mH} \in \Pi_{\phi^{\mH}}} \langle \pi^{\mH} , 1 \rangle \tr (\pi^{\mH} \mid f_{\mH}) = 0 $ for every refined endoscopic datum $ (\mH, s, \Leta) $ of $\U$ where $f_{\mH}$ is the characteristic function of a hyperspecial subgroup $\mc{H}(\Z_v) $ of $\mH$. By the fundamental lemma, $ f_{\mH} $ is the transfer of $f$. Then, again by \cite[Theorem 3.2.1 ]{C.P.Mok} we have 
\[
\sum_{\pi \in \Pi_{\psi}} \langle \pi , x \rangle \tr (\pi \mid f) = \sum_{\pi^{\mH} \in \Pi_{\psi^{\mH}}} \langle \pi^{\mH} , 1 \rangle \tr (\pi^{\mH} \mid f_{\mH}) = 0.
\]
where $ (\psi, x) $ corresponds to $ (\mH, s, \Leta, \psi^{\mH})$ under \cite[Proposition 3.10]{BM2}. Hence we conclude that $ \tr (\pi \mid f) = 0 $ for every $ \pi \in \Pi_{\psi}(\U, \id) $. Therefore the packet $ \Pi_{\psi}(\U, \id) $ does not contain any unramified representations.

We now consider the case of general $\psi \in \Psi(\GU^*_{\Q_v}(n), \id)$. This follows from the fact that $I^{\GU}_P(\pi)$ is $\GU(\Q_v)$-spherical if and only if $\pi$ is $M(\Q_v)$-spherical for $M$ a standard Levi subgroup with parabolic subgroup $P$.
\end{proof}

\subsection{On the hypothesis \texorpdfstring{$ST^{\mH}_{\el} (f^{\mH}) = ST^{\mH}_{\disc} (f^{\mH}) $}{STell=STdisc} }{\label{STell=STdisc}}

In this section, we prove that for $(\mH,s,\eta)$ a refined elliptic endoscopic datum of $\GU=\GU(V)$ and $f^{\mH} \in C^{\infty}_c(\GU(\A))$ that is stable cuspidal at infinity and cuspidal at a finite place $v$, we have an equality of traces:
\begin{equation*}
    ST^{\mH}_{\el}(f^{\mH}) = ST^{\mH}_{\disc}(f^{\mH}).
\end{equation*}

We begin with some preparatory notation and lemmas. Let $\G$ be a connected reductive group defined over $\Q$ and let $\nu$ be a sufficiently regular (in the sense of Lemma \ref{generic}) quasi-character of $A_{\G}(\R)^0 $ and $ C_c^{\infty} (G(\R), \nu^{-1}) $ be the set of functions $ f_{\infty} : G(\R) \longrightarrow \C $ smooth, with compact support modulo $ A_G(\R)^0 $ and such that for every $ (z, g) \in A_G(\R)^0 \times G(\R), f_{\infty} (zg) = \nu^{-1}(z)f_{\infty} (g) $. Fix $K_{\G}$ a maximal compact subgroup of $\G(\R)$.

\begin{definition} (Stable cuspidal function at infinity)
We say that $ f_{\infty} \in C_c^{\infty} (\G(\R), \nu^{-1}) $ is stable cuspidal if $ f_{\infty} $ is left and right $K_{\G}$-finite and if the function 
\[
\Pi_{\text{temp}} (\G(\R), \nu) \longrightarrow \C, \quad \pi \longmapsto \tr( \pi \mid f_{\infty} )
\]
vanishes outside $\Pi_{\text{disc}} (\G(\R))$ and is constant in the $L$-packets of $\Pi_{\text{disc}} (\G(\R), \nu)$.
\end{definition}
\begin{definition}{(cuspidal function)}
We say that $ f_v \in C_c^{\infty} (\G_v(\Q_v), \nu^{-1}) $ is cuspidal if $ f_v$ if for each proper Levi subgroup we have that the constant term, $f_{v, M}$, vanishes (as defined in \cite[(7.13.2)]{GKM}).
\end{definition}

We record the following well-known lemma
\begin{lemma}{\label{cusptransferlem}}
If $f_{\infty} \in C_c^{\infty} (\G(\R), \nu^{-1}) $ is a stable cuspidal function and $(\mH, s, \eta)$ is an endoscopic triple of $\G$ then there exists a stable cuspidal transfer function $ f^{\mH}_{\infty} \in C_c^{\infty} (\mH(\R), \nu^{-1}) $ of $ f_{\infty} $.
\end{lemma}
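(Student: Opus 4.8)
The plan is to reduce to the corresponding statement for real groups, which is the content of the archimedean endoscopic transfer together with the stabilization of the real twisted trace formula, as packaged in the foundational work on transfer factors. First I would recall that a stable cuspidal function $f_\infty \in C_c^\infty(\G(\R),\nu^{-1})$ is, by definition, determined up to stable conjugacy by its character values on $\Pi_{\mathrm{disc}}(\G(\R),\nu)$, and that these are arranged so that $\pi \mapsto \tr(\pi \mid f_\infty)$ is constant on $L$-packets. The key classical input is Shelstad's theorem on real endoscopic transfer: for any $f_\infty \in C_c^\infty(\G(\R),\nu^{-1})$ and any endoscopic triple $(\mH,s,\eta)$ there exists a transfer $f_\infty^{\mH}\in C_c^\infty(\mH(\R),\nu^{-1})$ whose stable orbital integrals match the $\kappa$-orbital integrals of $f_\infty$ against the transfer factors $\Delta[\mf{w},\varrho,z]$. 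This uses that $\mH$ is defined over $\R$ with $A_{\mH}(\R)^0$ compatible with $A_{\G}(\R)^0$ via $\eta$, so that $\nu$ makes sense on $\mH(\R)$; since $\G$ here is a form of a unitary similitude group and $\mH$ an elliptic endoscopic group, both have discrete series (modulo center), and the central quasi-characters are compatible.

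The main point is then to check that when $f_\infty$ is \emph{stable cuspidal} one can choose the transfer $f_\infty^{\mH}$ to again be stable cuspidal. I would argue as follows. A stable cuspidal function is a stable pseudo-coefficient-type object: up to stable orbital integrals it is a finite linear combination, indexed by $L$-packets $\Pi$ of $\Pi_{\mathrm{disc}}(\G(\R),\nu)$, of the stabilized pseudo-coefficients $f_\Pi := \sum_{\pi\in\Pi} f_\pi$ (the sum of Euler-Poincaré/pseudo-coefficients of the discrete series $\pi$), with the coefficient of $f_\Pi$ being the common value of $\tr(\pi\mid f_\infty)$ on $\Pi$. So it suffices to produce a stable cuspidal transfer for each such $f_\Pi$. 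By Shelstad's inversion of the endoscopic character identities (the stable tempered characters transfer to spectral transfers of discrete series $L$-packets), the transfer of $f_\Pi$ can be taken to be a linear combination of the analogous stabilized pseudo-coefficients $f_{\Pi^{\mH}}$ on $\mH(\R)$, over those discrete series $L$-packets $\Pi^{\mH}$ of $\mH$ whose transfer to $\G$ involves $\Pi$; concretely the coefficients are given by the spectral transfer factors $\Delta(\phi^{\mH},\phi)$. Each $f_{\Pi^{\mH}}$ is stable cuspidal on $\mH(\R)$ by construction: its stable orbital integrals are supported on elliptic elements (cuspidality of pseudo-coefficients of discrete series), and its trace against a tempered representation of $\mH(\R)$ is $1$ on $\Pi^{\mH}$, $0$ on other discrete series packets, and $0$ outside the discrete series. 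A finite linear combination of such functions, with coefficients constant on packets, is again stable cuspidal, so $f_\infty^{\mH} := \sum$ of these does the job.

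I expect the main obstacle — really the only nontrivial point — to be bookkeeping the normalizations: making sure the transfer factor $\Delta[\mf{w},\varrho,z]$ used here (the geometrically normalized $\Delta_D$ of \S\ref{sectionECIGU}, restricted to the archimedean place) is exactly the one under which Shelstad's spectral transfer identities hold, and that the central character twist $\nu^{-1}$ is preserved under transfer, i.e. that $\eta$ carries $\nu$ to the analogous quasi-character of $A_{\mH}(\R)^0$. Both are standard — the first follows because the $\Delta_D$ normalization is exactly the one compatible with the contragredient and with Shelstad's conventions (as invoked throughout \S\ref{sectionECIGU}), and the second because $Z(\widehat{\mH})\to Z(\widehat{\G})$ induced by $\eta$ identifies the relevant cocharacter lattices — but they are the things one must state carefully. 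Everything else is Shelstad's work on real endoscopy (existence of transfer, spectral transfer of discrete series packets, cuspidality of pseudo-coefficients), which I would simply cite.
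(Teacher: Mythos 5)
Your proposal is correct, and it rests on the same two pillars as the paper's proof (Shelstad's archimedean transfer and the Clozel--Delorme circle of ideas), but it runs the argument in the opposite order. The paper first invokes \cite{Shel} to get \emph{some} transfer $f^{\mH}_{\infty}$, then averages its tempered traces over $L$-packets to produce a packet-constant function $F$ supported on finitely many discrete series packets, and finally applies the trace Paley--Wiener theorem of \cite{CD90} to realize $F$ as the trace of a stable cuspidal $f'^{\mH}_{\infty}$; since stable orbital integrals only see packet-averaged traces, $f'^{\mH}_{\infty}$ is still a transfer. You instead decompose $f_{\infty}$ itself into stabilized pseudo-coefficients $f_{\Pi}$ and transfer each one explicitly via the spectral transfer factors $\Delta(\phi^{\mH},\phi)$. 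Your route needs the \emph{inversion} of the discrete-series endoscopic character identities (i.e.\ that $\sum_{\Pi^{\mH}}\Delta(\phi^{\mH},\phi)f_{\Pi^{\mH}}$ really is a geometric transfer of $f_{\Pi}$), which is a stronger citation of Shelstad than mere existence of transfer, but in exchange it produces the stable cuspidal transfer explicitly; the paper's route is softer but needs the fact that two functions with the same packet-averaged tempered traces have the same stable orbital integrals.

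One point to tighten: you say $f_{\infty}$ agrees with $\sum_{\Pi}c_{\Pi}f_{\Pi}$ ``up to stable orbital integrals,'' but equality of stable orbital integrals alone would not let you replace $f_{\infty}$ by this sum before transferring, since the transfer to $\mH$ is computed from $\kappa$-orbital integrals. What saves you is that the two functions actually have the same traces against \emph{all} tempered representations (both vanish off $\Pi_{\mathrm{disc}}$ and agree there), hence by the invariant Paley--Wiener theorem the same full orbital integrals at every regular semisimple element, hence the same transfers. State it that way and the argument is complete.
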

\begin{proof}[Proof sketch]
By \cite{Shel} we can find a function $f^{\mH}_{\infty} \in C^{\infty}_c(\mH(\R), \nu^{-1})$ that transfers to $f_{\infty}$. Define the function $ F $ on the set of unitary tempered representations of $ \mH(\R) $ by setting  
\begin{equation*}
    \displaystyle F(\pi) = \dfrac{1}{ | \Pi_{\phi} (\mH(\R)) | } \sum_{\pi' \in \Pi_{\phi} (\mH(\R)) } \tr ( \pi' | f^{\mH}_{\infty}),
\end{equation*}
for $ \pi \in \Pi_{\phi} (\mH(\R)) $. Then $F$ must be supported on finitely many discrete series packets since $f_{\infty}$ is stable cuspidal and $(\mH, s, \eta)$ is elliptic. Hence, by \cite[Theorem 1]{CD90} there exists a function $f'^{\mH}_{\infty}\in C^{\infty}_c(\mH(\R), \nu^{-1})$ that is stable cuspidal and $F(\pi) = \tr ( \pi | f'^{\mH}_{\infty}) $. Thus, $f'^{\mH}_{\infty}$ has the same stable orbital integrals as $f^{\mH}_{\infty}$. This implies that $f'^{\mH}_{\infty}$ is a stable cuspidal transfer of $f_{\infty}$. 
\end{proof}
We recall that $ST^{\mH}_{\el}(f^{\mH})$  is defined by the formula
\begin{equation}
    ST^{\mH}_{\el}(h) := \sum\limits_{\gamma_{\mH}} \tau(\mH)SO_{\gamma_{\mH}}(h),
\end{equation}
where the sum is over a set of representatives of the $(\GU, \mH)$-regular, semisimple, $\Q$-elliptic, stable conjugacy classes in $\mH(\Q)$.
\begin{definition}
 We define the term $ST^{\mH}_{\disc}(f^{\mH})$  to equal
\begin{equation*}
    \sum\limits_{\psi \in \Psi_2(\mH)} \frac{1}{|\ov{\mc{S}}_{\psi}|} \sum\limits_{ \pi \in \Pi_{\psi}(\mH, \nu)} \langle 1, \pi \rangle \tr( \pi \mid f^{\mH}).
\end{equation*} 
\end{definition}
Note that we have suppressed the term $\epsilon_{\psi}(s_{\psi})$ from this expression because our assumption on $\nu$ implies that all $\psi$ are generic by Lemma \ref{generic}.

Separately, we have for every Levi subgroup $\M$ of $\mH$ the term $ST^{\mH}_{\M}$ defined in \cite[pg 86]{Mo} as well as the term $ST^{\mH}$ defined by
\begin{equation*}
    ST^{\mH} := \sum\limits_{\M} (n^{\mH}_{\M})^{-1}ST^{\mH}_{\M},
\end{equation*}
for certain constants $(n^{\mH}_{\M})^{-1}$.

We prove the following standard result.
\begin{lemma}{\label{cuspstablem}}
Suppose $h \in \mH(\A)$ is stable cuspidal at infinity and cuspidal at a finite place. Then
\begin{itemize}
\item For any $\M \neq \mH$ we have 
\begin{equation*}
    ST^{\mH}_{\M}(h) = 0.
\end{equation*}
\item If $\M = \mH$ then
\begin{equation*}
    ST^{\mH}_{\mH}(h)= ST^{\mH}_{\el}(h).
\end{equation*}
\end{itemize}
\end{lemma}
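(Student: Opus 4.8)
The plan is to unwind the definition of $ST^{\mH}_{\M}$ from \cite[pg.~86]{Mo} and to exploit the two local hypotheses on $h$ separately: cuspidality at the finite place $v$ to kill all terms with $\M \neq \mH$, and stable cuspidality at infinity to identify the $\M = \mH$ term. Recall that $ST^{\mH}_{\M}(h)$ is produced, following Kottwitz's stabilization of the geometric side of the invariant trace formula for $\mH$ (see also \cite{kot7}), as a sum over $\M(\Q)$-elliptic semisimple stable conjugacy classes $\gamma$ of terms of the shape $\tau(\M)\,\iota(\gamma)\,\overline{\Phi}^{\mH}_{\M}(\gamma, h_{\infty})\, SO^{\M}_{\gamma}\big((h^{\infty})_{\M}\big)$, where $(h^{\infty})_{\M}$ is the constant term along a parabolic of $\mH$ with Levi $\M$ of the finite part of $h$, and $\overline{\Phi}^{\mH}_{\M}(\gamma, \cdot)$ is the archimedean stable (weighted) orbital integral. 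The point I would first record is that for $\M \neq \mH$ the dependence of $ST^{\mH}_{\M}(h)$ on the component $h_v$ is only through the proper constant term $(h_v)_{\M}$ --- equivalently, through an $\M$-weighted orbital integral at $v$, which by Arthur's splitting formula (see \cite[\S 7]{GKM}) is a sum of products each carrying a factor $SO^{L}_{\gamma}\big((h_v)_{L}\big)$ with $L \subsetneq \mH$ a proper Levi.

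Granting this, the first bullet is immediate: since $h$ is cuspidal at $v$, every proper constant term $(h_v)_{\M}$ vanishes, hence for $\M \neq \mH$ every local factor at $v$ appearing in $ST^{\mH}_{\M}(h)$ vanishes, and so $ST^{\mH}_{\M}(h) = 0$. I note that the stable cuspidality at infinity plays no role here --- a single cuspidal finite place already suffices.

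For the second bullet, when $\M = \mH$ there is no weighting and no constant term, so $ST^{\mH}_{\mH}(h)$ reduces to $\tau(\mH)\sum_{\gamma}\overline{\Phi}^{\mH}_{\mH}(\gamma, h_{\infty})\, SO^{\mH}_{\gamma}(h^{\infty})$, the sum running over $\Q$-elliptic semisimple stable classes $\gamma$ in $\mH(\Q)$ subject to the $(\GU,\mH)$-regularity condition built into \cite{Mo}. Here I would use that $h_{\infty}$, being stable cuspidal, is a stable combination of pseudo-coefficients of discrete series (by \cite{Shel} and \cite{CD90}), so that $\overline{\Phi}^{\mH}_{\mH}(\gamma, h_{\infty})$ is exactly the archimedean stable orbital integral $SO^{\mH}_{\gamma}(h_{\infty})$ and is supported on $\R$-elliptic $\gamma$. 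The $\gamma$-term of $ST^{\mH}_{\mH}(h)$ is then $\tau(\mH)\, SO^{\mH}_{\gamma}(h_{\infty})\, SO^{\mH}_{\gamma}(h^{\infty}) = \tau(\mH)\, SO^{\mH}_{\gamma}(h)$, the $\Q$-elliptic but non-$\R$-elliptic classes contributing $0$ just as they do in $ST^{\mH}_{\el}(h)$. Summing over $\gamma$ gives exactly $ST^{\mH}_{\el}(h)$.

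The main obstacle is the bookkeeping underlying the first paragraph: making precise, with the normalizations of \cite{Mo}, that for proper $\M$ the term $ST^{\mH}_{\M}(h)$ factors through proper constant terms of $h_v$. This requires tracking Kottwitz's stabilization together with Arthur's splitting formula for (stable) weighted orbital integrals and verifying that no term escapes the reduction; the remainder is a direct comparison of two explicit geometric expressions.
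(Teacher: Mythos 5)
Your overall strategy coincides with the paper's: kill the proper-Levi terms using the vanishing of constant terms at the cuspidal finite place, and identify the $\M=\mH$ term with $ST^{\mH}_{\el}(h)$ by showing the archimedean factor is a stable orbital integral supported on $\R$-elliptic classes. The first bullet is handled the same way in the paper (and, as you note, only the cuspidality at a finite place is used there); the paper treats it as essentially a definition-chase, so the "bookkeeping" you single out as the main obstacle is not where the real work lies.

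The genuine gap is in the second bullet, at the step where you assert that, because $h_{\infty}$ is stable cuspidal, $\overline{\Phi}^{\mH}_{\mH}(\gamma, h_{\infty})$ "is exactly" the stable orbital integral $SO^{\mH}_{\gamma}(h_{\infty})$. This identity $S\Phi_{\mH}(\gamma, h_{\infty}) = SO_{\gamma}(h_{\infty})$ is the entire content of the paper's proof and is not automatic from the pseudo-coefficient description: the term $S\Phi_{\mH}(\gamma, h_{\infty})$ is \emph{defined} as $\ov{v}(I_{\gamma})^{-1}\sum_{\Pi}\Phi_{\mH}(\gamma^{-1},\Pi)\tr(\Pi \mid h_{\infty})$, with a normalization by the volume term $\ov{v}(I_{\gamma})$ and a sum over full packet characters, whereas $SO_{\gamma}(h_{\infty})$ is a sum over the rational classes in the stable class of terms $e(I_{\gamma'})\,v(I_{\gamma'})^{-1}\sum_{\Pi}\Phi(\gamma'^{-1},\Pi)\tr(\pi\mid h_{\infty})$ coming from Arthur's character formula for orbital integrals of cuspidal functions. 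Matching the two requires (i) the relation $e(I_{\gamma})\ov{v}(I_{\gamma}) = v(I_{\gamma})d(I_{\gamma})$ between Kottwitz signs and the two volume normalizations, (ii) the fact that $\tr(\Pi\mid h_{\infty}) = |\Pi|\tr(\pi\mid h_{\infty})$ for stable cuspidal $h_{\infty}$, and (iii) the counting identity $\sum_{\gamma'\sim_{st}\gamma} d(I_{\gamma'}) = |\ker(H^1(\R,T)\to H^1(\R,\mH))| = |\Pi|$, proved by choosing elliptic representatives in a fixed elliptic maximal torus. Without carrying out this comparison of normalizations your argument does not establish the equality of the two geometric expressions; everything after that point (non-$\R$-elliptic classes dropping out, the $(\GU,\mH)$-regularity restriction) is fine.
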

\begin{proof}
To prove the first part, we note that by definition, for $\M$ a proper Levi subgroup, the ``constant term'' $h^{\infty}_{\M}$ is $0$ (for instance see the definition before \cite[Theorem 7.1]{Art2}). This implies that $ST^{\mH}_{\M}(h)=0$.

We now prove the second part. We first show that $S\Phi_{\mH}(\gamma_{\mH}, h_{\infty})=SO_{\gamma_{\mH}}(h_{\infty})$. By \cite[Theorem 5.1]{Art3}, we have that
\begin{equation*}
    O_{\gamma_{\mH}}(h_{\infty}) = \Phi_{\mH}(\gamma_{\mH}, h_{\infty}) =  v(I_{\gamma_{\mH}})^{-1} \sum\limits_{\Pi} \Phi_{\mH}(\gamma^{-1}_{\mH}, \Pi)\tr( \pi \mid h_{\infty}),
\end{equation*}
where the sum is over discrete series $L$-packets of $\mH(\R)$ with central character $\nu_{\mH}$ (the unique character of $A_{\mH}(\R)^{0}$ such that if a parameter $\Psi_{\mH}$ has central character restricting to $\nu_{\mH}$ then $\Leta \circ \Psi_{\mH}$ has central character $\nu$). The representation $\pi$ is some representative of $\Pi$, and the value of $\tr(\pi \mid h_{\infty})$ does not depend on the choice of representative since $h_{\infty}$ is stable cuspidal. The $\gamma^{-1}$ in this formula that is seemingly at odds with the formula of Arthur is explained by \cite[7.19]{GKM}.

Therefore we have 
\begin{equation*}
    SO_{\gamma_{\mH}}(h_{\infty}) = \sum\limits_{ \gamma'_{\mH} \sim_{st} \gamma_{\mH} } e(I_{\gamma'_{\mH}}) \Phi_{\mH}(\gamma_{\mH}, h_{\infty}).
\end{equation*}
Now by definition,
\begin{equation*}
    S\Phi_{\mH}(\gamma_{\mH}, h_{\infty}) =  \ov{v}(I_{\gamma_{\mH}})^{-1}  \sum\limits_{\Pi} \Phi_{\mH}(\gamma^{-1}_{\mH}, \Pi)\tr( \Pi \mid h_{\infty}).
\end{equation*}
Since $h_{\infty}$ is stable cuspidal, we have $\tr( \Pi \mid h_{\infty}) = |\Pi | \tr( \pi \mid h_{\infty} )$. Furthermore, it follows from the definitions and basic properties of the Kottwitz sign, that
\begin{equation*}
    e(I_{\gamma_{\mH}})\ov{v}(I_{\gamma_{\mH}}) = (-1)^{q(I_{\gamma_{\mH}})} \Vol(\ov{I_{\gamma_{\mH}}}(\R) /A_{\mH}(\R)^0) = v(I_{\gamma_{\mH}})d(I_{\gamma_{\mH}}),
\end{equation*}
where $d(I_{\gamma_{\mH}}) = |\ker( H^1(\R, T) \to H^1(\R, I_{\gamma_{\mH}})|$ for $T$ an elliptic maximal torus of $I_{\gamma_{\mH}}$. 



Finally, we put everything together to get
\begin{align*}
     SO_{\gamma_{\mH}}(h_{\infty}) &= \sum\limits_{ \gamma'_{\mH} \sim_{st} \gamma_{\mH} } e(I_{\gamma'_{\mH}}) \Phi_{\mH}(\gamma'_{\mH}, h_{\infty})\\
     &= \sum\limits_{ \gamma'_{\mH} \sim_{st} \gamma_{\mH} } \frac{d(I_{\gamma'_{\mH}})}{\ov{v}(I_{\gamma'_{\mH}})} \sum\limits_{\Pi} \Phi_{\mH}({\gamma'}^{-1}_{\mH}, \Pi)\tr(\pi \mid h_{\infty})\\
     &= \sum\limits_{ \gamma'_{\mH} \sim_{st} \gamma_{\mH} } \frac{d(I_{\gamma'_{\mH}})}{| \Pi|}  S\Phi_{\mH}(\gamma'_{\mH}, h_{\infty})\\
     &= S\Phi_{\mH}(\gamma_{\mH}, h_{\infty}).
\end{align*}
The last equality follows from the fact that $S\Phi_{\mH}(\gamma_{\mH}, h_{\infty})$ only depends on the stable class of $\gamma_{\mH}$ and
\begin{equation*}
    \sum\limits_{\gamma'_{\mH} \sim_{st} \gamma_{\mH}}  \frac{d(I_{\gamma'_{\mH}})}{| \Pi|}=1.
\end{equation*}
Indeed, $|\Pi|$ is well known to equal $|\ker(H^1(\R, T) \to H^1(\R, \mH))|$ for $T$ an elliptic maximal torus of $\mH$. Hence, it suffices to show that
\begin{equation*}
    \sum\limits_{\gamma'_{\mH} \sim_{st} \gamma_{\mH}}  d(I_{\gamma'_{\mH}})=|\ker(H^1(\R, T) \to H^1(\R, \mH))|.
\end{equation*}
To see this, first note that the set of conjugacy classes that are stably conjugate to $\gamma_{\mH}$ is in natural bijection with $\ker(H^1(\R, I_{\gamma_{\mH}}) \to H^1(\R, \mH))$. For each such conjugacy class, we can choose a representative $\gamma'_{\mH} \in T$. This follows from the fact that since $\mH$ contains an elliptic maximal torus, any elliptic element of $\mH(\R)$ is contained in an elliptic maximal torus and all elliptic maximal tori are conjugate in $H(\R)$. Then the set of classes in $H^1(\R, T)$ mapping to the class of $\gamma'_{\mH}$ in $H^1(\R, I_{\gamma_{\mH}})$ is in bijection with $\ker(H^1(\R, T) \to H^1(\R, I_{\gamma'_{\mH}}))$.

It then follows that
\begin{equation*}
    ST^{\mH}_{\mH}(h)= \tau(\mH)\sum\limits_{\gamma_{\mH}} SO_{\gamma_{\mH}}(h),
\end{equation*}
where the sum is over stable conjugacy classes in $\mH(\Q)$ that are semisimple and elliptic in $\mH(\R)$.

Since $h_{\infty}$ is stable cuspidal, its orbital integrals vanish on $\gamma_{\mH}$ that are not elliptic at $\R$, so we may as well impose this condition. By \cite[Proposition 3.3.4, Remark 3.3.5]{Mo} we may also restrict the sum to $\gamma_{\mH}$ that are $(\GU, \mH)$-regular. We then see that this is equal to $ST^{\mH}_{\el}(h)$.
\end{proof}

Suppose now that $f \in \mc{H}(\GU(\A))$ is stable cuspidal at infinity and cuspidal at a finite place. Then by the above Lemma \ref{cusptransferlem} and \cite[Lemma 3.4]{Art1}, for each elliptic endoscopic datum $(\mH, s, \eta)$, we can find a function $f^{\mH}$ that is stable cuspidal at infinity, cuspidal at a finite place, and a transfer of $f$. 

Our proof of the main result of this section will be by induction. We now state the key formulas we will need.

First, we have the following theorem of Morel:
\begin{theorem}{See \cite[Theorem 5.4.1]{Mo} \label{morelinput}}
Let $\G$ be a connected reductive group. Let $f = f^{\infty} f_{\infty}$ where $ f_{\infty} \in C_c^{\infty} (\G(\R), \C) $ and $ f^{\infty} \in C_c^{\infty} (\G(\mathbb{A}_f), \C) $. Assume that $f_{\infty}$ is stable cuspidal and that for every $(\mH, s, \eta) \in \mathcal{E}(\G)$, there exists a transfer $f^{\mH}$ of $f$. Then: 
\[
T^{\G}(f) = \sum_{(\mH, s, \eta) \in \mathcal{E}(\G)} \iota(\G, \mH) ST^{\mH} (f^{\mH})
\]
where $ \mathcal{E}(\G) $ is the set of isomorphism classes of elliptic endoscopic triples in the sense of Kottwitz and we recall that $T^{\G}(f)$ is defined to be the trace of $f$ on $L^2_{\disc}(\G(\Q) \setminus \G(\A))$.
\end{theorem}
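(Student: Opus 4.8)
The plan is not to reprove this from scratch --- it is \cite[Theorem 5.4.1]{Mo}, whose proof combines Arthur's invariant trace formula with the stabilization procedure of Langlands and Kottwitz. We sketch the structure of the argument. The starting point is the expansion of the trace $T^{\G}(f)$ of $f$ on $L^2_{\disc}(\G(\Q) \setminus \G(\A))$ furnished by the discrete part of Arthur's trace formula. Because $f_{\infty}$ is stable cuspidal, the contributions of proper Levi subgroups to the spectral side collapse and one is reduced to a purely geometric expansion of the shape $T^{\G}(f) = \sum_{\M} (n^{\G}_{\M})^{-1} T^{\G}_{\M}(f)$, where the sum is over $\G(\Q)$-conjugacy classes of Levi subgroups $\M$ of $\G$ and each $T^{\G}_{\M}(f)$ is assembled from (weighted) orbital integrals of $f$ at $\Q$-elliptic semisimple classes in $\M(\Q)$; here the stable cuspidality of $f_{\infty}$ is exactly what degenerates the archimedean weighted orbital integrals to ordinary ones, cf. \cite{GKM}.

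Next one stabilizes each $T^{\G}_{\M}(f)$. Fixing $\M$, one partitions the $\Q$-elliptic semisimple conjugacy classes in $\M(\Q)$ into stable classes and rewrites each stable class, via the Langlands--Shelstad transfer factors and the fundamental lemma (now a theorem, by Ng\^o, with the endoscopic and twisted transfer due to Waldspurger and the archimedean transfer due to Shelstad), in terms of stable orbital integrals on the elliptic endoscopic groups of $\M$. Summing over stable classes gives, for each $\M$, an identity of the form $T^{\G}_{\M}(f) = \sum (\mathrm{const})\, ST^{\mH_{\M}}_{\mH_{\M}}(f^{\mH_{\M}})$, the sum being over isomorphism classes of elliptic endoscopic data of $\M$; the matching functions exist by the hypothesis that $f$ admits transfers, and can be arranged to be stable cuspidal at infinity by Lemma \ref{cusptransferlem}.

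The final step is a reorganization: the double sum over pairs $(\M, \mH_{\M})$ produced above is re-indexed by pairs $(\mH, \M_{\mH})$, where $\mH$ runs over elliptic endoscopic data of $\G$ and $\M_{\mH}$ over Levi subgroups of $\mH$. This re-indexing is the combinatorial heart of Kottwitz's stabilization; it is where the constants $\iota(\G, \mH)$ --- built from ratios of Tamagawa numbers and from orders of automorphism groups of endoscopic data --- emerge, and after it the inner sum over $\M_{\mH}$ is precisely $ST^{\mH} = \sum_{\M_{\mH}} (n^{\mH}_{\M_{\mH}})^{-1} ST^{\mH}_{\M_{\mH}}$, yielding the stated formula. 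The main obstacle, were one to carry this out in detail, is exactly this matching of Levi subgroups across endoscopic groups together with the careful bookkeeping of every normalization constant; the analytic inputs (absolute convergence, the fundamental lemma, smooth transfer) are by now available off the shelf and are assembled in \cite{Mo}. We note that in the next section this theorem is applied in tandem with Lemma \ref{cuspstablem}: imposing additionally that $f$ be cuspidal at a finite place forces $ST^{\mH}_{\M_{\mH}}(f^{\mH}) = 0$ for $\M_{\mH} \neq \mH$, so that $ST^{\mH}(f^{\mH})$ collapses to $ST^{\mH}_{\el}(f^{\mH})$.
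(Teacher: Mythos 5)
The paper gives no proof of this statement at all: it is imported verbatim as Morel's Theorem 5.4.1, and your proposal likewise (and correctly) defers to that reference, with an accurate sketch of the underlying Arthur--Langlands--Kottwitz stabilization. This matches the paper's treatment, so nothing further is needed.
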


Now we fix an odd positive integer $n$. By Proposition \ref{itm: from U to GU} and Remark \ref{GU to products} we have the following formula for each group $\G'$ of the form $\G(\U^*(n_1) \times ... \times \U^*(n_k))$ such that $\sum\limits^k_{i=1} n_i=n$. We note that all such groups are quasisplit.

For a function $f^{\G'} \in \mc{H}(\G'(\A))$:
\begin{equation*}
    T^{\G'}(f^{\G'}) = \sum\limits_{\psi \in \Psi_2(\G')} \sum\limits_{ \pi \in \Pi_{\psi}(\G', \xi, 1)} \tr(\pi \mid f^{\G'}),
\end{equation*}
where $\Pi_{\psi}(\G', \xi, 1)$ is the subset of $\Pi_{\psi}(\G', \xi)$ containing those $\pi$ with trivial character $\langle \cdot , \pi \rangle$.

We now prove by induction that for each group $\G'$ that we consider and for each $f^{\G'} \in \mc{H}(\G'(\A))$ stable cuspidal at infinity, we have 
\begin{equation}
    ST^{\G'}(f^{\G'}) = ST^{\G'}_{\disc}(f^{\G'}).
\end{equation}
We induct on $\sum\limits^k_{i=1} n^2_i$. Hence, the base case is when each $n_i=1$. Such a group $\G'$ is a torus and hence has no non-trivial elliptic endoscopy. In particular, by Theorem \ref{morelinput} we have that
\begin{equation}
T^{\G'}(f^{\G'}) =ST^{\G'}(f^{\G'})
\end{equation}
and hence it suffices to show that $T^{\G'}(f^{\G'}) = ST^{\G'}_{\disc}(f^{\G'})$. By \ref{quadbij} since there is no non-trivial endoscopy, each $\ov{\mc{S}}_{\psi}=1$ and hence $\langle \cdot , \pi \rangle$ is the trivial character for all $\pi$. The result follows.

We now settle the inductive step. Suppose we have shown $ST^{\G'}(f^{\G'}) = ST^{\G'}_{\disc}(f^{\G'})$ for each $\G'$ satisfying $\sum\limits^k_{i=1} n^2_i \leq N$. Suppose that $\G'$ satisfies $\sum\limits^k_{i=1} n^2_i = N+1$. Pick a function $f^{\G'} \in \mc{H}(\G'(\A))$ that is stable cuspidal at infinity and for each elliptic endoscopic datum $(\mH, s, \eta)$ of $\G'$ we pick by Lemma \ref{cusptransferlem} a transfer $f^{\mH} \in \mc{H}(\mH(\A))$ that is stable cuspidal at infinity.

Then we can write Theorem \ref{morelinput} in the form
\begin{equation*}
    T^{\G'}(f^{\G'}) = ST^{\G'}(f^{\G'}) + \sum_{(\mH, s, \eta) \in \mathcal{E}(\G')} \iota(\G', \mH) ST^{\mH} (f^{\mH}),
\end{equation*}
where for each nontrivial elliptic endoscopic group $\mH$ appearing in the sum on right-hand side, we have verified $ST^{\mH}(f^{\mH}) = ST^{\mH}_{\disc}(f^{\mH})$ by inductive assumption. 

To conclude, it suffices to show that we have an equality
\begin{equation*}
    T^{\G'}(f^{\G'}) = ST^{\G'}_{\disc}(f^{\G'}) + \sum_{(\mH, s, \eta) \in \mathcal{E}(\G')} \iota(\G', \mH) ST^{\mH}_{\disc} (f^{\mH}).
\end{equation*}
We prove this by arguing as in \cite[pg30]{Tai1} (cf \cite[\S12]{Kot5}). 
Indeed, we have
\begin{equation*}
    \sum_{(\mH, s, \eta) \in \mathcal{E}(\G')} \iota(\G', \mH) ST^{\mH}_{\disc} (f^{\mH}) =  \sum_{(\mH, s, \eta) \in \mathcal{E}(\G')} \iota(\G', \mH)\sum\limits_{\psi \in \Psi_2(\mH^*)} \frac{1}{|\ov{\mc{S}}_{\psi}|} \sum\limits_{ \pi \in \Pi_{\psi}(\mH, \nu)} \langle 1, \pi \rangle \tr( \pi \mid f^{\mH}).
\end{equation*}
Now, we apply at each place the endoscopic character identity we proved in Section \ref{sectionECIGU} and argue as for the equation \cite[(11), pg30]{Tai1} to get that the above equals
\begin{equation*}
 \sum\limits_{\psi \in \Psi_2(\G'^*)}\sum\limits_{s \in \ov{\mc{S}}_{\psi}} \frac{1}{|\ov{\mc{S}}_{\psi}|} \sum\limits_{ \pi \in \Pi_{\psi}(\G', \nu)} \langle s, \pi \rangle \tr( \pi \mid f^{\G'}).
\end{equation*}
Now we use that
\begin{equation*}
    \sum\limits_{s \in \ov{\mc{S}}_{\psi}} \frac{1}{|\ov{\mc{S}}_{\psi}|}  \langle s, \pi \rangle 
\end{equation*}
is $1$ if $\pi \in  \Pi_{\psi}(\G, \nu,1)$ and $0$ otherwise to get that the above equals
\begin{equation*}
 \sum\limits_{\psi \in \Psi_2(\G'^*)} \sum\limits_{ \pi \in \Pi_{\psi}(\G', \nu,1)} \tr( \pi \mid f^{\G'}),
\end{equation*}

which equals $T^{\G'}(f^{\G'})$ as desired.

\subsection{Some special global liftings} \textbf{} \label{global liftings}

We remind that parameters with a dot above are global parameters. Now consider $ \widetilde{\psi} : W_{\Q_v} \longrightarrow \prescript{L}{}{\GU^*_{\Q_v}(n)} = \big( \GL_n(\C) \times \C^{\times} \big) \rtimes W_{\Q_v} $ a supercuspidal $L$-parameter. We denote $\psi$ the $L$-parameter of $\U^*_{{\Q_v}}(n)$ obtained from $\widetilde{\psi}$ by the projection $ \prescript{L}{}{\GU^*_{\Q_v}(n)} \longrightarrow \prescript{L}{}{\U^*_{\Q_v}(n)}$.  There is a (standard) base change morphism :
\begin{equation} \phantomsection \label{itm : base change map}
  \eta_B : \Phi(\GU^*_{\Q_v}(n)) \longrightarrow \Phi(\GL_{E_v}(n) \times \Gm ).  
\end{equation}

Denote by $ \widetilde{\psi}^n $ the image of $ \widetilde{\psi} $ by this morphism. Then $\widetilde{\psi}^n$ is just the restriction of $ \widetilde{\psi} $ to $ W_{E_v} $. Since $ W_{E_v} $ acts trivially on $\GL_n(\C) \times \C^{\times}$, if we denote $\psi^n$ the projection of $\widetilde{\psi}^n$ to $\GL_n(\C)$, then it is an $n$ dimensional representation of $W_{E_v}$ and moreover $\psi^n$ is the the image of $\psi$ by the (standard) base change morphism. 

Since $\psi$ is a supercuspidal $L$-parameter (in particular, a discrete $L$-parameter), the group $ S_{\psi} $ is finite (see \cite[Lemma 10.3.1]{Kot5}) and we can write $ \psi^n = \psi^{n_1}_1 \oplus \cdots \oplus \psi^{n_r}_r $ where the $\psi^{n_i}_i $ are irreducible and pairwise distinct. By the computation in \cite[page 62, 63]{KMSW}, all the $ \psi^{n_i}_i $ are conjugate-orthogonal and we have
\[
S_{\psi} \simeq S^{\natural}_{\psi} \simeq \prod_{i = 1}^r O(1, \C) \simeq \prod_{i = 1}^r \Z / 2\Z.
\]
Moreover, the group $ Z(\widehat{\U}_{\Q_v}(n))^{\Gamma} = \big\{ \pm \Id \big\} $ embeds diagonally into $S_{\psi}$. Furthermore $ \det (- \Id) = -1 $ and $ S_{\psi} = \big\{ \pm \Id \big\} \times S^+_{\psi} $. By Lemma \ref{itm : centralizer of GU and U} we have
\[
\overline{\mathcal{S}}_{\widetilde{\psi}} \simeq \overline{\mathcal{S}}_{\psi} = S_{\psi} /  \big\{ \pm \Id \big\} \simeq \prod_{i = 1}^{r-1} \Z / 2\Z,
\]
and 
\[
S_{\widetilde{\psi}} \simeq S^{\natural}_{\widetilde{\psi}} \simeq S^+_{\psi} \times \C^{\times} \simeq \Big( \prod_{i = 1}^{r-1} \Z / 2\Z \times \Big) \times \C^{\times}.
\]

Let $ \widetilde{\Dot{\psi}} = (\Dot{\psi}, \chi)$ be a discrete global $L$-parameter of $\GU(\mathbb{A})$. The corresponding $L$-packet consists of automorphic representations of $\GU(\mathbb{A})$ whose central character is $\chi$ and whose restriction to $\U(\mathbb{A})$ is an automorphic representation in the $L$-packet of $\Dot{\psi}$. Again, we denote by $\Dot{\psi}^n= \Pi_1 \boxplus \cdots \boxplus \Pi_m $ the isobaric sum of automorphic representations of $\GL_n(\mathbb{A}_E)$ corresponding to $\Dot{\psi}$. As in the local case, we see that $S_{\Dot{\psi}}$ is finite and by \cite[page 69]{KMSW} we have then 
\[
S_{\Dot{\psi}} \simeq S^{\natural}_{\Dot{\psi}}  = \prod_{i = 1}^m O(1, \C) \simeq \prod_{i = 1}^m \Z / 2\Z
\]
with the group $ Z(\widehat{\U}_{\Q}(n))^{\Gamma} = \big\{ \pm \Id \big\} $ embedded diagonally into $S_{\Dot{\psi}}$ and an  isomorphism $ S_{\Dot{\psi}} \cong \big\{ \pm \Id \big\} \times S^+_{\Dot{\psi}} $. Thus
\[
\overline{\mathcal{S}}_{ \widetilde{\Dot{\psi}}} = \overline{\mathcal{S}}_{\Dot{\psi}} = S_{\Dot{\psi}} /  \big\{ \pm \Id \big\} \simeq \prod_{i = 1}^{m-1} \Z / 2\Z,
\]
\[
S_{\widetilde{\Dot{\psi}}} \simeq S^{\natural}_{\widetilde{\Dot{\psi}}} \simeq S^+_{\Dot{\psi}} \times \C^{\times} \simeq \Big(  \prod_{i = 1}^{m-1} \Z / 2\Z \Big) \times \C^{\times}.
\]

We say that a global parameter $ \widetilde{\Dot{\psi}} = (\Dot{\psi}, \chi)$ is a global lifting of $\widetilde{\psi}$ if we have $ (\Dot{\psi}_v, \chi_v) $ = $\widetilde{\psi}$ where $ (\Dot{\psi}_v, \chi_v) $ is the localization at $v$. In this case, there exist morphisms $ \lambda : S_{\Dot{\psi}} \longrightarrow S_{\psi} $, $ \widetilde{\lambda} : S_{\widetilde{\Dot{\psi}}} \longrightarrow S_{\widetilde{\psi}} $ and $ \overline{\lambda} : \overline{\mathcal{S}}_{\Dot{\psi}} \longrightarrow \overline{\mathcal{S}}_{\psi} $. Since the the local and global parameters $\psi$ and $\Dot{\psi}$ are discrete, these maps are injective (see \cite[page 28-31]{C.P.Mok} for more details). In this section, we construct some global liftings $ \widetilde{\Dot{\psi}} = (\Dot{\psi}, \chi) $ such that the above maps $\lambda$, $\widetilde{\lambda}$ and $\overline{\lambda}$ have some special properties. 

\subsubsection{First construction}{\label{construction1}} (\text{c.f. Lemma 4.2.1 in \cite{KMSW}})

We choose an auxiliary place $u$ of $\Q$ which splits over $E$ as $u= w\overline{w}$. Therefore $\U(\Q_u)$ is isomorphic to $\GL_n(E_w)$. By \cite[Theorem 5.7]{Shin}, there exists a cuspidal automorphic representation $\Pi$ of $\U(\mathbb{A})$ satisfying the following properties
\begin{enumerate}
    \item[$\bullet$] $\Pi_{\infty}$ is discrete series corresponding to a regular highest weight and with sufficiently regular infinitesimal character in the sense of \cite[Def. 2.2.10]{KH1},
    \item[$\bullet$] $\Pi_v$ belongs to the packet $\Pi_{\psi}(\U(\Q_v), \varrho_{\U})$,
    \item[$\bullet$] $ \Pi_u $ is a supercuspidal representation of $\GL_n(E_w)$.
\end{enumerate}
Note that such a $\Pi$ will be cohomological by the first condition and the remark at the end of \S$2$ of \cite{Kot2}.

By Lemma 4.1.2 of \cite{LJ}, we can extend $\Pi$ to an algebraic cuspidal automorphic representation $\overline{\Pi}$ of $\GU(\A)$. Furthermore, we can assume that $\overline{\Pi}$ is cohomological since $\Pi$ is.

Consider the exact sequence
\[
1 \longrightarrow \U \longrightarrow \GU \xrightarrow{c} \Gm \longrightarrow 1.
\]

Since $ \Pi_v $ belongs to the packet $\Pi_{\psi}(\U(\Q_v), \varrho_{\U})$, the central character $\omega_{\ov{\Pi}_v}$ and the central character $\omega_{\widetilde{\psi}}$ of any representation in $\Pi_{\widetilde{\psi}}(\GU(\Q_v), \varrho_{\GU})$ must agree on $Z(\GU) \cap \U$. The map $c$ restricted to $Z(\GU)$ has kernel equal to $Z(\GU) \cap \U$ so that $\omega_{\ov{\Pi}_v}\omega^{-1}_{\widetilde{\psi}}$ factors to give a character of $\im(c)$ which (since $n$ is odd) is the norm subgroup $N_{E^{\times}_v/\Q^{\times}_v} \subset \Q^{\times}_v$. We can choose a lift of this character to $\Q^{\times}_v$ and hence we conclude that there is some character $\omega: \Q^{\times}_v \to \C^{\times}$ such that $\overline{\Pi}_v \otimes (\omega \circ c) $ belongs to the packet $\Pi_{\widetilde{\psi}}(\GU(\Q_v), \varrho_{\GU})$.

There is an isomorphism of topological groups 
\begin{align*}
    \Q^{\times} \times \R_{ > 0 } \times \prod \Z_p^{\times} &\longrightarrow \Gm(\A) \\
    (r, t, (u_p)) &\longmapsto (rt, ru_2, ru_3, \cdots).
\end{align*}
 Then there is a character $ \overline{\Omega} $ of $ \Q^{\times} \times \R_{ > 0 } \times \prod \Z_p^{\times} $ such that $ \overline{\Omega} $ is trivial on $\Q^{\times} \times \R_{ > 0 }$, $ \overline{\Omega}_{| \Z_v^{\times}} \equiv \omega_{| \Z_v^{\times}} $ and $ \overline{\Omega} (-1, 1, (-1)) = 1 $. This character descends to a Hecke character $\Omega$ of $ \Gm(\Q) \backslash \Gm (\A) $ such that $ \Omega_v = \omega \otimes \kappa $ where $\kappa : \Q^{\times}_v \longrightarrow \C^{\times}$ is an unramified character and $ \Omega_{\infty} $ is trivial. In particular if we denote $ \widetilde{\Pi} := \overline{\Pi} \otimes ( \Omega \circ c ) $ then it is still cohomological (since $\overline{\Pi}$ is) and the local representation $\widetilde{\Pi}_v $ belongs to the packet $\Pi_{\widetilde{\psi}}(\GU(\Q_v), \varrho_{\GU})$ up to an unramified character twist.

Therefore the global parameter $\widetilde{\Dot{\psi}} = (\Dot{\psi}, \chi) $ is a globalisation of $ \widetilde{\psi} $, up to an unramified twist (where $\Dot{\psi}$ is the global parameter of $\Pi$ and $\chi$ corresponds to the central character of $\widetilde{\Pi}$). Since $\Pi_{\infty}$ has sufficiently regular infinitesimal character, $ \Dot{\psi} $ is generic (Lemma \ref{generic}). The last condition implies that $\Dot{\psi}^n$ is a cuspidal automorphic representation of $\GL_n(\mathbb{A}_E)$ which is self-dual and conjugate orthogonal. Therefore we have $ S_{\Dot{\psi}} = \{ \pm \Id \} $ (\cite[page 69]{KMSW}) so $\overline{\mathcal{S}}_{\Dot{\psi}} = \big\{ \Id \big\}$. The above second condition implies that $\Dot{\psi}$ is a global lift of $\psi$. Since the map $\lambda$ is injective, we see that $\lambda$ is the diagonal embedding of $\big\{ \pm \Id \big\} $ into $S_{\psi}$. 

Moreover since $\overline{\mathcal{S}}_{\widetilde{\psi}} = \overline{\mathcal{S}}_{\psi} = \big\{ \Id \big\} $ and $\overline{\mathcal{S}}_{\widetilde{\Dot{\psi}}} = \overline{\mathcal{S}}_{\Dot{\psi}}$, the map $\overline{\lambda}$ is the trivial map. The group $S^+_{\Dot{\psi}}$ is also trivial and the map $ \tilde{\lambda} $ is given by

\begin{align*}
 S_{\widetilde{\Dot{\psi}}} \simeq \C^{\times} & \longrightarrow  S_{\widetilde{\psi}} \simeq S^+_{\psi} \times \C^{\times}  & \\
t \quad & \longmapsto \quad \quad ( \Id, t ). &   
\end{align*}

\subsubsection{Second construction}{\label{construction2}}. (\text{We adapt the proof of Lemma 4.4.1 in \cite{KMSW}})

Consider an element $ \displaystyle s = \big( x_i \big)_{i=1}^r \in S_{\psi} = \prod_{i = 1}^r \Z / 2\Z$ whose image in $\overline{\mc{S}_{\psi}}$ is denoted by $\overline{s}$. We can suppose that $ x_i = 1$ for $ i \in X \subset \{ 1, \cdots, r \} $ and $x_i = -1$ for $i \in Y \subset \{ 1, \cdots, r \} $. Denote $ \displaystyle \psi^{n_X}_X = \bigoplus_{i \in X} \psi^{n_i}_i $ and $ \displaystyle \psi^{n_Y}_Y = \bigoplus_{i \in Y} \psi^{n_i}_i $ (where $ \displaystyle n_X = \sum_{i \in X} n_i $ and $ \displaystyle n_Y = \sum_{i \in Y} n_i $). Since all the $\psi^{n_i}_i$ are conjugate orthogonal, by \cite[Lemma 2.2.1]{C.P.Mok}, the $L$-parameters $\psi^{n_X}_X$ resp. $\psi^{n_Y}_Y$ come from $L$-parameters $\psi_X$ resp. $\psi_Y$ of unitary groups $ \U_{\Q_v}(n_X) $ resp. $ \U_{\Q_v}(n_Y) $ by the base change map $\eta_B$ (see (\ref{itm : base change map})). Now as in the first construction, for these $L$-parameters we can construct cuspidal automorphic representations $ \Pi^{n_X}_X $ resp. $\Pi^{n_Y}_Y$, of $\GL_{n_X}(\mathbb{A}_E)$ resp. $\GL_{n_Y}(\mathbb{A}_E)$. Since these automorphic representations are self-dual and conjugate-orthogonal, the isobaric sum $ \Pi^{n_X}_X \boxplus \Pi^{n_Y}_Y $ factors through the base change map $\eta_B$ (\cite[Proposition 1.3.1]{KMSW}, \cite[page 27]{C.P.Mok}). Denote this global $L$-parameter of $\U(\mathbb{A})$ by $\Dot{\psi}$. Again by \cite[page 69]{KMSW} we know that $ \displaystyle S_{\Dot{\psi}} \simeq \prod_{ i \in \{ X, Y \} } \Z / 2\Z  $. As in the first construction, the $L$-parameter $\Dot{\psi}$ is generic (Lemma \ref{generic}) and is a global lift of $\psi$. Moreover the localization map $\lambda$ is defined as follows
\begin{align*}
 S_{\Dot{\psi}} \quad \quad & \longrightarrow \quad \quad \quad S_{\psi} & \\
(x_1, x_2) \quad & \longmapsto  (\underbrace{x_1, \cdots x_1}_{i \in X}, \underbrace{x_2, \cdots, x_2}_{i \in Y}). &   
\end{align*}

Now, taking the quotient by $ \big\{ \pm \Id \big\} $ we see that $ \overline{\mathcal{S}}_{\Dot{\psi}} = S_{\Dot{\psi}} /  \big\{ \pm \Id \big\} \simeq \Z / 2\Z $ and the map $\overline{\lambda}$ is given by : 
\begin{align*}
 \overline{\mathcal{S}}_{\Dot{\psi}} \quad \quad & \longrightarrow \quad  \overline{\mathcal{S}}_{\psi} & \\
(-1) \quad & \longmapsto \quad \overline{s} &   
\end{align*}

Now take an automorphic representation $\Pi$ of $\U(\A)$ in the packet $\Pi_{\Dot{\psi}}(\U, \varrho_{\U})$. By the same argument as in the first construction, we can extend it to an automorphic representation $\widetilde{\Pi}$ of $\GU(\A)$ such that $\widetilde{\Pi}_v $ belongs to the packet $\Pi_{\widetilde{\psi}}(\GU(\Q_v), \varrho_{\GU})$ up to an un-ramified twist. Thus the global parameter $\widetilde{\Dot{\psi}}$ of $\widetilde{\Pi}$ is a globalisation of $\widetilde{\psi}$. We have then $ S^+_{\Dot{\psi}} \simeq \Z \slash 2 \Z $ and $ S_{\widetilde{\Dot{\psi}}} = S^+_{\Dot{\psi}} \times \C^{\times} $.

Furthermore, if the element $s$ belongs to $ S^+_{\psi} $ then $(x_1, x_2)$ belongs to $S^+_{\Dot{\psi}}$ since the map $\lambda$ is injective and restricts to a map from $S^+_{\Dot{\psi}}$ to $ S^+_{\psi} $. Therefore, we have the following description of the map $\widetilde{\lambda}$ 

\begin{align*}
 S_{ \widetilde{\Dot{\psi}}} \simeq \Z \slash 2 \Z \times \C^{\times} \quad \quad & \longrightarrow \quad \quad \quad S_{\widetilde{\psi}} \simeq S^+_{\psi} \times \C^{\times} & \\
 1 \times t \quad \quad \quad  & \longmapsto \quad \quad \quad \quad \quad \quad 1 \times t \\
 -1 \times t \quad \quad \quad  & \longmapsto  \quad \quad (\underbrace{x_1, \cdots x_1}_{i \in X}, \underbrace{x_2, \cdots, x_2}_{i \in Y}) \times t . &   
\end{align*}

\subsection{Galois representations associated to global cohomological generic parameters} {\label{globalgaloisrep}}

We have fixed a quadratic imaginary extension $E$ of $\Q$. In this subsection, we associate representations of $\Gamma_{E}$ to certain global parameters. 

Let $( \Dot{\psi}, \chi )$ be a global parameter of a global unitary similitude group $\GU$. In particular $\Dot{\psi}$ is a global parameter for the corresponding unitary group $\U$. We suppose further that the localization at infinity  $( \Dot{\psi}_{\infty}, \chi_{\infty} )$ is regular and sufficiently regular so that $\Dot{\psi}$ will be generic.

We first associate a $\Gamma_E$ representation to $\Dot{\psi}$. Associated to $\Dot{\psi}$, we have the quadratic base change, $\Dot{\phi}^n$, which is an automorphic representation of $\GL_n(\A_E)$. Since the global parameter is generic, the representation $\Dot{\psi}^n$ is of the form $ \Pi_1 \boxplus \cdots \boxplus \Pi_k $ where $\Pi_i$ are self dual cuspidal generic and cohomological automorphic. Now, fix a place $\ell$ of $\Q$ and an isomorphism $\iota_{\ell}: \ov{\Q_{\ell}} \to \C$. Then by \cite[Theorem 1.2]{Shi2}, for each representation $\Pi_i$ there is a unique $\ell$-adic $\Gamma_E$-representation $\rho^i$ such that for each place $ \mathcal{P} $ of $E$ not dividing $\ell$, we have the following isomorphism of Weil-Deligne representations
\[
\WD(\rho^i|_{\Gamma_{E_{\mathcal{P}}}})^{F-ss} \cong \iota^{-1}_{\ell}\mathcal{L}((\Pi_i)_{\mathcal{P}}),
\]
where $\mathcal{L}((\Pi_i)_{\mathcal{P}})$ is the local parameter associated to $(\Pi_i)_{\mathcal{P}}$ under the local Langlands correspondence.

Similarly, if we denote $\rho = \rho_1 \oplus \cdots \oplus \rho_k $, then for each place $ \mathcal{P} $ dividing $q$ and not dividing $\ell$, we have that
\begin{equation*}
    \WD(\rho|_{\Gamma_{E_{\mathcal{P}}}})^{F-ss} = \iota^{-1}_{\ell}\mathcal{L}( (\Pi_1 \boxplus \cdots \boxplus \Pi_k)_{\mathcal{P}}).
\end{equation*}

Denote by $ \Dot{\psi}_{\mathcal{P}} $ the localization of $ \Dot{\psi} $ at $ \mathcal{P} $. By the definition of localization map of global parameters (\cite[p. 18, 19]{C.P.Mok}), we see that the local $L$-parameter (not necessarily bounded) corresponding to $\WD(\rho|_{ \Gamma_{E_{\mc{P}}}})^{F-ss}$ is $ \Dot{\psi}_{\mathcal{P}} $ if $q$ is split in $E$. If $q$ is inert in $E$ then $ q = \mathcal{P} $ and $E_{\mc{P}}$ is a quadratic extension of $ \Q_q $. In this case $\WD(\rho|_{\Gamma_{E_{\mc{P}}}})^{F-ss}$ corresponds to the image of $ \Dot{\psi}_{\mathcal{P}} $ via the base change map $\eta_B$ and equals $ \Dot{\psi}_{\mathcal{P}|_{ \mc{L}_{E_{\mathcal{P}}}}} $.

The central character $\chi$ gives rise to a character of $\GL_1(\A_E)$ and hence an $\ell$-adic character $\chi'$. The pair $(\rho, \chi')$ then gives us a morphism
\[
\widetilde{\rho} : \Gamma_E \longrightarrow \GL_n(\overline{\Q}_{\ell}) \times \overline{\Q}^{\times}_{\ell}.
\]

From the local-global compatibility properties of $\rho$, we conclude that for every place $\mathcal{P}$ dividing a prime $ q \neq \ell$, the restriction $\widetilde{\rho}_{| W_{E_{\mathcal{P}}}}$ equals $ (\psi_q, \chi_q)_{| W_{E_{\mathcal{P}}}} $ where $(\psi_q, \chi_q)$ is the localization of the global parameter $(\Dot{\psi}, \chi)$ at the prime $q$.


\section{Rapoport--Zink spaces and an averaging formula}
\subsection{Rapoport--Zink spaces}
We continue with our fixed prime number $p$ as  before. Let $ \breve{\Q}_p := \widehat{\Q_p^{\text{nr}}} = \text{Frac}W(\overline{\F}_p) $ the completion of the maximal unramified extension of $\Q_p$ and $\sigma$ the geometric Frobenius automorphism of $\breve{\Q}_p / \Q_p$. 

We will be interested in the subset $\mb{B}(\Q_p. \G, \mu) $ of $\mb{B}(\Q_p, \G)$  associated with a minuscule cocharacter $ \mu: \Gm_{ / \overline{\Q}_p} \longrightarrow \G_{\overline{\Q}_p} $as defined in \cite[\S 6.2]{kot1}. The Bruhat ordering on the image of the Newton map induces a partial order on  $\mb{B}(\Q_p, \G, \mu)$.

\begin{definition} \label{itm: defPEL} A Rapoport--Zink data of simple unramified unitary PEL type $ (E_p, *, V, \langle \cdot | \cdot \rangle, \GU, \mu, b)$ consists of the following
	\begin{enumerate}
		\item[$\bullet$] an unramifed extension $E_p$ of degree $2$ of $\Q_p$ with a non trivial involution $*$, 
		\item[$\bullet$] a $E_p$-vector space $V$ of dimension $n$,
		\item[$\bullet$] a symplectic Hermitian form $ \langle \cdot | \cdot \rangle: V \times V \longrightarrow \Q_p $ for which there is a self-dual lattice $ \Lambda $,  
		\item[$\bullet$] a conjugacy class of minuscule cocharacters $ \mu: \Gm_{ \overline{\Q}_p} \longrightarrow \GU _ {\overline{\Q}_p} $ where $ \GU $ is the similitude unitary group defined over $ \Q $ by 
		\[
		\GU(R) = \big\{ g \in \GL(V \otimes R) | \langle gv, gw \rangle = c(g) \langle v,w \rangle, \ v,w \in V \otimes R \big\}
		\]
		for all $ \Q $-algebra $ R $ and $ c (g) \in R^{\times} $.
		\item[$\bullet$] a $\sigma$-conjugacy class  $ b \in \mb{B}(\Q_p, \GU, -\mu) $. We also suppose that $ c \circ \mu (z) = z $ where $ c $ is the similitude factor of $\GU$.
	\end{enumerate}
\end{definition}

Denote $ \mathrm{I}_{E_p} := \Hom_{\Qp} (E_p, \ov{\Q}_p) $ then the cocharacter $\mu$ is determined by the integral couples $ (p_{\tau}, q_{\tau})_{\tau \in \mathrm{I}_{E_p}} $ such that $ p_{\tau} + q_{\tau} = n $ and $ (p_{\tau}, q_{\tau}) = (q_{\tau^*}, p_{\tau^*}) $.

To such a data, we associate the isocrystal $ N = \Big (V \otimes_{\Q_p} \breve{\Q}_p, b \circ (\Id \otimes \sigma) \Big) $ with an action $ \iota: \mathcal{O}_{E_p} \longrightarrow \End (N) $ and an alternating non degenerate form $ \langle \cdot | \cdot \rangle: N \times N \longrightarrow \breve{\Q}_p (n) $ where $ n = val_p (c (b)) $. By Dieudonne's theory, the isocrystal $ N $ corresponds to a $p$-divisible group $ (\mathbb{X}, \iota, \lambda) $ defined over $ \overline {\F}_p $ provided with an action of $ \mathcal{O}_{E_p} $ and a polarization $ \lambda $.

\begin{theorem} \cite[Theorem 3.25]{RZ1}
	Let $\mathcal{M}$ be the functor associating to each $ \mathcal{O}_{\breve{\Q}_p}$ scheme $ S $ on which $ p $ is locally nilpotent the set of pairs $ ( X, \rho) $ where:
	\begin{enumerate}
		\item [-] $X$ is a $p$-divisible group over $S$ with a $p$-principle polarization $ \lambda_X $ and an action $ \iota_X $ such as the Rosati involution inducing by $ \lambda_X $ induces $ * $ on $ \mathcal{O}_{E_p} $. 
		\item [-] A $\mathcal{O}_{E_p} $-linear quasi-isogeny $ \rho: X \times_S \overline{S} \longrightarrow \mathbb{X} \times_{Spec (\overline {\F}_p)} \overline{S} $ such that $ \rho^{V} \circ \lambda_X \circ \rho $ is a $ \Q_p $-multiple of $ \lambda_X $ in $ \mathop {\mathrm{Hom}} \nolimits_{\mathcal{O}_{E_p}} (X, X^{V}) \otimes_{\Z} \Q $. (here, $ \overline{S} $ is the modulo $ p $ reduction of $ S $).	
	\end{enumerate}
	
	We also require that $ (X, \iota_X) $ satisfies the Kottwitz determinant condition. More precisely, under the action of $ E_p $, we have a decomposition: $ \Lie(X) = \bigoplus_{\tau} \Lie(X)_{\tau} $ then $ \Lie(X)_{\tau} $ is locally free of rank $ p_{\tau} $. This functor is then represented by a formal scheme 
	defined over $ \Spf(\mathcal{O}_{\breve {\Q}_p}) $.
\end{theorem}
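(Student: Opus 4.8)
The assertion is precisely \cite[Theorem 3.25]{RZ1} applied to the datum $(E_p,*,V,\langle\cdot|\cdot\rangle,\GU,\mu,b)$ of Definition \ref{itm: defPEL}, so the plan is to recall why this datum falls within the framework of \emph{loc.\ cit.} and to sketch the structure of the representability argument. The essential input, already recorded above, is that the isocrystal $N$ attached to $b\in\mb{B}(\Q_p,\GU,-\mu)$, equipped with the $\mathcal{O}_{E_p}$-action $\iota$ and the alternating pairing, is \emph{effective}: since $\mu$ is minuscule and $c\circ\mu$ is the identity cocharacter, the slopes of $N$ lie in $[0,1]$ and the pairing is homogeneous of the correct weight, so Dieudonn\'e theory produces a $p$-divisible group $(\mathbb{X},\iota,\lambda)$ over $\overline{\F}_p$ with the required polarization and $\mathcal{O}_{E_p}$-action. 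Once $\mathbb{X}$ is fixed, $\mathcal{M}$ is the moduli functor of deformations of $\mathbb{X}$, up to quasi-isogeny, carrying its PEL structure, and the determinant condition singles out the signature $(p_\tau,q_\tau)_{\tau}$ corresponding to $\mu$.

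First I would reduce to a bounded moduli problem. For a quasi-compact $\mathcal{O}_{\breve{\Q}_p}$-scheme $S$ with $p$ locally nilpotent and $(X,\rho)\in\mathcal{M}(S)$, there is an integer $N$ for which $p^N\rho$ and $p^N\rho^{-1}$ are both isogenies; letting $\mathcal{M}_N\subset\mathcal{M}$ be the subfunctor where this holds, the $\mathcal{M}_N$ are open and their union is $\mathcal{M}$, so it suffices to represent each $\mathcal{M}_N$ by a scheme of finite type over each $\mathcal{O}_{\breve{\Q}_p}/p^m$ and then glue $p$-adically. On $\mathcal{M}_N$, the isogeny $p^N\rho$ realizes $X\times_S\overline{S}$ as a finite locally free $\mathcal{O}_{E_p}$-stable subgroup scheme of $\mathbb{X}_{\overline{S}}[p^{2N}]$ lying between $\mathbb{X}_{\overline{S}}$ and its $p^{2N}$-th multiple; such subgroups are parametrized by a closed subscheme of a Grassmannian, hence by a projective scheme over $\overline{\F}_p$. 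The passage from $X\times_S\overline{S}$ to the $p$-divisible group $X$ over $S$ is then governed by Grothendieck--Messing theory: using the canonical divided-power structure on $(p)$, a lift of $X\times_S\overline{S}$ to $S$ amounts to a lift of the Hodge filtration inside the evaluation of the Dieudonn\'e crystal on $S$, which is again parametrized by a projective scheme over the base. Iterating over $m$ and patching gives representability of the underlying (EL-type) functor by a formal scheme over $\Spf(\mathcal{O}_{\breve{\Q}_p})$, locally formally of finite type.

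Next I would impose the PEL conditions as locally closed conditions on this space. The $\mathcal{O}_{E_p}$-equivariance of all data is a closed condition; the Kottwitz determinant condition, that each $\Lie(X)_\tau$ be locally free of rank $p_\tau$, is an open and closed condition because the ranks are locally constant, and it cuts out a union of connected components; and the existence of the polarization $\lambda_X$, together with the demands that it be $p$-principal, that the associated Rosati involution induce $*$ on $\mathcal{O}_{E_p}$, and that $\rho^{V}\circ\lambda_X\circ\rho$ be a $\Q_p^{\times}$-multiple of $\lambda$, all define closed subfunctors. Thus $\mathcal{M}$ is carved out of the EL-type moduli space by a finite sequence of closed and open-closed conditions, and hence is itself representable by a formal scheme over $\Spf(\mathcal{O}_{\breve{\Q}_p})$. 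Throughout, one uses Drinfeld's rigidity of quasi-isogenies to see that $\rho$, defined only over $\overline{S}$, extends uniquely over nilpotent thickenings, which legitimizes both the stratification by the $\mathcal{M}_N$ and the $p$-adic patching. I expect the main obstacle to be the representability of the bounded strata $\mathcal{M}_N$: one must set up the crystalline Dieudonn\'e theory with enough precision to realize the abstract deformation-up-to-quasi-isogeny problem as an explicit closed subscheme of a Grassmannian and to control the transition from the special fibre to the $p$-adic formal scheme. As all of this is carried out in detail in \cite{RZ1}, in the body of the paper this reduces to verifying that our unramified unitary datum satisfies the hypotheses of \cite[\S3]{RZ1} and citing Theorem~3.25.
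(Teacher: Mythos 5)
The paper offers no proof of this statement beyond the citation to \cite[Theorem 3.25]{RZ1}, and your proposal correctly identifies that the content of the verification is simply that the unramified unitary PEL datum of Definition \ref{itm: defPEL} satisfies the hypotheses of \emph{loc.\ cit.} Your sketch of the representability argument (effectivity of the isocrystal, bounding by the subfunctors $\mathcal{M}_N$, Grothendieck--Messing, and imposing the PEL and determinant conditions as closed or open-and-closed conditions) is a faithful summary of the argument in \cite{RZ1}, so this is essentially the same approach as the paper.
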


In order to introduce the usual level structures, we work with the rigid generic fiber $\mathcal{M}^{\text{an}}$ of $\mathcal{M}$ over $\breve{\Q}_p$. We set $C_0 = \{g \in \GU(\Q_p) \ \vert \ g \Lambda = \Lambda \} $, a maximal compact subgroup of $ \GU (\Q_p) $.

\begin{definition}
	Let $\mathcal{T} / \mathcal{M}^{\text{an}} $ be the local system defined by the $p$-adic Tate module of the universal $p$-divisible group  on $ \mathcal{M} $. For $ K \subset C_0 $ we define $ \mathcal{M}_K $ as the etale covering of $ \mathcal{M}^{\text{an}} $ which classifies the $ \mathcal{O}_{E_p} $- trivializations  modulo $ K $ of $ \mathcal{T} $ by $ \Lambda $. We also require that the trivialization preserves the alternating form up to $ \Q_p^{\times} $. 
\end{definition}

We have, in particular, that $\mathcal{M}^{an} = \mathcal{M}_{C_0} $. We then get a tower $(\mathcal{M}_{K_p})_{K_p}$ of analytic spaces on $ \breve{\Q}_p$ provided with finite étale transition maps $\Phi_{K_p^{'}, K_p}: \ \mathcal{M}_{K^{'}_p} \ \longrightarrow \ \mathcal{M}_{K_p}$ (for $K_p^{'} \subset K_p $)
which forget the level structure. The map $ \Phi_{K_p^{'}, K_p} $ is Galois of Galois group $K_p / K_p^{'} $ if $ K_p^{'} $ is normal in $ K_p $. \\

Let $ \J_b(\Q_p) $ be the group of $ \mathcal{O}_{E_p} $-linear quasi-isogenies $g$ of $ \X $ such that $ \lambda \circ g $ is a $ \Q^ {\times} $- multiple of $ g^{\vee} \circ \lambda $. The group $ \J_b (\Q_p) $ acts on the left on $ \mathcal{M} $  by the formula
\[
\forall g \in \J_b(\Q_p) \ \forall (X, \rho) \in \ \mathcal{M}, \quad (X, \rho) \cdot g = (X, \rho \circ g^{- 1}).
\]
\textbf{}

We say that a simple unramified unitary Rapoport--Zink datum $ (E_p, *, V, \langle \cdot | \cdot \rangle, \GU, \mu, b) $ is basic if the associated group $\J_b (\Q_p) $ is an inner form of $\GU$. The above datum is basic if and only if $b$ is the unique minimal element in $ \mb{B} (\Q_p, \GU, \mu) $. In this case, we also say that $b$ is basic. \\

Let $ \ell \neq p$ be a prime number. Let $K_p \subset C_0 $ be a level. As in \cite[remark 2.6.3]{Far1} we denote:
	\[
	H_c^{\bullet}(\mathcal{M}_{K_p}, \overline{\Q}_{\ell}) := \mathop{\mathrm{lim}}_{\overrightarrow{V}} \mathop{\mathrm{lim}}_{\overleftarrow{n}} H_c^{\bullet}(V \otimes_{\breve{\Q}_p} \C_p, \Z / \ell^n \Z ) \otimes \overline{\Q}_{\ell}
	\]
	where $V$ runs through the relatively compact open subsets of $\mathcal{M}_{K_p}$. \\

The group $ \J_b(\Q_p) $ acts on $ \mathcal{M}_{C_0} $ and this action extends to $ \mathcal{M}_{K_p} $ so that $ \J_b(\Q_p) $ acts on $ H_c^{\bullet}(\mathcal{M}_ {K_p}, \overline{\Q}_{\ell}) $. Since $n$ is odd, the reflex field of the conjugacy class of $\mu$ is $E_p$. We can also define an action of the Weil group $ W_{E_p} $ on these cohomology groups thanks to the Rapoport--Zink descent data defined as below. 

Let $ \sigma_{E_p}: \breve{\Q}_p \xrightarrow {\sim} \breve {\Q}_p $ the relative Frobenius automorphism with respect to $E_p$. We denote by $ \overline {\sigma}_{E_p} $ the Frobenius morphism induced on $ \overline {\F}_p $. For $ \mathbb{X}$ a $p$-divisible group defined over $ \overline{\F}_p $, we note $ F_{E_p}: \mathbb{X} \longrightarrow \overline {\sigma}_{E_p}^ * \mathbb{X} $ the relative Frobenius morphism. We construct a functor isomorphism $ \alpha: \mathcal{M} \longrightarrow \sigma_{E_p}^ * \mathcal{M} $ as follows.


For $S$ a $ \mathcal{O}_{\breve{\Q}_p} $ scheme on which $p$ is nilpotent as well as a point $ (X, \rho) \in \mathcal{M}( S) $, the point $ (X^{\alpha}, \rho^{\alpha}) $ associated in $ \sigma_{E_p}^* \mathcal{M}(S) $ is defined as follows: $ X^{\alpha}: = X $ with the action of $ \iota_{X^{\alpha}}: = \iota_X $, with the polarization $ \lambda_{X^{\alpha}}: = \lambda_X $ and $ \rho^{\alpha}: = \rho \circ F_{E_p}^{- 1}$. The isomorphism of functors $ \alpha: \mathcal{M} \longrightarrow \sigma_{E_p}^* \mathcal{M} $ is the Rapoport--Zink descent data associated with $ \mathcal{M} $. Since the descent data commute with the action of $ \J_b( \Q_p) $, the groups $ H_c^{\bullet}(\mathcal{M}_{K_p}, \overline{\Q}_{\ell}) $ has an action of $ \J_b(\Q_p) \times W_{E_p} $. In addition, when $ K_p $ varies, the system $ (H_c^{\bullet} (\mathcal{M}_{K_p}, \overline{\Q}_{\ell}))_{K_p} $  has an action of $ \GU(\Q_p ) $. Thus, this system has an action of $\GU(\Q_p ) \times \J_b (\Q_p) \times W_{E_p}$. Let $ \rho $ be an admissible $\ell$-adic representation of $ \J_b(\Q_p)$, we define 
\[
H^{i,j} (\GU, b, \mu)[\rho] := \mathop{\mathrm{lim}}_{\overrightarrow{K_p}} \Ext^j_{J_b(\Q_p)}(H_c^{i}(\mathcal{M}_{K_p}, \overline{\Q}_{\ell}), \rho).  
\]

By \cite[Theorem 8]{Man1}, the $H^{i,j} (\GU, b, \mu)[\rho]$ are admissible and are zero for almost all $ i, j \geq 0 $. Finally, we define the homomorphism of Grothendieck groups $\Mant_{\GU, b, \mu}: \Groth(\J_b(\Q_p)) \to \Groth(G(\Q_p) \times W_{E_{\mu}}$ by
\[
\Mant_{\GU, b, \mu} (\rho) := \sum_{i,j} (-1)^{i+j} H^{i,j} (\GU, b, \mu)[\rho](- \dim \mc{M}^{\an}).
\]
	
 
\subsection{An averaging formula for the cohomology of Rapoport--Zink spaces}

In this section we deduce an averaging formula for the cohomology of Rapoport--Zink spaces using the results of \cite{BM2}.

We begin with some endoscopic preliminaries. To state the formula, we need the following notion of endoscopic data for Levi subgroups.
\begin{definition}{(cf. \cite[Definition 2.18]{BM2})}
    Let $\M \subset \G$ be a Levi subgroup. We say that $(\mH,{\mH}_{\M}, s, \eta)$ is an \emph{embedded endoscopic datum} of $\G$ relative to $\M$ if $(\mH,s,\eta)$ is a refined endoscopic datum of $\G$ and the restriction $({\mH}_{\M}, s, \eta|_{\widehat{{\mH}_{\M}}})$ gives a refined endoscopic datum of $\M$. 
    
    We say that two embedded endoscopic data $(\mH, {\mH}_{\M}, s, \eta)$ and $(\mH', {\mH'}_{\M}, s', \eta')$ are isomorphic if there exists an isomorphism $\alpha: \mH \to \mH'$ of refined endoscopic data $(\mH,s,\eta)$ and $(\mH', s', \eta')$ whose restriction $\alpha_{\M}$ to ${\mH}_{\M}$ gives an isomorphism of $({\mH}_{\M}, s, \eta)$ and $(\mH'_{\M}, s', \eta')$. We denote the set of isomorphism classes of embedded endoscopic data of $\G$ relative to $\M$ by $\mc{E}^e(\M,\G)$.

\end{definition}

We now fix a refined elliptic endoscopic datum $(\mH,s,\eta)$ of $\GU$. Note that for each standard Levi subgroup $\M \subset \G$, there is a natural forgetful map 
\begin{equation*}
    Y^e: \mc{E}^e(\M,\GU) \to \mc{E}^r(\GU).
\end{equation*}
We define $\mc{E}^i(\M,\GU;\mH)$ to be the set of embedded endoscopic data $(\mH', \mH'_{\M}, s', \eta')$ such that $\mH' = \mH$ and whose class lies in the fiber $(Y^e)^{-1}((\mH,s,\eta))$ modulo the relation that two data  $(\mH, \mH_{\M}, s, \eta)$ and $(\mH, \mH'_{\M}, s', \eta')$ are equivalent if there exists an inner automorphism $\alpha$ of $\mH$ inducing an isomorphism of the embedded endoscopic data.

Fix a maximal torus $\widehat{\T_{\mH}} \subset \widehat{\mH}$ and define $\widehat{\T} := \eta(\widehat{\T_{\mH}}) \subset \widehat{\GU}$. By the comment before \cite[Proposition 2.27]{BM2}, we have that the set $\mc{E}^i(\M,\GU;\mH)$ is parametrized by the set of double cosets $W(\widehat{\T}, \widehat{\M}) \backslash W(\M,\mH) / W(\widehat{{\T}_{\mH}}, \widehat{\mH})$ where $W(\widehat{\T}, \widehat{\M})$ and $W(\widehat{\T_{\mH}}, \widehat{\mH})$ are the Weyl groups of $\widehat{\M}$ and $\widehat{\mH}$ respectively and $W(\M,\mH)$ is defined in \cite[Definition 2.23]{BM2}.

Finally, for an inner form $\J$ of $\M$, we define the subset $\mc{E}^i_{\eff}(\J,\GU; \mH) \subset \mc{E}^i(\M, \GU; \mH)$ to consist of those equivalence class of endoscopic data $(\mH,\mH_{\M},s, \eta)$ such that there exists a maximal torus $\T_{\mH} \subset \mH$ that transfers to $\J$. 

We now fix $b \in \mb{B}(\Q_p, \GU,\mu)$ and let $\tilde{b} \in \GU(\breve{\Q}_p)$ be a decent lift. We get a standard Levi subgroup $\M_b$ of $\GU$ and an extended pure inner twist $\J_b$ of $\M_b$. Let $\nu_b: \D \to A_{\M_b}$ (where $A_{\M_b}$ is the maximal split torus in the center of $\M_b$) denote the image of the Newton map applied to $b$. Fix $(\mH,s,\LL\eta)$ an elliptic endoscopic group of $\GU$ and a set, $X^{\mf{e}}_{\J_b}$, of representatives of $\mc{E}^i_{\eff}(\J_b, \GU; \mH)$. Furthermore, for each $(\mH, \mH_{\M_b} , s, \eta) \in X^{\mf{e}}_{\J_b}$ we may choose an extension $\LL\eta: \LL \mH \to \LL \GU$ of $\eta$. We also get a natural map $A_{\M_b} \hookrightarrow A_{\mH_{\M_b}}$. Then we define $\nu$ to be the composition of $\nu_b$ with this map.

We then make the following definition.
\begin{definition}
  We define
    \begin{equation}
        \Red^{\mc{H}^{\mf{e}}}_b : \Gr^{st}(\mH(\Q_p)) \to \Gr(\J_b(\Q_p))
    \end{equation}
    by
    \begin{equation}
        \pi  \mapsto \sum\limits_{X^{\mf{e}}_{\J_b}}    \Trans^{\mH_{\M_b}}_{\J_b} (\Jac^{\mH}_{P(\nu)^{op}}(\pi)) \otimes \ov{\delta}^{1/2}_{P(\nu_b)},
    \end{equation}
    where $\Trans^{\mH_{\M_b}}_{\J_b}$ denotes the transfer of distributions from $\mH_{\M_b}(\Q_p)$ to $\J_b(\Q_p)$ and $\Groth(\J_b(\Q_p))$ denotes the Grothendieck group of admissible representations of $\J_b(\Q_p)$ and $\Groth^{st}(\mH(\Q_p))$ is the subgroup of $\Groth(\mH(\Q_p))$ consisting of those elements with stable distribution character.
\end{definition}

Our aim in this subsection is to establish the theorem below using the results of \cite{BM2}.

\begin{theorem}{\label{AvgFormula}}
Let $(\mH, s, \LL\eta)$ be a refined elliptic endoscopic datum of $\GU$. Let $\psi: W_{\Q_p} \to \LL \GU$ be a supercuspidal Langlands parameter such that there exists a Langlands parameter $\psi^{\mH}$ of $\mH$ with $\psi= \LL\eta \circ \phi^{\mH}$. Then we have the following equality in $\mathrm{Groth}(\GU(\Q_p) \times W_{E_{\mu}})$:

\begin{equation*}
\sum\limits_{b \in \mb{B}(\Q_p, \GU, - \mu)} \Mant_{\GU,b, \mu}(\Red^{\mc{H}^{\mf{e}}_b}(S\Theta_{\psi^{\mH}}))=
\end{equation*}
\begin{equation*}
 \sum\limits_{\rho} \sum\limits_{\pi_p \in
 \Pi_{\psi}(\GU, \varrho)} \langle \pi_p, \eta(s) \rangle \frac{\tr( \eta(s) \mid V_\rho)}{\dim \rho} \pi_p \boxtimes [\rho \otimes | \cdot |^{-\langle \rho_{\GU}, \mu \rangle}], 
\end{equation*}
where the first sum on the right-hand side is over irreducible factors of the representation $r_{- \mu} \circ \psi$ and $V_{\rho}$ is the $\rho$-isotypic part of $r_{- \mu} \circ \psi$.
\end{theorem}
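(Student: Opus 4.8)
The plan is to deduce Theorem \ref{AvgFormula} by specializing the general endoscopic averaging formula of \cite{BM2} to our situation. The strategy is to verify, for supercuspidal parameters $\psi$ of $\GU$, all the hypotheses under which \cite{BM2} derives the averaging formula. These hypotheses are: (i) the local Langlands correspondence for $\GU$ and its elliptic endoscopic groups, satisfying the endoscopic character identities; (ii) the global classification (global multiplicity formula) for the relevant global unitary similitude groups; (iii) the stabilization/destabilization input $ST^{\mH}_{\el}(f^{\mH}) = ST^{\mH}_{\disc}(f^{\mH})$ for test functions cuspidal at a finite place and stable cuspidal at infinity; and (iv) the existence of suitable global liftings $\widetilde{\dot{\psi}}$ of $\psi$ with prescribed behavior of the centralizer groups and localization maps $\lambda, \widetilde{\lambda}, \overline{\lambda}$. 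Items (i)--(iv) are precisely what was established in \S\ref{sectionECIGU}, \S\ref{globalGUparams}, \S\ref{STell=STdisc}, and \S\ref{global liftings} respectively, so the bulk of the work is already done; the proof of the theorem amounts to assembling these pieces in the framework of \cite{BM2}.

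First I would recall the global setup: choose a global unitary similitude group $\mb{GU}$ over $\Q$ with $\mb{GU}_{\Q_p} = \GU$ that globalizes the Rapoport--Zink datum, together with a Shimura variety $\Sh$ attached to $\mb{GU}$ whose associated local Shimura datum at $p$ is $(\GU, b, \mu)$. One then combines Mantovan's formula (\ref{itm : Mantovan's equality}), which expresses $H^*_c(\Sh, \mcL_\xi)$ as $\sum_b \Mant_{\GU, b, \mu}(H^*_c(\Ig_b, \mcL_\xi))$, with the stabilized trace formulas for Shimura and Igusa varieties of \cite{kot7}, \cite{Shi4}, \cite{Shi3}. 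Using the destabilization procedure — which is where the hypothesis $ST^{\mH}_{\el} = ST^{\mH}_{\disc}$ from \S\ref{STell=STdisc} enters — both sides of (\ref{itm : Mantovan's equality}) get rewritten in terms of the global multiplicity formula (Proposition \ref{global multiplicity formula}). This yields, after specializing the auxiliary global data appropriately, the averaging formula summed over all global parameters $\dot{\mb\psi}$ lifting a given local parameter.

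Next, to isolate the contribution of a single supercuspidal $\psi$, I would invoke the two global lifting constructions of \S\ref{global liftings}. The first construction (\S\ref{construction1}) produces a lift $\widetilde{\dot\psi}$ with $\overline{\mathcal S}_{\dot\psi}$ trivial, which pins down the "diagonal" part of the contribution; the second construction (\S\ref{construction2}), applied with $s$ ranging over elements of $S^+_\psi$, produces lifts whose localization maps $\overline\lambda$ hit each prescribed $\overline s \in \overline{\mathcal S}_\psi$, allowing one to separate the individual terms $\langle \pi_p, \eta(s)\rangle$ on the right-hand side by a linear-independence (partition-of-unity on $\overline{\mathcal S}_\psi$) argument exactly as in \cite{Shi1} and \cite{BM2}. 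The Galois/Weil-group action is controlled via \S\ref{globalgaloisrep}: the global Galois representation $\widetilde\rho$ attached to $\widetilde{\dot\psi}$ localizes at $p$ to $(\psi, \chi)$, and the term $r_{-\mu}\circ\psi$ with its $\rho$-isotypic decomposition $V_\rho$ arises from the action of $W_{E_\mu}$ on the relevant piece of Shimura variety cohomology, producing the factor $\tr(\eta(s)\mid V_\rho)/\dim\rho$ and the twist by $|\cdot|^{-\langle\rho_{\GU},\mu\rangle}$.

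The main obstacle I expect is bookkeeping: ensuring that all the normalizations — the geometric vs.\ arithmetic normalization of local Langlands (handled via \S\ref{section1} and \S\ref{sectionECIGU}, in particular Lemma \ref{arithmetic - geometric normalization}), the transfer factors $\Delta_D$ (the transfer factor identities of \S\ref{sectionECIGU}), the Kottwitz signs $e(\cdot)$, the half-sum twists $\overline\delta^{1/2}_{P(\nu_b)}$ in the definition of $\Red^{\mc H^{\mf e}}_b$, and the shift $(-\dim\mcL^{\an})$ in $\Mant$ — match the conventions of \cite{BM2} so that the abstract averaging formula applies verbatim. A secondary point requiring care is the passage from the unitary group $\U$ (where \cite{KMSW} supplies the input) to $\GU$: one must check that the reduction $\Red^{\mc H^{\mf e}}_b$ and the packets $\Pi_\psi(\GU, \varrho)$ defined in \S\ref{globalGUparams} are compatible with the $\U \times \Res_{E/\Q_p}\Gm \twoheadrightarrow \GU$ formalism of \S\ref{ECIforGU}, so that the endoscopic character identities for $\GU$ (established there) feed correctly into the destabilization. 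Once these compatibilities are in place, the theorem follows formally.
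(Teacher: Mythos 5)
Your proposal follows the paper's proof in essentially the same way: Theorem \ref{AvgFormula} is deduced from the general averaging formula of \cite{BM2} by verifying its hypotheses, and the items you list — the local correspondence with endoscopic character identities, the global multiplicity formula, the identity $ST^{\mH}_{\el}=ST^{\mH}_{\disc}$ for functions cuspidal at a finite place, and the two global lifting constructions — are exactly the inputs assembled in the paper. The only substantive points you gloss over are (a) that $\mb{GU}$ is not anisotropic modulo center, so the stabilization on the Shimura-variety side cannot be quoted directly from \cite{kot7} as you suggest but is obtained from Morel's stabilization of intersection cohomology combined with the comparison $\tr(W^C_{\lambda}\mid f)=\tr(W^I_{\lambda}\mid f)$ for test functions cuspidal at a finite place (Lemma \ref{cuspcohlem}), which is also why the separation Lemmas \ref{shimuraseplem} and \ref{shimseplem} are needed; and (b) that the global liftings of \S\ref{global liftings} only recover the local parameter up to an unramified twist, so Lemma \ref{itm : unramified twisting} on the behavior of $\Mant_{\GU,b,\mu}$ under twists by $\omega\circ c$ is required to transfer the resulting identity back to $\psi$ itself.
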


This theorem is \cite[Theorem 6.4]{BM2}. To verify this theorem we essentially just need to check a number of hypotheses from \cite{BM2}. 

First, we need a global group $\bm{\GU}$ such that $\bm{\GU}_{\Q_p} \cong \GU$ and such that there exists a Shimura variety $(
\bm{\GU}, X)$ of PEL type such that the global conjugacy class of cocharacters $\{\bm{\mu}\}$ of $\widehat{\bm{\GU}}$ associated to $X$ localizes to the conjugacy class of $\mu$. Since $\mu$ is assumed minuscule, its weights are equal to $1$ and $0$. In particular, $\mu$ is determined by a pair $(p,q)$ such that $p+q=n$ and $p$ denotes the number of $1$ weights and $q$ denotes the number of $0$ weights.  

We fix $n$ an odd positive integer and define $\bm{\GU}$ to be the group $\mb{GU}(p, q)$ coming from the hermitian form $I_{p,q}$ as in Section \ref{section1}. Following \cite[\S 2.1]{Mo}, we have a PEL Shimura $(\mb{GU}, X)$ for this group (in Morel's notation, this is the datum $(\bm{\GU}, \mc{X}, h)$). As we observed in Section \ref{section1}, the group $\bm{\GU}$ can be equipped with the structure of an extended pure inner twist $(\varrho, z)$. As in \cite{BM3},  this twist gives us for each refined endoscopic datum $(\mb{H}, s, \eta)$ of $\mb{GU}$ a normalized transfer factor at each place $v$.

We observe that, in accordance with \cite[\S 4.1, \S 5.1 ]{BM2}, we have $\mb{GU}_{\der}$ is simply connected and $\mb{GU}_{\Q_p}$ is unramified. The center $Z(\mb{GU})$ is isomorphic to $\Res_{E/ \Q} \Gm$ which has split rank equal to $1$. Since $E/\Q$ is an imaginary quadratic extension, the split rank of $\Z(\mb{GU})_{\R}$ also equals $1$.

We verify that $\mb{GU}$ satisfies the Hasse principle. By \cite[Lemma 4.3.1]{Kot5} it suffices to show that $\ker^1(\Q, \mb{GU}/\mb{GU}_{\der})=\ker^1(\Q, \Gm)$ vanishes but this latter group is trivial.

We now note an important difference between the exposition in \cite[\S 4]{BM2} and our current situation. This is that the group $\mb{GU}$ will not in general be anisotropic modulo center. For this reason, the stabilization of the trace formula carried out in that paper does not carry over exactly to our case. 
Instead, we use Morel's work on the cohomology of these Shimura varieties to establish the desired stabilization. However, Morel's work is on the intersection cohomology of Shimura varieties whereas we need to study compactly supported cohomology. We introduce some necessary notation.

Let $K \subset \mb{GU}(\A_f)$ be a compact open subgroup that factors as $K^pK_p$ where $K_p$ is a hyperspecial subgroup of $\mb{GU}(\Q_p)$. Following the notation of \cite{Mo}, we let $M^K(\mb{GU}, \mc{X})^*$ be the Baily-Borel-Satake compactification of the Shimura variety $M^K(\mb{GU}, \mc{X})$. Fix primes $p$ and $ \ell $ and an algebraic representation $V$ of $\mb{GU}$. Choose the highest weight of $V$ to be `sufficiently regular' in the sense of \cite[Def. 2.2.10]{KH1}.  Let $L \subset \C$ be a number field containing the field of definition of $V$ and let $\lambda$ be a place of $L$ over $\ell$. Then let $IC^KV$ denote the intersection complex on $M^K(\mb{GU}, \mc{X})^*$ with coefficients in $V$. Then we define an element $W^I_{\lambda}$ in the Grothendieck group of $\mc{H}_K \times \Gal(\ov{\Q} / E_{\bm{\mu}})$ representations by
\begin{equation*}
    W^{I}_{\lambda} := \sum\limits_{i \geq 0} (-1)^i [H^i (M^K(\mb{GU}, \mc{X})^*_{\ov{\Q}} , IC^KV_{\ov{\Q}})].
\end{equation*}

Similarly, we let $\mc{F}$ be the local system on $M^K(\mb{GU}, \mc{X})$ associated to $V$ and define the element $W^C_{\lambda}$ in the Grothendieck group of $\mc{H}_K \times \Gal(\ov{\Q} / E_{\bm{\mu}})$ representations by
\begin{equation*}
    W^{C}_{\lambda} := \sum\limits_{i \geq 0} (-1)^i [H^i_c (M^K(\mb{GU}, \mc{X})_{\ov{\Q}} , \mc{F})].
\end{equation*} 

Let $f^{\infty} \in \mc{H}_K$ and assume that $f^{\infty}$ factors as $f^{\infty}=f^{p,\infty} 1_{K_p}$. Fix a place $\mf{p}$ of $E_{\bm{\mu}}$ above $p$ and let $\Phi_{\mf{p}}$ be a lift of of the geometric Frobenius at $\mf{p}$. We will often consider functions $f \in \mb{GU}(\A)$ such that  $f=f^{\infty}f_{\infty}$ where  $f_{\infty}$ is stable cuspidal and  at some finite place $v$, we have $f=f^{v, \infty}f_v$ and $f_v$ is cuspidal.   For instance, $f_v$ could be a coefficient for a supercuspidal representation. Recall that these terms were defined in Section 4.2.
\begin{lemma}{\label{cuspcohlem}}
Suppose that $f$ is cuspidal at a finite place. Then we have $\tr( W^C_{\lambda} \mid f^{\infty}\times \phi^j_{\mf{p}}) = \tr(W^I_{\lambda} \mid f^{\infty} \times \phi^j_{\mf{p}})$.
\end{lemma}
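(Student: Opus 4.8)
The plan is to compare the compactly supported cohomology of the open Shimura variety $M^K(\mb{GU},\mc{X})$ with the intersection cohomology of its Baily--Borel compactification $M^K(\mb{GU},\mc{X})^*$, and to show that the difference between $W^C_\lambda$ and $W^I_\lambda$ is concentrated on the boundary strata, where it is built out of cohomology of \emph{proper} Levi subgroups of $\mb{GU}$. The point will then be that a function $f$ that is cuspidal at some finite place $v$ annihilates the trace of Hecke--Frobenius on all such boundary contributions. First I would recall the stratification of $M^K(\mb{GU},\mc{X})^*$ by locally closed Shimura varieties $M^{K_\M}(\M,\mc{X}_\M)$ attached to the (rational) boundary parabolic subgroups $P$ of $\mb{GU}$, with $\M$ the Hermitian part of the Levi of $P$; this is exactly the setup of Morel's book \cite{Mo}. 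The excision/Gysin long exact sequence relating $H^*_c$ of the open stratum, $H^*_c$ of the whole space, and the boundary, together with the description of the stalks and costalks of $IC^KV$ along the strata (weight truncation of the nearby cycles / $Rj_*$, as in \cite{Mo}), gives an identity in the relevant Grothendieck group of the form
\begin{equation*}
  W^C_\lambda \;=\; W^I_\lambda \;+\; \sum_{P} (\text{boundary term attached to }P),
\end{equation*}
where $P$ runs over $\mb{GU}(\Q)$-conjugacy classes of proper rational parabolics and each boundary term is, up to sign and twist, the ($\mc{H}_K\times\Gal$)-virtual module given by the cohomology of the boundary Shimura variety for $\M$ with coefficients in a truncation of $RpV$, $p$ the unipotent radical of $P$ — i.e.\ a combination of $H^*_c$ or $H^*$ of $M^{K_\M}(\M,\mc{X}_\M)$ against $\mathrm{Lie}$-cohomology of $N_P$.

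Next I would trace through the Hecke and Frobenius actions: each boundary term, after taking the trace of $f^\infty\times\phi^j_{\mf p}$, becomes a sum of terms of the shape (trace of a constant term $f_\M$ of $f^\infty$ on the cohomology of a smaller Shimura variety) times (a Frobenius/Lie-algebra factor). The decisive input is that $f=f^{v,\infty}f_v$ with $f_v$ cuspidal at the finite place $v$: by definition of cuspidality (vanishing of all constant terms $f_{v,\M}$ for proper Levi $\M$, as in \cite[(7.13.2)]{GKM}), the constant term of $f$ along any proper parabolic vanishes at $v$, hence the full adelic constant term $f_\M$ vanishes, hence every boundary term contributes $0$ to the trace. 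This is the same mechanism already used in \S\ref{STell=STdisc} (Lemma \ref{cuspstablem}) and in Morel's stabilization, so I would cite those. Consequently $\tr(W^C_\lambda\mid f^\infty\times\phi^j_{\mf p})=\tr(W^I_\lambda\mid f^\infty\times\phi^j_{\mf p})$, as claimed.

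There is a subtlety to address: a priori the excision identity expresses $W^C_\lambda$ in terms of $W^I_\lambda$ \emph{and} the intersection cohomology sheaves on the boundary strata (not just their ordinary cohomology), because $IC^KV$ restricted to a boundary stratum is itself a sum of shifted $IC$ sheaves for $\M$ by Morel's computation of the weight-truncated nearby cycles. So strictly the boundary terms are themselves Hecke-equivariant virtual intersection cohomology modules of the smaller Shimura varieties; but these are again governed — via the trace formula of \cite{Mo,Art2} or directly via the constant-term formula for the action of $\mc{H}_K$ on cohomology of a Shimura variety, e.g.\ \cite[Prop. CHL...]{Kot2} — by the constant terms $f_\M$ of $f$, and $f_\M=0$ at $v$ kills them just as well. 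The main obstacle, then, is not conceptual but bookkeeping: assembling the precise excision identity with the correct shifts, Tate twists, and signs for $IC^KV$ along all the boundary strata, and making sure the Hecke--Frobenius actions on the boundary terms are genuinely expressed through proper-Levi constant terms — i.e.\ invoking exactly the right statements from \cite{Mo} (the weight truncation description of the boundary and the resulting trace identity) rather than re-deriving them. I would handle this by quoting Morel's results verbatim and only recording the cuspidality cancellation, which is short.
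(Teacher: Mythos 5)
Your argument is correct in outline but follows a different, and considerably longer, route than the paper. The paper's proof is a two-line shortcut: it invokes the natural Hecke- and Galois-equivariant map $H^i_c(M^K(\mb{GU},\mc{X})_{\ov{\Q}},\mc{F}) \to H^i(M^K(\mb{GU},\mc{X})^*_{\ov{\Q}}, IC^KV_{\ov{\Q}})$ together with the fact (quoted from \cite{KH1}) that the cuspidal part of intersection cohomology lies in its image; since a test function that is cuspidal at a finite place has vanishing trace on the non-cuspidal (Eisenstein) constituents of both sides, the two traces agree. You instead decompose $W^C_\lambda - W^I_\lambda$ into boundary contributions over the strata of the Baily--Borel compactification, using Morel's weight-truncation description of $IC^KV$ along the boundary, and kill each boundary term by observing that its Hecke trace factors through constant terms of $f$ along proper parabolics, which vanish because $f_v$ is cuspidal. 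Both proofs rest on the same cancellation mechanism --- cuspidality at one finite place annihilates everything coming from proper Levi subgroups --- so your route is viable; what it costs is precisely the bookkeeping you flag. In particular, the Hecke--Frobenius action on a boundary stratum is not literally the constant term $f_{\M}$ acting on the cohomology of a smaller Shimura variety, but a more involved expression (sums over double cosets and over the parabolics inducing a given stratum, which Morel ultimately packages into the terms $ST^{\mH}_{\M}$ for proper $\M$); making your argument rigorous therefore amounts to re-running the boundary analysis of \cite{Mo} rather than citing a single statement. The paper's shortcut avoids this entirely, at the price of invoking the comparison of cuspidal parts from \cite{KH1}.
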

\begin{proof}
Indeed, this follows from the fact we have a natural $\Gamma$-equivariant morphism for each $i$
\begin{equation}
    H^i_c (M^K(\mb{GU}, \mc{X})_{\ov{\Q}} , \mc{F}) \to H^i (M^K(\mb{GU}, \mc{X})^*_{\ov{\Q}} , IC^KV_{\ov{\Q}})
\end{equation}
and the cuspidal part of $H^i (M^K(\mb{GU}, \mc{X})^*_{\ov{\Q}} , IC^KV_{\ov{\Q}})$ lies in the image of this map (see, for instance, \cite[Proposition 3.2]{KH1}).
\end{proof}

We now remark on the definitions of the functions $f^{\mb{H}} , f^{(j)}_{\mb{H}} \in \mc{H}(H(\A))$ defined in \cite[\S 4]{BM2} and \cite[\S 6.2]{Mo} respectively. Morel's normalization of transfer factors away from $p$ and $\infty$ is arbitrary up to the global constaint given by \cite[6.10b]{kot6}. At $v \neq p, \infty$ the definitions of $f^{\mb{H}}$ and $f^{(j)}_{\mb{H}}$ coincide up to differences in transfer factor normalization. At $p$, Morel normalizes her transfer factors as in \cite[pg180]{kot7}. If one chooses a different normalization at $p$, then Kottwitz explains (\cite[pg180-181]{kot7}) how to modify the function $f^{(j)}_{\mb{H}, p}$ by a constant such that it satisfies and analogous fundamental lemma formula. At $v= \infty$, Morel uses the normalization given on \cite[pg184]{kot7}. We can again modify the function $f^{(j)}_{\mb{H}, \infty}$ by a constant so that it satisfies the same formulas. Hence, so long as one modifies the normalizations of the transfer factors  at each place in such as way that the global constraint is still satisfied, one gets an analogous modification of the function $f^{(j)}_{\mb{H}}$ such that it satisfies the same transfer formulas. By examining the constructions at each place, it is clear that if $f^{(j)}_{\mb{H}}$ is modified to be compatible with our chosen normalization of transfer factors, then the functions $f^{(j)}_{\mb{H}}$ and $f^{\mb{H}}$ can be chosen to be equal.

Since the transfer of a cuspidal function is cuspidal \cite[Lemma 3.4]{Art1} and $f^{\mb{H}}_{\infty}$ is stable cuspidal by definition, we have that $f^{\mb{H}}$ satisfies the hypotheses of Lemma \ref{cuspcohlem} and Lemma \ref{cuspstablem}. In particular, we have the following proposition.

\begin{proposition}
    Suppose $f^{\infty}$ is a cuspidal at a finite place and factors as $f^{p,\infty} 1_{K_p}$. Then
    \begin{equation*}
        \tr(W^C_{\lambda} \mid f^{\infty}\times \phi^j_{\mf{p}}) = \sum\limits_{(\mb{H}, s, \eta) \in \mc{E}(\mb{GU})} \iota(\mb{GU} ,\mb{H}) ST^{\mb{H}}_{ell}(b_Hf^{\mb{H}}).
    \end{equation*}
\end{proposition}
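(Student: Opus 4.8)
The strategy is to combine the work of Morel on the intersection cohomology of these Shimura varieties with the comparison between $W^C_\lambda$ and $W^I_\lambda$ provided by Lemma \ref{cuspcohlem}, and then feed in the stabilization results of Section \ref{STell=STdisc}.

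First I would invoke \cite[Theorem 5.6.1 (or the relevant stabilization statement)]{Mo}, which expresses $\tr(W^I_\lambda \mid f^\infty \times \phi^j_{\mf{p}})$ as $\sum_{(\mb{H},s,\eta) \in \mc{E}(\mb{GU})} \iota(\mb{GU},\mb{H}) ST^{\mb{H}}(b_{\mb{H}} f^{\mb{H}})$, where $f^{\mb{H}} = f^{(j)}_{\mb{H}}$ after adjusting the transfer factor normalizations as explained in the paragraph preceding this proposition (so that Morel's functions and the functions of \cite{BM2} agree). Here one must check that the test function $f^\infty$ being cuspidal at a finite place does not obstruct applying Morel's formula — it does not, since her formula holds for arbitrary $f^{p,\infty}1_{K_p}$, and the cuspidality is only used afterwards.

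Next, by Lemma \ref{cuspcohlem}, the hypothesis that $f^\infty$ is cuspidal at a finite place (hence so is each transfer $f^{\mb{H}}$ by \cite[Lemma 3.4]{Art1}, and $f^{\mb{H}}_\infty$ is stable cuspidal by definition) gives $\tr(W^C_\lambda \mid f^\infty \times \phi^j_{\mf{p}}) = \tr(W^I_\lambda \mid f^\infty \times \phi^j_{\mf{p}})$. Finally, for each $(\mb{H},s,\eta)$ appearing, the function $b_{\mb{H}}f^{\mb{H}}$ satisfies the hypotheses of Lemma \ref{cuspstablem}: it is stable cuspidal at infinity and cuspidal at a finite place. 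Applying the first bullet of Lemma \ref{cuspstablem} kills all the terms $ST^{\mb{H}}_{\M}$ with $\M \neq \mb{H}$ in the expansion $ST^{\mb{H}} = \sum_{\M}(n^{\mb{H}}_{\M})^{-1}ST^{\mb{H}}_{\M}$, and the second bullet identifies $ST^{\mb{H}}_{\mb{H}}(b_{\mb{H}}f^{\mb{H}})$ with $ST^{\mb{H}}_{\el}(b_{\mb{H}}f^{\mb{H}})$. Combining these three inputs yields exactly the asserted identity $\tr(W^C_\lambda \mid f^\infty \times \phi^j_{\mf{p}}) = \sum_{(\mb{H},s,\eta)\in\mc{E}(\mb{GU})}\iota(\mb{GU},\mb{H}) ST^{\mb{H}}_{\el}(b_{\mb{H}}f^{\mb{H}})$.

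The main obstacle I anticipate is the bookkeeping around transfer factor normalizations: Morel, Kottwitz, and \cite{BM2} each use different normalizations at $p$ and $\infty$, and one must verify that the constants by which one modifies $f^{(j)}_{\mb{H},p}$ and $f^{(j)}_{\mb{H},\infty}$ are precisely those that make $f^{(j)}_{\mb{H}}$ equal to $f^{\mb{H}}$ while preserving the global compatibility constraint \cite[6.10b]{kot6}; this is the content of the paragraph immediately preceding the proposition and is where the real care is needed. A secondary point to be careful about is that $\mb{GU}$ is not anisotropic modulo center, so one genuinely needs Morel's version of the stabilization (for the Baily--Borel compactification / intersection cohomology) rather than the version in \cite{BM2}, and the passage from $W^I_\lambda$ to $W^C_\lambda$ via Lemma \ref{cuspcohlem} is exactly what bridges this gap.
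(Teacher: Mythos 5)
Your proposal matches the paper's proof: both combine Lemma \ref{cuspcohlem} with Morel's stabilization theorem (the paper cites \cite[Theorem 7.1.7]{Mo}) to get $\tr(W^C_{\lambda} \mid f^{\infty}\times \phi^j_{\mf{p}}) = \sum \iota(\mb{GU},\mb{H})ST^{\mb{H}}(f^{\mb{H}})$, and then apply Lemma \ref{cuspstablem} to replace $ST^{\mb{H}}$ by $ST^{\mb{H}}_{\el}$. The argument is correct and essentially identical to the one in the paper.
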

\begin{proof}
By Lemma \ref{cuspcohlem}  and \cite[Theorem 7.1.7]{Mo} (keeping in mind her remark that the result holds for general $p$) we have
\begin{equation*}
    \tr(W^C_{\lambda} \mid f^{\infty} \times \phi^j_{\mf{p}}) = \sum\limits_{ (\mb{H}, s, \eta) \in \mc{E}(\mb{GU})} \iota(\mb{GU} , \mb{H}) ST^{\mb{H}}(f^{\mb{H}}).
\end{equation*}
Now, we apply Lemma \ref{cuspstablem} to the righthand side to get the desired equality.
\end{proof}

At this point, we have finished using the work of Morel and have arrived at the formula \cite[Equation (4.17)]{BM2}. We now need to show that we can perform the destabilization procedure as in \cite[\S 4.7]{BM2}. To do so we need to prove that we have a sufficiently good theory of the Langlands correspondence for $\mb{GU}$ and its localizations. Globally, we will work with ``automorphic parameters'' in the style of \cite{KMSW} and \cite{ArthurBook} and as we defined in \ref{globalGUparams}. Since our ultimate goal is to prove a local formula, these parameters are sufficient for our purpose. We list the following properties we need and where these facts have been proven.

\begin{enumerate}
    \item We need a construction of local Arthur packets of generic parameters at all localizations of $\mb{GU}$ and descriptions of the elements in each local $A$-packet in terms of representations of the various centralizer groups (Theorem \ref{localpairingGU}).\\
    \item The local packets must satisfy the endoscopic character identities (Section \ref{sectionECIGU}).\\
    \item A local generic $A$-packet contains a $K$-unramified representation if and only if the parameter is unramified. In the case that an $A$-parameter is unramified, this $K$-unramified representation is unique (Subsection \ref{unramifiedGU}).\\
    \item We need a construction for global Arthur packets for generic  ``$v$-cuspidal'' parameters. These consist of parameters that are supercuspidal at some fixed local place $v$. We need a description of the global $A$-packet in terms of the local packets (Section \ref{globalGUparams}).\\
    
    \item{\label{quadbij}} We need $v$-cuspidal parameters to satisfy a version of \cite[Proposition 3.10]{BM2} (this is discussed in \cite[pg 36]{ArthurBook}).\\
    
    \item We need a decomposition of the generic $v$-cuspidal part of  $L^2_{\disc}( \mb{GU}(\Q) \setminus \mb{GU}(\A))$ in terms of global Arthur packets and this decomposition should satisfy the global multiplicity formula (Section \ref{globalGUparams}).\\
    
    \item We need to attach to a global generic parameter a global Galois representation whose localizations at each place are compatible with the corresponding localization of the global parameter (Subsection \ref{globalgaloisrep}).
\end{enumerate}

With these properties in hand, we can now apply the results of  Section 4.2 (which is analogous to \cite[Assumption 4.8]{BM2}) to get
\begin{equation*}
     \tr(W^C_{\lambda} \mid f^{\infty} \times \phi^j_{\mf{p}}) = \sum\limits_{ (\mb{H}, s, \eta) \in \mc{E}(\mb{GU})} \iota(\mb{GU} , \mb{H}) ST^{\mb{H}}_{\disc}(f^{\mb{H}}).
\end{equation*}
Following the argument of \cite[\S 4.7]{BM2}, we derive the formula
\begin{equation}
\tr(W^C_{\lambda} \mid f^{\infty}\phi^j_{\mf{p}})  = \sum\limits_{[\psi]} \, \sum\limits_{\nu}\sum\limits_{\pi^{\infty} \in \Pi_{\psi^{\infty}}(\mathbf{GU}, z^{iso, \infty})} m(\pi^{\infty}, \nu)\nu(s_{\psi})(-1)^{q(\mathbf{GU})} \tr(\pi^{\infty} \mid f^{\infty}) \boxtimes V(\psi, \nu)_{\lambda},
\end{equation}
in the Grothendieck group of $\mb{G}(\A_f) \times \Gamma_{\mb{E}}$-modules where the first sum is over equivalence classes of $v$-cuspidal parameters.

Suppose $\pi_f$ is a representation of $\mb{GU}(\A_f)$ appearing in the cohomology of Shimura varieties whose associated automorphic $A$-parameter is $v$-cuspidal. We need to compute the $\pi_f$-isotypic part, $W^C_{\lambda}(\pi_f)$, of $W^C_{\lambda}$. To do so, we apply the argument at the end of \cite[\S 4.7]{BM2} along with the following lemma.
\begin{lemma}{\label{shimuraseplem}}
Suppose $\pi_f$ is an admissible representation of $\mb{GU}(\A_f)$ such that the $A$-parameter at $v$ is supercuspidal. There exists a compact open $K \subset \mb{GU}(\A_f)$ such that $\pi^K_f \neq \emptyset$ and $K$ factors as $K^vK_v$ and there exists a $v$-cuspidal function $f^{\infty} \in \mc{H}(\mb{GU}(\A_f), K)$ such that $\tr( \pi_f \mid f^{\infty}) \neq 0$ and for any $\pi'_f$ with nontrivial $K$-invariants and appearing in either $W^C_{\lambda}$ or
\begin{equation*}
\sum\limits_{[\psi]} \, \sum\limits_{\nu}\sum\limits_{\pi^{\infty} \in \Pi_{\psi^{\infty}}(\mathbf{G}, z^{iso, \infty})} m(\pi^{\infty}, \nu)\nu(s_{\psi})(-1)^{q(\mathbf{G})}(\pi^{\infty}) \boxtimes V(\psi, \nu)_{\lambda},
\end{equation*}
we have 
\begin{equation*}
    \tr(\pi'_f \mid f^{\infty})=0.
\end{equation*}
\end{lemma}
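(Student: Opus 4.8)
The plan is to use the key fact, established in Section \ref{globalGUparams} (construction \ref{construction1}), that a supercuspidal local parameter at $v$ can be globalized with tight control of the global centralizer group, together with the strong multiplicity one / linear independence of characters argument familiar from \cite{Shi1}. First I would choose $K = K^vK_v$ small enough so that $\pi_f^K \neq 0$ and so that $K_v$ is small enough to detect the supercuspidal representation $\pi_v$ of $\mb{GU}(\Q_v)$ occurring in $\pi_f$. The function $f^{\infty}$ will be taken to factor as $f^{\infty} = f^{v,\infty} f_v$, where $f_v$ is a matrix coefficient (suitably truncated to lie in $\mc{H}(\mb{GU}(\Q_v), K_v)$) of the supercuspidal representation $\pi_v$; such an $f_v$ is cuspidal in the sense of Section 4.2 because matrix coefficients of supercuspidals have vanishing constant terms along all proper parabolics, and it satisfies $\tr(\pi_v \mid f_v) \neq 0$ while $\tr(\pi'_v \mid f_v) = 0$ for any irreducible $\pi'_v$ not isomorphic to $\pi_v$. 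The component $f^{v,\infty}$ away from $v$ I would choose inside $\mc{H}(\mb{GU}(\A_f^v), K^v)$ so that $\tr(\pi_f^v \mid f^{v,\infty}) \neq 0$; this is possible since $(\pi_f^v)^{K^v} \neq 0$ and one can use an appropriate idempotent or a product of local pseudo-coefficients.

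Next I would analyze which $\pi'_f$ can contribute. Any $\pi'_f$ appearing in $W^C_{\lambda}$ or in the spectral expansion arises (by the destabilization carried out above, using properties (1)--(7) and in particular the global multiplicity formula of Section \ref{globalGUparams}) from a global generic $v$-cuspidal parameter $\widetilde{\dot{\psi}}'$, so its local component $\pi'_v$ lies in the local packet $\Pi_{\widetilde{\psi}'_v}(\mb{GU}_{\Q_v}, \varrho)$ for the supercuspidal parameter $\widetilde{\psi}'_v = \widetilde{\dot{\psi}}'_v$. Since $\tr(\pi'_v \mid f_v) = 0$ unless $\pi'_v \cong \pi_v$, the only surviving contributions are those with $\pi'_v \cong \pi_v$, hence with $\widetilde{\psi}'_v = \widetilde{\psi}_v$ (the parameter of $\pi_v$) because $\pi_v$ occurs in a unique supercuspidal $L$-packet by the disjointness in part (5) of Theorem \ref{itm: local} together with Theorem \ref{localpairingGU}. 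For such $\pi'_f$ one then has $\tr(\pi'_f \mid f^{\infty}) = \tr(\pi'^v_f \mid f^{v,\infty})\cdot \tr(\pi_v \mid f_v)$, and I would arrange $f^{v,\infty}$ so that this is nonzero precisely for $\pi'^v_f$ in a prescribed finite set; in fact it suffices for the lemma to allow it to be nonzero for $\pi_f$ itself, so the statement as written only requires $\tr(\pi_f \mid f^{\infty}) \neq 0$ and does not force vanishing for all other $\pi'_f$ — rereading the statement, the ``for any $\pi'_f \ldots$ we have $\tr(\pi'_f \mid f^{\infty}) = 0$'' is presumably meant with $\pi'_f \not\cong \pi_f$ lying outside a fixed finite packet, which I would enforce by a linear independence of characters argument over the finitely many $\pi'^v_f$ with nonzero $K^v$-invariants appearing in the cohomology.

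The main obstacle will be the last point: ensuring that $f^{v,\infty}$ can be chosen to separate $\pi_f$ from all other automorphic representations with the same supercuspidal component at $v$ that appear in the (finite-dimensional, for fixed $K$) cohomology. This is where one invokes linear independence of the characters $\tr(\cdot \mid f^{v,\infty})$ on the finite set of such $\pi'^v_f$, which is a standard consequence of the linear independence of admissible characters of $\mb{GU}(\A_f^v)$ restricted to $K^v$-fixed vectors; the subtlety is purely bookkeeping — cutting down to a finite set using that $W^C_{\lambda}$ is a finite-dimensional virtual representation once $K$ is fixed, and that the spectral side is likewise supported (after imposing $K$-invariance) on finitely many $\pi^{\infty}$. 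Once that finiteness is in place, choosing $f^{v,\infty}$ is immediate. I would close by remarking that the cuspidality of $f^{\infty}$ at $v$ is exactly what is needed so that the preceding propositions (Lemma \ref{cuspcohlem}, Lemma \ref{cuspstablem}, and the destabilized trace formula) apply to $f^{\mb{H}}$.
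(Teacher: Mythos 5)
Your proposal is correct and follows essentially the same route as the paper's proof: take $f_v$ to be a (cuspidal) matrix coefficient of the supercuspidal component $(\pi_f)_v$, use finiteness of the set of $\pi'_f$ with nontrivial $K$-invariants appearing in $W^C_\lambda$ or the spectral expansion to choose $f^{v,\infty}$ separating $\pi_f^v$ from the other away-from-$v$ components, and conclude that any $\pi'_f\not\cong\pi_f$ is killed either at $v$ or away from $v$. Your reading of the statement (that the vanishing is meant for $\pi'_f\not\cong\pi_f$) matches what the paper actually proves.
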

\begin{proof}
The set $R'$ of isomorphism classes of $\pi'_f$ satisfying the above conditions is finite. Hence we can find a function $f^{v, \infty}$ such that $\tr( (\pi'_f)^v \mid f^{v, \infty}) = 0$ for all $\pi'_f \in R'$ unless $(\pi'_f)^v \cong \pi^v_f$ in which case the trace is nonzero. Now, at $v$ we have that $(\pi_f)_v$ is supercuspidal and so we choose $f_v \in \mc{H}( \mb{GU}(\Q_v) , K_v)$ to be a coefficient for $(\pi_f)_v$. Then $f^{v, \infty}f_v$ has the desired properties. Indeed any $\pi'_f$ not isomorphic to $\pi_f$ will differ from $\pi_f$ either at $v$ or away from it, and hence $\tr( \pi'_f \mid f^{v, \infty}f_v)=0$.
\end{proof}

Following the argument at the end of \cite[\S 4.7]{BM2} we conclude that
\begin{equation}
    W^C_{\lambda}(\pi_f) = (\sum\limits_{[\psi]} \, \sum\limits_{\nu}\sum\limits_{\pi^{\infty} \in \Pi_{\psi^{\infty}}(\mathbf{G}, z^{iso, \infty})} m(\pi^{\infty}, \nu)\nu(s_{\psi})(-1)^{q(\mathbf{G})}(\pi^{\infty}) \boxtimes V(\psi, \nu)_{\lambda})(\pi_f)
\end{equation}
in $\Gr(\mb{GU}(\A_f) \times W_{E_{\mb{\mu}}})$.

We now need to show that a similar result holds for the compactly supported cohomology of Igusa varieties. In this case the stabilization in \cite[\S 5]{BM2} does not require that $\mb{GU}$ is anisotropic modulo center and so that argument goes through essentially unchanged. The only difference is that we only prove the equality of $ST^{\mb{H}}_{ell}(f^{\mb{H}})$ and $ST^{\mb{H}}_{\disc}(f^{\mb{H}})$ in the case that $f^{\mb{H}}_{\infty}$ is cuspidal at a finite place. In particular, this means that when we compute the $\pi_f$-isotypic part of the cohomology of Igusa varieties, we need the following lemma.
\begin{lemma}{\label{shimseplem}}
Suppose $\pi_f$ is an irreducible admissible representation of $\mb{GU}(\A^p_f) \times J_b(\Q_p)$ such that the corresponding local $A$-parameter at $v$ is supercuspidal. Let  $K \subset \mb{GU}(\A^p_f) \times J_b(\Q_p)$ be a compact open subgroup such that $\pi^K_f \neq \emptyset$ and $K$ factors as $K^{v,p}K_vK_p$. Let $R$ be a finite set of isomorphism classes of irreducible admissible $\mb{GU}(\A^p_f) \times J_b(\Q_p)$ representations such that $\pi_f \in R$. Then there exists a $v$-cuspidal function $f^{\infty} \in \mc{H}(\mb{GU}(\A^p_f) \times J_b(\Q_p), K)$ that is acceptable in the sense of \cite[Definition 6.2]{Shi4} such that $f^{\infty}$ factors as $f^{p,v,\infty}f_pf_v$ and
$\tr( \pi'_f \mid f^{\infty}) \neq 0$ for $\pi'_f \in R$ if and only if $\pi'_f \cong \pi_f$.
\end{lemma}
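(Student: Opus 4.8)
The plan is to prove Lemma~\ref{shimseplem} by the representation-separation technique used for Lemma~\ref{shimuraseplem}, the new feature being that the function produced at $p$ must in addition be acceptable in the sense of \cite[Definition 6.2]{Shi4}. Write $\pi_f = \pi_f^{v,p}\otimes(\pi_f)_v\otimes(\pi_f)_p$ with $\pi_f^{v,p}$ a representation of $\mb{GU}(\A^{v,p}_f)$, $(\pi_f)_v$ of $\mb{GU}(\Q_v)$, and $(\pi_f)_p$ of $J_b(\Q_p)$, and decompose each $\pi'_f\in R$ similarly. I will build $f^\infty = f^{p,v,\infty}f_vf_p$ by treating the three groups of places in turn and shrinking $R$ at each stage.

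At $v$: since the local $A$-parameter of $\pi_f$ at $v$ is supercuspidal, $(\pi_f)_v$ is a supercuspidal representation of $\mb{GU}(\Q_v)$ (by the corollary on supercuspidal parameters together with Theorem~\ref{localpairingGU}). Take $f_v\in\mc{H}(\mb{GU}(\Q_v),K_v)$ to be a matrix coefficient (pseudo-coefficient) of $(\pi_f)_v$. Such an $f_v$ is cuspidal, because the constant terms along proper parabolics of a matrix coefficient of a supercuspidal representation vanish, so $f^\infty$ will be $v$-cuspidal; and for any irreducible admissible $\sigma$ of $\mb{GU}(\Q_v)$ one has $\tr(\sigma\mid f_v)\neq 0$ iff $\sigma\cong(\pi_f)_v$, by Schur orthogonality for supercuspidals and the vanishing of traces of cuspidal functions against non-supercuspidal irreducibles. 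Replace $R$ by the subset $R_1$ of those $\pi'_f$ with $(\pi'_f)_v\cong(\pi_f)_v$.

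Away from $v$ and $p$: the $(\pi'_f)^{v,p}$, $\pi'_f\in R_1$, form a finite set of irreducible admissible representations of $\mb{GU}(\A^{v,p}_f)$ with nonzero $K^{v,p}$-fixed vectors, so by linear independence of their characters on $\mc{H}(\mb{GU}(\A^{v,p}_f),K^{v,p})$ there is $f^{p,v,\infty}$ in that Hecke algebra with $\tr((\pi'_f)^{v,p}\mid f^{p,v,\infty})\neq 0$ exactly for $(\pi'_f)^{v,p}\cong\pi_f^{v,p}$; shrink $R_1$ to $R_2$ accordingly. At $p$: the $(\pi'_f)_p$, $\pi'_f\in R_2$, form a finite set of irreducible admissible $K_p$-spherical representations of $J_b(\Q_p)$; by linear independence of characters on $\mc{H}(J_b(\Q_p),K_p)$ choose $\phi_p$ there with $\tr((\pi'_f)_p\mid\phi_p)\neq 0$ precisely for $(\pi'_f)_p\cong(\pi_f)_p$. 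It remains to make this acceptable: using Shin's truncation (\cite[\S 6]{Shi4}), for $N$ sufficiently large the truncation $\phi_p^{(N)}$ of $\phi_p$ to the $N$-acceptable locus is acceptable and $K_p$-bi-invariant, and I would check that $\tr(\sigma\mid\phi_p^{(N)})=\tr(\sigma\mid\phi_p)$ for each of the finitely many $\sigma=(\pi'_f)_p$, $\pi'_f\in R_2$: each such $\sigma$ is admissible and $K_p$-spherical, so its $K_p$-averaged character is supported in a region bounded in terms of the slope of $\omega_\sigma$, whereas truncation only alters $\phi_p$ outside an arbitrarily large acceptable region. Then $f_p:=\phi_p^{(N)}$ and $f^\infty:=f^{p,v,\infty}f_vf_p$ are as required: $f^\infty$ is $v$-cuspidal, acceptable, of the prescribed factored form, and $\tr(\pi'_f\mid f^\infty)$ is the product of the three separated traces, hence nonzero for $\pi'_f\in R$ iff $\pi'_f\cong\pi_f$.

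The main obstacle is this last step. Acceptable functions are abundant, but one must ensure that passing from $\phi_p$ to an acceptable $\phi_p^{(N)}$ neither kills the distinguished trace $\tr((\pi_f)_p\mid\phi_p^{(N)})$ nor creates new nonzero traces among the remaining $(\pi'_f)_p$; this is precisely where the finiteness of $R_2$, the $K_p$-sphericity of the representations in play, and the exact shape of Shin's truncation lemma are used. The rest — $v$-cuspidality via a supercuspidal coefficient and linear independence of characters away from $p$ — is routine and parallel to the proof of Lemma~\ref{shimuraseplem}.
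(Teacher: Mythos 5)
Your route is genuinely different from the paper's. The paper argues by duality: it considers the linear map $f^{\infty} \mapsto (\tr(\pi_1 \mid f^{\infty}), \ldots, \tr(\pi_n \mid f^{\infty}))$ on acceptable $v$-cuspidal functions of the factored form, and shows it is surjective onto $\C^{|R|}$ — if not, a linear relation $\sum_i c_i \tr(\pi_i \mid f^{\infty}) = 0$ would hold for all such $f^{\infty}$, would propagate by \cite[Lemmas 6.3, 6.4]{Shi4} to \emph{all} $v$-cuspidal functions of that form (acceptability dropped), and would then contradict the separation statement of Lemma \ref{shimuraseplem}. Your proposal instead constructs $f^{\infty}$ directly place by place, which is more explicit and, where it works, proves slightly more (vanishing against every $\pi'_f$ outside the isomorphism class of $\pi_f$, not just those in $R$). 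The steps at $v$ and away from $v,p$ (supercuspidal matrix coefficient, modulo the usual care with the non-compact center, and linear independence of characters on the Hecke algebra) are fine and parallel to Lemma \ref{shimuraseplem}.

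The flaw is in your acceptability step at $p$. The operation $\phi_p \mapsto \phi_p^{(N)}$ of \cite[\S 6]{Shi4} is \emph{not} a truncation of the support to an ``$N$-acceptable locus''; it is translation of the argument by the $N$-th power of a fixed central element of $J_b(\Q_p)$ coming from the slope decomposition, which for $N \gg 0$ moves the (unchanged, compact) support into the acceptable region. Consequently $\tr(\sigma \mid \phi_p^{(N)})$ is \emph{not} equal to $\tr(\sigma \mid \phi_p)$: it equals $\omega_{\sigma}(a)^{\pm N}\,\tr(\sigma \mid \phi_p)$ for the central character $\omega_{\sigma}$. Your justification for trace-preservation — that the $K_p$-averaged character of an admissible representation is supported in a bounded region and truncation only alters $\phi_p$ far away — does not hold: Harish-Chandra characters are not compactly supported, and acceptability is a condition on Newton slopes, not a bound on support. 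The repair is immediate, though: since you only need non-vanishing of each $\tr((\pi'_f)_p \mid f_p)$ to be preserved, and $\omega_{\sigma}(a)^{N} \neq 0$, setting $f_p := \phi_p^{(N)}$ for $N$ large still separates $(\pi_f)_p$ from the other components, remains $K_p$-bi-invariant (the translating element is central), and does not disturb $v$-cuspidality or the factored form. With that correction your construction goes through.
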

\begin{proof}
Consider the linear map from $v$-cuspidal functions to $\C^{|R|}$ given by $f^{\infty} \mapsto (\tr(\pi_1 \mid  f^{\infty}), ..., \tr(\pi_n \mid f^{\infty}))$ where $R= \{ \pi_1, ..., \pi_n \}$. It suffices to show this map is surjective. If the map is not surjective, then its image is a proper subspace and hence lies in a hyperplane of $\C^{|R|}$. Hence we can find some element $c_1,...,c_n \in \C[R]$ such that for all $v$-cuspidal $f^{\infty}$, we have $c_1\tr(\pi_1 \mid f^{\infty})+...+c_n\tr( \pi_n \mid f^{\infty})=0$.

Now, by the argument of \cite[Lemma 6.4]{Shi4} and also \cite[Lemma 6.3]{Shi4} it follows that every $f^{\infty} =f^{p, v, \infty} f_pf_v$ that is cuspidal at $v$ satisfies
\begin{equation*}
    \tr( c_1\pi_1+...+c_n\pi_n \mid f^{\infty}) = 0.
\end{equation*}
By the argument of Lemma \ref{shimuraseplem}, we can find an $f^{\infty}$ that does not vanish at $c_1\pi_1+...+c_n\pi_n$. This is a contradiction and implies our desired result.
\end{proof}

At this point, we have verified the assumptions of \S $4$ and \S $5$ of \cite{BM2}. It remains to check those of \S $6$. We first note that the Mantovan formula is known for the PEL type Shimura varieties we consider. Indeed this is \cite[Theorem 6.32]{LS2018}.

It remains to check Assumptions $6.2$ and $6.3$ of \cite{BM2}. We record some useful lemmas.
\begin{lemma}{\label{generic}}
Suppose $\pi$ is a discrete automorphic representation of $\mb{GU}(\A)$ contained in an $A$-packet $\Pi$. Suppose further that the infinitesimal character of $\pi_{\infty}$ is sufficiently regular in the sense of \cite[Def. 2.2.10]{KH1}. Then the $A$-parameter associated to $\Pi$ is generic.
\end{lemma}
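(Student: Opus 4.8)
The plan is to argue by contradiction, showing that a non-generic $A$-parameter $\psi$ forces the infinitesimal character of $\pi_\infty$ to lie in a proper, Zariski-closed ``non-regular'' locus, contradicting sufficient regularity. The key point is the $\SL_2(\C)$-factor in an $A$-parameter: if $\psi$ is not generic, then $\psi|_{\SL_2(\C)}$ is non-trivial. First I would reduce to the unitary group $\U$. By our definition of global generic $A$-parameters of $\GU$ in \S\ref{globalGUparams}, $\psi$ is a pair $(\dot\psi, \chi)$ with $\dot\psi$ a global parameter of $\U^*(n)$ and $\chi$ an automorphic character; $\psi$ is generic exactly when $\dot\psi$ is. Since the infinitesimal character of $\pi_\infty$ determines (and is determined by, up to the central twist by $\chi_\infty$) the infinitesimal character of the restriction of $\pi$ to $\U(\A)$, it suffices to prove the statement for $\U$.

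Next I would pass to the standard base change $\dot\psi^n = \Pi_1 \boxplus \cdots \boxplus \Pi_r$, an isobaric automorphic representation of $\GL_n(\A_E)$, where each $\Pi_i \boxtimes \nu_i$ has $\nu_i$ an algebraic representation of $\SL_2(\C)$ and $\Pi_i$ cuspidal on $\GL_{m_i}(\A_E)$ with $n = \sum_i m_i \dim \nu_i$. The archimedean component of this isobaric sum is determined by $\dot\psi_\infty$, and its infinitesimal character is the multiset union of the infinitesimal characters of the $\Pi_{i,\infty}$ each shifted along the ``$\SL_2$-direction'' by the weights of $\nu_i$: concretely, if $\Pi_{i,\infty}$ has infinitesimal character with Harish-Chandra parameter $\mu_i \in \C^{m_i}$, then the block contributes the multiset $\{\mu_i + t : t \text{ a weight of } \nu_i\}$ with multiplicities. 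If some $\nu_i$ has dimension $\geq 2$, this multiset contains two entries of the form $\mu + t_1, \mu + t_2$ with $t_1 - t_2 \in \Z_{\neq 0}$, i.e. the infinitesimal character of $\pi_\infty$ is forced to contain two coordinates differing by a nonzero integer in a fixed bounded range. This is precisely the kind of coincidence excluded by ``sufficiently regular'' in the sense of \cite[Def. 2.2.10]{KH1}: that notion requires the Harish-Chandra parameter to avoid a finite explicit set of walls/translates, including all such near-diagonal relations. So I would spell out that the bound on $\dim \nu_i$ (hence on $t_1 - t_2$) depends only on $n$, match it against the regularity bound in \cite{KH1}, and conclude $\dim \nu_i = 1$ for all $i$, i.e. $\psi|_{\SL_2(\C)}$ is trivial and $\psi$ is generic.

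The main obstacle I anticipate is making the last step fully precise: translating ``$\psi|_{\SL_2}$ nontrivial'' into an explicit integral relation among the coordinates of the infinitesimal character of $\pi_\infty$, and then checking that this relation is indeed among those ruled out by the specific ``sufficiently regular'' condition of \cite[Def. 2.2.10]{KH1}. This requires being careful about normalizations: the relation between the infinitesimal character of a discrete automorphic representation of $\GU(\A)$, that of its restriction to $\U(\A)$, and that of the base-changed representation on $\GL_n(\A_E)$, together with the $\rho$-shifts implicit in passing between Harish-Chandra parameters and highest weights. Once these bookkeeping issues are handled, the argument is essentially the observation that Speh-type (non-tempered) archimedean constituents have infinitesimal characters with forced integral coincidences, and sufficient regularity forbids them; this is the same mechanism used in \cite{KMSW} and \cite{C.P.Mok} to ensure genericity of parameters with regular enough infinitesimal character, and I would cite those for the analogous statement on $\U$ if a direct reference is cleaner than reproducing the weight computation.
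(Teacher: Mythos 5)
Your argument is correct and is exactly the standard mechanism behind this lemma; the paper's own proof is a one-line citation of \cite[Lemma 4.3.1]{KMSW}, whose proof is precisely the observation you spell out (a nontrivial $\SL_2$-factor forces integral coincidences in the infinitesimal character of the base change, which sufficient regularity excludes). So you have simply unpacked the reference the paper relies on, including the reduction from $\GU$ to $\U$ that the paper leaves implicit.
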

\begin{proof}
Standard. For instance see \cite[Lemma 4.3.1]{KMSW}. 
\end{proof}
\begin{lemma}{\label{samepacketlem}}
Suppose $\pi$ is a discrete automorphic representation of $\mb{GU}(\A)$ contained in an $A$-packet and such that $\pi_{\infty}$ has sufficiently regular infinitesimal character. Then this is the unique $A$-packet containing $\pi_{\infty}$. Moreover, if $\widetilde{\pi}$ is another discrete automorphic representation of $\mb{GU}(\A)$ such that $\widetilde{\pi}_{\infty}$ has sufficiently regular infinitesinal character and such that $\pi^{\infty} \cong \widetilde{\pi}^{\infty}$ then $\pi$ and $\widetilde{\pi}$ are in the same $A$-packet.
\end{lemma}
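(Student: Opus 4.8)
The plan is to deduce both assertions from the classification results already in hand, the key ingredient being the quadratic base change. First I would invoke Lemma \ref{generic}: since $\pi$ is a discrete automorphic representation of $\mb{GU}(\A)$ lying in an $A$-packet and $\pi_\infty$ has sufficiently regular infinitesimal character, the associated global $A$-parameter $\widetilde{\Dot{\psi}}=(\Dot{\psi},\chi)$ is generic, so the global classification of Theorem \ref{itm: global}, Proposition \ref{itm: from U to GU}, and the constructions of \S\ref{globalGUparams} all apply, and in particular $\pi$ occurs in $L^2_{\disc}(\mb{GU}(\Q)\backslash\mb{GU}(\A))$ inside the single summand indexed by $\widetilde{\Dot{\psi}}$, as described by Proposition \ref{global multiplicity formula}. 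The first assertion is then essentially formal: the decomposition of the discrete spectrum is a direct sum over global parameters, so the $A$-packet containing $\pi$ is unique; moreover $\pi_\infty$, being a discrete series of $\mb{GU}(\R)$ with regular infinitesimal character, lies in a \emph{unique} $L$-packet of $\mb{GU}(\R)$, namely the discrete series $L$-packet attached to that infinitesimal character, and since the archimedean localization $\Pi_{\widetilde{\Dot{\psi}}_\infty}$ is a (tempered, hence discrete series by regularity) $L$-packet containing $\pi_\infty$, it must be this one.

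The substance is the ``Moreover'' clause. Here I would compare the quadratic base changes. Attached to $\Dot{\psi}$ is the self-dual isobaric representation $\Dot{\psi}^n=\Pi_1\boxplus\cdots\boxplus\Pi_k$ of $\GL_n(\A_E)$, and its finite part is recovered from $\pi^\infty$ through the local base change maps at the finite places (the identity at split places, and base change functoriality built into the packets of Theorem \ref{itm: local} at the inert ones). Thus $\pi^\infty\cong\widetilde{\pi}^\infty$ forces $(\Dot{\psi}^n)^\infty$ and $(\widetilde{\Dot{\psi}}^{\,n})^\infty$ to agree at almost all places, and by strong multiplicity one for isobaric automorphic representations of $\GL_n$, an isobaric sum is determined by its components at almost all places, so $\Dot{\psi}^n\cong\widetilde{\Dot{\psi}}^{\,n}$. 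Since a global parameter of $\U^*(n)$ is nothing but a self-dual formal sum together with the map $\widetilde{\Dot{\psi}}$, which is pinned down by $\Dot{\psi}^n$ and the base change morphism $\eta_B$ (\cite{C.P.Mok}, \cite{KMSW}), the underlying unitary parameters of $\pi$ and $\widetilde{\pi}$ coincide. It then remains to match the central characters: $\chi$ and $\widetilde{\chi}$ have the same restriction to $Z(\mb{GU})(\A_f)=\A_{E,f}^\times$ because $\pi^\infty\cong\widetilde{\pi}^\infty$, and at the archimedean place I would argue that $\pi_\infty$ and $\widetilde{\pi}_\infty$ are discrete series of $\mb{GU}(\R)$ whose restrictions to $\U(\R)$ lie in the common $L$-packet $\Pi_{\Dot{\psi}_\infty}$, so they share the same infinitesimal character; since $Z(\mb{GU})(\R)=\C^\times$ is connected, a discrete series central character is recovered from its differential, hence $\chi_\infty=\widetilde{\chi}_\infty$. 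Therefore $\widetilde{\Dot{\psi}}=(\Dot{\psi},\chi)=(\widetilde{\Dot{\psi}}_{\U},\widetilde{\chi})$, and $\pi,\widetilde{\pi}$ lie in the same $A$-packet.

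I expect the delicate point to be exactly this last one: showing that $\pi^\infty\cong\widetilde{\pi}^\infty$ together with the regularity hypothesis genuinely pins down $\chi_\infty$, rather than only up to a unitary (non‑algebraic) Hecke character of $E$ that is trivial at all finite places. This is where the cohomological/regular nature of $\pi_\infty$ and $\widetilde{\pi}_\infty$ is essential, since it forces their central characters to be the algebraic characters of regular discrete series with prescribed infinitesimal character (and in the Shimura–variety applications the archimedean central character is in any case fixed in advance by the algebraic representation $V$, which makes this point moot there). A second, more routine, matter to be careful about is that reconstructing $(\Dot{\psi}^n)^\infty$ from $\pi^\infty$ uses local base change at \emph{all} finite places, including the ramified ones; this is supplied by Theorem \ref{itm: local} together with the compatibility of the local packets with $\eta_B$, so it presents no real difficulty.
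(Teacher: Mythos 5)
Your proposal is correct and follows essentially the same route as the paper: both arguments reduce to comparing the quadratic base changes $\Dot{\psi}^n$ at almost all (unramified) places and invoking Jacquet--Shalika strong multiplicity one for isobaric representations of $\GL_n(\A_E)$, then matching the central characters separately. The ``delicate point'' you flag at the end is in fact a non-issue: two automorphic characters of $Z(\mb{GU})(\A)=\A_E^{\times}$ that agree on $\A_{E,f}^{\times}$ agree everywhere, since $E^{\times}$ is dense in $E_{\infty}^{\times}=\C^{\times}$, so no appeal to the discrete-series structure of $\pi_\infty$ is needed there.
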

\begin{proof}
Suppose that $\pi$ belongs to two $A$-packets with associated $A$-parameters $ (\Dot{\psi}_1, \chi_1) $ and $ (\Dot{\psi}_2, \chi_2) $. Since $ \chi_1 $ and $\chi_2$ correspond to the central character of $ \pi $, they are equal. We need to show that $ \Dot{\psi}_1 $, $ \Dot{\psi}_2 $ are also equal. At almost all finite unramified places $v$ where $ \pi_v $ is unramified, the localizations $ (\Dot{\psi}_1)_v $ and $ (\Dot{\psi}_2)_v $ are equal. Indeed, our sufficiently regular assumption implies that these parameters are generic. Following \cite[pg 189]{C.P.Mok}, these local parameters factor through $ \prescript{L}{}{M} $ where $M$ is the minimal Levi subgroup of $U_{E_v/\Q_v}(n)$ and correspond to the same spherical parameter of $ M $ (for more details, see \cite{C.P.Mok}, page $189$). This implies that $\Dot{\psi}^n_1$ and $ \Dot{\psi}^n_2 $ give rise to the same Hecke string. Then, by \cite{JS}, \cite[Theorem 4.3]{Art4} we see that $ \Dot{\psi}_1 $ and $\Dot{\psi}_2$ are equal. 
It is clear that the second statement also follows from exactly the same argument.
\end{proof}

Before verifying Assumptions on $6.2$ and $6.3$ of \cite{BM2}, we need to understand the effect of an unramified twist on the $\Mant_{\GU, b, \mu}$ map. Let $c : \GU(\Q_p) \longrightarrow \Q_p^{\times} $ be the similitude factor character. For $b$ non-basic, the group $\J_b(\Q_p)$ is an inner form of a Levi subgroup $\mathrm{M}_b(\Qp)$ of $\GU(\Q_p)$. Then the similitude character $c$ restricted to $\mathrm{M}_b(\Qp)$ can be transferred to $\J_b(\Q_p)$. Hence by abuse of language, we also denote $c$ the corresponding character on $\J_b(\Q_p)$. 

\begin{lemma} \phantomsection \label{itm : unramified twisting}
Let $ (E_p, *, V, \langle \cdot | \cdot \rangle, \GU, \mu, b)$ be an unramified unitary Rapoport--Zink PEL datum and suppose $ \omega : \Q_p^{\times} \longrightarrow \overline{\Q}^{\times}_{\ell} $ is an unramified character. Then the following holds in $ \Groth (\GU(\Q_p) \times W_{E_p} ) $
\[
\Mant_{\GU, b, \mu} ( \rho \otimes (\omega \circ c) ) = \Mant_{\GU, b, \mu} ( \rho ) \otimes (\omega \circ c) \otimes (\omega \circ Art^{-1}_{E_p}).
\]
\end{lemma}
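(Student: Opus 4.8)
The plan is to reduce the statement to the known compatibility of the Rapoport--Zink tower with unramified twists via the center, working directly on the level of cohomology groups before passing to the Grothendieck group. First I would recall that the similitude character $c$ extends to a morphism of Rapoport--Zink data: twisting the PEL datum by $\omega \circ c$ does not change the moduli problem but modifies the $\GU(\Q_p) \times \J_b(\Q_p) \times W_{E_p}$-action on $H^i_c(\mc{M}_{K_p}, \ov{\Q}_\ell)$ by the character $(\omega \circ c) \boxtimes (\omega \circ c) \boxtimes (\omega \circ \mathrm{Art}^{-1}_{E_p})$. Concretely, the $\GU(\Q_p)$-action twists by $\omega \circ c$ because $Z(\GU)$ acts through $c$ on the tower; the $\J_b(\Q_p)$-action twists by $\omega \circ c$ by the same reasoning applied to the quasi-isogeny group (here one uses that $c$ on $\J_b$ is defined by transfer from $\mathrm{M}_b$, compatibly with the embedding $Z(\GU) \hookrightarrow \mathrm{M}_b$); and the $W_{E_p}$-action twists by $\omega \circ \mathrm{Art}^{-1}_{E_p}$ because the Rapoport--Zink descent datum $\alpha$ interacts with the central twist through local class field theory exactly as in the Lubin--Tate/EL case. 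This last point is where the sign/normalization of the Artin map enters, and it is consistent with the \emph{geometric normalization} fixed in the paper (uniformizers to geometric Frobenius), which is why $\mathrm{Art}^{-1}_{E_p}$ rather than $\mathrm{Art}_{E_p}$ appears.

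Granting this, I would compute: for an admissible $\ell$-adic representation $\rho$ of $\J_b(\Q_p)$,
\begin{align*}
H^{i,j}(\GU, b, \mu)[\rho \otimes (\omega \circ c)] &= \varinjlim_{K_p} \Ext^j_{\J_b(\Q_p)}\bigl(H^i_c(\mc{M}_{K_p}, \ov{\Q}_\ell), \rho \otimes (\omega \circ c)\bigr)\\
&\cong \varinjlim_{K_p} \Ext^j_{\J_b(\Q_p)}\bigl(H^i_c(\mc{M}_{K_p}, \ov{\Q}_\ell) \otimes (\omega^{-1} \circ c), \rho\bigr) \otimes (\omega \circ c),
\end{align*}
where in the last step I factor the central character out of the $\Ext$-group, using that $\omega \circ c$ is a character and hence $\Ext^j_{\J_b}(A, B \otimes \chi) \cong \Ext^j_{\J_b}(A \otimes \chi^{-1}, B) \otimes \chi$ for any character $\chi$ of $\J_b(\Q_p)$. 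Now substituting the description of the twisted action of $\omega \circ c$ on $H^i_c(\mc{M}_{K_p})$ from the first paragraph, the factor $(\omega^{-1}\circ c)$ on the $\J_b$-side cancels, leaving a residual twist by $(\omega \circ c)$ on the $\GU(\Q_p)$-side and $(\omega \circ \mathrm{Art}^{-1}_{E_p})$ on the $W_{E_p}$-side. Combined with the $(\omega \circ c)$ pulled out of the $\Ext$, this gives
\[
H^{i,j}(\GU, b, \mu)[\rho \otimes (\omega \circ c)] \cong H^{i,j}(\GU, b, \mu)[\rho] \otimes (\omega \circ c) \otimes (\omega \circ \mathrm{Art}^{-1}_{E_p})
\]
as $\GU(\Q_p) \times W_{E_p}$-representations, for each $i, j$. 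Taking the alternating sum $\sum_{i,j}(-1)^{i+j}$ and the Tate twist $(-\dim \mc{M}^{\an})$ (which is central and unaffected) yields the claimed identity in $\Groth(\GU(\Q_p) \times W_{E_p})$.

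The main obstacle is the first paragraph: making precise the claim that the central unramified twist acts on the Weil group factor through $\omega \circ \mathrm{Art}^{-1}_{E_p}$, with the correct normalization. This requires unwinding the construction of the Rapoport--Zink descent datum $\alpha: \mc{M} \to \sigma^*_{E_p}\mc{M}$ and its interaction with the action of $Z(\GU)(\Q_p) = E_p^\times$ (via $t \mapsto \diag(t, \dots, t)$) on the tower, tracking how the quasi-isogeny $\rho$ is rescaled and how this manifests after passing to cohomology and to the generic fiber. This is a standard but somewhat delicate computation; the argument is essentially the one appearing in the EL case (e.g. \cite{Far1}, \cite{Shi1}) transported to our PEL setting, and I would cite those sources for the structural input while checking that the similitude factor $c$ plays the role there played by the determinant. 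Once that compatibility is in hand, the rest is the formal bookkeeping with $\Ext$-groups and Grothendieck groups sketched above.
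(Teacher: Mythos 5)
Your proposal is correct and follows essentially the same route as the paper: both reduce the lemma to the existence of an isomorphism $H^j_c(\mc{M}_{K_p},\ov{\Q}_\ell)\simeq H^j_c(\mc{M}_{K_p},\ov{\Q}_\ell)\otimes\chi$ with $\chi=(\omega\circ c)\otimes(\omega\circ c)\otimes(\omega\circ Art^{-1}_{E_p})$, followed by the formal $\Ext$-bookkeeping, and both defer the key isomorphism to the Fargues--Shin argument. The paper makes the mechanism slightly more explicit — the equivariant map $\iota:\mc{M}_{K_p}\to\Delta=\Hom_{\Z}(X^*(\GU),\Z)$ together with the compact-induction decomposition of the cohomology from $\mc{M}^{(0)}_{K_p}$, using that $\chi$ is trivial on the subgroup acting trivially on $\Delta$ — but this is exactly the structural input you are citing, so the two proofs coincide in substance.
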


\begin{proof}
This lemma is an analogue of \cite[Lemma 4.9]{Shi1} and the same proof applies in our situation. Thus we just briefly give an idea of how to proceed. 

Define a character $\chi$ of $ \J_b(\Q_p) \times \GU(\Q_p) \times W_{E_p} $ by
\[
\chi := (\omega \circ c) \otimes (\omega \circ c) \otimes (\omega \circ Art^{-1}_{E_p}).
\]

Then we prove that there is an isomorphism of $ \overline{\Q}_{\ell} $-vector spaces
\[
H_c^j (\mathcal{M}_{K_p}, \overline{\Q}_{\ell} ) \simeq H_c^j (\mathcal{M}_{K_p}, \overline{\Q}_{\ell} ) \otimes \chi
\]
compatible with the action of $ \J_b(\Q_p) \times ( K_p \backslash \GU(\Q_p) / K_p ) \times W_{E_p} $.

Notice there is a $\J_b(\Q_p)$-equivariant map $ \iota : \mathcal{M}_{K_p} \longrightarrow \Delta := \Hom_{\Z} (X^*(\GU), \Z)$ (\cite[sec. 3.52]{RZ1}) and moreover there is a natural way to define an action of $ \J_b(\Q_p) \times \GU(\Q_p) \times W_{E_p} $ on $\Delta$ such that the map $\iota$ is equivariant with respect to $ \J_b(\Q_p) \times ( K_p \backslash \GU(\Q_p) / K_p ) \times W_{E_p} $ (\cite[remark 2.6.11]{Far1}).

We can prove the lemma by using the fact that $ \chi $ acts trivially on $ ( \J_b(\Q_p) \times ( K_p \backslash \GU(\Q_p) / K_p ) \times W_{E_p} )^1 $ and 
\[
\mathop{\mathrm{lim}}_{\overrightarrow{K_p}} H_c^j (\mathcal{M}_{K_p}, \overline{\Q}_{\ell} )  \simeq c-\ind^{\J_b(\Q_p) \times ( K_p \backslash GU(\Q_p) / K_p ) \times W_{E_p}}_{( \J_b(\Q_p) \times ( K_p \backslash GU(\Q_p) / K_p ) \times W_{E_p} )^1} \Big( \mathop{\mathrm{lim}}_{\overrightarrow{K_p}} H_c^j (\mathcal{M}^{(0)}_{K_p}, \overline{\Q}_{\ell} ) \Big)
\]
where $\mathcal{M}^{(0)}_{K_p}$ is the inverse image of $0$ by $\iota$ and $(\J_b(\Q_p) \times (K_p \backslash \GU(\Q_p) / K_p) \times W_{E_p})^1 $ is the subgroup of $ \J_b(\Q_p) \times (K_p \backslash \GU(\Q_p) / K_p) \times W_{E_p} $ that acts trivially on $\Delta$.

\end{proof}

We can now settle Assumption $6.2$ in the cases we need. Let $\pi_p$ be a representation of $\mb{GU}(\Q_p)$ and $\pi_1$ a discrete automorphic representation of $\mb{GU}(\A)$ such that $(\pi_1)_p \cong \pi_p$. Suppose further that $\pi_1^{\infty}$ appears in either the formula for the cohomology of Igusa varieties or $W^C_{\lambda}$. Then since $V$ has sufficiently regular infinitesimal character, it follows that the same is true of $(\pi_1)_{\infty}$. Now suppose $\pi_2$ is a discrete automorphic representation of $\mb{GU}(\A)$ appearing in either of the above formulas and such that $\pi^{\infty}_1 \cong \pi^{\infty}_2$. We then have by Lemma \ref{samepacketlem} that $\pi_2$ and $\pi_1$ are in the same packet.

We now tackle Assumption 6.3 of \cite{BM2}. For a fixed supercuspidal representation $\pi_p$ with local parameter $\phi_p$, we have the local centralizer group $S_{\phi_p}$. For any global $A$-parameter $\psi$ such that $\psi_p = \phi_p$, we have a natural embedding $S_{\psi} \hookrightarrow S_{\phi_p}$. The formula immediately before Assumption 6.3 of \cite{BM2} includes a sum indexed over a set of representatives $X_{\psi}$ of $\ov{S}_{\psi}$. We must show that we can pick different globalizations, $\psi$, of $\phi_p$ to derive the formula below assumption 6.2 for each element of $S_{\psi_p}$.

Suppose first that $s \in S_{\phi_p}$ projects to the identity element of $\ov{S}_{\phi_p}$. Then by Construction \ref{construction1} we can choose $\psi$ so that the image of $S_{\psi}$ in $S_{\psi_p}$ is $\{ \pm s\}$ and the packet $ \Pi_{\psi_p}(\GU(\Qp), \rho_{\GU}) $ differs from the packet $ \Pi_{\phi_p}(\GU(\Qp), \rho_{\GU}) $ by an unramified twist of the form $\omega \circ c$. Then we simply pick $X_{\psi}$ to contain the unique element of $S_{\psi}$ mapping to $s$. This establishes the formula for $s$ projecting to the identity of $\ov{S}_{\psi_p}$. By Lemma \ref{itm : unramified twisting}, we obtain the formula for $s$ projecting to the identity of $\ov{S}_{\phi_p}$.

Now suppose pick $s \in S_{\phi_p}$ that projects to a non-identity element. By Construction \ref{construction2}, we may choose $\psi$ such that the image of $S_{\psi}$ in $S_{\psi_p}$ is precisely $\{ \pm s, \pm \Id\}$ and the packet $ \Pi_{\psi_p}(\GU(\Qp), \rho_{\GU}) $ differs from the packet $ \Pi_{\phi_p}(\GU(\Qp), \rho_{\GU}) $ by an unramified twist of the form $\omega \circ c$. Choose $X_{\psi}$ to contain the unique elements mapping to $s, \Id$ and denote these $x_s$ and $x_{\Id}$ respectively. Then each side of the formula before Assumption 6.2 for the parameter $ \psi_p $ has two terms indexed by $x_s$ and $x_{\Id}$ respectively. Again, by Lemma \ref{itm : unramified twisting}, we can draw the same formula for $\phi_p$. The $x_{\Id}$ terms are already known to be equal by the previous paragraph. It therefore follows that the $x_s$ terms are equal as well.

This completes the verification of Theorem \ref{AvgFormula}.

\section{Proof of the main theorem}

To prove the Kottwitz conjecture for the groups we consider, we use Theorem \ref{AvgFormula}. 

First of all, we show that
\[
\mathrm{Red}^{\mc{H}^{\mathfrak{e}}}_b(\sum\limits_{\pi^{\mH} \in \Pi_{\psi^{\mH}}} \langle \pi^{\mH}, s_{\psi^{\mH}} \rangle \pi^{\mH})) = 0
\]
for $b$ non basic, $(\mH, s, \eta)$ an elliptic endoscopic datum of $\GU$ and $\psi$ a supercuspidal parameter.

Indeed, the parameter $\psi^{\mH}$ is again a supercuspidal $L$-parameter. In particular, the representations $\pi^{\mH}$ are supercuspidal. Now by definition we have
\begin{equation*}
\mathrm{Red}^{\mc{H}^{\mf{e}}}_b =\sum\limits_{X^{\mf{e}}_{J_b}} \overline{\delta^{\frac{1}{2}}_{P(\nu_b)}} \otimes \mathrm{Trans}^{\mH_{\M_b}}_{\J_b} \mathrm{Jac}^{\mH}_{P(\nu)^{op}}
\end{equation*}

As $b$ is non-basic, the group $\J_b$ is an inner form of a proper Levi subgroup of $\GU$. Suppose that $P(\nu)^{op}$ = $\mH$. In this case $\mH$ equals $\mH_{\M}$ and is isomorphic to an endoscopic group of $\J_b$. This is a contradiction because by the classification of the endoscopic groups of $\GU$ and its Levi subgroups, we know that the elliptic endoscopic groups of $\GU$ are not endoscopic groups of any proper Levi subgroup of $\GU$. We conclude that $P(\nu)^{op}$ is a proper parabolic subgroup of $\mH$ so that 
\[
\mathrm{Red}^{\mc{H}^{\mathfrak{e}}}_b(\sum\limits_{\pi^{\mH} \in \Pi_{\psi^{\mH}}} \langle \pi^{\mH}, s_{\psi^{\mH}} \rangle \pi^{\mH})) = 0,
\]
as desired. 

Now, for $b$ basic, the main formula of Theorem \ref{AvgFormula} becomes
\begin{equation*}
  \mathrm{Mant}_{\GU,b, \mu}( \mathrm{Trans}^{\mH}_{\J_b} (\sum\limits_{\pi^{\mH} \in \Pi_{\psi^{\mH}}} \langle \pi^{\mH}, 1 \rangle \pi^{\mH} ))=\sum\limits_{\rho} \, \sum\limits_{\pi \in \Pi_{\psi}(\GU, \varrho)} \langle \pi, \eta(s) \rangle \frac{\mathrm{tr}(\eta(s) \mid V_\rho)}{\dim \rho} [\pi][\rho \otimes | \cdot |^{-\langle \rho_{\GU}, \mu \rangle}].   
\end{equation*}
Simplifying the left-hand side using the endoscopic character identities gives
\begin{equation*}
  \mathrm{Mant}_{\GU,b, \mu}(  \sum\limits_{\pi_{\J_b} \in \Pi_{\psi}(\J_b, \varrho_b)} \langle \pi_{\J_b}, \eta(s) \rangle \pi_{\J_b} )=\sum\limits_{\rho} \, \sum\limits_{\pi \in \Pi_{\psi}} \langle \pi, \eta(s) \rangle \frac{\mathrm{tr}(\eta(s) \mid V_\rho)}{\dim \rho} [\pi][\rho \otimes | \cdot |^{-\langle \rho_{\GU}, \mu \rangle}].   
\end{equation*}
Now, fix $\pi_{\J_b} \in \Pi_{\psi}(\J_b, \varrho_b)$ and multiply the above equation by $\langle \pi_{\J_b}, \eta(s) \rangle^{-1}$. Then one can check that both sides only depend on the projection $\ov{\eta(s)} \in \ov{\mathcal{S}}_{\psi}$. We then average over  $\ov{\mathcal{S}}_{\psi}$. This gives equality between
\begin{equation*}
  \mathrm{Mant}_{\GU,b, \mu} \left(\frac{1}{|\ov{\mathcal{S}}_{\psi}|} \sum\limits_{s \in \ov{\mathcal{S}}_{\psi}} \sum\limits_{\pi'_{\J_b} \in \Pi_{\psi}(\J_b)} \langle \pi_{\J_b}, s \rangle^{-1} \langle \pi'_{\J_b}, s \rangle \pi'_{\J_b} \right)
  \end{equation*}
  and
  \begin{equation*}
  \frac{1}{|\ov{\mathcal{S}}_{\psi}|}\sum\limits_{s \in \ov{\mathcal{S}}_{\psi}}\sum\limits_{\rho} \, \sum\limits_{\pi \in \Pi_{\psi}(\GU, \varrho)} \langle \pi_{\J_b}, s \rangle^{-1}\langle \pi, s \rangle \frac{\mathrm{tr}(s \mid V_\rho)}{\dim \rho} [\pi][\rho \otimes | \cdot |^{-\langle \rho_{\GU}, \mu \rangle}].   
\end{equation*}
Now, for any irreducible representation $\chi$ of $\ov{\mathcal{S}}_{\psi}$, we have $\frac{1}{|\ov{\mathcal{S}}_{\psi}|}\sum\limits_{s \in \ov{\mathcal{S}}_{\psi}} \chi(s)$ is $1$ if $\chi$ is trivial and $0$ otherwise. Hence we get the equality
\begin{equation*}
  \mathrm{Mant}_{\GU,b, \mu}( \pi_{\J_b} )=
  \frac{1}{|\ov{\mathcal{S}}_{\psi}|}\sum\limits_{s \in \ov{\mathcal{S}}_{\psi}}\sum\limits_{\rho} \, \sum\limits_{\pi \in \Pi_{\psi}(\GU, \varrho)} \langle \pi_{\J_b}, s \rangle^{-1}\langle \pi, s \rangle \frac{\mathrm{tr}(s \mid V_\rho)}{\dim \rho} [\pi][\rho \otimes | \cdot |^{-\langle \rho_{\GU}, \mu \rangle}].   
\end{equation*}
We now isolate the term for a fixed $\pi_{\GU} \in \Pi_{\psi}(\GU, \varrho)$ and representation $\rho$. It is
\begin{equation*}
     \frac{1}{|\ov{\mathcal{S}}_{\psi}|}\sum\limits_{s \in \ov{\mathcal{S}}_{\psi}} \langle \pi_{\J_b}, s \rangle^{-1}\langle \pi_{\GU}, s \rangle \frac{\mathrm{tr}(s \mid V_\rho)}{\dim \rho} [\pi_{\GU}][\rho \otimes | \cdot |^{-\langle \rho_{\GU}, \mu \rangle}],
\end{equation*}
which equals
\begin{equation*}
    \frac{\dim \Hom_{\ov{\mathcal{S}}_{\psi}}(\iota_{\mf{w}}(\pi_{\J_b}) \otimes \iota_{\mf{w}}(\pi_{\GU})^{\vee}, V_{\rho})}{\dim \rho}[\pi_{\GU}][\rho \otimes | \cdot |^{-\langle \rho_{\GU}, \mu \rangle}].
\end{equation*}
This equals
\begin{equation*}
    [\pi_{\GU}][ \Hom_{\ov{\mathcal{S}}_{\psi}}(\iota_{\mf{w}}(\pi_{\J_b}) \otimes \iota_{\mf{w}}(\pi_{\GU})^{\vee}, V_{\rho}) \otimes | \cdot |^{-\langle \rho_{\GU}, \mu \rangle}].
\end{equation*}
Hence summing over $\rho$, we get
\begin{equation*}
  \mathrm{Mant}_{\GU,b, \mu}( \pi_{\J_b} )=
 \sum\limits_{\pi_{\GU} \in \Pi_{\psi}(\GU, \varrho)} [\pi_{\GU}][ \Hom_{\ov{\mathcal{S}}_{\psi}}(\iota_{\mf{w}}(\pi_{\J_b}) \otimes \iota_{\mf{w}}(\pi_{\GU})^{\vee}, r_{-\mu} \circ \psi) \otimes | \cdot |^{-\langle \rho_{\GU}, \mu \rangle}].   
\end{equation*}
In conclusion we have proven
\begin{theorem}[Kottwitz Conjecture]{\label{Kottwitzconjintext}}
For irreducible admissible representations $\pi_{\J_b}$ of $\J_b(\Q_p)$ with supercuspidal $L$-parameter $\psi$, we have the following equality in $\Groth(\G(\Q_p) \times W_{E_{\mu}})$:
\begin{equation*}
  \mathrm{Mant}_{\G, b, \mu}( \rho )=
 \sum\limits_{\pi_{\GU} \in \Pi_{\psi_{\rho}}(\G)} [\pi_{\GU}][ \Hom_{\ov{\mathcal{S}}_{\psi_{\rho}}}(\iota_{\mf{w}}(\pi_{\J_b}) \otimes \iota_{\mf{w}}(\pi_{\GU})^{\vee}, r_{-\mu} \circ \psi) \otimes | \cdot |^{-\langle \rho_{\G}, \mu \rangle}].
\end{equation*}
\end{theorem}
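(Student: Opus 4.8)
The plan is to deduce the statement directly from the endoscopic averaging formula of Theorem \ref{AvgFormula}, which we may assume. Fix an irreducible admissible $\pi_{\J_b}$ of $\J_b(\Q_p)$ with supercuspidal $L$-parameter $\psi$, where $b$ is the unique basic class in $\mb{B}(\Q_p,\GU,-\mu)$. The idea is to run Theorem \ref{AvgFormula} for \emph{each} refined elliptic endoscopic datum $(\mH,s,\LL\eta)$ through which $\psi$ factors, first showing that only the basic term survives on the left-hand side, and then inverting over the component group $\ov{\mathcal{S}}_\psi$ to extract the contribution of a single $\pi_{\J_b}$.

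The first step is the vanishing $\Red^{\mc{H}^{\mf{e}}}_{b'}(S\Theta_{\psi^{\mH}})=0$ for $b'$ non-basic. Here I would use the definition $\Red^{\mc{H}^{\mf{e}}}_{b'} = \sum_{X^{\mf{e}}_{\J_{b'}}} \overline{\delta^{1/2}_{P(\nu_{b'})}}\otimes \Trans^{\mH_{\M_{b'}}}_{\J_{b'}}\circ \Jac^{\mH}_{P(\nu)^{op}}$, together with the fact that $\psi^{\mH}$ is again supercuspidal (so $S\Theta_{\psi^{\mH}}$ is a combination of characters of supercuspidal representations of $\mH$, all of whose proper Jacquet modules vanish). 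Thus the only surviving embedded data would be those with $P(\nu)^{op}=\mH$, forcing $\mH=\mH_{\M_{b'}}$ to be an endoscopic group of the proper Levi $\M_{b'}$ of $\GU$; but the classification of elliptic endoscopic data recorded in Section \ref{section1} shows no elliptic endoscopic group of $\GU$ arises as an endoscopic group of a proper Levi, a contradiction. Hence the left side of Theorem \ref{AvgFormula} reduces to the $b$-basic term.

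For the basic term, I would apply the endoscopic character identities for $\GU$ (Section \ref{ECIforGU}, together with Theorem \ref{localpairingGU}) to rewrite $\Trans^{\mH}_{\J_b}\big(\sum_{\pi^{\mH}}\langle \pi^{\mH},s_{\psi^{\mH}}\rangle \pi^{\mH}\big)$ as $\sum_{\pi_{\J_b}\in\Pi_\psi(\J_b,\varrho_b)}\langle\pi_{\J_b},\eta(s)\rangle\,\pi_{\J_b}$. Fixing $\pi_{\J_b}$, multiplying by $\langle\pi_{\J_b},\eta(s)\rangle^{-1}$, checking that both sides depend only on the image $\overline{\eta(s)}\in\ov{\mathcal{S}}_\psi$, and averaging over $\ov{\mathcal{S}}_\psi$ kills all summands on the left except $\pi_{\J_b}$ itself (orthogonality of characters of $\ov{\mathcal{S}}_\psi$). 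On the right one is left, for each irreducible factor $\rho$ of $r_{-\mu}\circ\psi$ and each $\pi_{\GU}\in\Pi_\psi(\GU,\varrho)$, with $\tfrac{1}{|\ov{\mathcal{S}}_\psi|}\sum_{s}\langle\pi_{\J_b},s\rangle^{-1}\langle\pi_{\GU},s\rangle\,\tfrac{\tr(s\mid V_\rho)}{\dim\rho}$, which is exactly $\tfrac{1}{\dim\rho}\dim\Hom_{\ov{\mathcal{S}}_\psi}(\iota_{\mf{w}}(\pi_{\J_b})\otimes\iota_{\mf{w}}(\pi_{\GU})^\vee, V_\rho)$; summing over $\rho$ assembles $\Hom_{\ov{\mathcal{S}}_\psi}(\iota_{\mf{w}}(\pi_{\J_b})\otimes\iota_{\mf{w}}(\pi_{\GU})^\vee, r_{-\mu}\circ\psi)$, giving the claimed identity in $\Groth(\GU(\Q_p)\times W_{E_\mu})$.

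The main obstacle is the non-basic vanishing in the first step: it is where the explicit endoscopic classification of $\GU$ and its Levi subgroups (and the supercuspidality of $\psi^{\mH}$, hence the annihilation of proper Jacquet modules) genuinely enters, and it is the only place where the structure of $\J_{b'}$ as an inner form of a proper Levi is essential. Once that is in hand, the remaining manipulations are the standard Fourier-inversion over $\ov{\mathcal{S}}_\psi$ and bookkeeping of the pairings, which are routine given Theorems \ref{itm: local}, \ref{localpairingGU}, and \ref{AvgFormula}.
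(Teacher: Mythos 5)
Your proposal is correct and follows essentially the same route as the paper's proof: the vanishing of $\Red^{\mc{H}^{\mf{e}}}_{b'}(S\Theta_{\psi^{\mH}})$ for non-basic $b'$ via supercuspidality of $\psi^{\mH}$ and the endoscopic classification, followed by the endoscopic character identities at the basic term and Fourier inversion over $\ov{\mathcal{S}}_{\psi}$ to isolate $\pi_{\J_b}$ and assemble the Hom spaces.
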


\printbibliography

\end{document}